\newcommand{\dd}{\text{\it \dj\hspace{1pt}}}
\newcommand{\ul}[1]{\underline{#1}}
\newcommand{\ol}[1]{\overline{#1}}
\numberwithin{equation}{section}
\newcommand{\C}{\ensuremath{\mathbb{C}}}
\newcommand{\R}{\ensuremath{\mathbb{R}}}
\newcommand{\Rn}{\ensuremath{\mathbb{R}^n}}
\newcommand{\D}{\ensuremath{\mathcal{D}}}
\newcommand{\N}{\ensuremath{\mathbb{N}}}
\newcommand{\Z}{\ensuremath{\mathbb{Z}}}
\newcommand{\F}{\ensuremath{\mathcal{F}}}
\newcommand{\HS}{\ensuremath{\mathcal{H}}}
\newcommand{\SD}{\ensuremath{\mathcal{S}}}
\newcommand{\E}{\ensuremath{\mathcal{E}}}
\newcommand{\dist}{\operatorname{dist}}
\newcommand{\sd}{\, d}
\newcommand{\supp}{\operatorname{supp}}
\newcommand{\eps}{\ensuremath{\varepsilon}}
\newcommand{\weight}[1]{\langle #1\rangle}
\newcommand{\OP}{\operatorname{OP}}
\newcommand{\Os}{\operatorname{Os--}}
\newcommand{\habil}[1]{}
\newtheorem{thm}{Theorem}[section]
\newtheorem{cor}[thm]{Corollary}
\newtheorem{corollary}[thm]{Corollary}
\newtheorem{lem}[thm]{Lemma}
\newtheorem{lemma}[thm]{Lemma}
\newtheorem{definition}[thm]{Definition}
\newtheorem{theorem}[thm]{Theorem}
\newtheorem{prop}[thm]{Proposition}
\newtheorem{proposition}[thm]{Proposition}
\newtheorem{claim*}{Claim}
\newtheorem{remark}[thm]{Remark}
\newtheorem{example}[thm]{Example}
\newenvironment{proof*}[1]{\noindent{\bf Proof
#1:}}{\hspace*{\fill}\rule{1.2ex}{1.2ex}\\ }
\newenvironment{proof}{\noindent{\bf
Proof:\,}}{\hspace*{\fill}\rule{1.2ex}{1.2ex}\\ }
\newcommand{\crp}{\overline{\mathbb R}_+}
\newcommand{\crm}{\overline{\mathbb R}_-}
\newcommand{\rn}{{\mathbb R}^n}
\newcommand{\rnp}{{\mathbb R}^n_+}
\newcommand{\rnpm}{\mathbb R^n_\pm}
\newcommand{\crnp}{\overline{\mathbb R}^n_+}
\newcommand{\crnm}{\overline{\mathbb R}^n_-}
\newcommand{\crnpm}{\overline{\mathbb R}^n_\pm}
\newcommand{\comega}{\overline\Omega }
\newcommand{\ang}[1]{\langle {#1} \rangle}
\newcommand{\simto}{\overset\sim\rightarrow}
\newcommand{\rp}{ \mathbb R_+}
\newcommand{\op}{\operatorname{OP}}
\newcommand{\OPK}{\operatorname{OPK}}
\newcommand{\stimes}{\!\times\!}
\begin{document}
\begin{titlepage}
\title{Fractional-Order Operators on Nonsmooth Domains} 
\author{Helmut~Abels\footnote{Fakult\"at f\"ur Mathematik,  
Universit\"at Regensburg,  
93040 Regensburg, Germany, E-mail {\tt helmut.abels@ur.de}}~ and Gerd~Grubb\footnote{Department of Mathematical Sciences,
Copenhagen University,
Universitetsparken 5,
 DK-2100 Copenhagen, Denmark.
E-mail {\tt grubb@math.ku.dk} 
}
}
\end{titlepage}
\date{}
\maketitle

\begin{abstract}
The fractional Laplacian
$(-\Delta )^a$, $a\in(0,1)$, and its generalizations to
variable-coefficient $2a$-order pseudodifferential
operators $P$, are studied in $L_q$-Sobolev spaces of
Bessel-potential type $H^s_q$. For a bounded open set $\Omega \subset
\mathbb R^n$, consider the homogeneous Dirichlet problem: $Pu
=f$ in $\Omega $, $u=0$ in $ \mathbb R^n\setminus\Omega $. We find the
regularity of solutions and determine the exact Dirichlet
domain $D_{a,s,q}$ (the space of solutions $u$ with $f\in
H_q^s(\overline\Omega )$) in cases where $\Omega $ has
limited smoothness $C^{1+\tau }$, for $2a<\tau <\infty $, $0\le s<\tau
-2a$. Earlier, the regularity and  Dirichlet
domains were determined for smooth $\Omega$  by the second author, and
the regularity was found in low-order H\"older spaces for $\tau =1$ by
Ros-Oton and Serra. The $H_q^s$-results obtained now when
$\tau <\infty $ are new, even for $(-\Delta )^a$. In detail, the spaces $D_{a,s,q}$ are
identified as $a$-transmission spaces $H_q^{a(s+2a)}(\overline\Omega
)$, exhibiting estimates in terms of
$\operatorname{dist}(x,\partial\Omega )^a$ near the boundary.


The result has required a new development of
methods to handle nonsmooth coordinate changes for pseudodifferential
operators, which have not been available before; this constitutes another
main contribution of the paper.
 \end{abstract}

 \noindent
 {\bf Key words:} Fractional Laplacian; even pseudodifferential
 operator; homogeneous Dirichlet problem; boundary regularity;
 Dirichlet domain; nonsmooth
 coordinate transformation; nonsmooth coefficients; $\mu$-transmission condition  \\
 {\bf MSC (2020):}  Primary: 35S15, 35R11, Secondary: 35S05, 47G30, 60G52

 \section{Introduction}

 The present work has two main purposes. One is to solve the regularity
question and determine the domain for the Dirichlet problem for  $(-\Delta )^a$ and its fractional-order
 generalizations, in $L_q$-Sobolev spaces over domains of finite smoothness
 degrees between $C^{1,\sigma }$ and $C^\infty $. The other is to
 develop a tool that has been missing in the theory of
 pseudodifferential operators: Nonsmooth coordinate changes. It plays
 an important role in the solution of the regularity question.
\medskip

The fractional Laplacian $(-\Delta )^a$, $0<a<1$, and its
generalizations $P$ of the same order $2a$, have been much studied in
recent years, with applications in probability,
finance, differential geometry and mathematical physics. To mention some of the studies through the times:  
Blumenthal and Getoor \cite{BG59},  Hoh and Jacob
\cite{HJ96},  Kulczycki
\cite{K97}, Chen and Song \cite{CS98},
Jakubowski \cite{J02}, Bogdan, Burdzy and Chen \cite{BBC03}, Cont and
Tankov \cite{CT04}, 
Caffarelli and Silvestre \cite{CS07}, Gonzales, Mazzeo and Sire \cite{GMS12},
Ros-Oton and Serra \cite{RS14,RS16}, Abatangelo \cite{A15}, Felsinger, Kassmann and Voigt
\cite{FKV15}, Bonforte, Sire and Vazquez \cite{BSV15}, Dipierro, Ros-Oton and Valdinoci \cite{DRV17},  Abatangelo, Jarohs and Saldana
\cite{AJS18}. They refer to many more works, also with applications to
nonlinear problems.
 From its action on $\rn$ one defines
the {\it homogeneous Dirichlet problem:}
\begin{equation}
  \label{eq:1.4}
Pu=f\text{ on }\Omega ,\quad \supp u\subset \comega,
\end{equation}
on bounded open subset $\Omega $ of $\rn$ (with some boundary
regularity). 
For operators $P$ with $\operatorname{Re}\int_\Omega Pu\, \bar u\,dx>0$,
there is unique solvability
for $f\in L_2(\Omega )$ by a variational argument. One of the
fundamental  questions in then: How do the solutions look?
 The variational theory gives
that the solution belongs to $H^a$-functions supported in $\comega$,
but is $u$ in fact more regular? And will higher regularity of $f$
increase that of $u$? This is often called {\it the
  regularity question.}
Early results of Vishik and Eskin (see \cite{E81})
imply  e.g.\ for $a\ge \frac12$ that $u$ is $H^{a+\frac12-\varepsilon
}$. 
More precisely, one can ask: What is the {\it Dirichlet domain} for
$P$; the space of functions $u$ solving (1.1), when $f$ runs through a Sobolev space $H^s(\comega)$?

It was an important step forward when  Ros-Oton and Serra
\cite{RS14} showed that for  $f\in L_\infty (\Omega ) $, $u$
is H\"older-continuous with a singularity $d(x)^a$ at the boundary
(where $d(x)\sim \operatorname{dist}(x,\partial\Omega )$ near
 $\partial\Omega $, extended smoothly to a positive function on $\Omega $),
 \begin{equation}
   \label{eq:1.5}
   f\in L_\infty (\Omega )\implies u/d^a\in C^\alpha (\comega),
 \end{equation}
for small $\alpha >0$. $\Omega $ was assumed to be $C^{1,1}$; a later
study lifted $\alpha $ up to $a$. The methods were delicate
potential-theoretic arguments, based on the representation of
$(-\Delta )^a$ as a real singular integral operator; they were later
extended to other real translation-invariant singular integral operators.

A very different method was introduced by one of the present authors
\cite{G15}: Fourier analysis in the form of pseudodifferential
operator ($\psi $do) theory. This theory (necessarily for complex
functions) is designed to allow $x$-dependent operators (not
translation-invariant), taking care of the composition rules that arise,
which make the theory quite technical. Here it was shown when
$\Omega $ is a $C^\infty $-domain that if, say, $u\in \dot
H_q^a(\comega)$ and $s\ge 0$,
\begin{align}
  f&\in C^\infty (\comega)\iff u\in d^aC^\infty (\comega)\label{eq:1.6}\\
f&\in  H^s_q(\comega )\iff u\in D_{a,s,q},\label{eq:1.7}
\end{align}
where $D_{a,s,q}$ is a certain space contained in $\dot H_q^{s+2a}(\comega)+d^a
H_q^{s+a}(\comega )$
for $s+a-\frac1{q}>0$ (and $\notin \Z$), $1<q<\infty $; here $H^s_q$
denotes the $L_q$-Sobolev space of Bessel-potential type. For $0\le
s<\frac1q-a$, $D_{a,s,q}= \dot H_q^{s+2a}(\comega)$ (the $
H_q^{s+2a}(\rn)$-functions supported in $\comega$).

In detail, the space 
$D_{a,s,q}$  equals the so-called $a$-trans\-mis\-sion space
$H_q^{a(s+2a)}(\comega)$ introduced in \cite{G15}. \eqref{eq:1.7} was in \cite{G14} extended to a wealth of other
scales of function spaces, including H\"older-Zygmund spaces $C^s_*$; here $f\in
C^s(\comega)$ implies $u\in \dot
C^{s+2a}(\comega)+d^aC^{s+a}(\comega)$ (when $s,s+a,s+2a\notin \N$), the component in $d^aC^{s+a}(\comega)$
described more precisely in \cite{G19}. 

There is a large gap between the results \eqref{eq:1.5} and \eqref{eq:1.7}, in that the
former allows low smoothness of the domain $\Omega $ and
correspondingly shows smoothness of $u$ in a low range, whereas the latter, under a very high smoothness assumption
on $\Omega $, shows results in the full scale $s\ge 0$.

This gap remained open until recently. Ros-Oton with coauthor Abatangelo
presented a study \cite{ARO20} treating the regularity question for
translation-invariant real singular integral operators, 
lifting \eqref{eq:1.5} to $\alpha =a+s$ when $f\in C^{s}(\comega)$, $\Omega $
is $C^{1+\tau }$ with  $\tau >a$, for
$0<s\le\tau -a$ with $s,a+s, 2a+s\notin\N$.
This allows
 $\tau $ to be a step $a$ lower than we assume below, but does not exhibit
a more precise domain, and does not treat Sobolev spaces. \cite{ARO20} gives
important consequences for regularity questions for the obstacle problem.

One may also compare with the standard knowledge for  second order
strongly elliptic {\it differential} operators (cf.\ Gilbarg-Trudinger
\cite[Theorem 8.13]{GT77}), where $a=1$: When $\Omega $ is a $C^{2+s}$-domain, $s\in\N_0$, the Dirichlet
solutions for $f\in \ol H^s(\Omega )$ lie in $\ol H^{s+2}(\Omega )\cap
\dot H^1(\comega)$; this allows a first step lower regularity of $\Omega
$ than we do.

We shall in the present paper fill the gap in the category of
$L_q$-Sobolev spaces, showing that the characterizations 
\eqref{eq:1.7}
can be obtained also when $\Omega $ has
finite smoothness $C^{1+\tau }$, for $s$ in a finite interval whose
upper bound follows $\tau $
linearly. This is the first treatment of the fractional-order
operators acting in  $L_q$-Sobolev spaces over domains with limited but high smoothness, giving
correspondingly high regularity of solutions.

In this connection there  enters a condition on the behavior of the
operator $P$ at $\partial\Omega $, the $a$-transmission condition,
known from \cite{G15} to be necessary for \eqref{eq:1.6} on smooth
domains. Part of our work consists in finding the appropriate
generalization of this condition to nonsmooth domains, as well as
generalizing the
0-transmission condition of Boutet de Monvel
\cite{B66,B71} and of Grubb and H\"ormander \cite{GH90}. The
$a$-transmission condition is
satisfied in all directions  when $P$ of order $2a$ is {\it even},
i.e., its symbol $p(x,\xi )$ has a graded symmetry property in $\xi $,
cf.\ Section~\ref{subsec:PsDOs} below. This holds for $(-\Delta )^a$. We can now show a result that is new even for $(-\Delta )^a$:

\begin{theorem}\label{thm:main}
  Let $1<q<\infty $,  $0<a<1$, $\tau > 2a$, 
  let $\Omega \subset\rn$ be a bounded
$C^{1+\tau }$-domain, and let $P$ be an {\bf even} 
pseudodifferential
operator of order $2a$ with symbol depending $C^\tau $ on $x$ (i.e.,
$p\in C^\tau S^{2a}(\Rn\times\Rn)$, cf.\ Section~{\rm \ref{subsec:PsDOs}} below).

There is a family of  spaces $D_{a,s,q}$, equal to the
$a$-transmission spaces $ H_q^{a(s+2a)}(\comega)$ {(cf.~Definition~{\rm \ref{def:roughTransmSpaces} below)}},
such that the following holds:

$1^\circ$ $r^+P\colon u\mapsto (Pu)|_\Omega $ maps $D_{a,s,q}$
continuously into $H_q^s(\comega)$ for $0\le s<\tau -2a$.

$2^\circ$ Let $P$ moreover be strongly elliptic. If  $u\in 
  \dot{H}^a_q(\comega)$ is a solution of the homogeneous Dirichlet problem \eqref{eq:1.4}, then
  \eqref{eq:1.7} holds for $0\le s<\tau -2a$. This shows that $D_{a,s,q}$ is
  {\bf the
    Dirichlet domain} for $P$ with data in $H_q^s(\comega)$:
  \begin{equation}
D_{a,s,q}=\{u\in \dot H_q^a(\comega)\mid (Pu)|_\Omega \in H_q^s(\comega)\}.\label{eq:1.7a}
    \end{equation}
\end{theorem}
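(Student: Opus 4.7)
The plan is to reduce the problem to a flat half-space setting via local $C^{1+\tau}$ coordinate changes and then to use the factorization-based half-space calculus from \cite{G15}, with the smooth coordinate transformation formulas replaced by the nonsmooth counterparts the paper develops. Throughout, the two assertions are handled in parallel: $1^\circ$ is a mapping property of the symbol class on the $a$-transmission space $H_q^{a(s+2a)}(\comega)$, while $2^\circ$ is an elliptic bootstrap using a parametrix.

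First I would take a finite cover of $\partial\Omega$ by balls in which $\partial\Omega$ is the graph of a $C^{1+\tau}$ function, and, by means of the nonsmooth diffeomorphisms $\kappa$ flattening $\partial\Omega$ (of regularity $C^{1+\tau}$, so that their Jacobians lie in $C^\tau$), pull $P$ back to an operator $P_\kappa$ on $\rnp$. Here the decisive input is the nonsmooth pseudodifferential coordinate-change theorem announced in the introduction: it produces a principal symbol in $C^\tau S^{2a}(\Rn\times\Rn)$, with remainders of order $2a-1$ (or better) carrying the loss of smoothness in a controlled way. The $a$-transmission property is invariant under such transformations because the defining graded symmetry at the conormal directions transfers correctly, which lets me define, and work intrinsically with, $H_q^{a(s+2a)}(\comega)$ for $C^{1+\tau}$ domains (the $D_{a,s,q}$ of Definition~\ref{def:roughTransmSpaces}).

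In the straightened picture on $\rnp$, I factor the strongly elliptic principal symbol as $p^+\cdot p^-$ in the normal frequency and use the order-reducing operators $\Lambda_\pm^{(t)}$ to identify $H_q^{a(s+2a)}(\crnp)$ with a standard Bessel potential space via $u=\Lambda_-^{(-a)}e^+v$ modulo a smoother remainder supported in the half-space. For part $1^\circ$, applying $r^+P_\kappa$ to such a $u$ and commuting $P_\kappa$ past $\Lambda_-^{(-a)}$ using the $a$-transmission condition produces an operator that acts continuously from $H_q^{s+a}(\rnp)$ into $H_q^s(\crnp)$, provided $s<\tau-2a$ so that the symbol smoothness in $x$ is still strictly larger than the symbol order being lost to composition errors. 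For part $2^\circ$, assuming strong ellipticity, I construct a local parametrix $Q_\kappa$ with $Q_\kappa P_\kappa = I + R_\kappa$ on functions supported near the boundary chart, with $R_\kappa$ of order $-1$ in the nonsmooth calculus; iterating $(I+R_\kappa)^{-1}$ a finite number of times — bounded by how much $\tau-2a-s$ allows — and combining with interior elliptic regularity away from $\partial\Omega$ yields the implication $f\in H_q^s(\comega)\Rightarrow u\in D_{a,s,q}$. The converse direction is $1^\circ$.

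The hard part, and indeed the point where the bound $s<\tau-2a$ enters, will be the bookkeeping of remainders in the nonsmooth symbol calculus: every coordinate pull-back and every composition $P_\kappa\Lambda_-^{(-a)}$, $Q_\kappa P_\kappa$ costs a controlled amount of $x$-smoothness, and one must verify that the total loss remains strictly below $\tau$ so that the transmission structure survives and the Sobolev indices $s+a$, $s+2a$ in the splitting of $D_{a,s,q}$ are legal. Once the nonsmooth coordinate-change theorem and the corresponding composition rules from the body of the paper are available, the argument is a careful but standard adaptation of the smooth scheme of \cite{G15}.
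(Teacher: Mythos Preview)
Your plan for $1^\circ$ is broadly aligned with the paper's route (localize, flatten by $C^{1+\tau}$ diffeomorphisms, invoke the nonsmooth coordinate-change theorem, then reduce to the half-space via order-reducing operators), though some details are off: the identification of $H_q^{a(s+2a)}(\crnp)$ uses $\Xi_+^{-a}$ (or $\Lambda_+^{-a}$), not $\Lambda_-^{(-a)}$, and the paper does \emph{not} Wiener--Hopf factor $p=p^+p^-$. It simply multiplies by the fixed order-reducer, setting $B=P\Lambda_+^{-a}$, which satisfies the $0$-transmission condition, and then proves a half-space mapping theorem for $(x,y)$-form symbols with the $a$-transmission condition (Theorem~\ref{thm:BddnessTruncatedPsDOs}). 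Strong ellipticity plays no role in $1^\circ$.

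For $2^\circ$ your proposal diverges substantially from the paper and has a real gap. You propose to build a local parametrix $Q_\kappa$ for $P_\kappa$ in the nonsmooth calculus and iterate $(I+R_\kappa)^{-1}$. The difficulty is that a parametrix for the \emph{truncated} operator $r^+P_\kappa$ acting between $a$-transmission spaces is not simply $r^+Q$ for an $\Rn$-parametrix $Q$ of $P$; controlling the boundary contributions would require a nonsmooth Boutet de Monvel--type construction with singular Green operators for fractional-order operators, which the paper does not develop and which is not available off the shelf. The paper avoids this entirely by a \emph{freezing-and-rescaling} argument: at each boundary point it rescales so that both $\gamma$ and $p-p_0(0,\cdot)$ become small (Lemma~\ref{lem:Scaling}), then uses the \emph{exact} invertibility of the constant-coefficient half-space problem (known from the smooth theory, cf.\ \eqref{eq:Invertibility}) and a Neumann-series perturbation (Proposition~\ref{thm:localRegularity1}). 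The bootstrap (Corollary~\ref{cor:localRegularity2}) is driven not by a parametrix remainder but by the fact that commutators $[P,\psi_R]$ and $p-p_0$ are \emph{odd} of order $2a-1$, so Corollary~\ref{cor:NonsmoothMap3} gives one step of extra regularity. If you want to make your parametrix route rigorous, you would need to supply a half-space parametrix theory compatible with the $a$-transmission spaces in the nonsmooth setting; otherwise, the freezing/rescaling strategy is what actually closes the argument.
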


The definition of the $a$-transmission spaces
 $H_q^{a(t)}(\comega)$ over $C^{1+\tau }$-domains $\Omega $ ($t<\tau +1$) involves the order-reducing operators $\Xi _+^a$ connected with the $a$-transmission condition.
 Here  $ H_q^{a(s+2a)}(\comega)$ equals $\dot H_q^{s+2a}(\comega)$ if
 $s<\frac1q-a$, and is, if $\tau \ge 1$, included in $\dot H_q^{s+2a}(\comega)+d^a
 H_q^{s+a}(\comega )$ for larger $s$, with $d\in
 C^{1+\tau}(\overline{\Omega},\R_+)$ equal to the distance to
 $\partial\Omega$ in a neighborhood of $\partial\Omega$.
 The condition $\tau \ge 1$ assures that
 $d$ is differentiable in a neighborhood of $\partial\Omega $ and
 normal coordinates exist. In cases $2a<\tau <1$, which can only occur
 if $a<\frac12$, there is a local
 interpretation of the inclusion,
 cf. Remark~\ref{rem:TransmissionSpace}.
 
 We show furthermore that for $u\in H_q^{a(s+2a)}(\comega)$, the
 function $u/d^a$  has boundary value
 in $ B_q^{s+a -\frac1q}(\partial \Omega )$ when
 $s+a-\frac1q>0$, cf. Theorem~\ref{thm:TransmissionSpace} below.

It is a new result that
the exact Dirichlet domains $D_{a,s,q}$ have been found in cases where $\Omega $ has
limited smoothness. A remarkable fact is that these Dirichlet domains are
{\it universal}, depending on  $a$, $s$ and  $q$, but
not on the symbol of $P$ within the class of even, strongly elliptic
$\psi $do's with $C^\tau $-smooth symbols.
 
Our analysis has a number of other new applications for nonsmooth
domains, e.g.: \linebreak 1) Solution of evolution
problems by functional analysis as in \cite{G18JFA}, where the
determination of the Dirichlet domain leads to precise results. 2) Solution of
nonhomogeneous boundary value problems with local Dirichlet  conditions
as in \cite{G14}. They are  worked out in \cite{G22}. 
 
At the end, we include some consequences in H\"older and H\"older-Zygmund spaces that follow by use of embedding theorems by letting $q \to \infty$; this expands the scope of \cite{ARO20} to our classes of pseudodifferential operators.
  
  It would of course be interesting to extend the general principles to other scales of function spaces, namely the Triebel-Lizorkin spaces $F^s_{p,q}$ and the Besov spaces $B^s_{p,q}$ (including Hölder-Zygmund spaces $B^s_{\infty,\infty}=C^s_*$), as it was done in smooth cases in \cite{G14} after the treatment in Bessel potential spaces $H^s_q$ in \cite{G15}. But even for integer-order operators, the basic results \cite{A05} on pseudodifferential boundary problems with nonsmooth $x$-dependence have so far only been established in $H^s_q$-spaces (plus immediate consequences by interpolation), so a substantial additional work would be required.

\medskip

 The new results are based on a development of
 pseudodifferential theory that makes nonsmooth coordinate changes
 possible beyond the principal symbol level; this is the other main
contribution from the present work.

  Recall that pseudodifferential operators  were
 originally developed from singular integral operators as a systematic
 calculus  (containing differential operators) that could handle
 compositions of $x$-dependent operators 
and constructions of
 inverses in elliptic cases, by use of the mechanisms of Fourier
 transformation $\F$ (Seeley \cite{S65}, Kohn and Nirenberg \cite{KN65},
 H\"ormander \cite{H65}, and others). From a symbol $p(x,\xi )$
 depending smoothly on $x,\xi \in\rn$ (except possibly at $\xi =0$)
 one defines the operator $P=\op(p(x,\xi ))$ by
 \begin{equation}
   \label{eq:1.9}
  Pu(x)= (2\pi )^{-n}\int_{{\R} ^{n}}e^{ix\cdot\xi }p(x,\xi
)\F u(\xi )\,d\xi,
 \end{equation}
for nice functions $u$, extended to general $u$ as so-called oscillatory
integrals.

Theories for boundary value
problems for $\psi $do's,  resembling those for differential operators,
were soon set up in Boutet de Monvel  \cite{B66,B71} and further
developed in e.g.\
Rempel--Schulze \cite{RS82}, Grubb \cite{G84,G96}, acting on $C^\infty $-domains $\Omega \subset \rn$. The
definitions of appropriate symbol
classes were focused at first on the behavior with respect to $\xi $,
but after some years, nonsmooth behavior in $x$ was also introduced
(cf.\ e.g.\ Kumano-go and Nagase \cite{KN78}, Beals and Reed \cite{BealsReed},  Marschall
\cite{Marschall87,Marschall88}, Witt \cite{Witt98},  or Taylor \cite{TaylorNonlinPDE,ToolsForPDE}). Presently, we focus on symbols $p(x,\xi )$ in classes $C^\tau
S^m_{1,0}(\rn\times\rn)$, with $C^\tau $-smoothness in $x$ and standard
estimates in $\xi $. For boundary value problems, a nonsmooth generalization of the
calculus of Boutet de Monvel for integer-order $\psi $do's  was worked out in
Abels \cite{A05}.

In the study of boundary value problems, one needs not only the
$x$-dependence in the symbol to be less smooth than $C^\infty $; one
also needs to be able to apply the theory to domains $\Omega $ with nonsmooth
boundary.
When localization techniques are used, this means that one needs to
perform changes-of-variables with nonsmooth transition functions.
For some questions it is sufficient to do this only at the principal symbol
level, with estimates for the remainder operators
(since the result is possibly  sought in low-regularity spaces
anyway); this has been done e.g.\ in applications to the Navier-Stokes
problem (cf.\ Abels \cite{A05b}, with Terasawa \cite{AT09}) and spectral
theory (cf.\ Abels, Grubb, and Wood \cite{AGW14}). Invariance under smooth coordinate changes for nonsmooth pseudodifferential operators and boundary value problems were discussed by Jim\'{e}nez and the first author in \cite{AJ18}.

Full symbol rules for nonsmooth changes of variables have not to our
knowledge been established  anywhere (e.g., \cite{Marschall88} leaves out
this point, \cite{AbelsPfeufferCharacterization} works with a restrictive symbol condition that avoids
the issue.).
Changes of variables are of course also interesting for interior
problems, e.g.\ if one wants to let one coordinate play a special
role.

Since the behavior of a symbol under a coordinate change has a nice
 exact expression when one allows symbols ``in $(x,y)$-form'' (also
called amplitude functions), which
define operators by formulas
\begin{equation}
  \label{eq:1.10}
Au(x)=(2\pi )^{-n}\int_{{\R} ^{2n}}e^{i(x-y)\cdot\xi }a(x,y,\xi
)u(y)\,d\xi dy=\op(a(x,y,\xi ))u, 
\end{equation} 
the question of how to reduce an operator $\op(a(x,y,\xi ))$ to the
form $\op(p(x,\xi ))$ is intimately related to the
change-of-variables question. For this question one has to establish
the validity 
of \eqref{eq:1.10},  as well as set up the formula for the reduction with
appropriate
remainder terms, and to our knowledge
neither of these points has been treated before when the
$y$-dependence is nonsmooth. We shall show:

\begin{theorem}\label{thm:main2}
   Let $\tau >0$ and $m<\tau $.

 $1^\circ$ Let $a\in C^\tau S^m_{1,0}(\R^{2n}\times
  \Rn)$. Then \eqref{eq:1.10} has a meaning as
  an oscillatory integral, cf.\ Theorem~{\rm \ref{thm:OscIntegrals}} below for the details. It defines an operator $ \op(a(x,y,\xi))$
  mapping continuously
$$
 \op(a(x,y,\xi))\colon H_q^{s+m}(\rn)\to H_q^{s}(\rn),
$$
when $|m|$, $|s|$ and $|s+m|$ are $<\tau $, and $1<q<\infty $.
Moreover, for any nonnegative integer
  $l<\tau $,
 \begin{equation}\label{eq:1.12}
    \op(a(x,y,\xi))u(x) = \sum_{|\alpha|\leq l}  \op (p_\alpha (x,\xi)) u(x) +\op(r(x,y,\xi))u(x),
 \end{equation}
  where $p_\alpha(x,\xi)=\frac1{\alpha!}\partial_y^\alpha D_\xi^{\alpha} a(x,y,\xi)|_{y=x} \in C^{\tau-|\alpha|}S^{m-|\alpha|}_{1,0}(\Rn\times \Rn)$ and
  $r(x,y,\xi )$ \linebreak
  $\in C^{\tau-l}S^{m-l}_{1,0}(\R^{2n}\times
  \Rn)$, with $ r(x,x,\xi )=0$. Here the operators map continuously
  \begin{alignat*}{2}
    \op (p_\alpha (x,\xi))&\colon   H^{s+m-|\alpha |}_q (\R^n)\to
    H^s_q(\R^n)&\quad &\text{for }|s|<\tau-|\alpha |,\\
    \op(r(x,y,\xi))&\colon
H_q^{(s+m-l)_+}(\rn)\to H^s_q(\R^n)&&\text{for }0\le s<\min\{\tau-l,\tau-m\}.
  \end{alignat*}

  $2^\circ$ Let $p\in C^\tau S^m_{1,0}(\R^{n}\times
  \Rn)$. Under a $C^{1+\tau }$-diffeomorphism $F\colon \rn\to \rn$
  such that $c_0\le |\det(\nabla F(x))|\le C_0$ with $c_0,C_0>0$, $P$
  transforms to an operator $\underline P$,
$$
      \underline P = F^{\ast}P F^{\ast,-1} = \op(q(x,y,\xi))  + R_1 ,
$$
 where $q\in  C^\tau S^m_{1,0}(\R^{2n}\times
  \Rn)$, $R_1\colon L_q(\rn)\to H^s_q(\R^n)$ for $ s<\min\{\tau ,
 \tau -m\}$, and $\op(q(x,y,\xi))$ is as under $1^\circ$; in
 particular reducing to $x$-form as in {\rm \eqref{eq:1.12}}.
 \end{theorem}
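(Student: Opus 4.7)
The plan for Theorem~\ref{thm:main2} is to first give a rigorous oscillatory-integral meaning to $\op(a(x,y,\xi))$ for amplitudes with only $C^\tau$-regularity in $(x,y)$, then reduce them to $x$-form by a Taylor expansion in $y$ around $y=x$, and finally realize the conjugation $F^*PF^{*,-1}$ as such an $(x,y)$-symbol operator by the Kuranishi trick. For part $1^\circ$, the formal integral
\[
\op(a(x,y,\xi))u(x) = (2\pi)^{-n}\iint_{\rn\times\rn} e^{i(x-y)\cdot\xi}\,a(x,y,\xi)\,u(y)\,dy\,d\xi
\]
is regularized by inserting a Schwartz cutoff $\chi(\eps\xi)\chi(\eps y)$ and then integrated by parts. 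With $L_\xi = \ang{x-y}^{-2}(1-\Delta_\xi)$ one gains decay in $x-y$ at no cost in the $\xi$-order (since $a\in S^m_{1,0}$). Decay in $\xi$ is produced by $L_y=\ang{\xi}^{-2}(1-\Delta_y)$; because $a$ is only $C^\tau$ in $y$, one is limited to integer powers $N<\tau/2$ (with $N$ chosen so that $m-2N+n<0$) or, more flexibly, one replaces $(1-\Delta_y)^N$ by the Bessel-potential multiplier $\ang{D_y}^{2N}$ with $2N<\tau$, using that $\ang{D_y}^{2N}a(x,\cdot,\xi)\in L^\infty$ uniformly. Passing to the limit $\eps\to 0$ gives the asserted oscillatory integral (Theorem~\ref{thm:OscIntegrals}) and independence of the regularization.

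To obtain the expansion~\eqref{eq:1.12}, I would Taylor-expand in $y$ at $y=x$:
\[
a(x,y,\xi) = \sum_{|\alpha|<l}\tfrac{(y-x)^\alpha}{\alpha!}\partial_y^\alpha a(x,x,\xi) + l\sum_{|\alpha|=l}\tfrac{(y-x)^\alpha}{\alpha!}\int_0^1(1-t)^{l-1}\partial_y^\alpha a(x,x+t(y-x),\xi)\,dt,
\]
and exchange each factor $(y-x)^\alpha$ with $(-D_\xi)^\alpha$ by integration by parts in $\xi$ inside the oscillatory integral. This produces the terms $\op(p_\alpha(x,\xi))$ with $p_\alpha=\frac1{\alpha!}\partial_y^\alpha D_\xi^\alpha a|_{y=x}\in C^{\tau-|\alpha|}S^{m-|\alpha|}_{1,0}$ and a remainder symbol $r(x,y,\xi)$ of class $C^{\tau-l}S^{m-l}_{1,0}$ vanishing on the diagonal. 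Continuity of each $\op(p_\alpha)$ on $H^s_q$ in the stated range is the known nonsmooth-symbol boundedness (as in \cite{A05} and the paraproduct approach of Marschall/Taylor cited in the introduction). Continuity of the remainder $\op(r(x,y,\xi))$ on $H^{(s+m-l)_+}_q\to H^s_q$ follows by applying the same boundedness theorem at regularity level $\tau-l$ after absorbing the diagonal vanishing into the effective gain.

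For part $2^\circ$, write $\underline Pu(x) = (Pu\circ F)(x)$ as a double integral and apply the Kuranishi identity $F(x)-F(y)=M(x,y)(x-y)$ with $M(x,y):=\int_0^1 DF(y+t(x-y))\,dt\in C^\tau$; by continuity of $M$ and $M(x,x)=DF(x)$, the matrix $M$ is invertible on a neighborhood of the diagonal. Cutting off with $\varphi\in C_c^\infty$ supported where $M^{-1}$ is bounded and substituting $\eta=M(x,y)^{-T}\xi$ converts the oscillatory integral into $\op(q(x,y,\xi))$ with
\[
q(x,y,\xi) = \varphi(x-y)\,p\bigl(F(x), M(x,y)^{-T}\xi\bigr)\,|\det M(x,y)|^{-1},
\]
which lies in $C^\tau S^m_{1,0}(\R^{2n}\times\Rn)$. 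The remainder $R_1$, coming from $1-\varphi(x-y)$, has Schwartz kernel supported away from the diagonal of the bi-Lipschitz image, hence is a smoothing operator whose target regularity is limited only by the pullback $F^*$ and the available symbol smoothness, giving $R_1\colon L_q\to H^s_q$ for $s<\min\{\tau,\tau-m\}$. Then part $1^\circ$ applies to $\op(q(x,y,\xi))$ to produce the desired $x$-form expansion.

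The main obstacle is the interplay between the nonsmooth $y$-dependence and the oscillatory integral: the classical $(1-\Delta_y)^N$ trick is unavailable, and one must either truncate at $N<\tau/2$ or interpret $y$-derivatives via Bessel potentials, and then ensure that all subsequent steps, most importantly the integration by parts that converts $(y-x)^\alpha$ into $D_\xi^\alpha$ and the estimates of the remainder, remain rigorous inside the regularized integral. A secondary subtlety is that in the Kuranishi step the matrix $M(x,y)$ is only $C^\tau$; no further symbol derivatives in $(x,y)$ are available than the paper's nonsmooth $\psi$do calculus allows, so the expansion depth $l$ in~\eqref{eq:1.12} must be matched carefully with $\tau$ and with the continuity theorems for $C^{\tau-l}$-symbols.
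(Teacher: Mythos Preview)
Your oscillatory-integral definition has a genuine gap. To make the $\xi$-integral absolutely convergent you need $m-2N+n<0$, i.e.\ $2N>m+n$; combined with the restriction $2N<\tau$ (whether via integer powers of $1-\Delta_y$ or via $\ang{D_y}^{2N}$), this forces $\tau>m+n$. The hypothesis is only $\tau>m$, so for typical $n$ your regularization simply does not close. The paper avoids $y$-integration by parts entirely: it \emph{first} Taylor-expands $a(x,y,\xi)$ in $y$ around $y=x$ up to order $l=\max\{[m],0\}$. The resulting leading terms are $(y-x)^\alpha\,\partial_y^\alpha a(x,x,\xi)$, which are \emph{polynomial} in $y$ and hence fall under the classical oscillatory-integral theory. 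The remainder $(y-x)^\alpha r_\alpha(x,y,\xi)$ is handled by a dyadic decomposition in $\xi$ and direct kernel estimates: the normalization $r_\alpha(x,x,\xi)=0$ provides an extra factor $|x-y|^{\min(\tau-l,1)}$, and one shows the associated kernel $k(x,y,x-y)$ is dominated by an $L_1$ function, so the $\eps\to 0$ limit exists by dominated convergence (Theorem~\ref{thm:OscIntegrals}). The subsequent conversion of $(y-x)^\alpha$ into $D_\xi^\alpha$ (Lemma~\ref{lem:PartIntOscInt}) is then justified \emph{within} this kernel framework, not by a formal integration by parts under a divergent integral.

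A second point: the mapping property $\op(a(x,y,\xi))\colon H^{s+m}_q\to H^s_q$ and the remainder bound $\op(r)\colon H^{(s+m-l)_+}_q\to H^s_q$ are not consequences of ``the known nonsmooth-symbol boundedness'' for $x$-form operators. The paper proves a separate $(x,y)$-form boundedness result (Theorem~\ref{thm:Bddness}) by a case-by-case argument (splitting on the sign and size of $m$ and $s$, invoking \cite[Propositions~9.5 and 9.8]{ToolsForPDE}, commuting with $\partial_x^\alpha$, and duality/interpolation); the remainder estimate (Corollary~\ref{cor:Bddnessr}) then follows from that theorem applied at level $(\tau-l,m-l)$, \emph{not} from the diagonal vanishing of $r$. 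Your phrase ``absorbing the diagonal vanishing into the effective gain'' does not correspond to what is actually needed here. Your outline for part $2^\circ$ (Kuranishi substitution, near-diagonal cutoff, off-diagonal remainder via an $(x,y)$-symbol vanishing to infinite order on the diagonal) matches the paper's Theorems~\ref{thm:CoordTrafo1}--\ref{thm:CoordTrafo2}, with the off-diagonal piece controlled by Theorem~\ref{thm:Bddness2}.
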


We address the problem on subsets of $\R^n$, because that is what is asked for in the probabilistic and financial applications. Based on the work of Jim\'enez and the first author~\cite{AJ18} for nonsmooth operators on smooth manifolds, the various results are likely to carry over without trouble to suitable nonsmooth domains in smooth manifolds. The case where the basis manifold itself is nonsmooth would demand a larger effort because of various conditions between the regularities of the manifolds, the symbols and the order of the function spaces.

 The structure of the manuscript is as follows: In
Section~\ref{subsec:prelim} we summarize necessary preliminaries on
function spaces and (nonsmooth) pseudodifferential operators. The
first main results are established in
Section~\ref{sec:CoordinateChange}, where pseudodifferential operators
with non\-smooth symbols in $(x,y)$-form, oscillatory integrals for
them, and their reduction to $x$-form operators, are discussed. These
results are applied to treat nonsmooth coordinate changes for
nonsmooth pseudodifferential operators, and Theorem~\ref{thm:main2} is
proved. In Section~\ref{sec:FurtherPrelim}, the description of
$\mu$-transmission spaces is generalized from the smooth case to nonsmooth domains. In
Section~\ref{sec:TransmissionCond}, we introduce general versions of a
$\mu$-transmission condition for nonsmooth pseudodifferential
operators, and show their mapping properties in relation to the
$\mu$-transmission spaces, 
as a key to the regularity results.  Finally, in
Section~\ref{sec:Regularity} the results of the preceding sections are
applied to show regularity results for the homogeneous Dirichlet
problem for even, strongly elliptic pseudodifferential operators on
nonsmooth domains, proving Theorem \ref{thm:main}.

\section{Preliminaries}\label{subsec:prelim}

\subsection{Function spaces}

Recall that the standard Sobolev spaces $W_q^s({\mathbb R}^n)$, $1<q<\infty $ and
$s\ge 0$, have a different character according to whether $s$ is
integer or not. Namely, for $s$ integer, they consist of
$L_q$-functions with derivatives in $L_q$ up to order $s$, hence
coincide with the Bessel-potential spaces $H^s_q({\mathbb R}^n)$, defined
for $s\in{\mathbb R}$ by 
\begin{equation}
  \label{eq:2.1}
H_q^s(\R^n)=\{u\in \SD'({\mathbb R}^n)\mid \F^{-1}(\ang{\xi }^s\hat u)\in
L_q(\R^n)\}.
\end{equation}
Here $\mathcal F$ is the Fourier transform  $\hat
u(\xi )=\mathcal F
u(\xi )= \int_{{\mathbb R}^n}e^{-ix\cdot \xi }u(x)\, dx$, and the function $\ang\xi $ equals $(|\xi |^2+1)^{\frac12}$.
For noninteger $s$, the $W^s_q$-spaces coincide with the Besov
spaces $B^s_q(\rn)= B^s_{q,q}(\rn)$, defined e.g.\ 
as follows: For $0<s<2$ and measurable $f\colon \R^n\to \C$,
\begin{equation}
  \label{eq:2.2}
f\in
B^s_q({\mathbb R}^{n})\iff \|f\|_{L_q}^q+ \int_{\R^{2n}}\frac{|f(x)+f(y)-2f((x+y)/2)|^q}{|x-y|^{n+qs}}\,dxdy<\infty ;
\end{equation}
and $B^{s+t}_q(\R^n)=(1-\Delta )^{-t/2}B^s_q({\mathbb R}^n)$ for all $t\in{\mathbb R}$.
The Bessel-potential spaces are important because they are most
directly related to $L_q(\R^n)$; the Besov spaces have other
convenient properties, and are
needed for boundary value problems in an $H^s_q$-context,
 because they are the correct range spaces for
trace maps $\gamma _ju=(\partial_n^ju)|_{x_n=0}$:
\begin{equation}
  \label{eq:2.3}
\gamma _j\colon \ol H^s_q(\rnp), \ol B^s_q(\rnp) \to
B_q^{s-j-\frac1q}({\mathbb R}^{n-1}), \text{ for }s-j-\tfrac1q >0,
\end{equation}
surjectively and with a continuous right inverse; see e.g.\ the overview in
the introduction to \cite{G90}. For $q=2$, the two scales are
identical, but for $q\ne 2$ they are related by strict inclusions: $
H^s_q\subset B^s_q\text{ when }q>2$, $H^s_q\supset B^s_q\text{ when
}q<2$. When $q=2$, the index $q$ is usually omitted. We will always use $B^s_q$ as abbreviation of $B^s_{q,q}$.

We shall also use the spaces $C^k(\rn)\equiv C^k_b(\rn)$ of $k$-times differentiable
functions with uniform norms $\|u\|_{C^k}=\sup_{|\alpha |\le k,x\in\rn}|D^\alpha
u(x)|$ ($k\in{\mathbb N}_0$), and the H\"older
spaces $C^\tau   (\rn)$, $\tau  =k+\sigma $ with $k\in{\mathbb N}_0$,
$0<\sigma <1$, also denoted $C^{k,\sigma } (\rn)$, with norms
$\|u\|_{C^\tau  }=\|u\|_{C^k}+\sup_{|\alpha |= k,x\ne y}|D^\alpha
u(x)-D^\alpha u(y)|/|x-y|^\sigma $. The latter definition extends to
Lipschitz spaces $C^{k,1} (\rn)$. There are similar spaces over subsets
of $\rn$. Finally, we denote $C^\infty_b(\Rn)= \bigcap_{k\in\N} C^k_b(\Rn)$.

The halfspaces $\rnpm$ are defined by
 $\rnpm=\{x\in
{\mathbb R}^n\mid x_n\gtrless 0\}$, with points denoted  $x=(x',x_n)$,
$x'=(x_1,\dots, x_{n-1})$. When $\gamma\in C^{1+\tau}
(\R^{n-1})$ for some $\tau >0$, we define the curved halfspace
$\rn_\gamma $ by $\Rn_\gamma= \{x\in\R^n\mid x_n>\gamma(x')\}$.

Also bounded $C^{1+\tau }$-domains $\Omega $ will be considered. By this we mean
that $\Omega \subset\rn$ is open and bounded, and every boundary point
$x_0$ has an open neighborhood $U$ such that, after a translation of
$x_0$ to $0$ and a suitable rotation, $U\cap \Omega =U\cap \rn_\gamma
$ for a function $\gamma \in C^{1+\tau }(\R^{n-1})$ with $\gamma
(0)=0$. (In some texts, a hypothesis on connectedness of $\Omega $ is
included here, but we do not need this.) 

Restriction from $\R^n$ to $\rnpm$ (or from
${\mathbb R}^n$ to $\Omega $ resp.\ $\complement\comega= \Rn \setminus \comega$) is denoted $r^\pm$,
 extension by zero from $\rnpm$ to $\R^n$ (or from $\Omega $ resp.\
 $\complement\comega$ to ${\mathbb R}^n$) is denoted $e^\pm$. (The
 notation is also used for $\Omega =\rn_\gamma $). Restriction
 from $\crnp$ or $\comega$ to $\partial\rnp$ resp.\ $\partial\Omega $
 is denoted $\gamma _0$.

By $d(x)$ we denote (as in \cite[Definition~2.1]{G15} for the $C^\infty
$-case) a function  that is $C^{1+\tau } $ on $\comega$,
positive on $\Omega $ and vanishes only to the first order on
$\partial\Omega $ (i.e., $d(x)=0$ and $\nabla d(x)\neq 0$ for $x\in
\partial\Omega$).  On bounded sets it satisfies near $\partial\Omega $:
\begin{equation}\label{eq:2.3a}
  C^{-1}d_0(x)\le d(x)\le Cd_0(x)
\end{equation}
with $C>0$, where $d_0(x)$ equals
$\dist(x,\partial\Omega )$ on a  neighborhood of
$\partial\Omega $ and is extended as a correspondingly smooth positive function on
$\Omega $.

When $\tau \ge 1$,
$d_0$ itself can be taken $C^{1+\tau}$. This holds since there is then a tubular
neighborhood of $\partial\Omega $ where $d_0(x)$ plays the role of a
normal coordinate; its gradient equals the interior unit normal vector
$\nu (x)$ (cf.\ e.g.\ Pr\"uss and Simonett \cite[p.\
65-66]{pruessbuch}). Since $\nu $ is $C^\tau $, $d_0$ is $C^{1+\tau }$. 
 Then moreover, 
 $d/d_0$ is a positive $C^{\tau }$-function on $\comega$.
   
We take $d_0(x)=x_n$ in the
case of $\rnp$. For $\rn_\gamma $, the function  $d(x)=x_n-\gamma (x')$ satisfies \eqref{eq:2.3a} near
$\partial\rn_\gamma $, and does so globally on $\rn_\gamma $ if we choose
the extension of $d_0(x)$ further away from $\partial\rn_\gamma $ to equal
$x_n+C_1$, where $C_1>\sup_{x'}|\gamma (x')|$. Then when  $\tau \ge
1$,  $d/d_0$ is a positive function in $C^{\tau }(\ol\R^n_\gamma $).

Along with the spaces $H^s_q({\mathbb R}^n)$ defined in \eqref{eq:2.1}, there are
the two scales of spaces associated with $\Omega $ for $s\in{\mathbb R}$:
\begin{equation}
  \label{eq:2.4}
  \begin{split}
\ol H_q^{s}(\Omega)&=\{u\in \D'(\Omega )\mid u=r^+U \text{ for some }U\in
H_q^{s}(\R^n)\}, \text{ the {\it restricted} space},\\
\dot H_q^{s}(\comega)&=\{u\in H_q^{s}({\mathbb R}^n)\mid \supp u\subset
\comega \},\text{ the {\it supported} space;}
  \end{split}
\end{equation}
here $\operatorname{supp}u$ denotes the support of $u$. 
 $\ol H_q^s(\Omega )$ is in other texts often denoted  $H_q^s(\Omega )$  or
$H_q^s(\comega )$, and $\dot H_q^{s}(\comega)$ may be indicated with a
ring, zero or twiddle;
the current notation stems from H\"ormander \cite[Appendix B2]{H85}.
There are similar spaces with $B^s_q$.

Besides for the $H^s_q$ and $B^s_q$-spaces, there are in \cite{G14} for
$C^\infty $-domains 
established the relevant results in many other scales of spaces,
namely Besov spaces $B^s_{p,q}$ for $1\le p,q \le \infty $
and Triebel-Lizorkin spaces $F^s_{p,q}$ (for the same $p,q$ but with $p<\infty $). We shall
not pursue this in the present work, except that we want to refer to
the H\"older-Zygmund scale $B^s_{\infty ,\infty }$, also denoted
$C^s_*$. Here $C^s_*$ identifies with the
H\"older space $C^s$ when $s\in
\rp\setminus {\mathbb N}$, and for positive integer $k$ satisfies
$ C^{k-\varepsilon }\supset C^k_*\supset 
C^{k-1,1}\supset  C^k_b$ for small $\varepsilon >0$; moreover,
$C^0_*\supset L_\infty \supset C^0_b$ (with strict inclusions everywhere). Similarly to \eqref{eq:2.4} we denote the
spaces of restricted, resp.\ supported distributions
\begin{align*}
\ol C_*^{s}(\Omega)&=\{u\in \D'(\Omega )\mid u=r^+U \text{ for some }U\in
C_*^{s}(\R^n)\},\\
\dot C_*^{s}(\comega)&=\{u\in C_*^{s}({\mathbb R}^n)\mid \supp u\subset
\comega \};
\end{align*}
 the star can be omitted when $s\in \rp\setminus \N$.

\subsection{Pseudodifferential operators}\label{subsec:PsDOs}

A {\it pseudodifferential operator} ($\psi $do) $P$ on ${\mathbb R}^n$ is
defined from a symbol $p(x,\xi )$ on ${\mathbb R}^n\times{\mathbb R}^n$ by 
\begin{equation}
  \label{eq:2.5}
Pu=\op (p(x,\xi ))u 
=\int_{\Rn} e^{ix\cdot\xi
}p(x,\xi )\hat u(\xi)\, \dd\xi =\mathcal F^{-1}_{\xi \to x}(p(x,\xi )\F u(\xi
)),
\end{equation}
using the Fourier transform $\F$ and the notation $\dd\xi= (2\pi)^{-n}\, d\xi$. 
We refer to
textbooks such as Kumano-go~\cite{KumanoGo}, H\"ormander \cite{H85}, Taylor \cite{T81},  Grubb
\cite{G09}, Abels \cite{PsDOBuch} for the rules of
calculus, in particular the definition by oscillatory integrals in
\cite{H85}, \cite{PsDOBuch}. For precision, the notation $\Os\int$ is often
used when an integral is
understood as an oscillatory integral.

The symbols
$p$ of order $m\in{\mathbb R}$ were originally taken to lie in the symbol
space $S^m_{1,0}({\mathbb R}^n\times{\mathbb R}^n)$, consisting of complex
$C^\infty $-functions $p(x,\xi )$
such that $\partial_x^\beta \partial_\xi ^\alpha p(x,\xi
)$ is $O(\ang\xi ^{m-|\alpha |})$ for all $\alpha ,\beta $, for some
$m\in{\mathbb R}$, with global estimates for $x\in{\mathbb R}^n$ (as in
\cite[start of Section~18.1]{H85} and \cite{G96}).
$P$ (of order $m$)
then  maps $H^s_q({\mathbb R}^n)$ continuously into
$H^{s-m}_q ({\mathbb R}^n)$ for all $s\in{\mathbb R}$, cf.\ \eqref{eq:2.1}.
$P$ is said to be {\it classical} when 
$p$  
has an asymptotic expansion $p(x,\xi )\sim \sum_{j\in{\mathbb
N}_0}p_j(x,\xi )$ with $p_j$ homogeneous in $\xi $ of degree $m-j$ for
all $|\xi |\ge 1$ and $j\in\N_0$, such that
\begin{equation}
  \label{eq:2.6}
\partial_x^\beta \partial_\xi ^\alpha \bigl(p(x,\xi )-
{\sum}_{j<J}p_j(x,\xi )\bigr) \text{ is }O(\ang\xi ^{m-\alpha -J})\text{ for
all }\alpha ,\beta \in{\mathbb N}_0^n, J\in{\mathbb N}_0.  
\end{equation}
 For a 
complete theory one adds to these operators 
the smoothing operators (mapping any  $H^s_q({\mathbb R}^n)$ into
$\bigcap_tH^t_q ({\mathbb R}^n)$), regarded as operators of order
$-\infty $. (For example, $(-\Delta )^a$ fits into the calculus when
it is 
written as $\op((1-\zeta (\xi ))|\xi |^{2a})+\op(\zeta (\xi
)|\xi |^{2a})$, where $\zeta (\xi )$ is a $C^\infty $-function that
equals $1$ for $|\xi |\le \frac12$ and 0 for $|\xi |\ge 1$; the 
second term is smoothing.)

In the present study we moreover consider symbols with limited
smoothness in $x$. For later purposes we here replace $x\in \rn$ by
$X\in {\mathbb R}^{n'}$, where $n'$ will usually be $n$ or $2n$.

The space $C^\tau S^m_{1,0}(\R^{n'}\times \Rn)$ for $\tau>0$, $m\in\R$, $n',n\in\N$ consists of functions
$p \colon  \R^{n'}\times \R^n\to \C$ that are continuous w.r.t.\
$(X,\xi)\in  \R^{n'} \times \R^n$ and smooth with respect to $\xi\in
\R^n$, such that for every $\alpha\in\N_0^n$  we have:
$\partial_\xi^\alpha p(X,\xi)$ is in $C^\tau (\R^{n'})$ with respect to
$X$ and satisfies for all $\xi\in\R^n$, $\alpha \in {\mathbb N}_0^n$, 
\begin{equation}
  \label{eq:2.6'}
  \|\partial_\xi^\alpha p(\cdot,\xi)\|_{C^\tau  (\R^{n'})}\leq C_\alpha
  \ang{\xi}^{m-|\alpha|} ,
\end{equation}
with $C_\alpha>0$. We equip the symbol space with the semi-norms
\begin{equation}\label{eq:seminorm}
  |p|_{k,C^\tau S^m_{1,0}(\R^{n'}\times \R^n)}:= \max_{|\alpha|\leq k} \sup_{\xi\in\R^n} \ang{\xi}^{-m+|\alpha|}\|\partial_\xi^\alpha p(\cdot,\xi)\|_{C^\tau  (\R^{{n'}})}\quad \text{for }k\in\N_0.
\end{equation}

The following theorem is well-known:

\begin{theorem}\label{thm:bd-compos}
  Let $\tau>0$, $1<q<\infty$, $m\in\R$ and $p\in C^\tau S^m_{1,0}(\R^n\times \R^n)$. Then $\op(p)$ extends to a bounded linear operator
  \begin{equation*}
    \op(p)\colon H^{s+m}_q (\R^n)\to H^s_q(\R^n)\quad \text{for all }|s|<\tau.
  \end{equation*}
  Moreover, for every $s\in (-\tau,\tau)$ there is some $k\in\N$ and $C>0$ such that
  \begin{equation*}
    \|\op(p)\|_{\mathcal{L}(H^{s+m}_q (\R^n), H^s_q(\R^n))}\leq C|p|_{k,C^\tau S^m_{1,0}(\R^n\times \R^n)}\quad \text{for all }p\in  S^m_{1,0}(\R^n\times \R^n).
  \end{equation*}
\end{theorem}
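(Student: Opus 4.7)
The plan is to use the standard symbol-smoothing technique of Kumano-go--Nagase and Taylor to split $p$ into a ``smooth'' piece lying in a H\"ormander class $S^m_{1,\delta}$ and a ``rough'' remainder of strictly lower order $m-\tau\delta$, and then treat each piece by known continuity results. Fix $\delta\in(0,1)$ and let $\varphi\in C^\infty_c(\R^n)$ be nonnegative with $\int\varphi=1$; define
\[
p^{\sharp}(x,\xi)=\int_{\R^n}\ang{\xi}^{n\delta}\varphi\bigl(\ang{\xi}^{\delta}(x-y)\bigr)\,p(y,\xi)\,dy,\qquad p^{\flat}=p-p^{\sharp}.
\]
A direct calculation, using $\|\partial_\xi^\alpha p(\cdot,\xi)\|_{C^\tau}\lesssim \ang{\xi}^{m-|\alpha|}$ from the symbol class definition, shows $p^{\sharp}\in S^m_{1,\delta}(\R^n\times\R^n)$ (now smooth in $x$) and $p^{\flat}\in C^\tau S^{m-\tau\delta}_{1,\delta}(\R^n\times\R^n)$, with the relevant seminorms controlled by finitely many seminorms \eqref{eq:seminorm} of $p$.

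First I would handle the smooth piece. Since $\delta<1$, the Calder\'on--Vaillancourt/H\"ormander theorem for $S^m_{1,\delta}$-symbols (see e.g.\ \cite{H85} Ch.~18 or \cite{PsDOBuch}) gives continuity $\op(p^{\sharp})\colon H^{s+m}_q(\R^n)\to H^{s}_q(\R^n)$ for \emph{every} $s\in\R$, with operator norm bounded by a finite symbol seminorm of $p^{\sharp}$, hence by $|p|_{k,C^\tau S^m_{1,0}(\R^n\times\R^n)}$ for some $k$.

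Next comes the rough piece. Here one either performs a dyadic Littlewood--Paley decomposition $p^{\flat}=\sum_{j\ge 0} p^{\flat}_j$ with $p^{\flat}_j$ frequency-localized to $|\xi|\sim 2^j$, estimates each $\op(p^{\flat}_j)$ by its kernel (which contributes a factor $2^{j(m-\tau\delta)}$ multiplied by a $C^\tau$ constant in $x$) and reassembles the sum via the Littlewood--Paley characterization of $H^s_q(\R^n)$; or one quotes directly the continuity theorem for nonsmooth $\psi$do's of type $C^\tau S^\mu_{1,\delta}$ as in Marschall~\cite{Marschall87} or~\cite{PsDOBuch}. Either way the summation in $j$ converges provided $\tau\delta>|s|$, which is arranged by choosing $\delta$ close enough to $1$ whenever $|s|<\tau$.

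The step I expect to be the main obstacle is the rough-part estimate for \emph{negative} $s$, where one effectively needs multiplication by a $C^\tau$-function to be continuous on $H^s_q(\R^n)$; this is precisely where the hypothesis $|s|<\tau$ becomes indispensable, and it breaks down outside that range. A clean way to handle $s<0$ is by duality: the formal adjoint of $\op(p)$ has an $(x,y)$-symbol $\overline{p(y,\xi)}\in C^\tau S^m_{1,0}(\R^{2n}\times\R^n)$, and the positive-$s$ case applied to this adjoint, combined with the pairing $(H^s_q)'=H^{-s}_{q'}$, covers the range $-\tau<s<0$. The quantitative seminorm bound in the second display of the theorem is then obtained by tracking constants through the decomposition, which is a routine matter.
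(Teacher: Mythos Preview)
Your proposal is correct and outlines the standard symbol-smoothing approach that underlies results of this type. However, the paper's own ``proof'' of this theorem is simply a citation: the mapping property is quoted from Marschall \cite[Theorem~2.7]{Marschall87}, and the quantitative seminorm estimate is deduced abstractly from the closed graph theorem (or Hahn--Banach), as in \cite[Theorem~3.7]{AbelsPfeufferSpectralInvariance}. So you are in effect reconstructing the content behind the reference rather than taking a parallel route.

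Two minor points of comparison. First, your duality step for $s<0$ passes through the adjoint, which has a $y$-form symbol $\overline{p(y,\xi)}$; this is fine, but note that at this point you need a boundedness statement for $y$-form (or $(x,y)$-form) symbols, not the $x$-form statement you have just established --- the symbol-smoothing argument does adapt straightforwardly to $y$-form, but it is worth saying so explicitly to avoid any appearance of circularity. Second, for the operator-norm bound the paper avoids tracking constants altogether: once linearity and boundedness of $p\mapsto\op(p)$ are known, the closed graph theorem on the Fr\'echet space $C^\tau S^m_{1,0}$ gives the seminorm estimate for free. Your constant-tracking works too, but the abstract argument is cleaner and is reused verbatim later in the paper (e.g.\ in Theorem~\ref{thm:BddnessTruncatedPsDOs}).
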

The mapping properties follow from \cite[Theorem~2.7]{Marschall87}.
 The boundedness of the operator norm by a symbol semi-norm is a consequence of the closed graph theorem or the Hahn-Banach theorem, cf.\ e.g.\ \cite[Theorem~3.7]{AbelsPfeufferSpectralInvariance}.

The subspace of classical symbols $C^\tau
S^m(\R^{n'}\times \Rn)$ consists of those functions that moreover have
expansions into terms  $p_j$ homogeneous in $\xi $ of degree $m-j$ for
$|\xi |\ge 1$, all $j$, such that for all $\xi\in\R^n$, $\alpha
\in{\mathbb N}_0^n$, $ J\in{\mathbb N}_0$,
\begin{equation}
  \label{eq:2.7}
\|\partial_\xi ^\alpha \bigl(p(\cdot,\xi )-
{\sum}_{j<J}p_j(\cdot,\xi )\bigr)\|_{C^\tau  (\R^{n'})}\leq C_{\alpha
,J}\ang{\xi}^{m-J-|\alpha|}.
\end{equation}

A classical symbol $p(x,\xi )$ (and the associated operator $P$) is said to be   {\it strongly elliptic}
when $\operatorname{Re}p_0(x,\xi )\ge c|\xi |^m $ for $|\xi |\ge 1$,
with $c>0$.
Moreover, a classical $\psi $do $P=\op(p(x,\xi ))$ of
order $m\in\R$ is said
to be 
{\it even}, when the terms in the symbol expansion $p\sim\sum_{j\in\N_0}p_j$ satisfy
\begin{equation}\label{eq:even}
p_j(x,-\xi )=(-1)^jp_j(x,\xi )\quad \text{ for all }x\in \rn,|\xi |\ge 1, j\in \mathbb N_0.
\end{equation}
(The word ``even'' is short for {\it even-to-even
  parity}, meaning that  the
terms with even $j$ are even in $\xi $,  the
terms with odd $j$ are odd in $\xi $.)
Similarly, $p$ is \emph{odd} (short for odd-to-even parity) if
$p\sim\sum_{j\in\N_0}p_j$, where
\begin{equation*}
p_j(x,-\xi )=(-1)^{j+1}p_j(x,\xi )\quad \text{ for all }x\in \rn,|\xi |\ge 1, j\in \mathbb N_0.
\end{equation*}
Note that when $p$ is even of order $m$, $p-p_0$ is odd of order $m-1$.

\medskip
In part of the present paper, we consider symbols that are moreover
assumed  to satisfy a  $\mu $-transmission
condition, as introduced in the smooth case by H\"ormander
\cite{H66,H85} and Grubb \cite{G15}, see Section \ref{subsec:muTransmissionSpaces}. 
To handle operators with such properties, we must introduce
 {\it order-reducing
operators}. There is a simple definition of operators $\Xi _\pm^t $ on
${\mathbb R}^n$, $t\in{\mathbb R}$,
\begin{equation}
  \label{eq:2.8}
\Xi _\pm^t =\operatorname{OP}(\chi _\pm^t),\quad \chi _\pm^t(\xi )=(\ang{\xi '}\pm i\xi _n)^t ;  
\end{equation}
 they preserve support
in $\crnpm$, respectively, because the symbols extend as holomorphic
functions of $\xi _n$ into ${\mathbb C}_\mp$, respectively; ${\mathbb
C}_\pm=\{z\in{\mathbb C}: \operatorname{Im}z\gtrless 0\}$. (The functions
$(\ang{\xi '}\pm i\xi _n)^t $  satisfy only part of the estimates
\eqref{eq:2.6} with $m=t$, but the $\psi $do definition can be applied anyway.)
 There is a more refined choice $\Lambda _\pm^t $, cf.\
\cite{G90, G15}, with
symbols $\lambda _\pm^t (\xi )$ that do
satisfy all the required estimates, and where $\overline{\lambda _+^t }=\lambda _-^{t }$.
These symbols likewise have holomorphic extensions in $\xi _n$ to the complex
halfspaces ${\mathbb C}_{\mp}$, so that  
the operators preserve
support in $\crnpm$, respectively. Operators with that property are
called ``plus'' resp.\ ``minus'' operators.

There is also a pseudodifferential definition $\Lambda
_\pm^{(t )}$ adapted to the situation of a smooth domain $\Omega
$, by \cite{G90,G15}.

It follows from the Lizorkin multiplier theorem and the definition of the spaces $H_q^s(\R^n)$
in terms of Fourier transformation that the operators define homeomorphisms 
$
\Xi^t _\pm\colon H_q^s(\R^n) \simto H_q^{s- t
}(\R^n)$, for all $s\in \R$. 
The special
interest is that the ``plus''/``minus'' operators also 
 define
homeomorphisms related to $\crnp$ and $\comega$, for all $s\in{\mathbb R}$: 
$$
\aligned
\Xi ^{t }_+\colon \dot H_q^s(\crnp )&\simto
\dot H_q^{s- t }(\crnp),\quad
r^+\Xi ^{t }_{-}e^+\colon \ol H_q^s(\rnp )\simto
\ol H_q^{s- t } (\rnp ),\\
\Lambda  ^{(t) }_+\colon \dot H_q^s(\comega )&\simto
\dot H_q^{s- t }(\comega),\quad
r^+\Lambda  ^{(t) }_{-}e^+\colon \ol H_q^s(\Omega  )\simto
\ol H_q^{s- t } (\Omega  ),
\endaligned
$$
with similar rules for $\Lambda ^t_\pm$.
Moreover, the operators $\Xi ^t _{+}$ and $r^+\Xi ^{t }_{-}e^+$ identify with each other's adjoints
over $\crnp$, because of the support preserving properties.
There is a
similar statement for $\Lambda ^t_+$ and  $r^+\Lambda ^t_-e^+$, and for  $\Lambda ^{(t )}_+$ and $r^+\Lambda ^{(
t )}_{-}e^+$ relative to the set $\Omega $.

\section{Nonsmooth Coordinate Changes}
\label{sec:CoordinateChange}

Since some of the proofs of the results in this chapter are quite technical,
we have moved them to the Appendix, in order to keep the flow of the
presentation leading to results on fractional-order boundary problems. 

Basic rules
of calculus for pseudodifferential operators with nonsmooth symbols
were set up by Kumano-go and Nagase \cite{KN78}, Beals and Reed \cite{BealsReed},  Marschall
\cite{Marschall87,Marschall88}, Witt \cite{Witt98},
however leaving out the question of nonsmooth coordinate changes. To our
knowledge, this question has been open since then, and it is this that
we want to work out now. A basic step is the reduction of operators
defined by symbols in $(x,y)$-form to symbols in $x$-form; in
particular to handle the remainders arising from this. The definition
of operators from symbols $a(x,y,\xi )$ (also called amplitude
functions) follows  from known
facts when $a$ is $C^\tau $ in $x$ and $C^\infty $ in $y$, see e.g.\  [AP18],
but a definition when $a$ is merely $C^\tau $ in $y$ has not to our
knowledge been
established in detail; this is the first thing we undertake here.  

Symbol classes and their seminorms were defined in the preliminaries section. 
In the following let $\tau>0$, $m\in\R$, $a\in C^\tau
S^m_{1,0}(\R^{2n}\times \Rn)$.

The first task is to
show the existence of oscillatory integrals for nonsmooth 
$(x,y)$-form symbols, and to find derived $x$-form operators.

Assuming $\tau >m$, we have to give a meaning to, 
\begin{alignat*}{1}
  \op(a)u(x)&=  \Os\int_{\R^{2n}} e^{i(x-y)\cdot \xi} a(x,y,\xi) u(y)\sd y\dd\xi\\
  &:= \lim_{\eps\to 0}\int_{\R^{2n}}\chi(\eps y,\eps \xi) e^{i(x-y)\cdot \xi} a(x,y,\xi) u(y)\sd y\dd\xi
\end{alignat*}
for every $u\in \SD(\Rn)$, $x\in\R^n$, where $\chi\in \SD(\R^{2n})$
with $\chi(0,0)=1$.
To this end (and for other purposes later on) we shall use a Taylor expansion
\begin{equation}\label{eq:Taylor}
    a(x,y,\xi)= \sum_{|\alpha|\leq l} \frac1{\alpha!}\partial_y^\alpha a(x,y,\xi)|_{y=x} (y-x)^\alpha+\sum_{|\alpha|=l}(y-x)^\alpha r_\alpha(x,y,\xi),
  \end{equation}
where $l\in\N_0$ with $l<\tau$ and 
  \begin{equation}\label{eq:Taylor1}
    r_\alpha(x,y,\xi) = \frac{|\alpha|}{\alpha!}\int_0^1(1-t)^{|\alpha|-1} (\partial_y^\alpha a)(x,(1-t)x+ty,\xi)\sd t -\frac1{\alpha!} \partial_y^\alpha a(x,y,\xi)|_{y=x}.
  \end{equation}
  We have here subtracted the $\alpha $'th precise term from the usual
  remainder term, to achieve that
  \begin{equation*}
    r_\alpha(x,x,\xi) = 0\quad \text{for all }x,\xi\in\R^n. 
  \end{equation*} 
Note that  $r_\alpha\in C^{\tau-|\alpha|}S^m_{1,0}(\R^{2n}\times \Rn)$. Hence for every $\beta\in \N_0^n$
  \begin{equation}\label{eq:ralphaEstim}
    |\partial_\xi^\beta r_\alpha(x,y,\xi)|\leq C_\beta|x-y|^{\min\{\tau-|\alpha|,1\}}\weight{\xi}^{m-|\beta|}\quad \text{for all }x,y,\xi\in\Rn.
  \end{equation}

  We will use a dyadic partition of unity $(\varphi_j)_{j\in\N_0}$ of $\R^n$ in the proof. More precisely, let $\varphi_j\in C_0^\infty(\Rn)$, $j\in\N_0$, be a partition of unity such that
  \begin{equation}\label{eq:dyadic}
  \operatorname{supp} \varphi_j\subset \{\xi\in\Rn\mid 2^{j-1}\leq |\xi|\leq 2^{j+1}\}  
  \end{equation}
  and $\varphi_j(\xi)= \varphi_1(2^{1-j}\xi)$ for all $j\geq 1$, $\xi\in \Rn$.
  
  For later purposes we show existence of the oscillatory integral in
  a more general form. We denote by $[\sigma ]$ the largest integer
  $\le \sigma $.
  
  \begin{thm}\label{thm:OscIntegrals}
    Let $a\in C^\tau S^m_{1,0}(\R^{2n}\times \Rn)$, $\tau >0$, $m\in\R$, $\gamma\in\N_0^n$, and assume that $m<\tau + |\gamma|$. Then 
    for every $x\in \Rn$ and $u\in\SD (\Rn)$ the limit
    \begin{equation*}
    \op((y-x)^\gamma a(x,y,\xi))u(x)
    := \lim_{\eps\to 0} \int_{\R^{2n}}\chi(\eps y,\eps \xi) e^{i(x-y)\cdot \xi} (y-x)^\gamma a(x,y,\xi) u(y)\sd y\dd\xi
  \end{equation*}
  exists and coincides with
  \begin{alignat*}{1}
    \sum_{|\alpha|\leq l}  \frac1{\alpha!}\op(\partial_y^\alpha D_\xi^{\alpha+\gamma} a(x,y,\xi)|_{y=x})u(x)
    + \sum_{|\alpha|=l}\op((y-x)^{\alpha+\gamma} r_\alpha(x,y,\xi))u(x),
  \end{alignat*}
  where $l=\max\{[m-|\gamma|],0\} <\tau$, and
  \begin{equation*}
    \op((y-x)^\alpha r_\alpha(x,y,\xi))u(x) = \int_{\Rn} k_{\alpha,\gamma}(x,y,x-y)u(y)\, dy,
  \end{equation*}
  $|k_{\alpha,\gamma}(x,y,x-y)|\leq g(x-y)$ for some nonnegative $g
  \in L_1(\Rn)$, and 
  \begin{alignat*}{1}
    k_{\alpha,\gamma}(x,y,z)&= {\sum}_{j\in\N_0} k_{\alpha,\gamma,j}(x,y,z),\quad\\
    k_{\alpha,\gamma,j}(x,y,z)&= \int_{\R^n} e^{iz\cdot \xi} (x-y)^{\alpha+\gamma}r_\alpha (x,y,\xi)\varphi_j(\xi)\dd\xi 
  \end{alignat*}
  for all $x,y,z\in\Rn$ with $z\neq 0$.
\end{thm}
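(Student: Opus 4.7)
The plan is to combine the Taylor expansion \eqref{eq:Taylor} with a dyadic partition in $\xi$ to split the oscillatory integral into a sum of $x$-form pseudodifferential operators (handled by the standard nonsmooth theory recalled in Section~\ref{subsec:PsDOs}) plus a kernel operator whose kernel I show to be dominated by an $L_1$ function of $x-y$.

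First I would insert the Taylor expansion of $a(x,y,\xi)$ about $y=x$ into the regularized integral, producing the polynomial-in-$(y-x)$ terms $(y-x)^{\alpha+\gamma}\tfrac{1}{\alpha!}\partial_y^\alpha a(x,y,\xi)|_{y=x}$ for $|\alpha|\le l$ and the remainder terms $(y-x)^{\alpha+\gamma}r_\alpha(x,y,\xi)$ for $|\alpha|=l$. For the polynomial terms the symbols are independent of $y$, so the identity $(x-y)^\beta e^{i(x-y)\cdot\xi}=D_\xi^\beta e^{i(x-y)\cdot\xi}$ allows integration by parts in $\xi$ (inside the regularization) to transfer $D_\xi^{\alpha+\gamma}$ onto the symbol, and the $\eps\to 0$ limit then reproduces $\op(\tfrac{1}{\alpha!}\partial_y^\alpha D_\xi^{\alpha+\gamma}a(x,y,\xi)|_{y=x})u(x)$. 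These are well-defined $x$-form pseudodifferential operators of order $m-|\alpha+\gamma|$ acting on $\SD(\Rn)$, since $\tfrac{1}{\alpha!}\partial_y^\alpha a(x,y,\xi)|_{y=x}\in C^{\tau-|\alpha|}S^{m}_{1,0}(\R^n\times\R^n)$ with $|\alpha|\le l<\tau$, so the $D_\xi^{\alpha+\gamma}$-shifted symbol lies in $C^{\tau-|\alpha|}S^{m-|\alpha+\gamma|}_{1,0}$.

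The substance lies in the $|\alpha|=l$ remainder terms. I would insert the dyadic partition $(\varphi_j)_{j\in\N_0}$ of unity in $\xi$ and use the compact $\xi$-support of each $\varphi_j$ together with Fubini to write the $j$-th dyadic piece of the regularized integral as a kernel operator with kernel $k_{\alpha,\gamma,j}^\eps(x,y,x-y)$ converging pointwise to $k_{\alpha,\gamma,j}(x,y,x-y)$ as $\eps\to 0$ for each $j$. To sum over $j$ and apply dominated convergence in $\eps$, I need a uniform bound on $\bigl|\sum_j k_{\alpha,\gamma,j}^\eps(x,y,x-y)\bigr|$ by an $L_1$ function of $x-y$. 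I would estimate each $k_{\alpha,\gamma,j}$ in two ways: a crude bound from \eqref{eq:ralphaEstim} gives $|k_{\alpha,\gamma,j}(x,y,z)|\lesssim |x-y|^{|\alpha+\gamma|+\min\{\tau-l,1\}}\,2^{j(n+m)}$; and a refined bound from $2N$-fold integration by parts in $\xi$ via $e^{iz\cdot\xi}=(1+|z|^2)^{-N}(1-\Delta_\xi)^N e^{iz\cdot\xi}$ introduces a gain factor $(1+2^j|z|)^{-2N}$ with $N$ arbitrary. Taking the minimum of the two bounds (i.e.\ choosing $N$ according to whether $2^j|z|$ is $\le 1$ or $>1$) and summing over $j$ yields $|k_{\alpha,\gamma}(x,y,z)|\le C|z|^{-n-m+|\alpha+\gamma|+\min\{\tau-l,1\}}$ near $z=0$ and $\le C_N|z|^{-N}$ for $|z|$ large, with $z=x-y$.

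The main obstacle is the exponent count near $z=0$: one must verify $-n-m+|\alpha+\gamma|+\min\{\tau-l,1\}>-n$, i.e.\ $\min\{\tau-l,1\}>m-l-|\gamma|$. In the regime $m\ge|\gamma|$ the choice $l=[m-|\gamma|]$ forces $m-l-|\gamma|\in[0,1)$, and the hypothesis $m<\tau+|\gamma|$ gives $\tau-l>m-l-|\gamma|$; in the regime $m<|\gamma|$ one has $l=0$ and the right-hand side is non-positive, so the inequality is automatic from $\tau>0$. Hence $l=\max\{[m-|\gamma|],0\}$ is precisely the order of Taylor expansion providing just enough H\"older vanishing of $r_\alpha$ to beat the $\xi$-growth of order $m$ and yield an integrable kernel near the diagonal. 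With the dominating function $g\in L_1(\R^n)$ in hand, dominated convergence removes the cutoff $\chi(\eps y,\eps\xi)$, identifies the oscillatory integral with the claimed sum, and gives the kernel representation of the remainder, completing the proof.
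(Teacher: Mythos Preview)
Your proposal is correct and follows essentially the same route as the paper: Taylor-expand $a$ in $y$ about $y=x$ to order $l=\max\{[m-|\gamma|],0\}$, handle the polynomial-in-$(y-x)$ terms via the standard oscillatory integral calculus (the paper simply invokes smoothness of $(\partial_y^\alpha a)(x,x,\xi)(y-x)^\alpha u(y)$ in $(y,\xi)$, while you spell out the integration by parts), and treat the remainder via a dyadic partition in $\xi$ with the H\"older estimate \eqref{eq:ralphaEstim} on $r_\alpha$. Your exponent count $\min\{\tau-l,1\}>m-l-|\gamma|$ for local integrability of the kernel is exactly the computation the paper performs (it phrases the conclusion as ``in $L_1$ since $\tau+|\gamma|>m$''), and the splitting $2^j\le|z|^{-1}$ versus $2^j>|z|^{-1}$ you describe is the same one the paper cites from \cite[Theorem~5.12]{PsDOBuch}.
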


The proof of Theorem \ref{thm:OscIntegrals} is given in the Appendix.

It will be convenient to observe:

\begin{lem}\label{lem:PartIntOscInt}
  Let $a\in C^\tau S^m_{1,0}(\R^{2n}\times \Rn)$, $\tau >0$, $m\in\R$, $\gamma\in\N_0^n$, and assume that $m<\tau + |\gamma|$. Then 
    for every $x\in \Rn$ and $u\in\SD (\Rn)$ and $\beta \in\N_0^n$ with $\beta\leq \gamma$: 
  \begin{alignat*}{1}
    &\op((y-x)^\gamma a(x,y,\xi))u(x) = \op((y-x)^{\gamma-\beta} D_\xi^\beta a(x,y,\xi))u(x).
  \end{alignat*}
\end{lem}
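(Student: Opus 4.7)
The plan is to prove the identity as an integration by parts in the $\xi$ variable inside the Schwartz regularizations that define both oscillatory integrals. First I would check both sides are well defined by Theorem~\ref{thm:OscIntegrals}: the left side is directly covered by $m<\tau+|\gamma|$, while on the right the symbol $D_\xi^\beta a\in C^\tau S^{m-|\beta|}_{1,0}$ is paired with the polynomial factor $(y-x)^{\gamma-\beta}$, and because $\beta\le\gamma$ componentwise one has $|\gamma-\beta|=|\gamma|-|\beta|$, so the required inequality $m-|\beta|<\tau+|\gamma-\beta|$ is once again just $m<\tau+|\gamma|$.

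Next I would reduce to the case $|\beta|=1$ by induction on $|\beta|$. Granted the one-derivative case for any admissible data, one applies it to the symbol $D_\xi^{\beta'}a\in C^\tau S^{m-|\beta'|}_{1,0}$ with polynomial factor $(y-x)^{\gamma-\beta'}$ and one step $\beta=e_j$ at a time; the invariant $m<\tau+|\gamma|$ is preserved throughout, so each step stays inside the hypothesis of Theorem~\ref{thm:OscIntegrals}.

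For the base case $\beta=e_j$ with $\gamma_j\ge 1$, pick $\chi\in\SD(\R^{2n})$ with $\chi(0,0)=1$ and set
\begin{equation*}
  J(\eps) := \int_{\R^{2n}} \chi(\eps y,\eps\xi)\,(y-x)^\gamma a(x,y,\xi)\,u(y)\,e^{i(x-y)\cdot\xi}\sd y\dd\xi.
\end{equation*}
Using $(y_j-x_j)e^{i(x-y)\cdot\xi}=-D_{\xi_j}e^{i(x-y)\cdot\xi}$, a single integration by parts in $\xi_j$ (valid for fixed $\eps>0$ thanks to the Schwartz decay of $\chi(\eps\cdot,\eps\cdot)$) together with $D_{\xi_j}[\chi(\eps y,\eps\xi)a]=\chi(\eps y,\eps\xi)D_{\xi_j}a+\eps(D_{\xi_j}\chi)(\eps y,\eps\xi)a$ yields $J(\eps)=J_0(\eps)+\eps J_1(\eps)$. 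Here $J_0(\eps)$ is the standard $\chi$-regularization of $\op((y-x)^{\gamma-e_j}D_{\xi_j}a)u(x)$ and converges to it by Theorem~\ref{thm:OscIntegrals}, while $J_1(\eps)$ is the regularization of the (now borderline) symbol $(y-x)^{\gamma-e_j}a$ with regularizer $D_{\xi_j}\chi\in\SD(\R^{2n})$.

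The main obstacle is to show $\eps J_1(\eps)\to 0$, since the pairing of orders $m$ with polynomial degree $|\gamma|-1$ sits precisely on the boundary of applicability of Theorem~\ref{thm:OscIntegrals}. I would handle this by repeating the kernel-splitting at the heart of that theorem's proof: Taylor-expand $a$ in $y$ at $y=x$ up to order $l=\max\{[m-|\gamma|+1],0\}<\tau$, treat each polynomial term as an ordinary $x$-form $\psi$do whose $\chi'$-regularized action on $u\in\SD(\R^n)$ stays bounded as $\eps\to 0$ for any Schwartz $\chi'$, and for the remainder terms (with $r_\alpha\in C^{\tau-l}S^m_{1,0}$, $r_\alpha(x,x,\xi)=0$) apply the dyadic decomposition $(\varphi_j)$ to bound the associated kernels $k_{\eps,j}^{(1)}(x,y,x-y)$ uniformly in $\eps\in(0,1)$ by an $L_1(\R^n)$ function independent of the regularizer. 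This yields $|J_1(\eps)|\le C$ uniformly in $\eps$, so the factor $\eps$ forces $\eps J_1(\eps)\to 0$. Combining with $\lim_{\eps\to 0}J_0(\eps)=\op((y-x)^{\gamma-e_j}D_{\xi_j}a)u(x)$ completes the base case, and the induction concludes the proof.
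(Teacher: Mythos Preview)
Your reduction to $\beta=e_j$ and the integration-by-parts identity $J(\eps)=J_0(\eps)+\eps J_1(\eps)$ are fine, but the treatment of $\eps J_1(\eps)\to 0$ has a real gap. First, your Taylor order $l=\max\{[m-|\gamma|+1],0\}$ need not satisfy $l<\tau$: with $\tau=0.5$, $|\gamma|=1$, $m=1.3$ one gets $l=1>\tau$, so the expansion is not available. Second, even with a valid $l$, the claimed uniform $L_1$ bound on the remainder kernel fails precisely in the borderline regime $m-|\gamma|<\tau\le m-|\gamma|+1$: repeating the estimate from Theorem~\ref{thm:OscIntegrals} with polynomial degree $|\gamma|-1$ gives $|k_\eps(x,y,x-y)|\le C|x-y|^{l+|\gamma|-1+\theta-m-n}\langle x-y\rangle^{-N}$, and the exponent $l+|\gamma|-1+\theta-m$ can be $\le 0$, so the kernel is not integrable near the diagonal and your claim ``$|J_1(\eps)|\le C$ uniformly'' is unjustified. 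It may be salvageable (one would need $J_1(\eps)=o(\eps^{-1})$, not boundedness), but that requires a different and more careful argument than the one you sketch.

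The paper avoids this borderline difficulty altogether by taking a different route. Instead of integrating by parts inside the $\chi$-regularization, it uses the kernel representation already furnished by Theorem~\ref{thm:OscIntegrals} for \emph{both} sides (with the same Taylor order $l=\max\{[m-|\gamma|],0\}$), so convergence is never in question. The polynomial terms match by standard oscillatory-integral rules; for the remainder kernels one integrates by parts in $\xi$ inside each dyadic piece $\varphi_j$ (where everything is absolutely convergent) to get $\tilde k_{\alpha,\gamma,j}=k_{\alpha,\gamma,j}-(\text{term with }D_{\xi_k}\varphi_j)$. Since both dyadic sums $\sum_j k_{\alpha,\gamma,j}$ and $\sum_j\tilde k_{\alpha,\gamma,j}$ converge absolutely by Theorem~\ref{thm:OscIntegrals}, so does the correction sum, and a further integration by parts together with $\sum_{j}D_{\xi_k}\varphi_j=D_{\xi_k}1=0$ shows it vanishes. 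This buys you a proof that works uniformly across the full range $m<\tau+|\gamma|$ without any borderline analysis.
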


The proof of Lemma \ref{lem:PartIntOscInt} is given in the Appendix.

    We also have:
    
\begin{cor}\label{cor:xyFormReduction}
  Let $a\in C^\tau S^m_{1,0}(\R^{2n}\times \Rn)$, $\tau >0$, $m\in\R$,
  and assume that $m<\tau$. Setting
  \begin{equation*}
p_\alpha(x,\xi)=\frac1{\alpha!}\partial_y^\alpha D_\xi^{\alpha} a(x,y,\xi)|_{y=x},
    \end{equation*}
we have for every $l\in\N_0$ with $l<\tau$:
  \begin{alignat*}{1}
    \op(a(x,y,\xi))u(x) = \sum_{|\alpha|\leq l}  \op (p_\alpha (x,\xi)) u(x) + \sum_{|\alpha|=l}\op(D_\xi^\alpha r_\alpha(x,y,\xi))u(x),
  \end{alignat*}
  where $p_\alpha(x,\xi) \in C^{\tau-|\alpha|}S^{m-|\alpha|}_{1,0}(\Rn\times \Rn)$ and
  $D_\xi^\alpha r_\alpha\in C^{\tau-l}S^{m-l}_{1,0}(\R^{2n}\times
  \Rn)$ for all $|\alpha|=l$ (as  defined in 
  \eqref{eq:Taylor1}), with $D_\xi^\alpha r_\alpha(x,x,\xi )=0$.
\end{cor}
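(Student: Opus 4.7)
The plan is to combine the Taylor expansion \eqref{eq:Taylor} of $a$ in the $y$-variable about $y=x$ with the ``integration by parts'' formula of Lemma~\ref{lem:PartIntOscInt}, which converts factors of $(y-x)^\alpha$ inside the oscillatory integral into $\xi$-derivatives. No further analytical machinery is needed beyond Theorem~\ref{thm:OscIntegrals} and Lemma~\ref{lem:PartIntOscInt}; the work reduces to careful bookkeeping and to checking that the hypothesis $m<\tau+|\gamma|$ of those results is satisfied in each invocation.

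First I expand $a(x,y,\xi)$ using \eqref{eq:Taylor} with the prescribed $l<\tau$. Each summand on the right hand side is either of the form $(y-x)^\alpha b_\alpha(x,\xi)$ with $b_\alpha(x,\xi):=\partial_y^\alpha a(x,y,\xi)|_{y=x}$ for some $|\alpha|\leq l$, or of the form $(y-x)^\alpha r_\alpha(x,y,\xi)$ for some $|\alpha|=l$; in each case the factor multiplying $(y-x)^\alpha$ belongs to $C^{\tau-|\alpha|}S^m_{1,0}$. For each such summand, Theorem~\ref{thm:OscIntegrals} (applied with $\gamma=\alpha$; the hypothesis $m<(\tau-|\alpha|)+|\alpha|=\tau$ holds by assumption) guarantees existence of the corresponding oscillatory integral. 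Linearity of the regularizing limit $\eps\to 0$ therefore decomposes $\op(a(x,y,\xi))u(x)$ into the sum of the oscillatory integrals of the individual summands.

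For each term with $|\alpha|\leq l$, I view $b_\alpha$ as an element of $C^{\tau-|\alpha|}S^m_{1,0}(\R^{2n}\times\R^n)$ independent of $y$, and apply Lemma~\ref{lem:PartIntOscInt} with $\gamma=\beta=\alpha$ to obtain
\begin{equation*}
\op\bigl((y-x)^\alpha b_\alpha(x,\xi)\bigr)u(x) = \op\bigl(D_\xi^\alpha b_\alpha(x,\xi)\bigr)u(x).
\end{equation*}
Dividing by $\alpha!$ and recalling the definition of $p_\alpha$ in the statement yields exactly $\op(p_\alpha)u(x)$ as the contribution of each main term (using that $D_\xi^\alpha$ commutes with $\partial_y^\alpha$ and with the evaluation $y=x$). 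The gain of $|\alpha|$ orders in the $\xi$-estimates under $D_\xi^\alpha$ shows that $p_\alpha \in C^{\tau-|\alpha|}S^{m-|\alpha|}_{1,0}(\R^n\times\R^n)$, as claimed.

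For the remainder terms with $|\alpha|=l$, the same argument applies to $r_\alpha\in C^{\tau-l}S^m_{1,0}(\R^{2n}\times\R^n)$ (the hypothesis $m<(\tau-l)+l=\tau$ of Lemma~\ref{lem:PartIntOscInt} is again exactly what we assumed), giving $\op((y-x)^\alpha r_\alpha)u(x) = \op(D_\xi^\alpha r_\alpha)u(x)$. The symbol class $D_\xi^\alpha r_\alpha \in C^{\tau-l}S^{m-l}_{1,0}(\R^{2n}\times\R^n)$ is immediate from the defining estimates, and the identity $D_\xi^\alpha r_\alpha(x,x,\xi)=0$ follows at once from $r_\alpha(x,x,\xi)\equiv 0$ since the $\xi$-derivative commutes with the restriction $y=x$. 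Collecting the pieces produces the asserted formula; no step of the argument poses a genuine obstacle, all the serious analysis having been done in Theorem~\ref{thm:OscIntegrals} and Lemma~\ref{lem:PartIntOscInt}.
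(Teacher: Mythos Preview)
Your proof is correct and follows essentially the same approach as the paper: both arguments rest entirely on Theorem~\ref{thm:OscIntegrals} and Lemma~\ref{lem:PartIntOscInt}, with the symbol-class and vanishing-on-the-diagonal claims read off from \eqref{eq:Taylor1}. The only organizational difference is that the paper invokes Theorem~\ref{thm:OscIntegrals} with $\gamma=0$ directly on $a$, which already delivers the main terms in the form $\op(p_\alpha)$, so Lemma~\ref{lem:PartIntOscInt} is applied only to the remainder $(y-x)^\alpha r_\alpha$; you instead decompose first via \eqref{eq:Taylor} and then invoke Lemma~\ref{lem:PartIntOscInt} on each piece, including the main terms. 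Both routes are valid and equally short.
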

\begin{proof}
  The equality follows directly from Theorem~\ref{thm:OscIntegrals}
  with $\gamma=0$ and Lemma~\ref{lem:PartIntOscInt} applied to
  $a=r_\alpha$ and $\gamma=\beta=\alpha$. The statements on $D_\xi
  ^\alpha r_\alpha $ follow from  \eqref{eq:Taylor1}ff.
\end{proof}

The next task is to determine the mapping properties of $(x,y)$-form operators.

The following result will be the basis for all further results on
mapping properties of $(x,y)$-form operators. It applies not only to
remainders, but also to full operators, without a graded expansion that
would reduce the regularity with respect to $x$.

\begin{thm}\label{thm:Bddness}
  Let $a\in C^\tau S^m_{1,0}(\R^{2n}\times \R^n)$ with $|m| < \tau$,
  and let $1<q<\infty$. Then
  \begin{equation*}
    \op(a(x,y,\xi))\colon H^{s+m}_q(\R^n)\to H^s_q(\R^n)
  \end{equation*}
  is a bounded linear operator, provided that
   $ |s| <\tau$, $|s+m| < \tau$.
 \end{thm}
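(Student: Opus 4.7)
The plan is to reduce the $(x,y)$-form operator to a sum of $x$-form operators (to which Theorem~\ref{thm:bd-compos} applies) plus a controlled remainder whose kernel estimates come from Theorem~\ref{thm:OscIntegrals}. Fix $s$ with $|s|<\tau$ and $|s+m|<\tau$. I would choose an integer $l \in \N_0$ with $l < \tau - |s|$ (so in particular $l<\tau$), adjusted further below to ensure the remainder is manageable. Corollary~\ref{cor:xyFormReduction} then gives
\begin{equation*}
  \op(a(x,y,\xi)) = \sum_{|\alpha|\le l}\op(p_\alpha(x,\xi)) + \sum_{|\alpha|=l}\op(D_\xi^\alpha r_\alpha(x,y,\xi)),
\end{equation*}
where $p_\alpha \in C^{\tau-|\alpha|}S^{m-|\alpha|}_{1,0}(\R^n\times\R^n)$ and $D_\xi^\alpha r_\alpha \in C^{\tau-l}S^{m-l}_{1,0}(\R^{2n}\times\R^n)$ with $D_\xi^\alpha r_\alpha(x,x,\xi)=0$.

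For each $x$-form term $\op(p_\alpha)$ with $|\alpha|\le l$, Theorem~\ref{thm:bd-compos} gives continuity $H^{s+m-|\alpha|}_q(\R^n) \to H^s_q(\R^n)$, since $|s|<\tau-l\le\tau-|\alpha|$. Composing with the continuous embedding $H^{s+m}_q(\R^n) \hookrightarrow H^{s+m-|\alpha|}_q(\R^n)$ (valid for $|\alpha|\ge 0$) yields the desired mapping. The analogous argument for the dual pair, after observing that the formal adjoint of an $(x,y)$-form operator is again of $(x,y)$-form (with $x$ and $y$ interchanged, conjugated) and can be treated by the same reduction, allows one to extend the range of $s$ to negative values within $|s|<\tau$.

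For the remainder $R = \sum_{|\alpha|=l}\op(D_\xi^\alpha r_\alpha)$, Theorem~\ref{thm:OscIntegrals} (applied with $\gamma=0$ to the symbol $D_\xi^\alpha r_\alpha$ of order $m-l$, vanishing on the diagonal) provides a kernel representation $Ru(x) = \int k(x,y,x-y)u(y)\,dy$ with $|k(x,y,x-y)|$ dominated by some $g(x-y)\in L^1(\R^n)$, provided $l$ is chosen large enough relative to $m$. Young's convolution inequality (or Schur's test applied pointwise in $x$) then yields $L^q\to L^q$ continuity. To upgrade this to $H^{s+m}_q\to H^s_q$, I would differentiate the kernel representation (with derivatives falling either on the symbol $r_\alpha$, whose $C^{\tau-l}$-regularity in $(x,y)$ is enough to absorb up to $\lfloor \tau-l\rfloor$ derivatives, or through further symbolic integration by parts via Lemma~\ref{lem:PartIntOscInt}), and interpolate for the fractional part of $s$.

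The main obstacle is reconciling the two constraints on $l$: on the one hand $l<\tau$ is forced by the corollary (so that the regularity $\tau-l$ of the remainder symbol stays positive), and on the other hand $l$ should be large enough that the kernel of the remainder is $L^1$-dominated, which roughly asks $l\gtrsim m$. Since only $|m|<\tau$ is assumed, not $\tau>m+n$, the integer $l$ must be chosen in the narrow window $(m-1,\tau-|s|)\cap \N_0$; its existence relies on combining $\tau>m$ and $\tau>|s|$. The $\theta=\min\{\tau-l,1\}$-gain in Hölder regularity coming from the vanishing $r_\alpha(x,x,\xi)=0$ (see \eqref{eq:ralphaEstim}) is precisely what closes the remaining gap in the kernel integrability estimate, and making this delicate accounting rigorous is the technical heart of the proof.
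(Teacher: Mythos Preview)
Your approach differs substantially from the paper's, and the remainder handling contains a genuine gap.

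The paper does \emph{not} use the Taylor reduction (Corollary~\ref{cor:xyFormReduction}) to prove Theorem~\ref{thm:Bddness}. Instead it establishes the base cases $|m|\le 1$, $0\le s<1$ by directly invoking two black-box results from Taylor's book (Propositions~9.5 and~9.8 in \cite{ToolsForPDE}), then reaches larger $s$ by commuting with $\partial_{x_j}$, larger $|m|$ by factoring $\langle\xi\rangle^{[m]}=\sum q_k(\xi)p_k(\xi)$ with polynomial $p_k$, and negative $s$ by duality. In particular, the paper's logical order is the reverse of yours: the remainder bound you need (Corollary~\ref{cor:Bddnessr}) is proved \emph{as a consequence} of Theorem~\ref{thm:Bddness}, not as an input to it.

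Your remainder step is where the gap lies. The kernel domination $|k(x,y,x-y)|\le g(x-y)$ from Theorem~\ref{thm:OscIntegrals} yields only $L_q\to L_q$ via Schur's test; it does not give $H^{s+m}_q\to H^s_q$. Your ``differentiate and interpolate'' sketch is not a proof: writing $\partial_{x_j}R = R\partial_{x_j}+[\partial_{x_j},R]$ gives a commutator $\op((\partial_{x_j}+\partial_{y_j})D_\xi^\alpha r_\alpha)$ that still vanishes on the diagonal (good), but the first term $R\partial_{x_j}$ maps $H^1_q\to L_q$, not $H^1_q\to H^1_q$, so there is no clean pair of endpoints to interpolate between. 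To make this work you would need to iterate the commutator decomposition $[s]$ times and control all resulting terms, each losing one unit of $C^\tau$-regularity---which is exactly the bookkeeping the paper avoids by importing Taylor's results. Moreover, your window $(m-1,\tau-|s|)\cap\N_0$ for $l$ is derived only from the $L^1$-integrability of the remainder kernel at order zero; after $[s]$ differentiations the symbol order rises to $m-l+[s]$ and the window shrinks further, and it is not clear it remains nonempty under just $|m|,|s|,|s+m|<\tau$. So the ``delicate accounting'' you flag is not merely technical: without an independent base-case estimate (such as Taylor's Proposition~9.8, which directly gives $H^{s+\sigma}_q\to H^s_q$ for symbols in $C^{\varrho+\sigma}S^\sigma_{1,0}$), the argument does not close.
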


 The proof of Theorem \ref{thm:Bddness} is given in the Appendix.

We observe a particular conclusion for the remainder terms
\eqref{eq:Taylor1}, in case $s\ge 0$. 

\begin{cor}\label{cor:Bddnessr} Let $r_\alpha$  be as in
  \eqref{eq:Taylor1} and Corollary~{\rm \ref{cor:xyFormReduction}}, let $m<\tau$,
  $l\in\N_0$ with $l<\tau$,
   $1<q<\infty$, and let $s\in\R$ such that $0\leq s<\tau-l$, $ {s+m<\tau}$. Then
 \begin{equation}\label{eq:Bddnessb}
   \op(D_\xi^\alpha r_\alpha(x,y,\xi ))\colon H^{{(s+m-l)_+}}_q(\Rn)\to H^s_q(\Rn)
 \end{equation}
 is bounded. 
  Moreover, there is some $k\in\N$ and $C_{s,q}>0$ such that
 \begin{equation*}
   \|\op(D_\xi^\alpha r_\alpha(x,y,\xi ))\|_{\mathcal{L}(H^{(s+m-l)_+}_q(\Rn), H^s_q(\Rn))}\leq C_{s,q}|a|_{k, C^{\tau} S^{m}_{1,0}(\R^{2n}\times \R^n)}.
 \end{equation*}
\end{cor}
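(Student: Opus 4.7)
The plan is to view $b:=D_\xi^\alpha r_\alpha$ as an $(x,y)$-form symbol of class $C^{\tau-l}S^{m-l}_{1,0}(\R^{2n}\times \R^n)$ (cf.\ Corollary~\ref{cor:xyFormReduction}) carrying the crucial extra feature $b(x,x,\xi)=0$, inherited from $r_\alpha(x,x,\xi)=0$, and then to derive the mapping property from Theorem~\ref{thm:Bddness} together with the smoothing gain available for diagonal-vanishing symbols. Setting $\tau':=\tau-l>0$ and $m':=m-l$, I would split the argument according to the sign of $s+m'=s+m-l$.

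In the case $s+m-l\ge 0$, where $(s+m-l)_+=s+m-l$, I would apply Theorem~\ref{thm:Bddness} to $b\in C^{\tau'}S^{m'}_{1,0}$ and verify its three hypotheses. The bound $|s|<\tau'$ is exactly $s<\tau-l$; the bound $|s+m'|<\tau'$ reduces to $s+m<\tau$, which is given. For $|m'|<\tau'$: if $m\ge l$, then $0\le m-l<\tau-l \Leftrightarrow m<\tau$; if $m<l$, the combination $s\ge l-m$ (from $s+m\ge l$) with $s<\tau-l$ forces $l-m<\tau-l$. Theorem~\ref{thm:Bddness} then delivers the bounded map $H_q^{s+m-l}(\R^n)\to H_q^s(\R^n)$.

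In the complementary case $s+m-l<0$ one has $(s+m-l)_+=0$ and must produce a bounded map $L_q(\R^n)\to H_q^s(\R^n)$. Since $s\ge 0$, the assumption forces $m-l<0$, so in particular $b\in C^{\tau'}S^0_{1,0}$. Combined with $b(x,x,\xi)=0$, this is exactly the setup of \cite[Proposition~9.5]{ToolsForPDE} already invoked in the second case of the proof of Theorem~\ref{thm:Bddness}: it gives $\op(b)\colon H_q^\sigma(\R^n)\to H_q^{\sigma+t}(\R^n)$ for every $-\tau'<\sigma\le 0$ and $0\le t<\tau'$. Taking $\sigma=0$ and $t=s$ (permissible because $0\le s<\tau-l$) yields the required boundedness from $L_q$ into $H_q^s$.

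For the operator-norm inequality I would appeal to the quantitative forms of Theorem~\ref{thm:bd-compos} and \cite[Proposition~9.5]{ToolsForPDE} (alternatively, a closed-graph/Hahn--Banach argument as after Theorem~\ref{thm:bd-compos}) to control $\|\op(b)\|$ by a symbol semi-norm of $b$; the explicit formula~\eqref{eq:Taylor1} for $r_\alpha$, together with the $\xi$-differentiations that yield $D_\xi^\alpha r_\alpha$, then dominates this semi-norm by some $|a|_{k',C^\tau S^m_{1,0}}$. The hard part is the case $s+m-l<0$: a direct appeal to Theorem~\ref{thm:Bddness} breaks down, because $|s+m-l|<\tau-l$ can fail when $s+m-l$ is very negative, and it is precisely the diagonal-vanishing of $r_\alpha$ that converts the negative order of $b$ into a genuine smoothing gain recorded in an $L_q\to H_q^s$ estimate.
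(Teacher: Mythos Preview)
Your argument is correct, but it differs from the paper's in an interesting way. You split according to the sign of $s+m-l$, and in the case $s+m-l<0$ you exploit the diagonal vanishing $D_\xi^\alpha r_\alpha(x,x,\xi)=0$ via \cite[Proposition~9.5]{ToolsForPDE}. The paper instead splits according to whether $|m-l|<\tau-l$ and uses only Theorem~\ref{thm:Bddness} in \emph{both} cases: when $m-l>-(\tau-l)$ one applies it with order $m-l$ and gets $H_q^{s+m-l}\to H_q^s$, which yields \eqref{eq:Bddnessb} a fortiori; when $m-l\le -(\tau-l)$ one observes $m-l\le -(\tau-l)<-s$, embeds $C^{\tau-l}S^{m-l}_{1,0}\subset C^{\tau-l}S^{-s}_{1,0}$, and applies Theorem~\ref{thm:Bddness} with order $-s$ to obtain $H_q^0\to H_q^s$ directly. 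So the diagonal vanishing of $r_\alpha$ is actually not needed for this corollary, contrary to what your last paragraph suggests: the ``very negative'' case is handled simply by raising the symbol order to $-s$ before invoking Theorem~\ref{thm:Bddness}. Your route works, but the paper's is more self-contained. The final operator-norm estimate is handled in the paper by the closed-graph argument, as you indicate.
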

\begin{proof} If $m-l> -(\tau -l)$, we
can apply Theorem~\ref{thm:Bddness} directly with $\tau $ and $m$
replaced by $\tau '=\tau -l$ and $m'=m-l$; this gives
\begin{equation*}
   \op(D_\xi^\alpha r_\alpha(x,y,\xi ))\colon H^{{s+m-l}}_q(\Rn)\to H^s_q(\Rn),
 \end{equation*}
and \eqref{eq:Bddnessb} holds \`a{} fortiori. If $m-l
  \leq -(\tau-l)$, we note that $-(\tau -l)<-s$, so that $D_\xi^\alpha r_\alpha \in
  C^{\tau-l}S^{m-l}_{1,0}(\R^{2n}\times \Rn)\subset
  C^{\tau-l}S^{-s}_{1,0}(\R^{2n}\times \Rn)$; here
  Theorem~\ref{thm:Bddness} can be applied with $\tau $ and $m$
  replaced by $\tau '=\tau -l$ and $m''=-s$, giving
  \begin{equation*}
   \op(D_\xi^\alpha r_\alpha(x,y,\xi ))\colon H^{0}_q(\Rn)\to H^s_q(\Rn).
 \end{equation*}
 This is as desired since in this case $(s+m-l)_+=0$.

 Finally,  the boundedness of the operator norm by a symbol semi-norm is a consequence of the closed graph theorem. It can be shown in the same way as in the proof of Theorem~\ref{thm:BddnessTruncatedPsDOs} below.
\end{proof}

The next result will help improve remainder estimates; it is related
to statements given in Taylor 
\cite[Proposition~9.5]{ToolsForPDE}. 

\begin{thm}
  \label{thm:Bddness2}
  Let $a\in C^\tau S^m_{1,0}(\R^{2n}\times \R^n)$ with $m < \tau$ and
  \begin{equation*}
  \partial_x^\alpha \partial_y^\beta a(x,y,\xi)|_{y=x}=0\quad \text{for all }x,\xi\in \Rn\text{ and }|\alpha|+|\beta|< \tau.  
  \end{equation*}
   Moreover, let $1<q<\infty$. Then
  \begin{equation*}
    \op(a(x,y,\xi))\colon L_q(\R^n)\to H^s_q(\R^n)
  \end{equation*}
  is a bounded linear operator provided that $0\leq s < \min(\tau-m,\tau)$.
\end{thm}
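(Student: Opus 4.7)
The approach mirrors Corollary~\ref{cor:Bddnessr}: Taylor-expand $a$ in $y$ around $y=x$, trade the resulting factors $(y-x)^\beta$ for $D_\xi^\beta$ via Lemma~\ref{lem:PartIntOscInt}, and then apply a smoothing estimate to the diagonal-vanishing reduced symbol. The key new ingredient is that the full \emph{joint} vanishing hypothesis (the condition $\partial_x^\alpha\partial_y^\beta a(x,x,\xi)=0$ for every $|\alpha|+|\beta|<\tau$, not merely its $\alpha=0$ specialization) must be exploited in order to close the gap between the coarse bound $s<(\tau-m)/2$ obtainable from Corollary~\ref{cor:Bddnessr} and the full range $s<\min(\tau-m,\tau)$ claimed here.

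I would fix an integer $l$ with $\max(m,0)\le l<\tau$ and apply the Taylor formula \eqref{eq:Taylor}. By the hypothesis with $\alpha=0$, every Taylor coefficient $\partial_y^\beta a(x,x,\xi)$ with $|\beta|\le l$ vanishes identically, leaving
\[
a(x,y,\xi)=\sum_{|\beta|=l}(y-x)^\beta R_\beta(x,y,\xi),\qquad R_\beta\in C^{\tau-l}S^m_{1,0}(\R^{2n}\times\R^n),
\]
with $R_\beta(x,x,\xi)=0$. Moreover, applying the Leibniz rule to the integral representation of $R_\beta$ in \eqref{eq:Taylor1} and invoking the full joint vanishing of $a$, one checks that $\partial_x^\alpha\partial_y^\gamma R_\beta(x,x,\xi)=0$ for every $|\alpha|+|\gamma|<\tau-l$, so that $R_\beta$ inherits the joint vanishing with parameters $(\tau-l,m)$. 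Lemma~\ref{lem:PartIntOscInt} then gives $\op(a)=\sum_{|\beta|=l}\op(b_\beta)$ with $b_\beta:=D_\xi^\beta R_\beta\in C^{\tau-l}S^{m-l}_{1,0}(\R^{2n}\times\R^n)$, and $b_\beta$ likewise satisfies $\partial_x^\alpha\partial_y^\gamma b_\beta(x,x,\xi)=0$ for $|\alpha|+|\gamma|<\tau-l$.

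Since $l\ge m$, the reduced symbol $b_\beta$ has order $m-l\le 0$. The key analytic step is a sharpened variant of the smoothing estimate \eqref{eq:mapping-b}: \emph{if $b\in C^{\tau'}S^{m'}_{1,0}(\R^{2n}\times\R^n)$ with $m'\le 0$ satisfies $\partial_x^\alpha\partial_y^\gamma b(x,x,\xi)=0$ for all $|\alpha|+|\gamma|<\tau'$, then $\op(b)\colon L_q(\R^n)\to H^t_q(\R^n)$ is bounded for every $0\le t<\tau'-m'$.} Applied with $(\tau',m')=(\tau-l,m-l)$ this produces $s<\tau-m$; summing over $|\beta|=l$ proves the theorem for that branch of the minimum. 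The complementary branch $s<\tau$ (binding only when $m\le 0$) follows by taking $l=0$ and applying \eqref{eq:mapping-b} directly to $a$ after the harmless embedding $C^\tau S^m_{1,0}\subset C^\tau S^0_{1,0}$, precisely as in the case ``$m\in(-1,0)$'' of the proof of Theorem~\ref{thm:Bddness}.

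The main obstacle is the sharpened smoothing estimate itself. A bare embedding $b\in C^{\tau'}S^{m'}_{1,0}\subset C^{\tau'}S^0_{1,0}$ followed by \eqref{eq:mapping-b} only delivers $t<\tau'$, missing the additional $|m'|$ units of smoothing available from the negative order. To capture these, the higher-order joint vanishing is indispensable: it upgrades the bare pointwise bound $|b(x,y,\xi)|\lesssim|x-y|^{\min(\tau',1)}\langle\xi\rangle^{m'}$ (which is all that mere diagonal vanishing $b(x,x,\xi)=0$ supplies via Hölder continuity) to the full bound $|b(x,y,\xi)|\lesssim|x-y|^{\tau'}\langle\xi\rangle^{m'}$, with analogous estimates on $\xi$-derivatives. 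Once this is available, the dyadic Littlewood--Paley decomposition used in Theorem~\ref{thm:OscIntegrals} can be rerun on $b$ to yield a kernel bound $|k_b(x,y)|\lesssim|x-y|^{\tau'-m'-n}$, corresponding to the effective operator order $m'-\tau'$ and thus the claimed $L_q\to H^t_q$ mapping for $0\le t<\tau'-m'$.
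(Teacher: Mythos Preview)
Your reduction via Taylor expansion and Lemma~\ref{lem:PartIntOscInt} is correct and the inheritance of the joint vanishing by $R_\beta$ (hence by $b_\beta$) is fine. But the whole detour is largely unnecessary, and the real analytic content is precisely what you leave undone. The paper proves the theorem \emph{directly} for arbitrary $m<\tau$: it uses the joint vanishing to obtain the kernel estimate
\[
|k(x,y,x-y)|\lesssim |x-y|^{\tau-m-n}(1+|x-y|)^{-N}
\]
for the original symbol $a$, with no prior order reduction; then for $0\le s<1$ it controls the Besov seminorm of $\op(a)u$ by estimating $\|\Delta_h\op(a)u\|_{L_q}\lesssim|h|^s\|u\|_{L_q}$ via a careful near/far splitting at scale $|h|$ (and the analogous estimate with $D_zk$ in place of $k$); higher $s$ follow by writing $\partial_x^\alpha\op(a)=\sum_\beta\binom{\alpha}{\beta}\op(\partial_x^\beta a\,(i\xi)^{\alpha-\beta})$ and recursing. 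So the paper's route is shorter: same kernel bound, applied to $a$ itself.

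The genuine gap in your sketch is the last sentence. A pointwise bound $|k_b(x,y,x-y)|\lesssim|x-y|^{\tau'-m'-n}$ does \emph{not} by itself yield $\op(b)\colon L_q\to H^t_q$ for $t<\tau'-m'$; for a non-translation-invariant kernel this is not a blackbox (Hardy--Littlewood--Sobolev gives $L_q\to L_p$ improvement, not $L_q\to H^t_q$). You need in addition the analogous bound for $\partial_z k_b$ and then the difference-quotient argument just described --- i.e., exactly the step the paper carries out. Your ``sharpened smoothing estimate'' is nothing other than Theorem~\ref{thm:Bddness2} itself in the special case $m'\le 0$, so you have reduced the theorem to a special case that still requires the full Besov-difference machinery. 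That step is the heart of the proof, and it is missing.
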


The proof of Theorem \ref{thm:Bddness2} is given in the Appendix.

\begin{remark}\label{rem:cutoff}
  We note that, if $a\in C^\tau S^m_{1,0}(\R^{2n}\times \Rn)$ with $0\leq m <\tau$ satisfies $a(x,y,\xi)=0$ for all $x,y,\xi\in\Rn$ with $|x-y|<\delta$ for some $\delta>0$, then the conditions of Theorem~\ref{thm:Bddness2} are satisfied. In particular, if $p\in C^\tau S^m_{1,0}(\Rn\times \Rn)$ with $0\leq m<\tau$ and $\varphi,\psi\in C^\infty_b(\Rn)$ are such that $\supp \varphi\cap \supp \psi =\emptyset$, then
  \begin{equation*}
    \psi \op(p(x,\xi))(\varphi u) \in H^s_q(\Rn)\qquad \text{for all }u\in L_q(\Rn), 0\leq s<\tau.
  \end{equation*}
\end{remark}

The third task is to establish rules for coordinate changes, by use of the
results on $(x,y)$-form operatos.


In the following, let  $F\colon \R^n\to \R^n$ be a $C^1$-diffeomorphism with $DF \in C^{\tau}(\R^n)^{n\times n}$, and $p\in C^\tau S^m_{1,0}(\Rn\times \Rn)$ for some $m<\tau$ and $\tau >0$.  Moreover, let
\begin{equation}\label{eq:PgammaGeneral}
  (\underline P u)(x) = (P (u\circ F^{-1}))(F(x))= (F^{\ast}P F^{\ast,-1} u)(x)\quad \text{for all }u\in \SD(\Rn),
\end{equation}
where $(F^*v)(y):=v(F(y))$ and $(F^{*,-1}u)(x):=u(F^{-1}(x))$.
We will first consider the case that 
\begin{equation}\label{eq:CondF}
  \sup_{x\in\R^n}|\nabla F(x)-I|\leq \tfrac12.
\end{equation}
Then one obtains for all $u\in \SD(\R^n)$ and $x\in\R^n$:
\begin{alignat}{1}\nonumber
  \underline P u(x) &=\Os\int_{\R^{2n}} e^{i(F(x)-z)\cdot \eta } p(F(x),\eta) u(F^{-1} (z)) \sd z\dd \eta\\\label{eq:CoordTrafo1General}
  &=\Os\int_{\R^{2n}} e^{i(x-y)\cdot \xi } q(x,y,\xi) u(y) \sd y\dd \xi,
\end{alignat}
where a well-known formula for coordinate changes (explained
e.g.\ in  \cite[Proof of Theorem~3.48]{PsDOBuch}) gives 
\begin{alignat}{1} \label{eq:CoordTrafo2General}
  q(x,y,\xi)&= p(F(x),A(x,y)^{-1,T}\xi)|\det A(x,y)|^{-1}|\det \nabla_y F(y)|,\\\nonumber
  A(x,y) &= \int_0^1 \nabla_x F (x+t(y-x))\sd t,
\end{alignat}
for all $x,y,\xi\in\R^n$.  
Here $q(x,y,\xi )\in  C^\tau S^m_{1,0}(\R^{2n}\times \Rn)$ and $A(x,y)^{-1,T}=(A(x,y)^{-1})^T$. We note that \eqref{eq:CondF} ensures $\det A(x,y)\neq 0$ for every $x,y\in\Rn$.
In our nonsmooth case, the existence of the limit in the oscillatory integral in
\eqref{eq:CoordTrafo1General} follows from
Theorem~\ref{thm:OscIntegrals}. The existence of this limit also
implies the existence of the limit in the oscillatory integral
preceding it. 
Moreover, using that for every $\tau'\in (0,\tau]$ there is some $C>0$ such that 
\begin{equation*}
  \|a\circ F-a\|_{C^{\tau'}(\Rn)}\leq C\|a\|_{C^\tau(\Rn)}\|F-\operatorname{id}\|_{C^{1+\tau}(\Rn)}^{\min (1,\tau-\tau')}\qquad \text{for all }a\in C^\tau(\Rn)
\end{equation*}
one verifies in a straightforward manner that for every $r_0>0$ and $k\in \N_0$ there is some $C_k$ independent of $F$ such that
\begin{alignat}{1}\label{eq:qEstim}
  |q -p|_{k, C^{\tau'} S^m_{1,0}(\R^{2n}\times\Rn)} &\leq C_k \|F-\operatorname{id}\|_{C^{1+\tau}(\Rn)}^{\min (1,\tau-\tau')}|p|_{k+1, C^\tau S^m_{1,0}(\R^{n}\times\Rn)}
\end{alignat}
for all $p\in C^\tau S^m_{1,0}(\R^{n}\times\Rn)$ and $C^1$-diffeomorphisms $F$ such that $DF\in C^{\tau}(\Rn)^{n\times n}$, $\|F-\operatorname{id}\|_{C^{1+\tau}}\leq r_0$ and \eqref{eq:CondF} holds
since
\begin{alignat*}{1}
  \|A-I\|_{C^{\tau}(\Rn\times \Rn)}\leq C\|F-\operatorname{id}\|_{C^{1+\tau}(\Rn)}.
\end{alignat*}
We shall now apply the Taylor expansion in \eqref{eq:Taylor} and 
Corollary~\ref{cor:xyFormReduction} to $q$. This gives that for any $N\in\N_0$ with $N<\tau$,
\begin{alignat*}{1}
  \ul P u(x) &= \sum_{|\alpha|\leq N} \op(q_\alpha (x,\xi))u(x) + \op (\tilde{r}_N(x,y,\xi)) u(x),
\end{alignat*}
where
\begin{alignat}{1}\label{eq:CoordTrafo3General}
  q_\alpha (x,\xi) =& \frac1{\alpha!} \left.\partial_y^\alpha
      D_\xi^\alpha q(x,y,\xi )\right|_{y=x},\\ \nonumber
  \tilde{r}_N(x ,y,\xi) =& \sum_{|\alpha|=N} \frac N{\alpha!}
  \int_0^1(1-t)^{N-1} \partial_y^\alpha D_\xi^\alpha q(x,x+t(y-x),\xi)\sd t  -\sum_{|\alpha|=N} q_\alpha (x,\xi ).
\end{alignat}

\begin{theorem}\label{thm:CoordTrafo1}
    Let $p\in C^\tau S^{m}_{1,0}(\Rn\times \Rn)$ with $m<\tau$ and $\tau>0$,  let
  $F$ be a $C^1$-diffeomorphism with $DF\in C^{\tau}(\Rn)^{n\times n}$ such that \eqref{eq:CondF} holds true, $|\alpha|\leq N<\tau$ and let  $q$
  be defined as in \eqref{eq:CoordTrafo3General}. Then
  $q_\alpha\in C^{\tau-|\alpha|}S^{m-|\alpha|}_{1,0}(\R^n\times \R^n)$ and $\tilde{r}_N\in C^{\tau -N}S^{m-N}_{1,0}(\R^{2n}\times \Rn)$ with $\tilde{r}_N(x,x,\xi)=0$ for all $x,\xi\in\R^n$.
  Moreover, for every $r_0>0$, $\tau'\in (N,\tau]$, and $k\in \N_0$ there is some $C_k$ independent of $F$ such that
\begin{alignat*}{1}
  |q_0 -p|_{k, C^{\tau'} S^{m}_{1,0}(\Rn\times\Rn)} &\leq C_k \|F-\operatorname{id}\|_{C^{1+\tau}(\Rn)}^{\min (1,\tau-\tau')}|p|_{k+1, C^\tau S^m_{1,0}(\R^{n}\times\Rn)},\\
  |q_\alpha|_{k, C^{\tau'-|\alpha|} S^{m-|\alpha|}_{1,0}(\Rn\times\Rn)}&\leq C_k \|F-\operatorname{id}\|_{C^{1+\tau}(\Rn)}^{\min (1,\tau-\tau')}|p|_{k+1, C^\tau S^m_{1,0}(\R^{n}\times\Rn)},\\
   |\tilde r_N|_{k, C^{\tau'-N} S^{2a-N}_{1,0}(\R^{2n}\times\Rn)}&\leq C_k \|F-\operatorname{id}\|_{C^{1+\tau}(\Rn)}^{\min (1,\tau-\tau')}|p|_{k+1, C^\tau S^m_{1,0}(\R^{n}\times\Rn)}
\end{alignat*}
for all $1\leq |\alpha|\leq N$, provided that $\|F-\operatorname{id}\|_{C^{1+\tau}}\leq r_0$.
\end{theorem}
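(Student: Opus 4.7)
My plan starts from the explicit formula
$$q(x,y,\xi) = p(F(x),A(x,y)^{-1,T}\xi)|\det A(x,y)|^{-1}|\det \nabla_y F(y)|$$
given by \eqref{eq:CoordTrafo2General}. Since $F\in C^{1+\tau}$ we have $A\in C^\tau(\R^{2n})^{n\times n}$, and \eqref{eq:CondF} guarantees $\|A-I\|\leq 1/2$ pointwise, so $A^{-1}$, $|\det A|^{-1}$ and $|\det\nabla_y F|$ lie in $C^\tau$ uniformly. A routine chain-and-product-rule computation combined with the $S^m_{1,0}$-estimates for $p$ then gives $q\in C^\tau S^m_{1,0}(\R^{2n}\times\R^n)$, as already asserted after \eqref{eq:CoordTrafo2General}. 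Once this is in hand, $\partial_y^\alpha q$ loses exactly $|\alpha|$ orders of $(x,y)$-regularity (so produces a $C^{\tau-|\alpha|}$ function), while $D_\xi^\alpha$ lowers the $\xi$-order by $|\alpha|$; evaluation at $y=x$ preserves $C^{\tau-|\alpha|}$-regularity in $x$. This yields $q_\alpha\in C^{\tau-|\alpha|}S^{m-|\alpha|}_{1,0}(\Rn\times\Rn)$. For $\tilde r_N$ I invoke the Taylor remainder identity behind \eqref{eq:Taylor1} and Corollary~\ref{cor:xyFormReduction} applied to $q$, which directly gives $\tilde r_N\in C^{\tau-N}S^{m-N}_{1,0}(\R^{2n}\times\Rn)$ with $\tilde r_N(x,x,\xi)=0$.

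The three estimates are the real substance. The key observation is that when $F=\operatorname{id}$ one has $A\equiv I$, $|\det A|\equiv 1$, $\nabla_y F\equiv I$, so $q(x,y,\xi)=p(x,\xi)$ is independent of $y$; consequently $q_0=p$, $q_\alpha=0$ for $|\alpha|\geq 1$, and $\tilde r_N=0$. The estimates thus measure deviation from this trivial case. For $q_0-p$, note that $A(x,x)=\nabla F(x)$, so $q_0(x,\xi)=p(F(x),\nabla F(x)^{-T}\xi)$, and I write
$$q_0(x,\xi)-p(x,\xi)=\bigl[p(F(x),\nabla F(x)^{-T}\xi)-p(x,\nabla F(x)^{-T}\xi)\bigr]+\bigl[p(x,\nabla F(x)^{-T}\xi)-p(x,\xi)\bigr].$$
To the first bracket I apply the composition inequality $\|a\circ F-a\|_{C^{\tau'}}\leq C\|a\|_{C^\tau}\|F-\operatorname{id}\|_{C^{1+\tau}}^{\min(1,\tau-\tau')}$ cited before \eqref{eq:qEstim}, with $a(\cdot)=p(\cdot,\eta)$ and $\eta=\nabla F(x)^{-T}\xi$; the $\xi$-dependence is controlled via the $S^m$ estimates and the uniform bound on $\nabla F^{-T}$. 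For the second bracket I use the fundamental-theorem-of-calculus representation $p(x,\eta)-p(x,\xi)=\int_0^1 \nabla_\xi p(x,\xi+s(\eta-\xi))\cdot(\eta-\xi)\,ds$ with $\eta-\xi=(\nabla F(x)^{-T}-I)\xi$, and invoke that $\nabla F^{-T}-I$ has $C^{\tau'}$-norm bounded by $\|F-\operatorname{id}\|_{C^{1+\tau}}^{\min(1,\tau-\tau')}$ (again by the same interpolation). For $q_\alpha$ with $1\leq|\alpha|\leq N$, every $y$-derivative must land on $A$, $A^{-1}$, $|\det A|^{-1}$, or $|\det\nabla_y F|$, and each of these factors vanishes identically when $F=\operatorname{id}$, hence is bounded in $C^{\tau'-|\alpha|}$ by $\|F-\operatorname{id}\|_{C^{1+\tau}}^{\min(1,\tau-\tau')}$. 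The Leibniz rule and the $S^{m-|\alpha|}$ estimates for $D_\xi^\alpha p$ then give the required bound. For $\tilde r_N$, the explicit integral representation in \eqref{eq:CoordTrafo3General} expresses it as an average of quantities each containing such vanishing factors, so the same reasoning applies uniformly in $t\in[0,1]$.

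The main obstacle will be the bookkeeping in step three: one must check that each of $A-I$, $A^{-1}-I$, $|\det A|^{-1}-1$, $|\det\nabla_y F|-1$ (and their $x$- and $y$-derivatives up to the relevant orders) is controlled in the $C^{\tau'}$ or $C^{\tau'-|\alpha|}$ norm by $\|F-\operatorname{id}\|_{C^{1+\tau}}^{\min(1,\tau-\tau')}$ times a constant independent of $F$, uniformly on $\|F-\operatorname{id}\|_{C^{1+\tau}}\leq r_0$. The exponent $\min(1,\tau-\tau')$ arises precisely from the trade-off between plain Lipschitz-type control (available when $\tau-\tau'\geq 1$) and H\"older interpolation (when $\tau-\tau'<1$), exactly as in the composition inequality recalled above. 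Once these building blocks are in place, the Leibniz rule distributes the total $C^{\tau'}$-norm across the product, and the symbol seminorms of $p$ absorb the $\xi$-dependence, giving the three stated bounds.
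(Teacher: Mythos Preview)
Your proposal is correct and follows essentially the same approach as the paper, which gives only a two-line proof (``The first statements follow easily from the definitions. Moreover, as for \eqref{eq:qEstim} one can verify the stated estimates in a straightforward manner.''); you have simply supplied the details the authors omit. One small expository point: in your treatment of the first bracket $p(F(x),\nabla F(x)^{-T}\xi)-p(x,\nabla F(x)^{-T}\xi)$, the frozen variable $\eta=\nabla F(x)^{-T}\xi$ still depends on $x$, so the composition inequality does not apply verbatim to a fixed function $a$; you need either to freeze $\eta$ pointwise and track the symbol estimates in $\xi$ uniformly, or to differentiate through the $x$-dependence of $\eta$ via the chain rule---both routine, but worth stating explicitly.
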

\begin{proof}
  The first statements follow easily from the definitions. Morever, 
as for \eqref{eq:qEstim} one can verify the stated estimates in a straightforward manner.
\end{proof}

For general $C^{1+\tau}$-diffeomorphisms we obtain:
\begin{theorem}\label{thm:CoordTrafo2}
    Let $p\in C^\tau S^{m}_{1,0}(\Rn\times \Rn)$ with $m<\tau$ and $\tau>0$,  let
    $F\colon \R^n\to \R^n$ be a $C^1$-diffeomorphism with $DF\in C^\tau(\R^n)^{n\times n}$ such that
    \begin{equation}\label{eq:NonDeg}
      c_0 \leq |\det (\nabla F(x))|\leq C_0 \qquad \text{for all } x\in \R^n
    \end{equation}
    for some $c_0,C_0>0$. Then there is some $q\in C^\tau S^{m}_{1,0}(\R^{2n}\times \Rn)$ such that
    \begin{equation}\label{eq:Pgamma'}
      \ul P u= F^{\ast}P F^{\ast,-1} u= \op(q(x,y,\xi)) u + R u\quad \text{for all }u\in \SD(\Rn),
    \end{equation}
    where
    \begin{equation*}
      R \colon L_q(\Rn)\to H^s_q(\Rn)\qquad \text{for all }s<\min(\tau-m,\tau).
    \end{equation*}
    Moreover, for any $N\in\N_0$ with $N<\tau$ 
    \begin{alignat*}{1}
      \op(q(x,y,\xi)) &= \sum_{|\alpha|\leq N} \op(q_\alpha (x,\xi)) + \op (\tilde{r}_N(x,y,\xi)),
    \end{alignat*}
    where the entries are defined by \eqref{eq:CoordTrafo3General},
with
$
  A(x,y)= \int_0^1 \nabla_x F(x+t(y-x))\, dt
$
for every $y\in\Rn$ sufficiently close to $x$. 
\end{theorem}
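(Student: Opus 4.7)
The obstacle compared with Theorem~\ref{thm:CoordTrafo1} is that \eqref{eq:CondF} need not hold, so $A(x,y)$ may fail to be invertible when $|x-y|$ is large and the global change-of-variables formula \eqref{eq:CoordTrafo2General} is not directly available. The plan is to localize the transformation to a neighborhood of the diagonal $\{x=y\}$, where invertibility of $A(x,y)$ is guaranteed by \eqref{eq:NonDeg} together with the uniform continuity of $DF$ on $\R^n$ (which holds since $DF\in C^\tau(\R^n)$), and to show that the off-diagonal contribution produces a smoothing remainder.

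Concretely, pick $\delta>0$ so small that $A(x,y)=\int_0^1 \nabla F(x+t(y-x))\, dt$ is invertible with $|\det A(x,y)|\geq c_0/2$ and $\|A(x,y)^{-T}\|$ uniformly bounded for $|x-y|\leq\delta$, and choose $\chi\in C_c^\infty(\R^n)$ with $\chi\equiv 1$ on $\{|z|\leq\delta/2\}$ and $\supp\chi\subset\{|z|\leq\delta\}$. Starting from the first line of \eqref{eq:CoordTrafo1General}, the substitution $z=F(y)$ turns it into
\[
\ul P u(x)=\Os\!\int_{\R^{2n}}e^{i(F(x)-F(y))\cdot\eta}p(F(x),\eta)|\det\nabla F(y)|u(y)\sd y\dd\eta,
\]
and splitting the integrand via $\chi(x-y)+(1-\chi(x-y))$ decomposes $\ul P=\op(q)+R$. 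On the $\chi$-part the further substitution $\eta\mapsto A(x,y)^{-T}\xi$ (justified wherever $\chi(x-y)\neq 0$) produces the $(x,y)$-form symbol
\[
q(x,y,\xi):=\chi(x-y)\,p(F(x),A(x,y)^{-T}\xi)\,|\det A(x,y)|^{-1}\,|\det\nabla F(y)|,
\]
with $A$ extended arbitrarily outside $\{|x-y|\leq\delta\}$ (irrelevant because of the cutoff). Chain-rule estimates for the composition involving the $C^{1+\tau}$-map $F$ and the $C^\tau$-map $A$, combined with the symbol estimates for $p$, give $q\in C^\tau S^m_{1,0}(\R^{2n}\times\R^n)$, and Theorem~\ref{thm:OscIntegrals} makes $\op(q)u(x)$ meaningful.

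For the expansion into $x$-form, apply Corollary~\ref{cor:xyFormReduction} to $\op(q)$; since $\chi\equiv 1$ near $0$, all derivatives $\partial_y^\beta\chi(x-y)|_{y=x}$ with $\beta\neq 0$ vanish, so Leibniz shows that the resulting $q_\alpha(x,\xi)=\frac1{\alpha!}\partial_y^\alpha D_\xi^\alpha q(x,y,\xi)|_{y=x}$ and $\tilde r_N(x,y,\xi)$ coincide with the formulas in \eqref{eq:CoordTrafo3General}. What remains is the $(1-\chi)$-contribution
\[
Ru(x)=\int_{\R^n}K(x,y)u(y)\sd y,\quad K(x,y)=(1-\chi(x-y))\,k_P(F(x),F(y))\,|\det\nabla F(y)|,
\]
with $k_P$ the Schwartz kernel of $P$. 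The bi-Lipschitz property of $F$ coming from \eqref{eq:NonDeg} gives $|F(x)-F(y)|\sim|x-y|$, and repeated integration by parts in $\eta$ in the oscillatory integral for $k_P$ yields arbitrary polynomial decay of $K$ in $|x-y|$ together with the available $C^\tau$-regularity in $x$. The principal obstacle is converting these kernel estimates into the sharp mapping bound $R\colon L_q(\R^n)\to H^s_q(\R^n)$ for $s<\min(\tau-m,\tau)$, since each additional $x$-derivative on the off-diagonal kernel brings down a factor of $\eta$ that must be compensated by further integration by parts. This is handled most cleanly by rewriting $R$ as the operator associated to an auxiliary $(x,y)$-form symbol that vanishes on the diagonal and then invoking Theorem~\ref{thm:Bddness2} (cf.\ also Remark~\ref{rem:cutoff}) in combination with Theorem~\ref{thm:Bddness}.
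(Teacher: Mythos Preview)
Your overall strategy---localize to a neighborhood of the diagonal where $A(x,y)$ is invertible and show the off-diagonal piece is smoothing---is exactly the paper's, and your derivation of $q$ and of the $q_\alpha$, $\tilde r_N$ formulas is fine. The one substantive difference is \emph{where} you insert the cutoff, and this is what makes your remainder step vague.

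The paper splits $P$ \emph{before} transforming: it writes $P=P'+\op\bigl((1-\psi(x-y))p(x,\xi)\bigr)$ in the original coordinates. The second summand is then literally an $(x,y)$-form operator with symbol in $C^\tau S^m_{1,0}$ vanishing for $|x-y|<\delta/2$, so Theorem~\ref{thm:Bddness2} and Remark~\ref{rem:cutoff} apply directly to give $L_q\to H^s_q$; the remainder $R$ in the theorem is its conjugate $F^{\ast}\op(a)F^{\ast,-1}$, and one only needs that $F^{\ast},F^{\ast,-1}$ preserve $L_q$ and $H^s_q$ for $0\le s<1+\tau$. The transformed near-diagonal part $\underline{P'}$ then carries the cutoff $\psi(F(x)-F(y))$ rather than $\chi(x-y)$, which is harmless for the $q_\alpha$ formulas since $\psi\equiv1$ near $0$.

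You instead split $\underline P$ \emph{after} transforming, with cutoff $\chi(x-y)$. Your remainder then has kernel $(1-\chi(x-y))\,k_P(F(x),F(y))|\det\nabla F(y)|$, and this is \emph{not} manifestly $\op(a)$ for an $a\in C^\tau S^m_{1,0}(\R^{2n}\times\R^n)$ vanishing near the diagonal, so Theorem~\ref{thm:Bddness2} does not apply as stated. Your closing sentence (``rewriting $R$ as the operator associated to an auxiliary $(x,y)$-form symbol'') points in the right direction but leaves the work undone. The clean way to finish is to conjugate back: $R=F^{\ast}\op(b)F^{\ast,-1}$ with $b(x,y,\xi)=(1-\chi(F^{-1}(x)-F^{-1}(y)))p(x,\xi)$, check that $b\in C^\tau S^m_{1,0}$ and vanishes for $|x-y|$ small (bi-Lipschitz bound), and then invoke Theorem~\ref{thm:Bddness2} on $\op(b)$. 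But at that point you have reproduced the paper's argument with a slightly different cutoff; splitting on the $P$-side from the outset avoids the detour entirely.
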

\begin{proof}
  Since $\nabla F\in C^{\tau}(\R^n)$ and satisfies \eqref{eq:NonDeg}, there is some $\delta>0$ such that $A(x,y)$ is invertible for every $x,y\in\R^n$ with $|x-y|<\delta$.
  Now choose $\psi\in C^\infty_0(\Rn)$ with $\psi\equiv 1$ on $B_{\delta/2}(0)$ and $\supp \psi \subset B_\delta(0)$. Then
  \begin{alignat*}{1}
    Pu= & \Os\int_{\R^{2n}} e^{i(x-y)\cdot \xi } \psi(x-y) p(x,\xi) u(y) \sd y\dd \xi\\
    &+ \Os\int_{\R^{2n}} e^{i(x-y)\cdot \xi } (1-\psi(x-y)) p(x,\xi) u(y) \sd y\dd \xi\equiv P'u + \op (a(x,y,\xi)) u
  \end{alignat*}
  where
  \begin{equation*}
    \op (a(x,y,\xi))\colon L_q(\Rn)\to H^s_q(\Rn)\qquad \text{for all }s<\min(\tau-m,\tau)
  \end{equation*}
  by Theorem~\ref{thm:Bddness2} since $a(x,y,\xi)= 0$ if $|x-y|<\delta/2$. Moreover,
\begin{alignat*}{1}
  \underline P' u(x) &=\Os\int_{\R^{2n}} e^{i(F(x)-z)\cdot \eta } \psi(F(x)-z) p(F(x),\eta) u(F^{-1} (z)) \sd z\dd \eta\\
  &=\Os\int_{\R^{2n}} e^{i(x-y)\cdot \xi } q(x,y,\xi) u(y) \sd y\dd \xi,
\end{alignat*}
where
\begin{alignat}{1} 
  q(x,y,\xi)&= \psi(F(x)-F(y))p(F(x),A(x,y)^{-1,T}\xi)|\det A(x,y)|^{-1}|\det \nabla_y F(y)|
\end{alignat}
for all $x,y,\xi\in\R^n$. The rest follows in the same way as in the proof of Theorem~\ref{thm:CoordTrafo1}, since $\eta(F(x)-F(y))|_{x=y}=1$ and $\partial_x^\alpha \left(\eta(F(x)-F(y))\right)|_{x=y}=0$ for every $|\alpha|\leq N$.  
\end{proof}

\noindent
\begin{proof*}{of Theorem~\ref{thm:main2}}
  The first part is a consequence of Corollary~\ref{cor:xyFormReduction}, Theorem~\ref{thm:bd-compos} applied to $p_\alpha$ and Corollary~\ref{cor:Bddnessr} applied to $r_\alpha$. Theorem~\ref{thm:CoordTrafo1} implies the second part.
\end{proof*}

\section{Preliminaries for Operators on Domains}\label{sec:FurtherPrelim}

\subsection{The ${\mu}$-transmission spaces for the halfspace and other smooth domains}\label{subsec:muTransmissionSpaces}

For $\Omega $ equal to $\rnp$ or a bounded smooth domain, the special {\it ${\mu} $-transmission spaces} were 
introduced for all $\mu \in{\mathbb C}$ by
H\"ormander \cite{H66} for $q=2$, cf.\ the account in \cite{G15} where
the spaces are redefined and extended to general $q\in (1,\infty)$. In the present paper we take $\mu $ real with $\mu >-1$. 
The spaces are defined by use of the order-reducing operators recalled
in Section \ref{subsec:PsDOs}: 
\begin{equation}
  \label{eq:2.9}
\begin{split}
H_q^{{\mu} (s)}(\crnp)&=\begin{cases}\Xi _+^{-{\mu} }e^+\ol H_q^{s- {\mu}
}(\rnp)=\Lambda  _+^{-{\mu} }e^+\ol H_q^{s- {\mu}
}(\rnp)
,&\text{ for }  s> {\mu} -\tfrac1{q'},\\
\dot H_q^{s}(\crnp),&\text{ for }  s\le {\mu} -\tfrac1{q'},
\end{cases}\\
H_q^{{\mu} (s)}(\comega)&=\begin{cases} \Lambda  _+^{(-{\mu} )}e^+\ol H_q^{s- {\mu}
}(\Omega ),&\text{ for }  s> {\mu} -\tfrac1{q'},\\
\dot H_q^{s}(\comega ),&\text{ for }  s\le {\mu} -\tfrac1{q'}.\end{cases}
\end{split}  
\end{equation}
Here $\frac1{q'}=1-\frac1q  $; for convenience of notation we have included the cases $s\le \mu
-\tfrac1{q'}$ (as mentioned in \cite[Definition 1.5]{G15}), although they
play a very small role in regularity studies. The spaces decrease with
growing $s$: $H_q^{{\mu} (s)}(\comega)\subset H_q^{{\mu}
  (s')}(\comega)$ when $s>s'$.

\begin{remark}\label{rem:Interpolation}
  From the definition and the interpolation properties of Bessel
  potential spaces it follows that $H_q^{{\mu}(s)}(\ol{\Omega })$
  is preserved under complex interpolation in $s$ when $s>\mu-\tfrac1{q'}$.
\end{remark}
We now list some further properties, formulated for $\Omega$ with the
convention that $\Lambda _+^{(t)}$ is replaced by $\Xi _+^t$ or
$\Lambda _+^t$ when $\rnp$ is considered. There holds (cf.\
\cite[Definition 1.8]{G15})
\begin{equation} \label{eq:2.10a}
  \|u\|_{ H_q^{{\mu} (s)}(\comega)}\simeq
  \| r^+\Lambda _+^{(\mu )}u\|_{\ol H_q^{s- {\mu}}(\Omega )}, \text{ when }s>\mu -\tfrac1{q'}.
\end{equation}
Observe in particular that
\begin{equation}
  \label{eq:2.10}
\begin{split}
H_q^{{\mu}(s)}(\comega)&=\dot
H_q^s(\comega) \text{ for }s-{\mu}\in (-\tfrac1{q'},\tfrac1q ) ,\\
\dot
H_q^s(\comega)&\subset H_q^{{\mu}(s)}(\comega)\subset H_{q,\operatorname{loc}}^{s}(\Omega) \text{ for all }s\in{\mathbb R},
\end{split} 
\end{equation}
since $e^+\ol H_q^{s- {\mu}}(\Omega )\supset \dot H_q^{s-\mu }(\comega) $ for all
$s>{\mu}-\tfrac1{q'}$, with equality if $s-{\mu}\in
(-\tfrac1{q'},\tfrac1q )$, and since $\Lambda _+^{(-\mu )}$ is elliptic.
We have moreover, for $s\ge {\mu}$,
\begin{equation}
  \label{eq:2.11}
H_q^{{\mu}(s)}(\comega)\subset H_q^{{\mu}({\mu})}(\comega)=\dot H_q^{\mu}(\comega).  
\end{equation}

The great interest of the spaces $H_q^{{\mu}(s)}(\comega)$ lies in the
following facts:
\begin{itemize} 
\item Pseudodifferential operators $P$ of order $m$ satisfying the $\mu $-transmission
condition map these spaces 
into standard spaces $\ol H_q^{s-m}(\Omega )$, by \cite[Theorem~4.2]{G15},
\begin{equation}
\label{eq:2.14a}
\|r^+Pu\|_{ \ol H_q^{s-m}(\Omega )}\le C\|u\|_{ H_q^{{\mu}(s)}(\comega)}\quad \text{ for } s>\mu -\tfrac1{q'}.
\end{equation}
\item When  $P$ furthermore is strongly elliptic, the spaces are
the solution spaces for the homogeneous Dirichlet problem, cf.\ \cite[Theorem~4.4]{G15}.
\end{itemize}

\begin{example}\label{ex:LaxMilgram} 
To demonstrate how \eqref{eq:2.9} enters into the
picture, we give a simple example: Let $P=\operatorname{OP}(p(\xi ))$,
where the $C^\infty $-function $p(\xi )$ is homogeneous of degree $2a$ and
even for
$|\xi |\ge 1$, and strongly elliptic satisfying $\operatorname{Re}p(\xi )\ge c>0$ for
$\xi \in \rn$. The Dirichlet problem on $\rnp$,
$$
r^+Pu=f \text{ on }\rnp, \quad \operatorname{supp}u\subset \crnp,
$$
is uniquely solvable for $f\in L_2(\rnp)$, when $u$ is sought in $\dot
H^a(\crnp)$; this follows by applying the Lax-Milgram lemma (cf.\
e.g.\ \cite[Section 12.4]{G09}) to the sesquilinear form
$s(u,v)=\int_{\rnp}r^+Pu\, \bar v\, dx$ on $\dot H^a(\crnp)$. A precise description of the domain $D(P_D)=\{u\in \dot H^a(\crnp)\mid
r^+Pu\in L_2(\rnp)\}$ can be found by letting $Q=\Lambda _-^{-a}P\Lambda _+^{-a}$; it
defines a bijection $Q_+=r^+Qe^+$ in $L_2(\rnp)$. Here $Q$ is of order
0 and satisfies Boutet de Monvel's $0$-transmission condition at
$\partial\rnp$, hence $Q_+$ is also a bijection in $\ol H^s_q(\rnp)$
for all $1<q<\infty $, $s>-1/q'$. When this is carried back to $P$, we
see that $r^+P$ maps the $a$-transmission space $\Lambda _+^{-a}e^+\ol H_q^{s+a}(\rnp)=H_q^{a(s+2a)}(\crnp)$ bijectively
to $\ol H_q^s(\rnp)$. In particular, $D(P_D)=H^{a(2a)}(\crnp)$.
\end{example}

We note that the spaces do not depend on $P$.
As a very special case of \eqref{eq:2.14a}, since $\Lambda _+^{(\mu )}$ satisfies the $\mu
$-transmission condition, so does the composition $P=\Lambda _+^{(\mu
  )}\circ\varphi $ for any $\varphi \in C^\infty _b(\rn)$, hence for any $s>\mu -\tfrac1{q'}$ there is some $C>0$ such that
\begin{equation*}
\|\varphi u\|_{ H_q^{\mu (s)}(\comega)}\simeq\|r^+\Lambda _+^{(\mu )}\varphi u\|_{ \ol H_q^{s-\mu }(\Omega )}
\le C\|u\|_{ H_q^{\mu(s)}(\comega)} \text{ for all }u\in H_q^{\mu(s)}(\comega),
\end{equation*}
cf.\ also \eqref{eq:2.10a}. Thus the multiplication by $\varphi $ maps
$ H_q^{\mu(s)}(\comega)$ into itself (also for lower $s$, since the
  property is well-known for the spaces $\dot H_q^s(\comega)$). 

With $d$ defined as in \eqref{eq:2.3a} (in particular, $d$ can equal $d_0$), there are local weighted boundary operators  
\begin{equation}
  \label{eq:2.12b}
\gamma _j^\mu u=\Gamma (\mu
+1+j)\gamma _j(u/d^\mu )\colon H_q^{\mu(s)}(\comega)\to B_q^{s-\mu -j-\frac1q}(\partial\Omega)
\end{equation}
defined for $s>\mu +j+\frac1q$, here $\gamma _j$ denotes the
$j$'th normal derivative $\gamma _jv=(\partial_n^jv)|_{\partial\Omega
}$, and $\Gamma $ denotes the Gamma function. 
There is a hierarchy (cf.\ \cite[Section~5]{G15}),
$H_q^{{\mu}(s)}(\comega)\supset H_q^{({\mu}+1)(s)}(\comega)\supset\dots
\supset  H_q^{({\mu}+j)(s)}(\comega)$ for $s>{\mu}+j-\tfrac1{q'}$, and
\begin{equation}
  \label{eq:2.12}
u\in H_q^{({\mu}+j)(s)}(\comega) \iff u\in H_q^{{\mu}(s)}(\comega)\text{ with }\gamma ^{\mu}_0u=\dots=\gamma ^{\mu}_{j-1}u=0;  
\end{equation}
  this is of importance for the study of
nonhomogeneous boundary conditions. 

Defining $\E_{\mu}(\comega)\equiv e^+d^{\mu}\ol C^\infty (\Omega )$,
we have for bounded smooth domains  that $ {\bigcap}_s
H_q^{{\mu}(s)}(\comega)=\E_{\mu}(\comega)$, which is dense in
$H_q^{\mu (s)}(\comega)$. For $\rnp$, $ {\bigcap}_s
H_q^{{\mu}(s)}(\crnp)=\E_{\mu}(\crnp)\cap {\bigcap}_s
H_q^{{\mu}(s)}(\crnp)$, and $\E_{\mu}(\crnp)\cap \E'(\rn)$ is dense in $H_q^{{\mu}(s)}(\crnp)$.

It was shown in \cite{G15} that
\begin{equation}
  \label{eq:2.14}
 H_q^{{\mu}(s)}(\comega)\subset \dot H_q^{s} (\comega)+e^+d^\mu \ol
 H_q^{s-\mu }(\Omega ),\text{ for }s>\mu +\tfrac1{q} , s-\mu -\tfrac1{q} \notin{\mathbb N},
\end{equation}
and the inclusion holds with $\dot H_q^{s}(\comega)$ replaced by $\dot
H_q^{s-\varepsilon }(\comega)$ if $s-\mu -\tfrac1{q}\in{\mathbb
  N}$.  There is a similar statement on $\rnp$ with $d$ replaced by
$x_n$.

In a recent paper \cite{G19}, the representations \eqref{eq:2.14} were
sharpened by an identification of which functions in $e^+d^\mu \ol
 H_q^{s-\mu }(\Omega )$ actually enter in $
 H_q^{{\mu}(s)}(\comega)$. 

For the H\"older-Zygmund scale $C^s_*$, the $\mu $-transmission spaces
are defined analogously by
\begin{alignat*}{2}
C_*^{\mu  (s)}(\comega)&=\Lambda  _+^{(-{\mu} )}e^+\ol C_*^{s- {\mu}
                         }(\Omega ),&\quad &\text{for }  s> {\mu} -1,\\
  C_*^{\mu  (s)}(\comega)&=\dot C_*^{s}(\comega ),&\quad &\text{for }  s\le
                           {\mu} -1,
\end{alignat*}
and all the properties listed above for $H^s_q$-spaces carry over to
the $C^s_*$-spaces. The formulas hold with $H^s_q$ replaced by $C^s_*$
and $\tfrac1{q},\tfrac1{q'}$ replaced by $0,1$. In particular, there
is the analogue of  \eqref{eq:2.14}: 
\begin{equation}
  \label{eq:2.14b}
 C_*^{{\mu}(s)}(\comega)\subset \dot C_*^{s} (\comega)+e^+d^\mu \ol
 C_*^{s-\mu }(\Omega ),\text{ for }s>\mu  , s-\mu  \notin{\mathbb N},
\end{equation}
and the inclusion holds with $\dot C_*^{s}(\comega)$ replaced by $\dot
C_*^{s-\varepsilon }(\comega)$ if $s-\mu \in{\mathbb
  N}$.  There is a similar statement on $\rnp$ with $d$ replaced by
$x_n$.

\subsection{Definitions in nonsmooth cases}\label{subsec: NonsmoothSets}

Recall that for any $C^{1}$-diffeomorphism $F\colon \R^n\to \R^n$ with $DF\in C^\tau(\Rn)^{n\times n}$ and $0\leq s< 1+\tau$ we have that
\begin{equation}\label{eq:change}
  F^\ast \colon H^s_q(\Rn)\to H^s_q(\Rn)\colon u\mapsto u\circ F
\end{equation}
is bounded. In the case $s\leq 1+[\tau]$, this follows by interpolation from the corresponding statement for $H^m_q(\Rn)$, $m=0,\ldots, 1+[\tau]$. In the case $1+[\tau]<s<1+\tau$ one uses that
\begin{equation*}
  \partial_{x_j} (u\circ F)(x)= (\nabla u)(F(x))\cdot \partial_{x_j} F(x) \quad \text{for }u\in H_q^s(\Rn),
\end{equation*}
where $\nabla u \circ F\in H^{s-1}_q(\Rn)$ since $s-1\leq 1+[\tau]$,
$\partial_{x_j} F \in C^{\tau}(\Rn)^n$, and one can apply well-known multiplication
results for Bessel potential spaces, namely that $fg\in H^t_q(\Rn)$ for any $f\in C^\tau(\Rn)$, $g\in H^t_q(\Rn)$ if $|t|<\tau$ and $1<q<\infty$, cf.\ e.g.\ the book by Runst and
Sickel~\cite[Section~4.7.1]{RunstSickel}. This result also follows from Theorem~\ref{thm:bd-compos} for $p(x,\xi)= f(x)$. The mapping is a bijection
if also $D(F^{-1})\in C^{\tau }(\Rn)^{n\times n}$.

In the following, let  $\Rn_\gamma= \{x\in\R^n\mid x_n>\gamma(x')\}$ for
some $\gamma\in C^{1+\tau}(\R^{n-1})$ with $\tau >0$, and let
$F_\gamma\colon \R^n\to \R^n$ be such that 
\begin{equation}\label{eq:change1} 
F_\gamma(x)=
(x',x_n-\gamma(x'))\quad\text{for all }x\in\R^n, \text{ where }x'=(x_1,\ldots,
x_{n-1});
\end{equation} 
note that for both $F_\gamma $ and
$F_\gamma^{-1} $ (defined by $F_\gamma(x)^{-1}=
(x',x_n+\gamma(x'))$) the differential is in $C^{\tau}(\R^n)^{n\times n}$.

Moreover, we consider a bounded open set $\Omega\subset \R^n$ with
$C^{1+\tau }$-boundary.

\begin{definition}\label{def:roughTransmSpaces} Let $\mu >-1$, $\tau
  >0$, $1<q<\infty $, and let
  $\mu -\frac1{q'}< s<1+\tau $.

  $1^\circ$ For the set $\Rn_\gamma$ with $\gamma \in C^{1+\tau }(\R^{n-1})$, we define
\begin{equation*}
u\in  H^{\mu(s)}_q(\ol{\R}^n_\gamma ) \iff u\circ F_\gamma ^{-1}\in H^{\mu(s)}_q(\crnp), 
\end{equation*}
and provide $ H^{\mu(s)}_q(\ol{\R}^n_\gamma ) $ with the inherited
norm. In other words,
\begin{equation}\label{eq:change2}
H^{\mu(s)}_q(\ol{\R}^n_\gamma ) = F^*_\gamma (H^{\mu(s)}_q(\crnp)).
\end{equation}

$2^\circ$ When $\Omega$ is a bounded $C^{1+\tau}$-domain, each point $x_0\in
\partial\Omega $
has a bounded open neighborhood $U\subset \R^n$ and
a $\gamma \in C^{1+\tau }(\R^{n-1})$,
such that (after a suitable rotation) $\Omega \cap U=\R^n_\gamma \cap
U$. We denote by
$H^{\mu(s)}_q(\overline{\Omega})$ the set of all $u\in
H^s_{q,loc}(\Omega)$ such that for each $x_0$, with a $\varphi\in
C_0^\infty(U)$ with $\varphi\equiv 1$ in a neighborhood of $x_0$, we have
\begin{equation*}
  F_\gamma ^{\ast,-1} (\varphi u):= (\varphi u)\circ F_\gamma ^{-1} \in H^{\mu(s)}_q(\ol{\R}^n_+)
\end{equation*}
in the rotated situation, with $F_\gamma $ defined by \eqref{eq:change1}.

$3^\circ$ There are similar spaces defined with $H^s_q$ replaced by $C^s_*$,
$q,q'$ replaced by $\infty ,1$.
\end{definition}

From  the inclusions
\eqref{eq:2.14}, \eqref{eq:2.14b} in the halfspace case we obtain:

\begin{proposition}\label{prop:roughTransmSpaces}  Let $\mu >-1$ and $\mu -\frac1{q'}< s<1+\tau $ with
  $s-\mu<1+\tau$, 
  and let $\gamma \in C^{1+\tau}(\R^{n-1})$. 
  There is a function $d$ as in {\rm \eqref{eq:2.3a}}ff.\ such that (with $\eps>0$): 
\begin{equation*}
H_q^{{\mu} (s)}(\ol{\R}^n_\gamma )\begin{cases} 
 = \dot H_q^{s}(\ol{\R}^n_\gamma  ), &\text{for }  s< {\mu} +\tfrac1{q},\\
 \subset  \dot H_q ^{s(-\varepsilon )}(\overline{\R}^n_\gamma )+d^\mu e^+\ol
  H_q^{s-\mu}({\ol{\R}^n_\gamma }),& \text{for }
    s> {\mu} +\tfrac1{q},
\end{cases}
\end{equation*}
where $(-\varepsilon )$ is active if $s-\mu -\tfrac1q\in\N$.

Moreover, the mapping $\gamma _0^\mu \colon u\mapsto \Gamma (\mu +1)(u/d^\mu
)|_{\partial \rn_\gamma }$ is continuous:
\begin{equation*}
\gamma _0^\mu \colon H_q^{{\mu} (s)}(\ol{\R}^n_\gamma )\to B_{q}^{s-\mu
  -\frac1q}(\partial \rn_\gamma), \text{ for } s-\mu -\tfrac1q>0.
\end{equation*}

If $\tau \geq 1$, then one can replace $d$ by $d_0$, chosen as a
$C^{1+\tau}$-function coinciding with the distance to
$\partial\Rn_\gamma$ near $\partial\rn_\gamma $, as indicated after
{\rm \eqref{eq:2.3a}}.

There are similar results in $C_*^s$-spaces with $\frac1q$ replaced by $0$.
\end{proposition}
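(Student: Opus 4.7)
The plan is to transplant the already-established halfspace results \eqref{eq:2.10}, \eqref{eq:2.14}, \eqref{eq:2.12b} and \eqref{eq:2.14b} to $\ol{\R}^n_\gamma$ via the pullback by the $C^{1+\tau}$-diffeomorphism $F_\gamma$ of \eqref{eq:change1}, exploiting the fact (noted after \eqref{eq:2.3a}) that $d(x):=x_n-\gamma(x')$ satisfies \eqref{eq:2.3a} on $\R^n_\gamma$ and transforms simply under $F_\gamma^{-1}$, namely $d\circ F_\gamma^{-1}(y)=y_n$. By Definition~\ref{def:roughTransmSpaces}, $u\in H_q^{\mu(s)}(\ol{\R}^n_\gamma)$ iff $v:=u\circ F_\gamma^{-1}\in H_q^{\mu(s)}(\crnp)$; and by \eqref{eq:change}, together with the corresponding statement for $F_\gamma^{-1}$, the pullback $F_\gamma^*$ is a topological isomorphism on $H_q^s(\R^n)$ and on $H_q^t(\R^n)$ for any $|t|<1+\tau$, in particular for $t=s$ and $t=s-\mu$ by the hypotheses $s<1+\tau$, $s-\mu<1+\tau$. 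Since $F_\gamma$ maps $\ol{\R}^n_\gamma$ bijectively onto $\crnp$, pullback preserves the support conditions, hence induces isomorphisms $F_\gamma^*\colon \dot H_q^s(\crnp)\simto \dot H_q^s(\ol{\R}^n_\gamma)$ and $F_\gamma^*\colon \ol H_q^{s-\mu}(\rnp)\simto \ol H_q^{s-\mu}(\ol{\R}^n_\gamma)$.

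For $s<\mu+\tfrac1q$, \eqref{eq:2.10} gives $H_q^{\mu(s)}(\crnp)=\dot H_q^s(\crnp)$, and applying $F_\gamma^*$ yields $H_q^{\mu(s)}(\ol{\R}^n_\gamma)=\dot H_q^s(\ol{\R}^n_\gamma)$. For $s>\mu+\tfrac1q$, apply \eqref{eq:2.14} on the halfspace to decompose $v=v_1+v_2$ with $v_1\in\dot H_q^{s(-\varepsilon)}(\crnp)$ (the $(-\varepsilon)$ being active when $s-\mu-\tfrac1q\in\N$) and $v_2=e^+(x_n^\mu w)$ for some $w\in\ol H_q^{s-\mu}(\rnp)$. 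Pulling back, $v_1\circ F_\gamma\in\dot H_q^{s(-\varepsilon)}(\ol{\R}^n_\gamma)$ by the argument above, while
\[
  v_2\circ F_\gamma=e^+\bigl((x_n\circ F_\gamma)^\mu\,w\circ F_\gamma\bigr)=e^+\bigl(d(x)^\mu\,(w\circ F_\gamma)(x)\bigr),
\]
with $w\circ F_\gamma\in\ol H_q^{s-\mu}(\ol{\R}^n_\gamma)$. This gives the asserted inclusion, and the norm estimates follow from those on $\crnp$.

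For the boundary trace, \eqref{eq:2.12b} on $\crnp$ gives $v\mapsto\Gamma(\mu+1)\gamma_0(v/x_n^\mu)\in B_q^{s-\mu-\frac1q}(\R^{n-1})$. Since $d\circ F_\gamma^{-1}(y)=y_n$, we have $(u/d^\mu)\circ F_\gamma^{-1}=v/y_n^\mu$, and the restriction of $F_\gamma$ to $\partial\R^n_\gamma$ is a $C^{1+\tau}$-diffeomorphism onto $\R^{n-1}$ (the coordinate projection $x\mapsto x'$), so the diffeomorphism invariance of Besov spaces up to order $<1+\tau$ yields $\gamma_0^\mu u\in B_q^{s-\mu-\frac1q}(\partial\R^n_\gamma)$ with continuous dependence. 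Continuity of $F_\gamma^*$ and of all trace/extension maps involved gives the norm estimate. When $\tau\geq 1$, $d/d_0$ is a strictly positive $C^\tau$-function by the discussion after \eqref{eq:2.3a}, so $(d/d_0)^\mu$ is a $C^\tau$-function (at least locally near $\partial\R^n_\gamma$, where it matters); multiplication by such functions preserves $\ol H_q^{s-\mu}(\ol{\R}^n_\gamma)$ thanks to the hypothesis $|s-\mu|<1+\tau$ (one may invoke Theorem~\ref{thm:bd-compos} with symbol $p(x,\xi)=(d/d_0)^\mu(x)$), so one may replace $d$ by $d_0$ in the decomposition. The $C_*^s$-version is proved by the same pullback strategy, starting from \eqref{eq:2.14b} instead of \eqref{eq:2.14} and using the corresponding multiplication and diffeomorphism invariance for H\"older--Zygmund spaces.

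The main delicate point is not any single step but the bookkeeping of the regularity thresholds: verifying that under the hypotheses $s<1+\tau$ and $s-\mu<1+\tau$ the pullback $F_\gamma^*$, the multiplication by $d^\mu$, the switch from $d$ to $d_0$, and the boundary trace all remain bounded on the spaces in question. Once these threshold checks are in place, each piece reduces directly to its smooth-halfspace counterpart quoted in Section~\ref{subsec:muTransmissionSpaces}.
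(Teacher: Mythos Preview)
Your proposal is correct and follows essentially the same route as the paper's own proof: pull back the halfspace inclusions \eqref{eq:2.10}, \eqref{eq:2.14} (resp.\ \eqref{eq:2.14b}) and the trace \eqref{eq:2.12b} via $F_\gamma^*$, using that $x_n\circ F_\gamma^{-1}=d$ with $d(x)=x_n-\gamma(x')$, and then switch from $d$ to $d_0$ when $\tau\ge 1$ by multiplying with the positive $C^\tau$-function $(d/d_0)^\mu$. One small bookkeeping slip: the multiplication by a $C^\tau$-function preserves $\ol H_q^{s-\mu}$ for $|s-\mu|<\tau$, not $|s-\mu|<1+\tau$ as you wrote; this is harmless here since the $d\to d_0$ replacement is only invoked in contexts (cf.\ Theorem~\ref{thm:TransmissionSpace}) where the stronger hypothesis $s-\mu<\tau$ is in force.
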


\begin{proof}
The properties of $H_q^{{\mu} (s)}(\R^n_\gamma )$ follow from
\eqref{eq:2.9} and \eqref{eq:2.14}ff., since the function $x_n^\mu $
carries over to the function $d^\mu$ where $d(x)=x_n-\gamma (x')$ has the
mentioned properties, and since $F^{\ast,-1}$ maps $H^s_q(\Rn)$ and
$H^{s-\mu}_q(\Rn)$ to themselves. Similarly, the properties of
$C_*^{{\mu} (s)}(\R^n_\gamma )$ are carried over from \eqref{eq:2.14b}. The
definition of the trace follows from \eqref{eq:2.12b} for $\Omega=\Rn_+$.

If $\tau \ge 1$, we can replace $d(x)=x_n-\gamma (x')$ by $d_0(x)$,
since $d(x)=f(x)d_0(x)$, where $f\in C^\tau
(\overline{\R}^n_\gamma)$
is strictly positive, and multiplication with a $C^\tau (\overline{\R}^n_\gamma)$-functions maps $H_q^{s-\mu}({\ol{\R}^n_\gamma })$ into itself.
\end{proof}

For the use of  local coordinates, we need to know how the
multiplication by regular
functions acts in the transmission-spaces.

 \begin{proposition}\label{prop:Commutator1}
   Let $\mu >-1$ and $\sigma >0$.
   If $\mu \ge 0$, assume that $\sigma
   >\mu $ and $\mu -\frac1{q'}<s<\sigma$.
   If $-1<\mu < 0$, assume that  $\sigma >\mu +1$ and
   $\mu -\frac1{q'}<s<\sigma -1$.
   Then multiplication by a function $\varphi \in C^\sigma(\rn)$ maps $ H_q^{\mu  (s)}(\crnp)$ into itself.

  Consequently, if $\gamma\in C^\sigma (\R^{n-1})$, then
  multiplication by a function $\varphi \in C^\sigma(\rn)$ maps $
  H_q^{\mu  (s)}(\ol{\R}^n_\gamma )$ into itself.
  
\end{proposition}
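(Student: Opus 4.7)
The plan is first to reduce the statement for $\R^n_\gamma$ to the halfspace case by pullback: since $H_q^{\mu(s)}(\overline{\R}^n_\gamma) = F_\gamma^{\ast}(H_q^{\mu(s)}(\crnp))$ by Definition~\ref{def:roughTransmSpaces}, and multiplying by $\varphi$ pulls back through $F_\gamma$ to multiplication by $\varphi\circ F_\gamma^{-1}$, which still lies in $C^\sigma(\R^n)$ (since $F_\gamma^{-1}$ has enough regularity for the composition to preserve $C^\sigma$ in the admissible range), it suffices to prove the halfspace version.

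For $\Omega=\rnp$, I would use the equivalent norm \eqref{eq:2.10a}, writing $u=\Lambda_+^{-\mu}e^+w$ with $w=r^+\Lambda_+^\mu u\in\overline H_q^{s-\mu}(\rnp)$, so that $\Lambda_+^\mu u = e^+w$. Splitting
\[
\Lambda_+^\mu(\varphi u) \;=\; \varphi\,\Lambda_+^\mu u \;+\; [\Lambda_+^\mu,\varphi]\,u,
\]
the first term gives $r^+(\varphi\,e^+w)=\varphi|_{\rnp}\cdot w$, which remains in $\overline H_q^{s-\mu}(\rnp)$ by the standard pointwise multiplier property for Bessel-potential spaces; the hypotheses on $(\sigma,\mu,s)$ are precisely what guarantees the required multiplier inequality $|s-\mu|<\sigma$ here.

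The main task is the commutator term. I would apply Corollary~\ref{cor:xyFormReduction} to the $(x,y)$-form symbol $a(x,y,\xi)=\lambda_+^\mu(\xi)\varphi(y)\in C^\sigma S^\mu_{1,0}$ (constant in $x$), obtaining, for an integer $l$ with $l<\sigma$,
\[
\Lambda_+^\mu\circ M_\varphi \;=\; \sum_{|\alpha|\le l}\OP(p_\alpha) \;+\; \OP(\tilde r_l), \qquad p_\alpha(x,\xi)=\tfrac{1}{\alpha!}\,\partial^\alpha\varphi(x)\,D_\xi^\alpha\lambda_+^\mu(\xi),
\]
with $\tilde r_l\in C^{\sigma-l}S^{\mu-l}_{1,0}(\R^{2n}\times\R^n)$ and $\tilde r_l(x,x,\xi)=0$. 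The $\alpha=0$ term is exactly $M_\varphi\Lambda_+^\mu$ and cancels in $[\Lambda_+^\mu,\varphi]$. For each $1\le|\alpha|\le l$, the symbol $D_\xi^\alpha\lambda_+^\mu$ is still a plus symbol of order $\mu-|\alpha|$ (derivatives in $\xi_n$ preserve the holomorphic extension into $\C_-$, and derivatives in $\xi'$ only introduce tangential factors). Hence $\OP(D_\xi^\alpha\lambda_+^\mu)\Lambda_+^{-\mu}$ is a plus operator of order $-|\alpha|$ with constant symbol; applied to $e^+w$ it lands in a supported Bessel-potential space of order $s-\mu+|\alpha|$, and subsequent multiplication by $\partial^\alpha\varphi\in C^{\sigma-|\alpha|}$ followed by $r^+$ places the result back in $\overline H_q^{s-\mu}(\rnp)$. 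The remainder $\OP(\tilde r_l)u$ is controlled by Corollary~\ref{cor:Bddnessr} together with Theorem~\ref{thm:Bddness2} (the latter exploiting the vanishing on the diagonal $\tilde r_l(x,x,\xi)=0$), which give the gain of regularity needed to conclude, provided $l$ is chosen close enough to $\sigma$.

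The hard part is the bookkeeping: at each order $1\le|\alpha|\le l$ one must verify that the pointwise multiplier condition and the mapping-property hypotheses of Theorem~\ref{thm:Bddness} and Corollary~\ref{cor:Bddnessr} are compatible with the target space $\overline H_q^{s-\mu}(\rnp)$. The dichotomy in the hypotheses ($\sigma>\mu$ versus $\sigma>\mu+1$) is exactly what ensures these conditions are simultaneously satisfiable: for $-1<\mu<0$ the symbol $\lambda_+^\mu$ is already of negative order, so the cancellation of the $\alpha=0$ term loses its effectiveness and one extra order of smoothness on $\varphi$ is required to absorb the commutator terms without shrinking the $s$-range one wants to cover.
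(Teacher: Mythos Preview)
Your reduction to the halfspace via $F_\gamma$ and the splitting
\[
\Lambda_+^\mu(\varphi u)=\varphi\,\Lambda_+^\mu u+[\Lambda_+^\mu,\varphi]u
\]
are exactly what the paper does, and the first term is handled the same way. The divergence is in the commutator, and there your argument does not close.

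The paper does \emph{not} expand $[\Lambda_+^\mu,\varphi]$ term by term. It writes the commutator in $(x,y)$-form with symbol $a(x,y,\xi)=\lambda_+^\mu(\xi)(\varphi(x)-\varphi(y))\in C^\sigma S^\mu(\R^{2n}\times\R^n)$, observes that this symbol satisfies the global $\mu$-transmission condition, and then invokes the \emph{forward references} Theorem~\ref{thm:BddnessTruncatedPsDOs} (for $\mu\ge 0$) and Corollary~\ref{cor:BddnessHalfspace} (for $-1<\mu<0$). These results give directly
\[
r^+[\Lambda_+^\mu,\varphi]\colon H_q^{\mu(\mu+s')}(\crnp)\to\ol H_q^{s'}(\rnp)
\]
for the needed $s'$, and the support-preserving property of $\Lambda_+^\mu$ upgrades this to $[\Lambda_+^\mu,\varphi]u\in e^+\ol H_q^{s-\mu}(\rnp)$. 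The dichotomy in the hypotheses ($\sigma>\mu$ versus $\sigma>\mu+1$) is \emph{not} a multiplier phenomenon as you suggest; it is inherited verbatim from the different parameter constraints in those two results ($m<\tau$ versus $m<\tau-1$).

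Your term-by-term route via Corollary~\ref{cor:xyFormReduction} runs into two concrete obstructions. First, for the $p_\alpha$ terms with $1\le|\alpha|\le l$ you need multiplication by $\partial^\alpha\varphi\in C^{\sigma-|\alpha|}$ to preserve $\ol H_q^{s-\mu}(\rnp)$, which requires $|s-\mu|<\sigma-|\alpha|$; for $|\alpha|$ near $l$ and $s$ near $\sigma$ this fails, so you cannot take $l$ ``close enough to $\sigma$'' as you propose. Second, Corollary~\ref{cor:Bddnessr} and Theorem~\ref{thm:Bddness2} give mapping properties on $\R^n$, with domain $H_q^{(s-l)_+}(\R^n)$; but $u\in H_q^{\mu(s)}(\crnp)$ embeds only into $\dot H_q^\mu(\crnp)\subset H_q^\mu(\R^n)$, so you need $(s-l)_+\le\mu$ \emph{and} $s-\mu<\sigma-l$, forcing roughly $s<\mu+\sigma/2$ rather than $s<\sigma$. (Also, Theorem~\ref{thm:Bddness2} needs \emph{all} derivatives of the symbol to vanish on the diagonal, not just order zero, so it does not apply to $\tilde r_l$ as stated.) The halfspace-adapted results of Section~\ref{sec:TransmissionCond} are what absorb these losses; without them the argument only yields a strictly smaller $s$-range.
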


\begin{proof} 
For $s-\mu \in (-\frac1{q'},\frac1q )$,  $H_q^{\mu
  (s)}(\crnp)=\dot H_q^s(\crnp)$, where the property is well-known since
$|s|<\sigma $, so
we can assume $s\ge \mu $.
There holds in general:
\begin{equation}\label{eq:2.15b}
v\in  H_q^{\mu  (s)}(\crnp)\iff \Lambda _+^\mu v\in e^+\ol H_q^{s-\mu }(\rnp).
\end{equation}
Let $u\in  H_q^{\mu  (s)}(\crnp) $, and let  $\varphi \in C^{\sigma
  }_b(\rn)$. To show that  $\varphi u\in  H_q^{\mu  (s)}(\crnp)$, we
  must show that $
\Lambda _+^\mu (\varphi u)\in e^+\ol H_q^{s-\mu }(\rnp)$.
Here 
\begin{equation*}
\Lambda _+^\mu (\varphi u)=\varphi \Lambda _+^\mu  u+[\Lambda _+^\mu, \varphi] u.
\end{equation*}
For the first term, $\Lambda _+^\mu u\in e^+\ol H_q^{s-\mu }(\rnp)$ by
hypothesis, and multiplication by $\varphi $ is known to preserve this space
since $|s-\mu |<\sigma $.
It remains to show that
\begin{equation}\label{eq:2.15a}
[\Lambda _+^\mu ,\varphi] u\in e^+\ol H_q^{s-\mu }(\rnp).
\end{equation}
 Here we shall borrow some
continuity properties shown later in Theorem~\ref{thm:BddnessTruncatedPsDOs} and Corollary~\ref{cor:BddnessHalfspace}.
The operator $[\Lambda _+^\mu, \varphi]$ may be written as a $\psi $do
in $(x,y)$-form, 
\begin{equation*}
[\Lambda _+^\mu, \varphi]=\op(a(x,y,\xi ));\quad a(x,y,\xi )=\lambda _+^\mu (\xi )(\varphi (x)-\varphi (y)),
\end{equation*}
satisfying the global $\mu $-transmission condition and with symbol in
$C^{\sigma }S^{\mu }(\R^{2n}\times \rn)$.

If $\mu \ge 0$, we apply
Theorem~\ref{thm:BddnessTruncatedPsDOs} with $\tau =\sigma $, $m=\mu $, and $s$ replaced by $s'$, to conclude that 
\begin{equation}\label{eq:2.15c}
r^+[\Lambda _+^\mu, \varphi]\colon  H_q^{\mu  (\mu +s')}(\crnp)\to \ol
H_q^{s' }(\rnp), 
\end{equation}
when $\sigma >\mu $, $ 0\le s'<\sigma  -\mu $.
The operator preserves support in $\crnp$ since $\Lambda _+^\mu$ does so. With $s'=s-\mu $ we conclude that when $\sigma >\mu $, $ \mu \le s<\sigma $,
\begin{equation}\label{eq:2.15d}
[\Lambda _+^\mu, \varphi]\colon  H_q^{\mu  (s)}(\crnp)\to e^+\ol
H_q^{s-\mu  }(\rnp).
\end{equation}

If $\mu \in (-1,0)$, we apply Corollary~\ref{cor:BddnessHalfspace} below with $\tau =\sigma $, $m=\mu $, and $s$
replaced by $s'$, to conclude that \eqref{eq:2.15c} holds when $\sigma
>\mu +1$, $0\le s'<\sigma -\mu -1$.  With $s'=s-\mu $ we conclude that 
\eqref{eq:2.15d} holds when $\sigma >\mu +1$, $\mu \le s<\sigma -1$.

For the last statement, we note that when $u\in  H_q^{\mu  (s)}(\ol{\R}^n_\gamma )$, then $\varphi u\in  H_q^{\mu
  (s)}(\ol{\R}^n_\gamma )$ holds if  $(\varphi u)\circ F_\gamma ^{-1}\in  H_q^{\mu
  (s)}(\crnp)$, by Definition \ref{def:roughTransmSpaces}. Here
$(\varphi u)\circ F_\gamma ^{-1}=(\varphi \circ F_\gamma ^{-1})(
u\circ F_\gamma ^{-1})$, where $\varphi \circ F_\gamma ^{-1}\in
C_b^\sigma (\rn)$, so the statement follows from what was proved in the
case of $\crnp$.
\end{proof}

The inclusions \eqref{eq:2.14}ff.\ are shown for bounded $C^{1+\tau
}$-domains as follows:

\begin{thm}\label{thm:TransmissionSpace} Let $\mu >-1$ and $\mu -\frac1{q'}< s<\tau $ with
  $s-\mu<\tau$, and  let $\Omega\subset\R^n$ be a bounded  $C^{1+\tau}$-domain with $\tau \geq 1$.
Then (with $\varepsilon >0$)
  \begin{equation}\label{eq:2.18}
H_q^{{\mu} (s)}(\comega)\begin{cases}   =\dot H_q^{s}(\comega ),&\text{for }  s< {\mu} +\tfrac1{q},\\
  \subset \dot H_q^{s-\varepsilon }(\comega ), &\text{for }  s= {\mu} +\tfrac1{q},\\
 \subset  \dot H_q^{s}(\comega)+ d_0^\mu e^+\ol
  H_q^{s-\mu}({\Omega}),& \text{for }
  s- {\mu} -\tfrac1{q}\in \rp\setminus \N,\\
   \subset  \dot H_q^{s-\varepsilon }(\comega)+ d_0^\mu e^+\ol
  H_q^{s-\mu}({\Omega})& \text{for }
  s- {\mu} -\tfrac1{q}\in\N.
\end{cases}
\end{equation}
Moreover, the mapping $\gamma _0^\mu \colon u\mapsto \Gamma (\mu +1)(u/d_0^\mu
)|_{\partial \Omega }$ is continuous:
\begin{equation*}
\gamma _0^\mu \colon H_q^{{\mu} (s)}(\comega )\to B_q^{s-\mu
  -\frac1q}(\partial \Omega ),\quad \text{ for } s-\mu -\tfrac1q>0.
\end{equation*}
There are similar results in $C_*^s$-spaces, with $q,q'$ replaced by
$\infty ,1$. 
\end{thm}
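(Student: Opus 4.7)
The strategy is to reduce everything to the curved halfspace situation treated in Proposition~\ref{prop:roughTransmSpaces} by means of a finite partition of unity adapted to boundary charts, using Proposition~\ref{prop:Commutator1} to control multiplication by cutoff functions and the $C^{1+\tau}$-diffeomorphisms $F_\gamma$ to straighten the boundary. Fix a finite atlas: an open cover $\{U_0,U_1,\dots,U_N\}$ of $\comega$ with $\ol U_0\subset\Omega$ and, for $j\ge 1$, each $U_j$ meeting $\partial\Omega$ in such a way that after a rigid motion $\Omega\cap U_j=\R^n_{\gamma_j}\cap U_j$ with $\gamma_j\in C^{1+\tau}(\R^{n-1})$, and an associated partition $\varphi_0,\varphi_1,\dots,\varphi_N\in C_0^\infty$ with $\supp\varphi_j\subset U_j$ and $\sum\varphi_j=1$ on $\comega$. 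Write $u=\sum_j\varphi_j u$ for $u\in H_q^{\mu(s)}(\comega)$.

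The interior piece $\varphi_0 u$ lies in $\dot H_q^s(\comega)$ directly from Definition~\ref{def:roughTransmSpaces}, since multiplication by $\varphi_0$ preserves $H_{q,\loc}^s$ and compactly supports in $\Omega$. For each boundary piece, by Definition~\ref{def:roughTransmSpaces} the pullback $v_j:=F_{\gamma_j}^{\ast,-1}(\varphi_j u)$ lies in $H_q^{\mu(s)}(\ol{\R}^n_{\gamma_j})$; here Proposition~\ref{prop:Commutator1} justifies that the localization stays inside the $\mu$-transmission class, under the assumption $\tau\ge 1>\max\{\mu+1,\mu\}-s_{\min}$ implicit in the hypothesis $s<\tau$. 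Applying Proposition~\ref{prop:roughTransmSpaces} in the curved halfspace gives, according to the four cases on $s$ relative to $\mu+\tfrac1q$, either $v_j\in\dot H_q^s(\ol{\R}^n_{\gamma_j})$, or $v_j\in\dot H_q^{s-\varepsilon}(\ol{\R}^n_{\gamma_j})$, or $v_j\in\dot H_q^{s}+d^\mu e^+\ol H_q^{s-\mu}$ (with $s$ replaced by $s-\varepsilon$ in the first summand when $s-\mu-\tfrac1q\in\N$), where $d(x)=x_n-\gamma_j(x')$.

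Pushing forward by $F_{\gamma_j}^\ast$: the mapping $F_{\gamma_j}^\ast$ is a bijection on $H_q^t(\R^n)$ for $|t|<1+\tau$ by \eqref{eq:change}, it preserves support, and it transforms the local distance $d$ into a $C^{1+\tau}$-function on $\Omega\cap U_j$ comparable to $d_0$. Since $\tau\ge 1$, the quotient $d/d_0$ extends to a strictly positive function in $C^\tau(\comega)$, so $(d/d_0)^\mu\in C^\tau(\comega)$ (chain rule with $t\mapsto t^\mu$, positive argument); multiplication by this factor preserves $\ol H_q^{s-\mu}(\Omega)$ because $|s-\mu|<\tau$. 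Thus the second summand transfers cleanly into $d_0^\mu e^+\ol H_q^{s-\mu}(\Omega)$, and the first summand lands in $\dot H_q^s(\comega)$ (or $\dot H_q^{s-\varepsilon}(\comega)$). Summing over $j$ yields \eqref{eq:2.18}.

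For the boundary trace, define $\gamma_0^\mu u$ piece-by-piece: on the chart $U_j$, set $\gamma_0^\mu(\varphi_j u):=\Gamma(\mu+1)\bigl((\varphi_j u)/d_0^\mu\bigr)|_{\partial\Omega}$, which via $F_{\gamma_j}^\ast$ corresponds to the curved halfspace trace in Proposition~\ref{prop:roughTransmSpaces} after division by the $C^\tau$-factor $(d/d_0)^\mu$; the target is $B_q^{s-\mu-\frac1q}(\partial\Omega\cap U_j)$. Consistency on overlaps follows because on the smooth boundary $\partial\Omega\in C^{1+\tau}$ the division by $d_0^\mu$ is chart-independent, and summing the pieces yields the claimed continuous map into $B_q^{s-\mu-\frac1q}(\partial\Omega)$. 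The H\"older--Zygmund analogue is obtained by the same argument with $\tfrac1q,\tfrac1{q'}$ replaced by $0,1$.

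The main obstacle is the last step of the reduction: verifying that the division by the local distance $d(x)=x_n-\gamma_j(x')$ can be replaced by division by the intrinsic distance $d_0$ without leaving the relevant Bessel-potential (resp.\ H\"older--Zygmund) class on $\comega$. This is precisely where the hypothesis $\tau\ge 1$ is used, ensuring $d_0\in C^{1+\tau}$ and $d/d_0\in C^\tau$ positive, so that $(d/d_0)^{\pm\mu}$ acts as a multiplier on $\ol H_q^{s-\mu}(\Omega)$ and $B_q^{s-\mu-\frac1q}(\partial\Omega)$.
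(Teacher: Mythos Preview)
Your approach is correct and essentially the same as the paper's: localize via a partition of unity adapted to boundary charts, invoke Proposition~\ref{prop:roughTransmSpaces} in each curved halfspace (with $d=d_0$ since $\tau\ge1$), and sum. Two small points: first, there is a notational slip---$v_j:=F_{\gamma_j}^{\ast,-1}(\varphi_j u)$ lands in $H_q^{\mu(s)}(\crnp)$, not in $H_q^{\mu(s)}(\ol{\R}^n_{\gamma_j})$; what lies in the curved-halfspace transmission space is $\varphi_j u$ itself (Definition~\ref{def:roughTransmSpaces}~$1^\circ$). Second, the decomposition $\varphi_j u=w_j+d_0^\mu e^+v_j$ from Proposition~\ref{prop:roughTransmSpaces} produces $w_j,v_j$ globally on $\R^n_{\gamma_j}$, not supported in $U_j$; the paper introduces a second cutoff $\psi_j\in C_0^\infty(U_j)$ with $\psi_j\varphi_j=\varphi_j$ and writes $\varphi_j u=\psi_j w_j+d_0^\mu e^+\psi_j v_j$ before summing, which you should make explicit.
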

\begin{proof} We formulate the proof for $H_q^s$-spaces; the proof
  for $C_*^s$-spaces is similar. We can assume $s> \mu +\frac1q$,
  noting that the identity is known when $s\in (\mu -\frac1{q'}, \mu
  +\frac1q)$, and the spaces decrease with increasing $s$.

Let $u\in H_q^{{\mu} (s)}(\comega )$ and let $x_0,U,\gamma ,\varphi $ be as in Definition~\ref{def:roughTransmSpaces} $2^\circ$. Let $\psi \in C_0^\infty
(U)$ satisfy $\psi \varphi =\varphi $. Let $U'$ be the interior of the
set where $\varphi =1$. By Proposition
\ref{prop:roughTransmSpaces} with  $d=d_0$,
\begin{equation}\label{eq:2.18a}
\varphi u=w+d_0^\mu e^+v=\psi w+d_0^\mu e^+\psi v\text{ with }\psi
w\in \dot H_q^s(\comega), \psi v\in \ol H_q^{s-\mu }(\Omega ),
\end{equation}
using that multiplication by $\psi $ preserves the space, by
Proposition \ref{prop:Commutator1}.

There is a finite set of points $\{x_{0,i}\}_{i=1,\dots,I}$ such that
$\bigcup_iU'_i\supset \partial\Omega $ holds for the associated data $\{U_i,\gamma _i,\varphi _i,U'_i,\psi _i\}$.
Supply these sets with an open set
$U'_0\supset \comega\setminus \bigcup_iU'_i $ with $\overline U'_0\subset \Omega $, and let $\{\varrho
_i\}_{i=0,\dots,I}$ be an associated partition of unity, $\varrho
_i\in C_0^\infty (U'_i)$. Then $u=\sum_i\varrho _iu$. Moreover,
$\varrho _iu=\varrho _i\varphi _iu$ for $i\ge 1$, where the $\varphi
_iu$ satisfy \eqref{eq:2.18a}. 

Summation over $i$ gives the statement in \eqref{eq:2.18} with $d$
replaced by $d_0$. (Note that the functions $d_0$ in the different charts
are consistent near $\partial\Omega $, and their extensions further away
play no role since $u$ is in $H_{q,\operatorname{loc}}^s(\Omega )$
  anyway).

  The statement on the trace operator follows from Proposition \ref{prop:roughTransmSpaces}  in a similar way.
\end{proof}
\begin{remark}\label{rem:TransmissionSpace}
 If we replace the assumption $\tau\geq 1$ in
 Theorem~\ref{thm:TransmissionSpace} by $\tau>0$, we still obtain the
 following local inclusion: If $u\in H_q^{{\mu} (s)}(\comega )$ and
 $x_0,U,\gamma ,\varphi $ are as in
 Definition~\ref{def:roughTransmSpaces} $2^\circ$, then by
 Proposition~\ref{prop:roughTransmSpaces}:  
\begin{equation*}
\varphi u=w+d^\mu e^+ v\qquad \text{ with }w\in \dot H_q^s(\ol{\R}^n_\gamma),\; v\in \ol H_q^{s-\mu }(\ol{\R}^n_\gamma ),
\end{equation*}
where $d$ may depend on $\gamma$.
\end{remark}

Also higher traces as in  \eqref{eq:2.12b} can be defined; we intend
to take up their applications in later works. 

The concepts are applied to the fractional Laplacian $(-\Delta )^a$
and its generalizations  primarily for $a\in (0,1)$, but also higher
values of $a$ are
of interest. The $a$-transmission spaces enter as solution spaces for
the homogeneous Dirichlet problem. The $(a-1)$-transmission spaces allow
the definition of nonzero Dirichlet and Neumann traces; this is the reason that
we have made an effort to include cases $\mu \in (-1,0)$ in the treatment.

The following commutation result will be needed later:
\begin{proposition}\label{prop:Commutator}
  Let $p\in C^\tau S^m_{1,0}(\Rn\times \Rn)$ for some $\tau>0$, $\tau\not\in\N$, $m\in\R$ and $\varphi \in C^\infty_b(\Rn)$. Then there is some $q\in C^\tau S^{m-1}_{1,0}(\Rn\times \Rn)$ such that
  \begin{equation*} 
    [\op(p),\varphi]u= \op (q)u \qquad \text{for all }u\in \SD(\Rn).
  \end{equation*}
  Moreover, if $p\in C^\tau S^m(\Rn\times\Rn)$, then $q \in C^\tau S^{m-1}(\Rn\times\Rn)$ and, if $p$ is even, then $q$ is odd.
\end{proposition}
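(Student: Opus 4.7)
The plan is to pass through the $(x,y)$-form and then reduce to $x$-form, tracking regularity and parity. Writing
\[
[\op(p),\varphi]u(x) = \op(a(x,y,\xi))u(x),\quad a(x,y,\xi)=p(x,\xi)(\varphi(y)-\varphi(x)),
\]
gives a valid oscillatory integral because $\varphi(y)-\varphi(x)$ vanishes at $y=x$ (cf.\ Theorem~\ref{thm:OscIntegrals}). Since $\varphi\in C^\infty_b(\R^n)$, Taylor-expand to any order $N$,
\[
\varphi(y)-\varphi(x)=\sum_{1\le|\alpha|<N}\tfrac{(y-x)^\alpha}{\alpha!}\partial^\alpha\varphi(x)+\sum_{|\alpha|=N}\tfrac{(y-x)^\alpha}{\alpha!}R_\alpha(x,y),
\]
with $R_\alpha\in C^\infty_b(\R^{2n})$, and apply Lemma~\ref{lem:PartIntOscInt} with $\gamma=\beta=\alpha$ to each summand, converting each $(y-x)^\alpha$ factor into $D_\xi^\alpha$. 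This yields
\[
[\op(p),\varphi]=\sum_{1\le|\alpha|<N}\op\bigl(\tfrac{\partial^\alpha\varphi(x)}{\alpha!}D_\xi^\alpha p(x,\xi)\bigr)+\sum_{|\alpha|=N}\op\bigl(\tfrac{1}{\alpha!}D_\xi^\alpha p(x,\xi)R_\alpha(x,y)\bigr),
\]
where the first sum is a genuine $x$-form operator with symbol in $C^\tau S^{m-1}_{1,0}(\R^n\times\R^n)$, each term lying in $C^\tau S^{m-|\alpha|}_{1,0}$.

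For the remaining $(x,y)$-form terms of order $m-N$, smooth in $y$ and $C^\tau$ in $x$, choose $N$ large enough that $m-N<-n$: their kernels $K_\alpha(x,y)$ are then continuous and integrable, and the Kohn-Nirenberg integral
\[
\tilde q_\alpha(x,\xi)=\int_{\R^n} e^{-iz\cdot\xi}K_\alpha(x,x-z)\,dz
\]
converges absolutely and defines an $x$-form symbol $\tilde q_\alpha\in C^\tau S^{m-N}_{1,0}(\R^n\times\R^n)\subset C^\tau S^{m-1}_{1,0}$ (smoothness of $R_\alpha$ and $C^\tau$-regularity of $p$ in $x$ transfer cleanly to $\tilde q_\alpha$). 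Summing all the $x$-form pieces yields a single $q\in C^\tau S^{m-1}_{1,0}$ with $[\op(p),\varphi]=\op(q)$. The hypothesis $\tau\notin\N$ is used so that $[\tau]$ is an admissible integer strictly below $\tau$ wherever Corollary~\ref{cor:xyFormReduction}-type reductions are invoked at the sharpest order.

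For the classical case $p\in C^\tau S^m(\R^n\times\R^n)$, applying the same construction termwise to the homogeneous expansion $p\sim\sum_{j'\ge 0}p_{j'}$ produces $q\sim\sum_{j\ge 0}q_j$ with
\[
q_j(x,\xi)=\sum_{\substack{j'+|\beta|=j+1\\|\beta|\ge 1}}\tfrac{i^{|\beta|}}{\beta!}D_\xi^\beta p_{j'}(x,\xi)\,\partial^\beta\varphi(x),
\]
homogeneous in $\xi$ of degree $m-1-j$ for $|\xi|\ge 1$, so $q\in C^\tau S^{m-1}(\R^n\times\R^n)$. Evenness of $p$ means $p_{j'}(x,-\xi)=(-1)^{j'}p_{j'}(x,\xi)$, and differentiating $|\beta|$ times in $\xi$ gives $(D_\xi^\beta p_{j'})(x,-\xi)=(-1)^{j'+|\beta|}(D_\xi^\beta p_{j'})(x,\xi)=(-1)^{j+1}(D_\xi^\beta p_{j'})(x,\xi)$ on each term, so $q_j(x,-\xi)=(-1)^{j+1}q_j(x,\xi)$ and $q$ is odd.

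The main obstacle is the step producing a \emph{single} $x$-form symbol $q$ rather than an expansion with a leftover $(x,y)$-form remainder, which is all Corollary~\ref{cor:xyFormReduction} gives on its own. The flexibility of the Taylor order $N$, together with the $C^\infty$-smoothness of $\varphi$ (which guarantees smooth $y$-dependence of all Taylor coefficients and prevents any loss of $C^\tau$-regularity in $x$ during the reduction), is what closes the argument: pushing $N$ arbitrarily far makes the $(x,y)$-form remainder smoothing, and the Kohn-Nirenberg integral then produces its $x$-form symbol with the required regularity.
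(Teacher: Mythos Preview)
Your approach is correct and close in spirit to the paper's, but organized differently. Both start from the $(x,y)$-form representation $a(x,y,\xi)=p(x,\xi)(\varphi(y)-\varphi(x))$ and exploit that $\varphi\in C^\infty_b$ so that the $y$-dependence is smooth. The paper uses a single Hadamard factorization $\varphi(y)-\varphi(x)=(y-x)\cdot\Phi(x,y)$ with $\Phi\in C^\infty_b(\R^{2n})^n$, converts one factor $(y-x)$ into $D_\xi$, and then invokes Theorem~4.15 of \cite{AbelsPfeufferCharacterization} to reduce the resulting amplitude $D_\xi p(x,\xi)\cdot\Phi(x,y)$ (which is $C^\tau$ in $x$ and $C^\infty$ in $y$) to an $x$-form symbol $q=a_L\in C^\tau S^{m-1}_{1,0}$, with the asymptotic expansion of $a_L$ recovered afterwards. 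You instead Taylor-expand $\varphi(y)-\varphi(x)$ to high order, extract the explicit $x$-form terms $\tfrac{\partial^\alpha\varphi(x)}{\alpha!}D_\xi^\alpha p(x,\xi)$ first, and then reduce the order-$(m-N)$ remainder (still smooth in $y$) via the Kohn--Nirenberg integral.

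The one soft spot in your write-up is the sentence ``converges absolutely and defines an $x$-form symbol $\tilde q_\alpha\in C^\tau S^{m-N}_{1,0}$'': absolute convergence of the kernel integral (from $m-N<-n$) does not by itself yield the symbol estimates for $\partial_\xi^\beta\tilde q_\alpha$. One still needs the usual oscillatory-integral manipulations in $\tilde q_\alpha(x,\xi)=\Os\int e^{-iy\cdot\eta}b_\alpha(x,x+y,\xi+\eta)\,dy\,d\eta$, integrating by parts in $\eta$ (using $C^\infty$ in $y$) and in $y$ --- which is exactly the content of the cited reduction theorem. So both proofs rest on the same technical lemma; yours is more hands-on and delivers the asymptotic expansion of $q$ along the way, making the classical/parity statements transparent, while the paper's route is shorter. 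Your parity computation is correct and matches the paper's. Your remark about where $\tau\notin\N$ enters is vague; in the paper it is implicitly a hypothesis of the cited reduction result.
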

\begin{proof}
  First of all we have
  \begin{align*}
    [P,\varphi] u(x)&= \Os\int_{\R^{2n}} e^{i(x-y)\cdot \xi} p(x,\xi) (\varphi(y)-\varphi(x)) u(y)\,dy\, \dd\xi\\
                    &= \Os\int_{\R^{2n}} e^{i(x-y)\cdot \xi} p(x,\xi) (y-x)\cdot \Phi(x,y) u(y)\,dy\, \dd\xi\\
                    &= \Os\int_{\R^{2n}} e^{i(x-y)\cdot \xi} D_\xi p(x,\xi)\cdot \Phi(x,y) u(y)\,dy\, \dd\xi
  \end{align*}
for all $u\in\SD(\Rn)$ and $x\in\Rn$,  where $\Phi\in C^\infty_b(\Rn\times \Rn)^n$ is defined by
\begin{equation*}
  \Phi(x,y)=\int_0^1 (\nabla \varphi)(x+t(y-x))\, dt\qquad \text{for all }x,y\in \Rn.
\end{equation*}
Hence
\begin{equation}
  a(x,y,\xi)= D_\xi p(x,\xi)\cdot \Phi(x,y)\qquad \text{for all }x,y,\xi\in\Rn
\end{equation}
defines a symbol in $C^\tau S^{m-1}_{1,0}(\Rn\times \Rn\times\Rn;\infty)$ as defined in \cite{AbelsPfeufferCharacterization}. Therefore, because of \cite[Theorem~4.15]{AbelsPfeufferCharacterization}, there is some $q=a_L\in C^\tau S^{m-1}_{1,0}(\Rn\times \Rn)$ such that
\begin{equation*}
  [P,\varphi] u(x)                    = \Os\int_{\R^{2n}} e^{i(x-y)\cdot \xi} a(x,y,\xi) u(y)\,dy \dd\xi = \op (q)u(x)
\end{equation*}
for all $u\in \SD(\Rn)$, $x\in\Rn$. More precisely,
\begin{equation*}
  a_L(x,\xi)= \Os\int_{\R^{2n}} e^{-iy\cdot \eta} a(x,x+y,\eta+\xi)dy\dd \eta\quad \text{for all }x,\xi\in\Rn
\end{equation*}
and it follows first follows from \cite[Theorem~4.15]{AbelsPfeufferCharacterization} that $a_L\in C^\tau S^{m-1}_{0,0}(\Rn\times \Rn)$. In order to see that $a_L\in C^\tau S^{m-1}_{1,0}(\Rn\times \Rn)$ one uses that
\begin{equation*}
  \partial_\xi^\alpha a_L(x,\xi)= \Os\int_{\R^{2n}} e^{-iy\cdot \eta} (\partial_\xi^\alpha a)(x,x+y,\eta+\xi)dy\dd \eta\quad \text{for all }x,\xi\in\Rn
\end{equation*}
due to \cite[Theorem~2.11]{AbelsPfeufferCharacterization}, 
where $\partial_\xi^\alpha a \in C^\tau S^{m-|\alpha|-1}_{1,0}(\Rn\times \Rn\times \Rn;\infty)$. Applying  \cite[Theorem~4.15]{AbelsPfeufferCharacterization} again, we obtain $\partial_\xi^\alpha a_L\in C^\tau S^{m-|\alpha|-1}_{0,0}(\Rn\times \Rn)$ for all $\alpha\in\N_0$, i.e, $q=a_L\in C^\tau S^{m-1}_{1,0}(\Rn\times \Rn)$.

Finally, we note that, using a Taylor expansion of $a(x,x+y,\eta+\xi)$ with respect to $\eta$ (around $0$) in a standard manner, it is easy to show that we have the asymptotic expansion
\begin{equation*}
  a_L(x,\xi)\sim {\sum}_{\alpha\in\N_0^n}\tfrac1{\alpha!} \partial_\xi^\alpha D_y^\alpha a(x,y,\xi)|_{y=x}.
\end{equation*}
Moreover, $a\in C^\tau S^{m-1}(\R^{2n}\times \Rn)$ if $p\in C^\tau
S^{m}(\Rn\times \Rn)$. Using the asymptotic expansion one easily
observes that $q=a_L\in C^\tau S^{m-1}(\Rn\times \Rn)$. Furthermore, one
verifies in a straightforward manner that $a$ and $q=a_L$ are odd if
$p$ is even.
\end{proof}

There is a corollary to Proposition~\ref{prop:Commutator} showing that an
operator sandwiched between smooth functions with disjoint supports
acts like an operator of arbitrarily low order and the same H\"older smoothness:

\begin{corollary}\label{cor:Commutator} Let $p\in C^\tau S^m_{1,0}(\Rn\times \Rn)$
  for some $\tau>0$, $\tau\not\in\N$, $m\in\R$, and let  $\varphi,\psi
\in C^\infty_b(\Rn)$ with disjoint supports. For any $N\in\N$ there is a $q_N\in C^\tau S^{m-N}_{1,0}(\Rn\times \Rn)$ such that
$$ 
   \varphi  \op(p)\psi u= \varphi \op (q_N)u \qquad \text{for all }u\in \SD(\Rn).
$$  
(If $p\in C^\tau S^m(\Rn\times\Rn)$, then $q_N \in C^\tau
S^{m-N}(\Rn\times\Rn)$ and, if $p$ is even, then $q_N$ is even for
even $N$, odd for odd $N$.) 
\end{corollary}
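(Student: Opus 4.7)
The plan is to exploit the disjoint supports to insert iterated commutators that lower the symbol order one step at a time, and then absorb the remaining multiplication by $\psi$ into a single $x$-form symbol.

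Since $\supp\varphi\cap\supp\psi=\emptyset$, I choose $\chi\in C^\infty_b(\Rn)$ with $\chi\equiv 1$ on a neighborhood of $\supp\varphi$ and $\supp\chi\cap\supp\psi=\emptyset$, so that $\varphi=\varphi\chi$ and $\chi\psi=0$. Then
\begin{equation*}
\varphi\op(p)\psi=\varphi\chi\op(p)\psi=\varphi[\chi,\op(p)]\psi+\varphi\op(p)\chi\psi=-\varphi\op(r_1)\psi,
\end{equation*}
where Proposition~\ref{prop:Commutator} supplies $r_1\in C^\tau S^{m-1}_{1,0}(\Rn\times\Rn)$. Repeating the same trick with $p$ replaced by $r_1$, and so on (using the same cutoff $\chi$ each time), I obtain inductively
\begin{equation*}
\varphi\op(p)\psi=(-1)^N\varphi\op(r_N)\psi,\qquad r_N\in C^\tau S^{m-N}_{1,0}(\Rn\times\Rn),
\end{equation*}
where the $C^\tau$-regularity is preserved because $\chi$ is smooth.

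To absorb the trailing $\psi$, I observe that $\op(r_N)$ composed with multiplication by $\psi$ acts on $u\in\SD(\Rn)$ as the $(x,y)$-form operator with amplitude
\begin{equation*}
  a(x,y,\xi)=r_N(x,\xi)\psi(y)\in C^\tau S^{m-N}_{1,0}(\R^{2n}\times\Rn);
\end{equation*}
crucially, since $\psi$ is $C^\infty$, all $y$-derivatives of $a$ retain the full $C^\tau$-regularity in $x$. The exact $(x,y)$-to-$x$ reduction employed in the proof of Proposition~\ref{prop:Commutator} (via \cite[Theorem~4.15]{AbelsPfeufferCharacterization}) then produces an $x$-form symbol $q_N\in C^\tau S^{m-N}_{1,0}(\Rn\times\Rn)$, with asymptotic expansion $q_N\sim\sum_\alpha\tfrac{1}{\alpha!}\partial_\xi^\alpha r_N\cdot D_x^\alpha\psi$, such that $\op(a)=\op(q_N)$. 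Absorbing the sign into $q_N$ gives $\varphi\op(p)\psi u=\varphi\op(q_N)u$. For the parenthetical, Proposition~\ref{prop:Commutator} preserves classicality and flips parity at each commutator; so for $p$ classical and even, $r_N$ is classical and even (resp.\ odd) when $N$ is even (resp.\ odd); composition with multiplication by $\psi$ (symbol $\psi(x)$, trivially even in $\xi$) preserves both properties, yielding the claim for $q_N$.

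The only nontrivial point I foresee is the last step---the exact $(x,y)$-to-$x$ reduction producing a $C^\tau$-regular $x$-form symbol with no loss of smoothness---which rests on the smoothness of $\psi$ making the $y$-dependence of $a$ harmless; the rest is a mechanical iteration of Proposition~\ref{prop:Commutator}.
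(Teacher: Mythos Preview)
Your proof is correct. The iteration via Proposition~\ref{prop:Commutator} is the same idea as in the paper, but you commute on the opposite side: you insert a single cutoff $\chi$ near $\varphi$ (using $\varphi=\varphi\chi$, $\chi\psi=0$) and iterate with the same $\chi$, whereas the paper inserts a nested sequence $\psi_0=\psi,\psi_1,\psi_2,\dots$ on the $\psi$ side (using $\psi_{N}\psi_{N-1}=\psi_{N-1}$, $\varphi\psi_N=0$). The practical difference is that the paper's route yields $\varphi\op(p)\psi=\varphi\op(q_N)$ directly at each step, since $[\op(q_{N-1}),\psi_N]$ is already an $x$-form operator by Proposition~\ref{prop:Commutator}; your route leaves a trailing $\psi$ and therefore requires the extra absorption step $\op(r_N)\psi=\op(r_N(x,\xi)\psi(y))\to\op(q_N)$. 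That step is legitimate---the amplitude is $C^\infty$ in $y$, so the reduction from \cite[Theorem~4.15]{AbelsPfeufferCharacterization} applies exactly as in the proof of Proposition~\ref{prop:Commutator} and preserves the $C^\tau$-regularity, classicality, and parity---but it is work the paper's arrangement avoids. Your parity bookkeeping is also correct: each commutator flips parity (the argument in Proposition~\ref{prop:Commutator} applies verbatim with ``even'' and ``odd'' interchanged), and multiplication by $\psi$ preserves it.
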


\begin{proof} Setting $\psi _0=\psi $, we can for $N=1,2,\dots$ choose
a nested sequence of
$C_b^\infty $-functions $\psi _N$ with supports disjoint from
$\supp\varphi $,
such that $\psi _N$ is 1 on $\supp \psi _{N-1}$ for all
$N$. 
By Proposition~\ref{prop:Commutator}, $
\varphi  \op(p)\psi _0 =\varphi  [\op(p),\psi _0 ]= \varphi \op(q_1)
$
 with $q_1\in C^\tau S^{m-1}_{1,0}(\Rn\times \Rn) $. Since $\psi
 _1\psi _0=\psi _0$, we can repeat the argument with
$$
\varphi  \op(p)\psi _0 =\varphi  \op(p)\psi _0 \psi _1=\varphi  \op(q_1)\psi _1= \varphi  [\op(q_1),\psi _1] =\varphi  \op(q_2),$$
where  $q_2\in C^\tau S^{m-2}_{1,0}(\Rn\times \Rn) $. Continuing in
this way, with the $N$-th step
being
$$
\varphi  \op(p)\psi _0 =\varphi  \op(p)\psi _0 \psi _N=\varphi  \op(q_{N})\psi _{N}= \varphi  [\op(q_{N}),\psi _{N}] =\varphi  \op(q_{N+1}),
$$
shows the main assertion. The last statement follows from the
corresponding statement in Proposition~\ref{prop:Commutator}.
\end{proof}

\section{Nonsmooth Transmission Conditions}\label{sec:TransmissionCond}

\subsection{Transmission conditions for nonsmooth symbols.}

For the consideration of $\psi $do's $P$ on open subsets of $\rn$ one needs
conditions that govern their behavior at a boundary. There have been
many contributions through the times, mainly for $\psi $do's with
smooth $x$-dependence. The {\it transmission property} in case of a
smooth open set $\Omega $ is the property that $r^+Pe^+$ maps  $C^\infty
(\comega)\cap \E'(\rn)$ into $C^\infty (\comega)$. Necessary and sufficient conditions for this
property have been established in several works,
and sufficient conditions have been
introduced under additional requirements (e.g.,
parameter-dependence) --- called {\it transmission
  conditions}. (References are given in Example \ref{ex:transexamples} below.)

For some $\psi $do's $P$, $r^+Pe^+$ does not preserve  $C^\infty
(\comega)$, but maps another space $d^\mu C^\infty (\comega)$ into
$C^\infty (\comega)$; they satisfy the so-called {\it $\mu $-transmission
condition}, where the abovementioned case is the case $\mu =0$.

We now consider nonsmooth situations:  $P=\op(p(x,\xi ))$ is a  $\psi
$do with symbol $p$ in $  C^\tau 
S^m(\Rn\stimes\Rn)$, and it is considered relative to an open subset $\Omega \subset \rn$ with
$C^{1+\tau }$-boundary. The definition of the $\mu
$-transmission condition from \cite{H66} and \cite{G15}  (and
\cite[Section~18.2]{H85}  with a different notation) will here be
generalized to be the
requirement that the difference between $p $ and a certain symbol
obtained by  twisted reflections of the homogeneous symbol terms
vanishes to the order $\tau $ at $\partial\Omega $. In details:

\begin{definition}\label{def:roughmu-transm}
Let $m,\mu \in \R$ and $\tau \in \crp$.  
Let $p(X,\xi )\in C^\tau S^m({\mathbb R}^{n'}\times
{\mathbb R}^n)$ with the expansion in homogeneous terms  $p(X,\xi
)\sim \sum_{j\in{\mathbb N}_0}p_j(X,\xi )$.

$1^\circ$ $p(X,\xi )$ will be said to satisfy the 
$\mu $-transmission condition with respect to $\crnp$ at $X$, when
there holds for all $j\in \mathbb N_0$, $ \alpha  \in{\mathbb N}_0^n$, 
\begin{equation}\label{eq:exroughmu-transm}
 \partial_\xi ^\alpha {p_j}(X,0,-1)=e^{i \pi (m-2\mu -j-|\alpha | )
} \partial_\xi ^\alpha{p_j}(X,0,1).
  \end{equation} 
For $n'=n$, $X=x$,
$p(x,\xi )$ will be said to satisfy the 
$\mu $-transmission condition with respect to $\crnp$, when for all
$x'\in \mathbb R^{n-1}$, $j\in \mathbb N_0$, $ \alpha  \in{\mathbb N}_0^n$, 
\begin{align}\label{eq:roughmu-transm}
\partial_x^\beta \partial_\xi ^\alpha {p_j}(x',0,0,-1)&=e^{i \pi (m-2\mu -j-|\alpha | )
}\partial_x^\beta \partial_\xi ^\alpha{p_j}(x',0,0,1),
\text{ for } |\beta |\le \tau .
\end{align}

$2^\circ$ For $n'=n$ or $2n$, $X=x$ or $(x,y)$,
$p(X,\xi )$ will be said to satisfy the {\bf extended}
$\mu $-transmission condition with respect to $\crnp$, when there is
an $\varepsilon >0$ such that for the points
$X$ in the region $\{0\le x_n<\varepsilon \}$ resp.\ $\{0\le
x_n,y_n<\varepsilon \}$, {\rm \eqref{eq:exroughmu-transm}} holds for  $j\in \mathbb N_0$, $ \alpha  \in{\mathbb N}_0^n$. 
The condition
is said to be {\bf global} if all $\varepsilon >0$ can be used.

$3^\circ$ For an open set $\Omega $ with $C^{1+\tau 
}$-boundary, analogous
conditions are formulated  with $(x',0)$ replaced by
$x_0\in\partial\Omega $, $(0,1)$ replaced
by $\nu (x_0)$, and $x=(x',x_n)$  replaced by
$x=x_0+t\nu (x_0)$, $x_0\in \partial\Omega $ and $0\le
t<\varepsilon $ (with $t$ playing the role of $x_n$).

\end{definition}

Note that in $2^\circ$, equalities for $X$-derivatives as in $1^\circ$
follow simply by differentiating the identities
\eqref{eq:exroughmu-transm} up to order $[\tau ]$. 
Note also
that addition of an integer to $\mu $ does not change the
formulas; the conditions depend only on $\mu (\operatorname{mod}\Z)$. 

We need the extended 0-transmission condition in order to apply the
results of Abels \cite{A05}, where the mapping properties for truncated
integer-order operators depend on an extended definition of Poisson
operators.

\begin{example}\label{ex:transexamples}
  \begin{enumerate}
  \item
  {\it The case $\mu =0$.}  The operators considered by Boutet de
Monvel \cite{B71}, Grubb 
\cite{G84,G96,G09}, Rempel and Schulze \cite{RS82} are of integer
order and satisfy the 0-transmission condition with $\tau =\infty
$. In \cite{B66} also noninteger-order classical $\psi $do's were included. Grubb and H\"ormander \cite{GH90} treated
operators of arbitrary orders with symbols in $S^m_{\varrho
  ,\delta }$-spaces,  giving general conditions that are necessary
and sufficient for the transmission property.
Abels
\cite{A05} introduced a generalization to operators of integer order
with finite $C^\tau
$-smoothness (defining a slightly different version of the global
0-transmission condition).
 
 \item
 {\it General $\mu $.} Simple examples of  symbols satisfying
the global $\mu $-transmission condition with respect to $\crnp$ are
$\chi ^\mu _+(\xi )=(\ang{\xi '}+i\xi _n)^\mu $ and its truly $\psi
$do variant $\lambda ^\mu _+$ (see (2.10)ff.); here $\chi _{+,0}^\mu
(0,\pm 1)=\lambda
_{+,0}^{\mu }(0,\pm 1)=(\pm i)^{\mu }$. Note that $\lambda ^\mu _-$
is of 0-transmission type.

 \item
 {\it Even symbols.}  When $P$ is of order $m=2a$ and
 even, cf.\ \eqref{eq:even}, it satisfies the $a$-transmission condition with respect to any
halfspace, so it in fact fulfills the global $a$-transmission
condition relative to any $C^{1+\tau  }$-domain.
 Examples of operators in this category are: {\it  fractional powers of
elliptic differential operators}, including $(-\Delta )^a$. (Cf.\ 
 Lemma 2.9 and Example 3.2 in \cite{G15}.)
Note that the symbol $p'=p-p_0$, considered as a symbol of order
$m'=m-1$, is {\it odd}. If $m=2a$, hence $m'=2a-1$, $p'$ satisfies
the $a$-transmission condition.
  \end{enumerate}
\end{example}

The conditions are preserved under multiplication in the following way:

\begin{prop}\label{thm:multipl}
When $p(X,\xi )\in C^{\tau }S^{m}({\mathbb R}^{n'}\times
{\mathbb R}^n)$ and
$p'(X,\xi )\in C^{\tau }S^{m'}({\mathbb R}^{n'}\times
{\mathbb R}^n)$, satisfying the (extended) $\mu $-transmission resp.\ $\mu '$-transmission condition,
 then $p(X,\xi )p'(X,\xi )\in C^{\tau }S^{m+m'}({\mathbb R}^{n'}\times
{\mathbb R}^n)$ satisfying the (extended) $(\mu +\mu ')$-transmission condition.
\end{prop}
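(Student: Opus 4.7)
The plan is to check the membership $pp'\in C^{\tau}S^{m+m'}$ by routine application of the Leibniz rule (using that $C^{\tau}(\R^{n'})$ is an algebra for $\tau>0$ and standard symbol estimates), and then to verify the transmission identity \eqref{eq:exroughmu-transm}/\eqref{eq:roughmu-transm} term by term in the homogeneous expansion. The key algebraic observation is that the transmission condition imposes a phase factor $e^{i\pi(m-2\mu-j-|\alpha|)}$ whose ``$m-2\mu$'' part is additive in products and whose ``$-j-|\alpha|$'' part is exactly matched by the Leibniz split, so all cross-terms carry the \emph{same} phase.

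In detail, I would first write the expansion of the product: if $p\sim\sum_j p_j$ and $p'\sim\sum_{j'} p'_{j'}$ with $p_j$, $p'_{j'}$ homogeneous in $\xi$ of degree $m-j$, $m'-j'$ for $|\xi|\ge 1$, then formally $pp'\sim\sum_k q_k$ with
\[
q_k(X,\xi)=\sum_{j+j'=k}p_j(X,\xi)p'_{j'}(X,\xi),
\]
homogeneous of degree $(m+m')-k$. The asymptotic character of this expansion in $C^{\tau}S^{m+m'}_{1,0}$ follows from \eqref{eq:2.7} applied to each factor together with the Leibniz rule on $C^{\tau}$-norms in $X$ and the Peetre-type estimates in $\xi$. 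Next, for each $k$ and each multi-index $\alpha$, Leibniz gives
\[
\partial_\xi^\alpha(p_jp'_{j'})(X,0,-1)
=\sum_{\beta\le\alpha}\binom{\alpha}{\beta}(\partial_\xi^\beta p_j)(X,0,-1)(\partial_\xi^{\alpha-\beta}p'_{j'})(X,0,-1).
\]
Applying the individual transmission conditions to each factor inside the sum yields the phase
\[
e^{i\pi(m-2\mu-j-|\beta|)}\cdot e^{i\pi(m'-2\mu'-j'-|\alpha-\beta|)}
=e^{i\pi((m+m')-2(\mu+\mu')-(j+j')-|\alpha|)},
\]
which is \emph{independent} of the Leibniz index $\beta$. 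Factoring it out of the sum and reassembling Leibniz at $(X,0,1)$ gives
\[
\partial_\xi^\alpha(p_jp'_{j'})(X,0,-1)
=e^{i\pi((m+m')-2(\mu+\mu')-k-|\alpha|)}\partial_\xi^\alpha(p_jp'_{j'})(X,0,1),
\]
and summing over $j+j'=k$ gives \eqref{eq:exroughmu-transm} for $q_k$ with the parameters $m+m'$, $\mu+\mu'$.

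For part $1^\circ$ of Definition~\ref{def:roughmu-transm} (symbols in $C^{\tau}$ in $X$), the $X$-derivative conditions \eqref{eq:roughmu-transm} up to order $[\tau]$ are obtained by differentiating the established identity in $X=x$; since the phase factor is constant in $X$, each $\partial_x^\beta$ passes through freely, and the $C^\tau$-norm control inherent in the Leibniz product in $x$ is what makes $q_k$ have the required $C^{\tau-|\beta|}$-regularity on both sides. For the \emph{extended} version $2^\circ$, the same pointwise Leibniz-plus-phase computation is carried out at each $X$ in the relevant strip $\{0\le x_n<\varepsilon\}$ (or $\{0\le x_n,y_n<\varepsilon\}$), using the common $\varepsilon$ that works for both $p$ and $p'$ (or the minimum of the two, which is still positive; and in the global case, any $\varepsilon$ works for both factors). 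For $3^\circ$ on a $C^{1+\tau}$-domain $\Omega$, the identities are applied at each boundary point $x_0$ with the inner normal $\nu(x_0)$ in place of $(0,1)$, which needs no extra work since the argument is pointwise in $X$ and in the chosen direction.

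The only step that needs a moment's care is the verification that $q_k$ really is the $k$-th homogeneous term of a valid asymptotic expansion of $pp'$ in the class $C^{\tau}S^{m+m'}_{1,0}$; this is standard but should be recorded, controlling the remainder $pp'-\sum_{k<K}q_k$ via the remainder estimates on $p-\sum_{j<J}p_j$ and $p'-\sum_{j'<J'}p'_{j'}$ combined with the Leibniz rule in $C^\tau$ and the symbol inequalities in $\xi$. The transmission identity itself, which is the genuine content of the proposition, then requires no more than the one-line phase computation above, so I do not anticipate any substantive obstacle.
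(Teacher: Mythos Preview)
Your proposal is correct and follows essentially the same approach as the paper's proof, which is even terser: the paper simply notes that $C^\tau$ is closed under multiplication, identifies $(pp')_j=\sum_{k+l=j}p_kp'_l$, and records the single phase identity $p_k(X,0,-\xi_n)p'_l(X,0,-\xi_n)=e^{i\pi(m+m'-2(\mu+\mu')-j)}p_k(X,0,\xi_n)p'_l(X,0,\xi_n)$. Your explicit Leibniz computation with $\partial_\xi^\alpha$ and the observation that the phase is independent of the Leibniz index $\beta$ spell out what the paper leaves implicit, but the argument is the same.
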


\begin{proof}
This follows straightforwardly from the definition, using that $C^\tau
$ is  preserved under multiplication. The $j$'th term in $pp'$ is 
$(pp')_j={\sum}_{k+l=j}p_kp'_l$, homogeneous of order $m+m'-j$,
where 
$$
p_k(X,0,-\xi _n)p'_l(X,0,-\xi _n)=e^{i\pi (m+m'-2(\mu +\mu ')-j)
}p_k(X,0,\xi _n)p'_l(X,0,\xi _n).
$$
\end{proof}

The $0$-transmission condition for $p(X,\xi )$ can also be expressed by formulations in terms of
$\tilde p(X,\xi ',z_n)=\F^{-1}_{\xi _n\to z_n}p$, where it means
smoothness from the right:

\begin{theorem}\label{thm:0transestimates} Let $m \in{\mathbb R}$ and $\tau \in
  \crp$, and let $p(X,\xi )\in C^\tau S^m({\mathbb R}^{n'}\times
{\mathbb R}^n)$. Then
$p$ satisfies the $0$-transmission condition with respect to $\crnp$ at $X$ 
({\rm \eqref{eq:exroughmu-transm}} with $\mu =0$) if and only if 
$r^+\tilde p(X,\xi ',z_n)=r^+\mathcal F^{-1}_{\xi _n\to z_n}p(X,\xi )$
satisfies: 
\begin{equation} \label{eq:right-smooth}
r^+z_n^{\alpha _n}\partial_{\xi '}^{\alpha '}\tilde
p_j(X,0,z_n)\in C^\infty (\crp),\text{ all }j\in\N_0,\alpha \in \N_0^n.
\end{equation}

When $p(X,\xi )$ satisfies the extended $0$-transmission condition
with respect to $\crnp$,
$\tilde p$ moreover satisfies
the estimates, for all $j,k,l\in \mathbb N_0,\alpha \in \mathbb
N_0^{n-1}$:
\begin{align}\label{eq:0transestimates}
\|z_n^k\partial_{z_n}^l\partial_{\xi '}^\alpha & r^+\tilde p_j(.,\xi
',z_n)\|_{C^\tau (U;L_{2,z_n}(\rp))}\le C_{k,l,\alpha
}\ang{\xi '}^{m+\frac12-k+l-j-|\alpha |},\\
\|z_n^k\partial_{z_n}^l\partial_{\xi '}^\alpha & r^+\tilde p(.,\xi
',z_n)\|_{C^\tau (U;L_{2,z_n}(\rp))}\le C_{k,l,\alpha
}\ang{\xi '}^{m+\frac12-k+l-|\alpha |},\label{eq:full0transestimates}
\end{align}
where $U = \{x\in\R^n\mid x_n\in [0,\eps)\}$ if $n'=n$ and $U =
\{(x,y)\in \R^{2n}\mid  x_n,y_n\in [0,\eps)\}$ if $n'=2n$. In the case
that $p$ satisfies the global $0$-transmission condition with respect
to $\ol{\R}^n_+$, the estimates
\eqref{eq:0transestimates}--\eqref{eq:full0transestimates} hold true
with $U=\crnp$ resp.\ $\crnp\times\crnp$.
\end{theorem}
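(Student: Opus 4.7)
The equivalence of the $0$-transmission condition at $X$ with the smoothness-from-the-right condition \eqref{eq:right-smooth} is the classical Boutet de Monvel result for integer order, and its extension by Grubb and H\"ormander to arbitrary order, applied pointwise in $X$; I would follow the line of argument in \cite{H85,G96}. The key observation is that for $p_j(X,\cdot)$ truly homogeneous of degree $m-j$ in $\xi$ for $|\xi|\ge 1$ (its behavior for $|\xi|<1$ being of no consequence for the right-smoothness of $\tilde p_j$), the distribution $r^+\F^{-1}_{\xi_n\to z_n}p_j(X,0,\cdot)$ extends smoothly to $z_n=0$ precisely when the $\pm\infty$-asymptotics of $p_j(X,0,\xi_n)$ are related by the phase $e^{i\pi(m-j)}$, which is exactly \eqref{eq:exroughmu-transm} with $\mu=0$. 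Differentiations $\partial_{\xi'}^{\alpha'}$ commute with $\F^{-1}_{\xi_n\to z_n}$, and multiplication by $z_n^{\alpha_n}$ corresponds to $D_{\xi_n}^{\alpha_n}$ on the symbol side, so the full family in \eqref{eq:right-smooth} follows, with the converse read off the behavior of $\F\tilde p_j$ at $\xi_n\to\pm\infty$.

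For the estimate \eqref{eq:0transestimates}, I would exploit homogeneity by rescaling $\xi_n=\ang{\xi'}t$ and then $w=\ang{\xi'}z_n$. Since $p_j$ is homogeneous of degree $m-j$, $p_j(X,\xi',\ang{\xi'}t)$ produces a factor $\ang{\xi'}^{m-j}$ times a function of $(X,\xi'/\ang{\xi'},t)$ that is $C^\tau$ in $X$ and smooth in the remaining arguments, uniformly on the compact set $\{|\xi'/\ang{\xi'}|\le 1\}$. Standard bookkeeping then collects the factors: $\ang{\xi'}$ from $d\xi_n$, $\ang{\xi'}^{-1/2}$ from the change of variable in the $L_{2,z_n}(\rp)$-norm, $\ang{\xi'}^{l-k}$ from $z_n^k\partial_{z_n}^l$, and $\ang{\xi'}^{-|\alpha|}$ from $\partial_{\xi'}^\alpha$, yielding exactly $\ang{\xi'}^{m+\frac12-k+l-j-|\alpha|}$. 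Finiteness of the resulting $L_{2,w}(\rp)$-norm, with $C^\tau(U)$-bounds in $X$, follows from the smoothness from the right established in the first part, which places the Fourier transform in the Schwartz class restricted to $\rp$, uniformly in the angular parameter.

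For \eqref{eq:full0transestimates} on the full symbol, I would expand $p=\sum_{j<J}p_j+r_J$ with $r_J\in C^\tau S^{m-J}_{1,0}$, choosing $J$ large enough that Plancherel in $\xi_n$ bounds the $L_{2,z_n}$-norm of $z_n^k\partial_{z_n}^l\partial_{\xi'}^\alpha r^+\tilde r_J$ by $\ang{\xi'}^{m-J-k+l-|\alpha|+\frac12}\le \ang{\xi'}^{m+\frac12-k+l-|\alpha|}$ without any use of the transmission condition; this step requires only $J\ge 0$ for the inequality, together with $J>m-k+l-|\alpha|+\tfrac12$ for convergence of the Plancherel integral, and the $C^\tau$-dependence in $X$ is passed through directly. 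The finite sum of $\tilde p_j$'s is then handled by \eqref{eq:0transestimates}, and the low-frequency truncation (where $|\xi|\le 1$) contributes a Schwartz-in-$z_n$ function of $X$ with $C^\tau(U)$-controlled seminorms, which is absorbable into the right-hand side.

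The main obstacle is promoting the pointwise-in-$X$ transmission condition to uniform estimates on the region $U$ in the extended, non-global case, where the condition \eqref{eq:exroughmu-transm} is only assumed for $X$ with $x_n<\eps$ (respectively $x_n,y_n<\eps$). I would handle this by inserting a smooth cutoff $\chi(X)$ supported in $U$; since $\chi$ depends only on $X$ it commutes with $\F^{-1}_{\xi_n\to z_n}$, and the entire analysis above applies unchanged to $\chi\cdot p$, while the complementary $(1-\chi)p$ contributes only outside $U$ where no estimate is claimed. In the global case one simply takes $\chi\equiv 1$, which yields the estimates on all of $\crnp$ (respectively $\crnp\times\crnp$ when $n'=2n$).
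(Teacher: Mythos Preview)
Your treatment of the equivalence and of the full-symbol estimate \eqref{eq:full0transestimates} matches the paper's: both invoke the classical Boutet de Monvel and Grubb--H\"ormander results pointwise in $X$, and both handle the full symbol by splitting off a finite sum $\sum_{j<J}p_j$ and controlling the remainder $r_J$ via Plancherel and symbol estimates.

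For the estimate \eqref{eq:0transestimates} on the homogeneous terms, your rescaling route differs from the paper's. The paper does not rescale; it first obtains \eqref{eq:0transestimates} for all $k>m+l-|\alpha|+\tfrac12$ directly from symbol estimates via Plancherel (no transmission condition needed), then uses \eqref{eq:right-smooth} to extend to small $k$ at $\xi'=0$ (this is \eqref{eq:plus-limit}), and finally passes to general $\xi'$ by a Taylor expansion in $\xi'$, referring to \cite[Theorem~2.6]{G15}. The paper also reduces general $m$ to $m<-1$ by multiplying by $(1+\sum_l\xi_l^{2r})^{-1}$, which your rescaling does not obviously bypass: for $m-j\ge -\tfrac12$ the rescaled function $t\mapsto p_j(X,\omega,t)$ is not in $L_2(dt)$ and the inverse Fourier transform must be taken distributionally.

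There is a genuine gap in your argument at the phrase ``uniformly in the angular parameter.'' The smoothness-from-the-right \eqref{eq:right-smooth} is established only at $\xi'=0$, i.e., at $\omega=\xi'/\ang{\xi'}=0$. For $\omega\ne 0$, the asymptotics of $p_j(X,\omega,t)$ as $|t|\to\infty$ are $p_j(X,\omega,t)=|t|^{m-j}p_j(X,\omega/|t|,\pm 1)\sim\sum_{|\alpha|\ge 0}\tfrac{1}{\alpha!}\omega^\alpha\,\partial_{\xi'}^\alpha p_j(X,0,\pm 1)\,|t|^{m-j-|\alpha|}$, and each subleading term produces a singularity of $\tilde g$ at $w=0$ of order $|w|^{-(m-j-|\alpha|)-1}$. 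Cancelling these from the right requires precisely the transmission condition on every $\partial_{\xi'}^\alpha p_j(X,0,\pm 1)$, and making this uniform in $\omega$ (with remainder control) is exactly the Taylor-expansion step the paper invokes. Once you insert that step, your rescaling route and the paper's direct route coincide; without it, your argument only yields \eqref{eq:0transestimates} for $k$ large.
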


\begin{proof}
Many of the ingredients in the proof were already present in 
\cite{B66}. The details we give below make use of later
developments.

First let  $m\in\Z$. Here \eqref{eq:roughmu-transm} takes the form
\begin{equation}\label{eq:int0transm}
\partial_\xi ^\alpha {p_j}(X,0,-\xi _n)=(-1)^{m-j-|\alpha | 
}\partial_\xi ^\alpha{p_j}(X,0,\xi _n),
\end{equation}
and clearly holds also with $\xi _n$ and $-\xi _n$ interchanged (is in
some texts in fact written as such), so it is {\it two-sided}, valid also with
respect to $\crnm$. In \cite{B71}, and in many later works, e.g.\
\cite{G96,G09,S01,A05}, it is replaced by a condition where $p$ as a
function of $\xi _n$ takes
values in the space $\HS=\F_{z_n\to \xi _n}(e^+\SD(\crp)\oplus e^-\SD(\crm))\oplus\C[\xi  _n]$ with certain estimates; they
imply  \eqref{eq:right-smooth}--\eqref{eq:full0transestimates}, and
vice versa. The equivalence of \eqref{eq:int0transm}
with the $\HS$-estimates is shown in \cite{B71}, and in \cite[Theorem~2.2.5]{G96} in a situation with a parameter. 
We shall
not take up further space  here with details.

Now let $m\in \R\setminus \Z$. First consider the case where $m<-1$.
We study each homogeneous term in the symbol individually; take for
example $p_0$. 
Consider  $p_0(X,0,\xi _n)$ at a fixed $X$. It is
homogeneous in $\xi _n$ of degree $m $ for $|\xi _n|\ge 1$. By the
rules of Fourier transformation of
homogeneous functions of one variable we have, as shown in detail  e.g.\ in  Lemma
2.7 of \cite{G15} (which takes the behavior for $|\xi _n|\le 1$ into
account), that the twisted parity property 
\begin{equation*}
p_0(X,0,-\xi
_n)=e^{i\pi m }p_0(X,0,\xi _n)\text{ for }|\xi _n|\ge 1,
\end{equation*} 
holds if and only if $r^+\tilde
p_0(X,0,z_n)$ is zero on $\rp$ modulo $C^\infty (\crp) $. Likewise,
the $\xi $-derivatives have the corresponding twisted parity if and
only if $r^+z_n^{\alpha _n}\partial_{\xi '}^{\alpha '}\tilde
p_0(X,0,z_n)$ is zero on $\rp$ modulo $C^\infty (\crp) $.
 This shows
the equivalence with  \eqref{eq:right-smooth} for $j=0$.

The consequences can be further analyzed:
Since $p_0(X,\xi )$ is a symbol of order  $m $, one has that 
\begin{equation*}
\|\partial_{\xi _n}^k \xi _n^l\partial_{\xi '}^\alpha 
p_0\|_{L_{2,\xi_n}({\mathbb R})} \le C_{k,l,\alpha }\ang{\xi
  '}^{m+\frac12-k+l-|\alpha |},\text{ for }k>m+l-|\alpha |+2, 
\end{equation*}
and hence
$z_n^k\partial_{z_n}^l\partial_{\xi '}^\alpha \tilde p_0$ is in
$L_{2,z_n}({\mathbb R})$ and
\begin{equation}\label{eq:plus-estimate}
\|z_n^k\partial_{z_n}^l\partial_{\xi '}^\alpha r^+\tilde
p_0\|_{L_{2,z_n}(\rp)} \le \|z_n^k\partial_{z_n}^l\partial_{\xi '}^\alpha
\tilde p_0\|_{L_{2,z_n}({\mathbb R})}\le C_{k,l,\alpha }\ang{\xi '}^{m+\frac12-k+l-|\alpha |}, 
\end{equation}
for such indices. When $\xi '=0$, the left entry is moreover, a fortiori,
bounded for all lower values of $k\in\mathbb N_0$,
 in view of the smoothness for $z_n\to 0+$ shown above. 
In particular, 
\begin{equation}\label{eq:plus-limit}
r^+\partial_{\xi '}^{\alpha }\tilde p_0(X,0,z_n)\in \mathcal S(\crp)
\end{equation}
 as a function of
$z_n$.

We can extend
 the estimates to $\xi '\ne 0$ by a Taylor expansion in $\xi '$, using
 the estimates for $\xi '=0$ (and handling remainders by use of symbol estimates), as in
 the detailed proof of Theorem 2.6 in \cite{G15}.

The estimates moreover hold with $\partial_{X}^\beta $ inserted, for
$|\beta |\le [\tau ]$. 
This shows that estimates  \eqref{eq:0transestimates}  hold for $p_0$ with $\tau $
replaced by $[\tau ]$. When $\tau =[\tau ]+\sigma $, $\sigma \in
(0,1)$, we also apply the considerations to $(\partial_{}^\beta p_0(X,\xi
)-\partial_{x}^\beta p_0(Y,\xi ))/|X-Y|^\sigma $ for $|\beta |=[\tau ]$, $X\ne Y$;
these functions likewise have the twisted parity property allowing to conclude
the smoothness for $z_n\to 0+$ of the inverse Fourier transformed
function when $\xi '=0$, with
estimates as above. This can, as above, be extended to estimates of the type
\eqref{eq:0transestimates} with $\tau $ replaced by 0. The validity
with uniform bounds in $X,Y$ then implies that \eqref{eq:0transestimates}
 holds for $p_0$.

There are similar proofs for the other terms $p_j$.

Larger values of $m$ are included as follows: When a positive
integer $r$ is chosen so large that $m-2r<-1$, then the above analysis
applies to $q(X,\xi )=p(X,\xi
)(1+\sum_{l=1}^n\xi _l^{2r})^{-1}$. Now $p=q+\sum_{l=1}^{n-1}q\xi
_l^{2r}+q\xi _n^{2r}$. The above analysis carries over directly to the
first two terms. The third term leads to the function
$r^+D_{z_n}^{2r}\tilde q$, which is also included in the analysis.

For the last estimate \eqref{eq:full0transestimates}, we use that
$p-\sum_{j<J}p_j$ satisfies estimates like \eqref{eq:plus-estimate}
with $m\to -\infty $ when $J\to\infty $, so that when the term is
added to the finite sum for $j<J$, the estimates cover more of the
desired indices, the larger $J$ is taken.
\end{proof}

\begin{corollary}\label{cor:mutransestimates} Let $m,\mu  \in{\mathbb R}$ and $\tau \in
  \crp$, and let $p(X,\xi )\in C^\tau S^m({\mathbb R}^{n'}\times
{\mathbb R}^n)$. Then $p$ satisfies the (extended)  $\mu $-transmission
condition with respect to $\crnp$ if and only if
 $b(X,\xi )=p(X,\xi )\lambda _+^{-\mu }(\xi )$ satisfies the equivalent
conditions in Theorem {\rm \ref{thm:0transestimates}} with $m$
replaced by $m-\mu $.

One can here replace $\lambda _+^{-\mu }(\xi )$ by any other invertible
symbol $l(X,\xi )\in C^\tau S^{-\mu }({\mathbb R}^n\times
{\mathbb R}^{n'}) $ satisfying the (extended) $(-\mu )$-transmission condition, for
which $1/l(X,\xi )$ is in $ C^\tau S^{\mu }({\mathbb R}^{n'}\times
{\mathbb R}^n) $ satisfying the (extended) $\mu $-transmission condition.
 
\end{corollary}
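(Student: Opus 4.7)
The plan is to reduce everything to the multiplication rule in Proposition~\ref{thm:multipl} combined with Theorem~\ref{thm:0transestimates}. First I would observe that the order-reducing symbol $\lambda_+^{-\mu}(\xi)$ lies in $S^{-\mu}(\R^n\times\R^n)$ (it is independent of $X$, so it is trivially $C^\tau$ in $X$ for any $\tau$), and by Example~\ref{ex:transexamples}(ii) it satisfies the global $(-\mu)$-transmission condition with respect to $\crnp$. Its homogeneous expansion starts with $\chi_+^{-\mu}(\xi)=(\ang{\xi'}+i\xi_n)^{-\mu}$ plus lower-order terms (after accounting for the cut-off inherent in $\lambda$), all of $(-\mu)$-transmission type.

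Assume now that $p\in C^\tau S^m(\R^{n'}\times\R^n)$ satisfies the (extended) $\mu$-transmission condition with respect to $\crnp$. Then $b:=p\cdot\lambda_+^{-\mu}\in C^\tau S^{m-\mu}(\R^{n'}\times\R^n)$ by the symbol calculus, and by Proposition~\ref{thm:multipl} it satisfies the (extended) $(\mu+(-\mu))=0$-transmission condition. Theorem~\ref{thm:0transestimates} then provides the equivalent formulations in terms of $\tilde b(X,\xi',z_n)$. Conversely, if $b\in C^\tau S^{m-\mu}$ satisfies the (extended) $0$-transmission condition, then because $\lambda_+^{-\mu}(\xi)\lambda_+^{\mu}(\xi)\equiv1$ on $\R^n$ and $\lambda_+^\mu$ satisfies the global $\mu$-transmission condition (same reasoning as for $\lambda_+^{-\mu}$), we can write $p=b\cdot\lambda_+^\mu$ and apply Proposition~\ref{thm:multipl} once more to conclude that $p$ satisfies the (extended) $\mu$-transmission condition. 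In both directions the $\varepsilon$-neighborhood in the extended case is preserved verbatim, since $\lambda_+^{\pm\mu}$ is globally of transmission type and the multiplication is pointwise in $X$; the global case is similarly preserved.

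For the final assertion, replace $\lambda_+^{-\mu}$ by an arbitrary invertible $l(X,\xi)\in C^\tau S^{-\mu}(\R^{n'}\times\R^n)$ satisfying the (extended) $(-\mu)$-transmission condition, such that $1/l\in C^\tau S^\mu$ satisfies the (extended) $\mu$-transmission condition. Exactly the same two applications of Proposition~\ref{thm:multipl} apply: $p\cdot l$ is in $C^\tau S^{m-\mu}$ and of $0$-transmission type whenever $p$ is of $\mu$-transmission type, while conversely $p=(p\cdot l)\cdot(1/l)$ recovers $p$ from a $0$-transmission factor and a $\mu$-transmission factor, giving $p$ of $\mu$-transmission type. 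The hypothesis that $1/l$ lie in $C^\tau S^\mu$ with the appropriate transmission property is precisely what is needed so that the backward direction works; without it, inverting $l$ could spoil either the symbol estimates or the twisted parity.

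The main (modest) obstacle is purely bookkeeping: verifying that ``extended'' means the same $\varepsilon$-neighborhood on both sides and that no $X$-regularity is lost under the product, which is clear because $\lambda_+^{\pm\mu}$ is $X$-independent and of class $S^{\pm\mu}$, and pointwise multiplication preserves $C^\tau$. No new analysis is required beyond the multiplication lemma and Theorem~\ref{thm:0transestimates}.
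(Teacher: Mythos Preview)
Your proposal is correct and follows essentially the same approach as the paper: both reduce the equivalence to the multiplication rule for transmission conditions together with Theorem~\ref{thm:0transestimates}. The only difference is cosmetic---the paper verifies the forward direction by an explicit computation of $b_0(X,0,-1)-e^{i\pi(m-\mu)}b_0(X,0,1)$ using $\lambda_+^{-\mu}(0,\pm 1)=e^{\mp i\mu\pi/2}$, whereas you invoke Proposition~\ref{thm:multipl} directly for both directions; the paper itself cites Proposition~\ref{thm:multipl} for the final assertion about general $l$, so your packaging is in fact slightly cleaner.
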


\begin{proof}
Note that  $b=p\lambda _+^{-\mu}$ is of order $m'=m-\mu $ with
homogeneous terms  $b_j=p_j\lambda _+^{-\mu}$ of degree $m'-j$.
The system of identities  \eqref{eq:roughmu-transm} for $p$ is equivalent with
the analogous system of identities for $b$ with $m,\mu $ replaced by
$m-\mu ,0$.
To check this,
it suffices in view of the homogeneity to evaluate the symbols for $\xi _n=\pm
1$. Since $\lambda
_+^{-\mu }(0,\pm 1)=(\pm i)^{-\mu }=e^{\mp i\mu \pi /2}$, $b_0$ satisfies
\begin{align*}
b_0(X,0,-1)&-e^{i\pi (m-\mu )}b_0(X,0,1 )=p_0(X,0,-1)e^{i\mu \pi  /2}
-e^{i\pi (m-\mu )}p_0(X,0,1)e^{-i\mu \pi/2}\\
&= e^{i\mu \pi
  /2}[p_0(X,0,-1)-e^{i\pi (m-2\mu )}p_0(X,0,1)] =0,
\end{align*}
by the hypothesis on $p_0$. The  lower-order terms and derivatives  are
checked  similarly, and then Theorem \ref{thm:0transestimates}
readily applies.

The last statement is likewise straightforward, in view of Proposition \ref{thm:multipl}.
\end{proof}

As a special case, the even $2a$-order symbols fit into the setup as follows:

\begin{example}\label{ex:evensymbols}
Let $a\in{\mathbb R}$, and let $p(X,\xi )\in C^\tau S^{2a}({\mathbb
  R}^{n'}\stimes{\mathbb R}^n)$.
 Assume that $p$ is {\it even}, cf.\ Example  \ref{ex:transexamples}.  Then $p$
satisfies the global  $a$-transmission condition with respect to $\crnp$
with $m=2a$,  and $b=p\lambda
_+^{-a}$ is in $C^\tau S^{a}({\mathbb
  R}^{n'}\stimes{\mathbb R}^n)$ satisfying the global
$0$-transmission condition with respect to $\crnp$. 
\end{example}

\subsection{Mapping properties over the
  halfspace and smooth sets}

In preparation for showing mapping properties of $\psi $do's $P=\op(p(x,\xi ))$
truncated to the halfspace $\rnp$, we shall consider the 
Poisson-type operators that arise in connection with the
truncation. Recall that  there holds
(as a version of Green's formula): 
$$
D_n^ke^+u=e^+D_n^ku-i{\sum}_{l=0}^{k-1}\gamma ^c_lu(x')\otimes D_n^{k-1-l}\delta (x_n),
$$
cf.\ e.g.\ \cite[(2.2.39)]{G96}. (We are here using the complex trace
operator $\gamma ^c_ju(x')=D_n^ju(x',0)=i^{-j}(\partial_n^ju)|_{x_n=0}$.)  Then when $r^+P$ is applied
to the extension by zero of a normal derivative of a function $u\in
\mathcal S(\crnp)$ --- which will usually have a
jump at $x_n=0$ --- one finds
\begin{align}\nonumber
r^+PD_n^ke^+u&-r^+Pe^+D_n^ku=-ir^+P{\sum}_{l=0}^{k-1}\gamma ^c_lu\otimes
  D_n^{k-1-l}\delta (x_n)=  -i{\sum}_{l=0}^{k-1}K_{p,{k-1-l}}\gamma ^c_lu,\\\text{ with }
  &K_{p,r}v=r^+P(v(x')\otimes D_n^r\delta (x_n)).  \label{eq:inducedPoisson}                   
\end{align}

The application of the $\psi $do $P=\op(p(x,\xi ))$ is understood as
an application by Fourier transformation for each fixed $x$ (considered as a
parameter).
If $p$ is independent of $x_n$, satisfying the 0-transmission
condition with respect to $\crnp$, $K_{p,0}$ is the Poisson operator with
symbol-kernel $\tilde k(x',z_n,\xi ')=r^+\tilde p(x',\xi ',z_n)$, as
defined in \cite{B71,G96,G09}. If $p$ depends on
$x_n$, one can for smooth symbols use an
expansion derived from the Taylor expansion of $p$ in $x_n$ to define the
operator, but in case of limited smoothness in $x$, this is
unsatisfactory. In \cite{A05}, this point is solved for nonsmooth
symbols  by allowing a more
general definition of Poisson operators incorporating the
$x_n$-dependence, and requiring the {\it global} 0-transmission condition for
the involved $\psi $do's.

The estimates in  Theorem \ref{thm:0transestimates} assure precisely
that when $p(x,\xi )\in C^\tau S^m(\rn\times\rn)$ satisfies the global
$0$-transmission condition introduced in  Definition
\ref{def:roughmu-transm}, 
the function $r^+\tilde p$ 
is a Poisson symbol-kernel as in
\cite[Definition~4.1]{A05} with $d=m$, lying in the space  
$C^\tau
S^{m}_{1,0}(\mathbb R^n\stimes \mathbb R^{n-1},\mathcal S(\crp))$
defined there. The general definition of a Poisson operator from a
symbol-kernel $\tilde k(x,\xi ',z_n)\in C^\tau
S^{m}_{1,0}(\mathbb R^n\stimes \mathbb R^{n-1},\mathcal S(\crp))$ is
\begin{equation}\label{eq:PoissonOP}
\OPK(\tilde k(x,\xi ',z_n))v=\int_{\R^{n-1}} e^{ix'\cdot \xi
  '}\tilde k(x,\xi ',z_n)\hat v(\xi ')\,\dd\xi ' \Big|_{z_n=x_n}.
\end{equation}

For the operator in \eqref{eq:inducedPoisson}, the calculation is, when  $v\in\mathcal S(\mathbb R^{n-1})$ and rules for Fourier
transformation of distributions are applied:
\begin{align*}
  K_{p,r}v&=
                          r^+P(v(x')\otimes D_n^r\delta (x_n))
  =r^+\mathcal F^{-1}_{\xi \to x}[p(x,\xi ',\xi _n)\hat v(\xi ')\xi
    _n^r]\\
   & =r^+\mathcal F^{-1}_{\xi '\to x'}[D_{z_n}^r\tilde p(x,\xi ',z_n)\hat v(\xi ')] \big|_{z_n=x_n}\\
&=\mathcal F^{-1}_{\xi '\to x'}[\tilde k_{p,r}(x,\xi ',z_n)\hat v(\xi ')] \big|_{z_n=x_n}=\OPK(\tilde k_{p,r}(x,\xi ',z_n))v,
\end{align*}
where
\begin{equation}\label{eq:inducedPoissonsym}                   
  \tilde k_{p,r}(x,\xi
  ',z_n)=r^+\mathcal F^{-1}_{\xi _n\to z_n}[p(x,\xi ',\xi _n)\xi
  _n^r]=r^+D_{z_n}^r\tilde p(x,\xi ',z_n),
\end{equation}
a Poisson symbol-kernel in $C^\tau
S^{m+r}_{1,0}(\mathbb R^n\stimes \mathbb R^{n-1},\mathcal
S(\crp))$. We have shown:

\begin{lemma}\label{lem:inducedPoisson} Let $p(x,\xi)\in C^\tau 
  S^m(\Rn\stimes\Rn)$ 
  satisfy the global  $0$-transmission condition, let $r\in\mathbb N_0$, and
  define $K_{p,r}$ by {\rm \eqref{eq:inducedPoisson}}.  Then
  $K_{p,r}$ is the Poisson operator $\operatorname{OPK}(\tilde
  k_{p,r})$, where $\tilde k_{p,r}(x,\xi ',z_n)\in              
C^\tau S^{m+r}_{1,0}(\mathbb R^n\stimes \mathbb R^{n-1},\mathcal
  S(\crp))$ is defined by {\rm  \eqref{eq:inducedPoissonsym}}.                   
\end{lemma}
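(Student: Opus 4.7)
The plan is straightforward since the computation has essentially been exhibited in the paragraph preceding the lemma; the task reduces to justifying that calculation rigorously and verifying membership of $\tilde k_{p,r}$ in the claimed symbol-kernel class.

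First I would treat $\op(p(x,\xi))$ applied to $v(x')\otimes D_n^r\delta(x_n)$ as a Fourier multiplier action for each fixed $x$ viewed as a parameter (this is legitimate because $p(x,\cdot)$ is a polynomially bounded smooth symbol in $\xi$, so it acts on $\mathcal{S}'(\R^n)$). Since the Fourier transform of $v(x')\otimes D_n^r\delta(x_n)$ is $\hat v(\xi')\xi_n^r$, one obtains
\[
K_{p,r}v(x)=r^+\F^{-1}_{\xi\to x}\bigl[p(x,\xi)\hat v(\xi')\xi_n^r\bigr]=r^+\F^{-1}_{\xi'\to x'}\Bigl[\hat v(\xi')\,\F^{-1}_{\xi_n\to z_n}\bigl[p(x,\xi',\xi_n)\xi_n^r\bigr]\Big|_{z_n=x_n}\Bigr],
\]
and pulling $\xi_n^r$ inside the $\xi_n$-inverse Fourier transform turns it into $D_{z_n}^r$, giving exactly $\tilde k_{p,r}(x,\xi',z_n)=r^+D_{z_n}^r\tilde p(x,\xi',z_n)$ as in \eqref{eq:inducedPoissonsym}. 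Restricting to $z_n=x_n$ (with $x_n>0$ since we have applied $r^+$) identifies this expression with $\OPK(\tilde k_{p,r})v$ via the definition \eqref{eq:PoissonOP}.

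Second, I would verify the symbol-kernel class. Since $p$ satisfies the global $0$-transmission condition, Theorem~\ref{thm:0transestimates} applies and gives \eqref{eq:right-smooth} (so $r^+\tilde p(x,\xi',\cdot)\in\mathcal{S}(\crp)$, with all $z_n$-derivatives and $z_n$-multiples remaining in $\mathcal{S}(\crp)$) together with the estimates \eqref{eq:full0transestimates}. Applying $D_{z_n}^r$ preserves Schwartz behavior and shifts the order of the estimates by $r$: for all $k,l\in\N_0$ and $\alpha\in\N_0^{n-1}$,
\[
\|z_n^k\partial_{z_n}^l\partial_{\xi'}^\alpha r^+D_{z_n}^r\tilde p(\cdot,\xi',z_n)\|_{C^\tau(\R^n;L_{2,z_n}(\rp))}\le C_{k,l,\alpha}\,\ang{\xi'}^{(m+r)+\frac12-k+l-|\alpha|}.
\]
These are exactly the estimates defining the class $C^\tau S^{m+r}_{1,0}(\R^n\times\R^{n-1},\mathcal{S}(\crp))$ from \cite{A05}, so $\tilde k_{p,r}$ belongs to it.

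The only point requiring care is the interchange of the inverse Fourier transforms in $\xi'$ and $\xi_n$ and the evaluation at $z_n=x_n$ for $x_n>0$; this is handled by density, first proving the identity for $v\in\mathcal{S}(\R^{n-1})$ (where all distributions are tempered and Fubini applies after suitable regularization in $\xi_n$), and then extending by continuity using the Poisson mapping properties carried by the symbol-kernel estimates just verified. No genuine obstacle arises here because both sides define continuous operators from $\mathcal{S}(\R^{n-1})$ to $\mathcal{D}'(\rnp)$ agreeing on the Schwartz class.
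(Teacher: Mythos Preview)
Your proposal is correct and follows essentially the same approach as the paper: the computation identifying $K_{p,r}$ with $\OPK(\tilde k_{p,r})$ is precisely the displayed calculation preceding the lemma, and the symbol-kernel class membership is exactly what the paper derives from Theorem~\ref{thm:0transestimates} (as noted in the paragraph just before \eqref{eq:PoissonOP}). Your added remarks on justifying the Fourier interchange and the density argument are reasonable elaborations, but no new idea beyond the paper's argument is involved.
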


For integer-order nonsmooth $\psi $do's there is a deduction of such 
Poisson operators in \cite[Lemma 5.4]{A05}.

 We shall now show that when $P=\op(p(x,\xi ))$ is a $C^\tau $-smooth  pseudodifferential operator satisfying the extended  $0$-transmission condition
w.r.t.\ $\crnp$, then the truncated version $P_+=r^+Pe^+$ preserves
regularity in $\crnp$ up to orders dominated by $\tau $. There holds:

\begin{theorem}\label{thm:P_+bound} Let  $\tau >0$, $1<q<\infty $ and
  $m\in\mathbb R$. 
When $P=\op(p(x,\xi ))$ with $p\in C^\tau S^m({\mathbb
R}^{n}\stimes {\mathbb R}^n)$ satisfying
the extended  $0$-transmission condition according to Definition {\rm
 \ref{def:roughmu-transm}}, then the truncated version $P_+=r^+Pe^+$ satisfies
\begin{align}\label{eq:P_+bound}
 P_+\colon \ol H^{s+m}_q(\rnp)&\to \ol H^{s}_q(\rnp),\text{ for
                               $|s|<\tau $ with $s+m>-\tfrac1{q'}$. }\\
  \nonumber
  P_+\colon \dot H^{s+m}_q(\crnp)&\to \ol H^{s}_q(\rnp),\text{ for $|s|<\tau $. }
\end{align}
\end{theorem}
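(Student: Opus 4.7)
The second assertion is immediate from Theorem~\ref{thm:bd-compos}: a distribution $u \in \dot H^{s+m}_q(\crnp)$ is identified with its zero extension in $H^{s+m}_q(\R^n)$, so that $Pu \in H^s_q(\R^n)$ (since $|s|<\tau$), and hence $P_+u = r^+Pu \in \ol H^s_q(\rnp)$ with norm controlled by a symbol semi-norm.

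For the first assertion the strategy is to reduce to the second via Green's formula. When $s+m \in (-\tfrac1{q'},\tfrac1q)$ the extension by zero $e^+ \colon \ol H^{s+m}_q(\rnp) \to \dot H^{s+m}_q(\crnp)$ is continuous (the two spaces are canonically identified in this range), so the claim follows at once from the second assertion applied to $e^+u$. When $s+m \geq \tfrac1q$, pick the smallest integer $k\geq 1$ with $s+m-k \in (-\tfrac1{q'},\tfrac1q)$. For $u \in \ol H^{s+m}_q(\rnp)$ (first smooth and compactly supported, then extended by density) the traces $\gamma_l^c u \in B^{s+m-l-\tfrac1q}_q(\R^{n-1})$ exist for $l \leq k-1$, and I split
\begin{equation*}
u = v + w, \qquad w = \sum_{l=0}^{k-1} \gamma_l^c u \otimes \psi_l(x_n),
\end{equation*}
with $\psi_l \in C_0^\infty(\crp)$ chosen so that $\gamma_j^c\psi_l = \delta_{jl}$, and $v = u - w$ vanishing to order $k$ at $\{x_n=0\}$. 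Then $e^+v \in H^{s+m}_q(\R^n)$, and $r^+P e^+ v$ is handled by the second assertion.

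For the boundary piece $r^+P e^+ w$ I apply Green's formula \eqref{eq:inducedPoisson} iteratively: $e^+$ intertwines with normal derivatives modulo tensor products of traces with $D_n^r\delta(x_n)$, on which $r^+P$ acts as the Poisson-type operator $K_{p,r} = \OPK(\tilde k_{p,r})$ from Lemma~\ref{lem:inducedPoisson}, whose symbol-kernel lies in $C^\tau S^{m+r}_{1,0}(\R^n \times \R^{n-1},\SD(\crp))$ thanks to the extended $0$-transmission condition via Theorem~\ref{thm:0transestimates}. The continuity $K_{p,r}\colon B^{s+m-l-\tfrac1q}_q(\R^{n-1}) \to \ol H^s_q(\rnp)$ for $|s|<\tau$ then comes from the nonsmooth Poisson operator calculus of Abels~\cite{A05}, and summing over the finitely many $l \leq k-1$ closes the estimate. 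Commutator terms $[P,D_n^j]$ generated along the way have symbols in $C^\tau S^{m+j-1}_{1,0}$ and are again absorbed either into the second assertion or into lower-order Poisson contributions.

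The main obstacle is bookkeeping rather than any new difficulty: one has to verify that the nonsmooth Poisson operator bounds of \cite{A05} apply uniformly for all the orders $m+r$ produced by the Green-formula iteration, and in particular that the $C^\tau$-valued $\SD(\crp)$-estimates supplied by Theorem~\ref{thm:0transestimates} match the hypotheses of those mapping theorems in the full range $|s|<\tau$. The case $s<0$ follows either by adapting the same scheme (interpreting $\ol H^s_q(\rnp)$ as a distributional completion and extending by density) or by a duality argument, using that the formal adjoint $P^*$ again satisfies the extended $0$-transmission condition with respect to $\crnp$.
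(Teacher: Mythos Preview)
Your treatment of the second assertion and of the range $s+m\in(-\tfrac1{q'},\tfrac1q)$ is correct and matches the paper. The gap is in the higher-regularity step, specifically the commutator claim. For a symbol $p\in C^\tau S^m$ one has $[\op(p),D_{x_n}]=-\op(D_{x_n}p)$, and $D_{x_n}p\in C^{\tau-1}S^m$, \emph{not} $C^\tau S^{m-1}$; iterating gives $[\op(p),D_{x_n}^j]$ with symbol in $C^{\tau-j}S^m$. So the commutator terms you generate do not gain an order, they lose H\"older regularity, and feeding them back into Theorem~\ref{thm:bd-compos} only yields mapping properties for $|s|<\tau-k$ (with $k$ growing with $s+m$) rather than the full range $|s|<\tau$. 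The paper flags exactly this obstruction: ``The iterative proof in \cite{GH90} does not adapt well, since commutation of $P$ with $D_j$ introduces a decrease in the H\"older smoothness.''

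The paper's remedy is to decompose the \emph{operator} rather than the function. Writing $p=p\cdot r(\xi)\sum_{j}\xi_j^{2k}+p\cdot r(\xi)$ with $r(\xi)=(1+\sum_j\xi_j^{2k})^{-1}$, each piece factors as $\op(q_j)D_j^{2k}$ with $q_j=p\,r\,\xi_j^{k}\in C^\tau S^{m-k}$ of full H\"older regularity. Tangential factors $D_j^{2k}$ commute with $e^+$; for the normal factor one applies Green's formula \eqref{eq:inducedPoisson} with the \emph{lower-order} operator $\op(p\,r\,\xi_n^{k})$ in front, so the resulting Poisson operators $K_{p\,r\,\xi_n^k,\,r}$ still have $C^\tau$ symbol-kernels and the bounds from \cite{A05} apply on the whole range $|s|<\tau$. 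No commutators with $D_n$ arise. A secondary point you omit: Lemma~\ref{lem:inducedPoisson} and the Poisson calculus of \cite{A05} require the \emph{global} $0$-transmission condition, so the paper first proves the global case and then reduces the merely extended case by a cutoff $\eta(x_n)$ near the boundary together with Corollary~\ref{cor:Commutator}; your write-up does not separate these two steps.
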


\begin{proof}
  Assume in the  following that $|s|<\tau $.
  The second statement is an immediate consequence of Theorem
  \ref{thm:bd-compos}  since $ \dot H^{s+m}_q(\crnp)$ is a closed 
  subspace of $  H^{s+m}_q(\rn)$. 
  When $-\tfrac1{q'}<s+m <{\tfrac1q}$, the first statement also follows immediately, since
  $ \ol H^ {s+m}_q(\rnp)$ identifies with $ \dot H^{s+m}_q(\crnp)$ then. 

To show the first statement for higher $s$, we use a method similar to that of \cite[Lemma 5.6]{A05}.
(The iterative proof in \cite{GH90} does not adapt well, since
commutation of $P$ with $D_j$ introduces a decrease in the H\"older smoothness.)

Assume to begin with that $P$ satisfies the global 0-transmission condition. 
Let us show that the estimate holds for $s+m\in (k-\tfrac1{q'},k+{\tfrac1q} )$,
$k=1,2,\dots$ with $|s|<\tau
$. Fix $k$ and write $s=s_0+k$, $s_0+m\in (-\tfrac1{q'},{\tfrac1q} )$. Setting
$r (\xi )=(\sum_{j=1}^n\xi _j^{2k}+1)^{-1}$, we write $p$ as a sum of
three terms:
\begin{align*}
p(x,\xi )&=p_1(x,\xi )+p_2(x,\xi )+p_3(x,\xi ),\\ p_1(x,\xi )&={\sum}_{j=1}^{n-1}p(x,\xi )r(\xi ) \xi _j^{2k} ,\;
p_2(x,\xi )=p(x,\xi )r(\xi ) \xi _n^{2k},\;p_3(x,\xi )=p(x,\xi )r(\xi ) , 
\end{align*}
defining operators $P_i=\op(p_i(x,\xi ))$ satisfying the  global
$0$-transmission condition.

We can write
$$
r^+P_1e^+={\sum}_{j=1}^{n-1}r^+\op(pr \xi _j^k)D_j^ke^+
={\sum}_{j=1}^{n-1}r^+\op(pr \xi _j^k)e^+D_j^k,
$$
since the tangential derivatives $D_j$ commute with $e^+$. When $u\in
\ol H_q^{m+s_0+k}(\rnp)$, then  $D_j^ku\in
\ol H_q^{m+s_0}(\rnp)\simeq \dot
H_q^{m+s_0}(\crnp)$, so since  $\op(pr \xi _j^k)$ is of
order $m-k$, $r^+\op(pr \xi _j^k)e^+$ maps $ \ol
H_q^{m+s_0}(\rnp)$ to  $\ol H_q^{s_0+k}(\rnp)$. Summing over $j$ we see that $r^+P_1e^+$ has the
desired mapping property.

For $P_2$, we have that
\begin{equation*}
r^+P_2e^+u=r^+\op(pr \xi _n^k)D_n^ke^+u
=r^+\op(pr
\xi _n^k)e^+D_n^ku+ r^+\op(pr \xi _n^k)[D_n^k,e^+]u.
\end{equation*}
The term $r^+\op(pr
\xi _n^k)e^+D_n^ku$ is treated like the terms in $P_1$, defining
an operator with the desired mapping property. The other term
satisfies, by \eqref{eq:inducedPoisson}  applied to $\op(pr\xi _n^k)$,                 \begin{align*}
r^+\op(pr \xi _n^k)[D_n^k,e^+]u&=-ir^+\op(pr \xi
_n^k){\sum}_{l=0}^{k-1}\gamma ^c_lu\otimes D_n^{k-1-l}\delta (x_n)\\
&=
-i{\sum}_{l=0}^{k-1}K_{pr \xi _n^k,k-1-l}\gamma ^c_lu,
\end{align*}
with Poisson operators defined by Lemma \ref{lem:inducedPoisson}. 
Here $K_{pr \xi _n^k,k-1-l}$ is a Poisson operator with 
symbol-kernel in  $C^\tau
S^{m-1-l}_{1,0}({\mathbb R}^{n}\stimes{\mathbb R}^{n-1},\mathcal
S(\crp))$, 
hence
continuous from  $B_q^{s+m-l-{\frac1q}}({\mathbb R}^{n-1})$ to 
$\ol H^s_q(\rnp)$
for $|s|<\tau $, by \cite[Theorem~4.8]{A05}. 
 The trace operator
$\gamma ^c_l$ goes from $\ol H_q^{s+m}(\rnp)$ to $B_q^{s+m-l-{\frac1q}}({\mathbb
R}^{n-1})$ for any $s>-m+{\tfrac1q}$, so 
 $K_{pr \xi _n^k,k-1-1}\gamma ^c_lu \in\ol H^s_q(\rnp)$. Altogether,
 $P_2$ has the asserted mapping property.

The term $P_3$ is easily treated:
Since $P_3$ is of order $m-2k$, it maps $H_q^{m-2k+s_0+k}({\mathbb
  R}^n)=H_q^{m-k+s_0}({\mathbb R}^n)$ to $H_q^{s_0+k}({\mathbb R}^n)$. Here
$\ol H_q^{m-k+s_0}(\rnp)\supset $  $\ol H_q^{m+s_0}(\rnp)
\simeq \dot
H_q^{m+s_0}(\crnp)$, 
so $r^+P_3e^+$ maps $ \ol H_q^{m+s_0}(\rnp)$ to  $\ol H_q^{s_0+k}(\rnp)$,
and a fortiori $ \ol H_q^{m+k+s_0}(\rnp)$ to  $\ol H_q^{s_0+k}(\rnp)$.

We have then obtained \eqref{eq:P_+bound} for all $|s|<\tau $ with $s+m+\tfrac1{q'}\in
\rp\setminus \mathbb N$. The intermediate integer values are included
by interpolation. This ends the proof in the case where $P$ satisfies
the global 0-transmission condition with respect to $\crnp$.

Finally, consider the case where the 0-transmission condition is only
satisfied for $x$ with $0\le x_n<\varepsilon $, some $\varepsilon >0$.
Let $\eta (x_n),\zeta _0(x_n)\in C_0^\infty (\R )$, supported in
$(-\varepsilon ,\varepsilon )$, equal to 1 on a neighborhood of 0, and
such that $\eta =1 $ on a neighborhood of $\supp\zeta _0$. Then
\begin{equation*}
  P=P_1+P_2+P_3, \text{ where } P_1=\eta P,\; P_2= (1-\eta)P\zeta _0,\; P_3=
  (1-\eta)P(1-\zeta _0).
  \end{equation*}
The term $P_1$ satisfies the global 0-transmission condition, hence
has the asserted mapping properties. For the term  $P_3$, $r^+P_3e^+$
acts on $\ol H_q^{s+m}(\rnp)$ as on $\dot H_q^{s+m}(\crnp)$ so it
likewise has them. For the middle term $P_2$, we note that since
$1-\eta $ and $\zeta _0$ have disjoint supports, we can  by
Corollary~\ref{cor:Commutator} for any large $N$ write
\begin{equation*}
  P_2= (1-\eta)P\zeta _0
  =(1-\eta )Q_N,
\end{equation*}
where $Q_N$ has  symbol in $C^\tau S^{m-N}(\rn\times\rn)$.
Now $e^+\ol H_q^{s+m}(\rnp)\subset H_q^{-M}(\rn)$
for some large $M$ for the considered values of $s $, and this will be mapped into
$H_q^{\tau -\delta }(\rn)$ (any $\delta >0$) by $P_2$ when $N$ is chosen large enough in the above
representation, by Theorem \ref{thm:bd-compos}.
\end{proof}

As a corollary we get the mapping property for operators satisfying the extended  $\mu
$-transmission condition:

\begin{corollary}\label{cor:mu-P_+bound} 
Let  $\tau >0$, $1<q<\infty $ and $\mu >-1$. 
Let $P $ 
have symbol $ C^\tau S^m({\mathbb
R}^n\stimes {\mathbb R}^n)$ satisfying the extended  $\mu
$-transmission condition with respect to $\crnp$.
Then 
\begin{equation}\label{eq:mu-P_+bound}
r^+P\colon H_q^{\mu (m+s)}(\crnp)\to \ol H^{s}_q
(\rnp),
\end{equation}
 holds for $|s|<\tau
$. 
\end{corollary}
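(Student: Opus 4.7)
The plan is to reduce to Theorem~\ref{thm:P_+bound} via the factorization $P = B \circ \Lambda_+^\mu$, where $B := \op(b)$ has symbol $b(x,\xi) = p(x,\xi)\lambda_+^{-\mu}(\xi)$. Since $\lambda_+^{-\mu}(\xi)$ depends only on $\xi$, the composition identity is exact at the symbol level: $\op(p)\,\Lambda_+^{-\mu} = \op(p\,\lambda_+^{-\mu}) = B$, and hence $P = B\,\Lambda_+^\mu$. By Corollary~\ref{cor:mutransestimates} (noting, as in Example~\ref{ex:transexamples}, that $\lambda_+^{-\mu}$ trivially satisfies the global $(-\mu)$-transmission condition), the symbol $b$ lies in $C^\tau S^{m-\mu}(\Rn\times\Rn)$ and satisfies the extended $0$-transmission condition with respect to $\crnp$. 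Thus Theorem~\ref{thm:P_+bound} applies to $B$ with $m$ there replaced by $m-\mu$.

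Now fix $u \in H_q^{\mu(m+s)}(\crnp)$ with $|s|<\tau$; the argument splits according to the two cases of the definition~\eqref{eq:2.9}. First, if $m+s > \mu - \tfrac{1}{q'}$, the definition yields $u = \Lambda_+^{-\mu} e^+ v$ for some $v \in \ol H_q^{m+s-\mu}(\rnp)$, hence $\Lambda_+^\mu u = e^+ v$. The first mapping statement of Theorem~\ref{thm:P_+bound} applied to $B$ then gives $r^+ P u = r^+ B\, e^+ v \in \ol H_q^s(\rnp)$, because its required condition $s + (m-\mu) > -\tfrac1{q'}$ coincides exactly with the case hypothesis $m+s > \mu - \tfrac1{q'}$. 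Second, if $m+s \le \mu - \tfrac1{q'}$, then $u \in \dot H_q^{m+s}(\crnp)$, and since $\Lambda_+^\mu$ is a ``plus'' operator preserving support in $\crnp$, $\Lambda_+^\mu u \in \dot H_q^{m+s-\mu}(\crnp)$; the second mapping statement of Theorem~\ref{thm:P_+bound} then yields $r^+ P u = r^+ B(\Lambda_+^\mu u) \in \ol H_q^s(\rnp)$. Continuity of $r^+P$ follows from the continuity statements combined with the boundedness $\Lambda_+^\mu\colon H_q^{\mu(m+s)}(\crnp) \to H_q^{m+s-\mu}(\rn)$ given in Section~\ref{subsec:PsDOs}.

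The only real subtlety is that the identity $P = B\Lambda_+^\mu$ must be interpreted consistently on each function space appearing in the argument; once the symbol $b$ has been identified via Corollary~\ref{cor:mutransestimates}, the rest is essentially bookkeeping. What makes the partition into the two cases close cleanly is the tight match between the threshold $-\tfrac1{q'}$ in Theorem~\ref{thm:P_+bound} and the threshold in the definition of $H_q^{\mu(s)}(\crnp)$, which leaves no gap in the range of $s$ to be covered.
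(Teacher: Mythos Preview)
Your proof is correct and follows essentially the same approach as the paper: factor $P = B\Lambda_+^\mu$ with $B = \op(p\lambda_+^{-\mu})$, invoke Corollary~\ref{cor:mutransestimates} to see that $B$ satisfies the extended $0$-transmission condition, apply Theorem~\ref{thm:P_+bound} to $B$, and split according to the two regimes in the definition~\eqref{eq:2.9} of $H_q^{\mu(m+s)}(\crnp)$. The only cosmetic difference is that the paper phrases the low-range case as $s+m-\mu < \tfrac1q$ (overlapping with the first case) rather than your complementary $m+s \le \mu - \tfrac1{q'}$, but the argument is the same.
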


\begin{proof}
By Corollary \ref{cor:mutransestimates}, the $\psi $do $B=P\Lambda _+^{-\mu }$, of order $m-\mu $, satisfies
the extended  $0$-transmission condition. By Theorem \ref{thm:P_+bound},
\begin{align*}
r^+B\colon e^+ \ol H^{s+m-\mu }_q(\rnp)&\to \ol H^{s}_q(\rnp),\text{ for $|s|<\tau $  with $s+m-\mu >-\tfrac1{q'}$,}\\
r^+B\colon  \dot H^{s+m-\mu }_q(\crnp)&\to \ol H^{s}_q(\rnp),\text{ for $|s|<\tau $.}\\
\end{align*}
Let $|s|<\tau $. Recalling that $H_q^{\mu (m+s)}(\crnp)= \Lambda _+^{-\mu }e^+ \ol
H^{s+m-\mu }_q(\rnp)$ for $s+m-\mu >-\tfrac1{q'}$,
we infer that
\begin{equation*}
r^+P =r^+B\Lambda _+^\mu \colon \Lambda _+^{-\mu }e^+\ol
H_q^{m-\mu +s}(\rnp)=H_q^{\mu (m+s)}(\crnp)\to \ol H^{s}_q
(\rnp),
\end{equation*}
for $|s|<\tau $ with $s+m-\mu >-\tfrac1{q'}$. For $s+m-\mu < {\tfrac1q}$, we use
that $H_q^{\mu (m+s)}(\crnp)=\dot H_q^{m+s}(\crnp)$ by definition.
\end{proof}

There is in particular a consequence for operators as in Example \ref{ex:evensymbols}:

\begin{corollary}\label{cor:evenP_+bound}
Let  $\tau >0$ and $1<q<\infty $. 
When $P $ is even of order $2a>0$ as in Example {\rm
 \ref{ex:evensymbols}}, 
then 
\begin{equation}\label{eq:evenP_+bound}
r^+P\colon H_q^{a(2a+s)}(\crnp)\to \ol H^{s}_q
(\rnp),\text{ for $|s|<\tau
$.} 
\end{equation}
\end{corollary}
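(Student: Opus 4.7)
The plan is to derive this as a direct application of Corollary~\ref{cor:mu-P_+bound} with the parameter choice $\mu=a$ and $m=2a$. The only things to check are that the hypotheses of that corollary hold for our even operator $P$.

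First, I would verify the algebraic compatibility of the parameters: since $a>0$, we have $\mu=a>-1$, as required by Corollary~\ref{cor:mu-P_+bound}. Next, by Example~\ref{ex:evensymbols}, an even symbol $p\in C^\tau S^{2a}(\R^n\times\R^n)$ satisfies the \emph{global} $a$-transmission condition with respect to $\crnp$; in particular, by Definition~\ref{def:roughmu-transm}$\,2^\circ$, it satisfies the extended $a$-transmission condition (with any $\varepsilon>0$). Hence the hypothesis of Corollary~\ref{cor:mu-P_+bound} is fulfilled with $\mu=a$.

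Applying that corollary then yields
\begin{equation*}
r^+P\colon H_q^{a(2a+s)}(\crnp)\to \ol H^{s}_q(\rnp)\quad\text{for }|s|<\tau,
\end{equation*}
which is precisely the assertion. There is no genuine obstacle here — the substantive work was already done in proving Theorem~\ref{thm:P_+bound} (for the $0$-transmission case) and in reducing the general $\mu$-transmission case to that one via the order-reducing operator $\Lambda_+^{-\mu}$ in Corollary~\ref{cor:mu-P_+bound}. The present corollary merely specializes $\mu$ to $a$ and notes that the evenness of $P$ supplies the required transmission condition uniformly in the direction, so no localization in $x$ near $\partial\rnp$ is needed.
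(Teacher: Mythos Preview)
Your proposal is correct and matches the paper's approach exactly: the corollary is stated without proof in the paper precisely because it is the specialization of Corollary~\ref{cor:mu-P_+bound} to $\mu=a$, $m=2a$, using that even symbols satisfy the global (hence extended) $a$-transmission condition by Example~\ref{ex:evensymbols}.
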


The result can be generalized to bounded smooth domains by tools that are
already available in the literature, namely the result of Abels and
Jim\'enez  \cite{AJ18} that $C^\tau S^m(\rn\times\rn)$ is preserved
under $C^\infty $-transformations, and the localization
explained e.g.\ in \cite{G18}.

\begin{theorem}\label{thm:P_+boundSmoothSets}
Let  $\tau >0$, $1<q<\infty $ and $\mu >-1$. 
Let $\Omega \subset\rn$ be a bounded $C^\infty $-domain,
and let $P=\op(p(x,\xi ))$ with $p\in C^\tau S^{m}({\mathbb
R}^{n}\stimes {\mathbb R}^n)$ satisfying the extended $\mu
$-transmission condition with respect to $\comega$.
Then the restricted operator $r^+P$
has the mapping property:
\begin{equation}\label{eq:P_+bound}
r^+ P\colon  H^{\mu  (m+s)}_q(\comega )\to \ol H^{s}_q(\Omega),\text{
  for }|s|<\tau .
\end{equation}
\end{theorem}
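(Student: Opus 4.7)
The plan is to reduce to the halfspace statement (Corollary~\ref{cor:mu-P_+bound}) through a finite smooth partition of unity and a $C^\infty$ flattening of the boundary in each chart. Since $\Omega$ is $C^\infty$, I would choose a finite open cover $\{U_i\}_{i=0}^I$ of $\comega$ with $\overline{U}_0\subset\Omega$ and, for each $i\ge 1$, a $C^\infty$-diffeomorphism $F_i\colon U_i\to V_i\subset\rn$ flattening the boundary, i.e.\ $F_i(U_i\cap\Omega)=V_i\cap\rnp$ and $F_i(U_i\cap\partial\Omega)=V_i\cap\{x_n=0\}$. Let $\{\psi_i\}$ be a smooth subordinate partition of unity, and for each $i\ge 1$ pick $\varphi_i\in C_0^\infty(U_i)$ with $\varphi_i\equiv 1$ on $\supp\psi_i$. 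Write $u=\sum_i\psi_i u$; as observed after \eqref{eq:2.14a}, multiplication by a smooth cutoff preserves $H_q^{\mu(m+s)}(\comega)$, so each $\psi_i u$ lies in this space.

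For the interior piece, $\psi_0 u$ is compactly supported in $\Omega$, and by the local identification $H_q^{\mu(m+s)}(\comega)\subset H_{q,\loc}^{m+s}(\Omega)$ from \eqref{eq:2.10} it follows that $\psi_0 u\in H_q^{m+s}(\rn)$. Theorem~\ref{thm:bd-compos} then yields $P(\psi_0 u)\in H_q^s(\rn)$ for $|s|<\tau$, so $r^+P(\psi_0 u)\in\ol H_q^s(\Omega)$.

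For each boundary piece ($i\ge 1$), I split $r^+P(\psi_i u)=\varphi_i r^+P(\psi_i u)+(1-\varphi_i)r^+P(\psi_i u)$. Since $1-\varphi_i$ and $\psi_i$ have disjoint supports, Corollary~\ref{cor:Commutator} gives $(1-\varphi_i)P\psi_i=(1-\varphi_i)\op(q_N)$ with $q_N\in C^\tau S^{m-N}_{1,0}(\rn\times\rn)$ for arbitrarily large $N$; because $u$ is a compactly supported distribution, $u\in H_q^{-K}(\rn)$ for some $K$, and choosing $N$ large enough Theorem~\ref{thm:bd-compos} puts $(1-\varphi_i)P(\psi_i u)$ into $H_q^s(\rn)$. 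For the main local piece, I transport via $F_i$: set $\tilde u_i:=(\psi_i u)\circ F_i^{-1}$, which by Definition~\ref{def:roughTransmSpaces} lies in $H_q^{\mu(m+s)}(\crnp)$, and let $\underline P_i:=F_i^{*,-1}PF_i^*$. By the Abels--Jim\'enez invariance result \cite{AJ18}, the conjugated operator $\underline P_i$ has symbol in $C^\tau S^m(\rn\times\rn)$, and it will satisfy the extended $\mu$-transmission condition with respect to $\crnp$ because the geometric condition in Definition~\ref{def:roughmu-transm}\,$3^\circ$ at $x_0\in\partial\Omega$ with normal $\nu(x_0)$ translates to that at $F_i(x_0)\in\partial\rnp$ with normal $(0,\dots,0,1)$. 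Corollary~\ref{cor:mu-P_+bound} then yields $r^+\underline P_i\tilde u_i\in\ol H_q^s(\rnp)$, and composing with $F_i$ (a smooth diffeomorphism, hence continuous on all $\ol H_q^s$-spaces with $|s|<\tau$) and multiplying by $\varphi_i$ delivers $\varphi_i r^+P(\psi_i u)\in\ol H_q^s(\Omega)$.

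The main obstacle will be the invariance of the extended $\mu$-transmission condition under $F_i$: one must unwind the change-of-variables formula for the full homogeneous expansion (not merely the principal symbol) of the conjugated classical $C^\tau$-symbol and verify that the twisted parity identities \eqref{eq:roughmu-transm} at the new inward normal match the hypothesized identities at $\nu(x_0)$ term by term. This is standard for smooth symbols, and carries over to the $C^\tau$ setting via \cite{AJ18} because the diffeomorphism $F_i$ is smooth, so no loss of H\"older regularity is incurred. Summing the three contributions over $i\in\{0,1,\dots,I\}$ completes the argument.
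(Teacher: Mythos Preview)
Your proposal is correct and follows essentially the same strategy as the paper: reduce to the halfspace result (Corollary~\ref{cor:mu-P_+bound}) by a smooth partition of unity, transport the localized operators to $\crnp$ via smooth diffeomorphisms (invoking \cite{AJ18} for the preservation of the $C^\tau S^m$ class), and observe that the extended $\mu$-transmission condition survives the change of variables.

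The only notable difference is organizational. The paper uses a \emph{subordinated} partition of unity (as in \cite[Remark~4.3ff.]{G18}): the cutoffs $\{\varrho_j\}$ are chosen so that for every pair $(j,k)$ there is a single chart $U_{i(j,k)}$ containing both $\supp\varrho_j$ and $\supp\varrho_k$, and one writes $Pu=\sum_{j,k}\varrho_jP\zeta_k u_k$ with each $(j,k)$-term already living in one chart. This avoids having to treat the ``off-diagonal'' pieces $(1-\varphi_i)P\psi_i$ separately, which you handle instead via Corollary~\ref{cor:Commutator} and Theorem~\ref{thm:bd-compos}. Both routes work; the subordinated partition is slightly cleaner bookkeeping, while your explicit off-diagonal argument makes the smoothing nature of those terms more visible. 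Two minor remarks: (i) when you write $\underline P_i=F_i^{*,-1}PF_i^*$, you should really be conjugating the cut-off operator $\varphi_iP\psi_i$ (or extend $F_i$ to a global diffeomorphism first), and (ii) your citation of Definition~\ref{def:roughTransmSpaces} for the transport of $\psi_iu$ is slightly misplaced, since for a $C^\infty$-domain the space $H_q^{\mu(m+s)}(\comega)$ is defined in \eqref{eq:2.9} via $\Lambda_+^{(-\mu)}$; the fact that localized pieces transport to $H_q^{\mu(m+s)}(\crnp)$ under smooth boundary-flattening diffeomorphisms is a standard result from \cite{G15,G18}.
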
 

\begin{proof}
  For $s+m-\mu <\tfrac1q$, the statement follows immediately from Theorem \ref{thm:bd-compos}.
  For $s+m-\mu > -\tfrac1{q'}$, we use local coordinates and a subordinated
  partition of unity, as in \cite[Remark~4.3ff.]{G18}   It is described
  there how $\comega$ has a cover by bounded open sets $U_i$ with
  diffeomorphisms $\kappa _i\colon U_i\to V_i$ such that
  $U_i\cap\Omega  $ is mapped to $V_i\cap \rnp$, $i=0,\dots,I_1$, and
  there is a {\it subordinated} partition of unity $\{\varrho
  _j\}_{j=0,\dots, J}$ where for each pair $j,k$ there is an index
  $i=i(j,k)$ such that $\varrho _j,\varrho _k\in C_0^\infty
  (U_i)$. Choose also $\zeta _j$  in $C_0^\infty (U_i)$
  satisfying $\zeta _j\varrho _j=\varrho _j$. Let $u\in H^{\mu 
    (m+s)}_q(\comega )$, and let $u_k=\varrho _ku=\zeta _ku_k$, then 
 $Pu=\sum_{j,k}\varrho _jP\zeta _ku_k$. Here
 the operators $P_{jk}=\varrho _jP\zeta
  _k$ carry over via $\kappa _i$ to operators $\underline P_{jk}$ acting over $V_i$ with
   symbols in $C^\tau S^{2a}(\rn\times\rn)$ in view of Proposition
 \ref{prop:Commutator} and \cite{AJ18}, satisfying the $\mu
 $-transmission condition with respect to $\crnp$, and $u_k$ carries over to $\underline
  u_k\in  H^{\mu  (m+s)}_q(\crnp ) $. Now we apply Corollary
  \ref{cor:mu-P_+bound} to each $\underline P_{jk}\underline u_k$,
  carry the contributions back to $\comega$, and sum over $j$ and $k$
  to end the proof.
  \end{proof}

  A similar result holds for $\Omega =\R^n_\gamma $ when $\gamma \in
  C^\infty _b(\R^{n-1})$.

\subsection{Mapping properties of $(x,y)$-form operators over the halfspace}

As a preparation for the treatment of operators on nonsmooth sets
we consider operators with symbols in $(x,y)$-form on $\Rn_+$.
We begin with an observation on remainders:

\begin{cor}\label{cor:BoundednessRemainderNew} Let $r_\alpha$ and $m<\tau$ be as in
   Corollary~{\rm\ref{cor:xyFormReduction}}, with
  $l<\tau$, and let $\mu\geq 0$. Then   
  
  \begin{equation*}
    r^+ \op(D_\xi^\alpha r_\alpha(x,y,\xi ))\colon  H^{\mu ((m-l+s)_+)}_q(\ol{\R}_+^n)\to \ol{H}_q^s(\Rn_+) 
  \end{equation*}
  is bounded if $0\leq s<\tau $ and ${s+m
    <\tau}$, and $s+m<\mu + l+\frac1q$. 
  Moreover, there is some $k\in\N$ and $C_{s,m,\mu}>0$ independent of $a$ such that
  \begin{equation*}
    \|\op(D_\xi^\alpha r_\alpha(x,y,\xi ))\|_{\mathcal{L}( H^{\mu ((m-l+s)_+)}_q(\ol{\R}_+^n), \ol{H}_q^s(\Rn_+))}\leq C_{s,m,\mu }|a|_{k, C^{\tau} S^{m}_{1,0}}.
  \end{equation*}
\end{cor}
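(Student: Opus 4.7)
The plan is to observe that under the hypotheses, the transmission space in the domain collapses to a standard supported Bessel-potential space, after which the result reduces to the $\R^n$-version already established in Corollary~\ref{cor:Bddnessr}.

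First, I would verify that $(s+m-l)_+<\mu+\tfrac{1}{q}$. If $s+m-l\leq 0$, then $(s+m-l)_+=0<\mu+\tfrac{1}{q}$ since $\mu\geq 0$; otherwise $(s+m-l)_+=s+m-l<\mu+\tfrac{1}{q}$ by the hypothesis $s+m<\mu+l+\tfrac{1}{q}$. Combining the two cases in \eqref{eq:2.9} with \eqref{eq:2.10}, one has $H^{\mu(t)}_q(\crnp)=\dot{H}^{t}_q(\crnp)$ for \emph{every} $t<\mu+\tfrac{1}{q}$. Applied with $t=(s+m-l)_+$ this yields
\begin{equation*}
H^{\mu((s+m-l)_+)}_q(\crnp)=\dot{H}^{(s+m-l)_+}_q(\crnp),
\end{equation*}
which is by definition a closed subspace of $H^{(s+m-l)_+}_q(\R^n)$.

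Next, since $0\leq s<\tau$ and $s+m<\tau$, Corollary~\ref{cor:Bddnessr} applies and gives a bounded operator $\op(D_\xi^\alpha r_\alpha(x,y,\xi))\colon H^{(s+m-l)_+}_q(\R^n)\to H^s_q(\R^n)$, together with the semi-norm bound by $|a|_{k,C^{\tau}S^m_{1,0}}$. Restricting the domain to the closed subspace $\dot{H}^{(s+m-l)_+}_q(\crnp)$ and composing with $r^+\colon H^s_q(\R^n)\to\ol{H}^s_q(\R^n_+)$ (which is bounded of norm one), one obtains the claimed continuity
\begin{equation*}
r^+\op(D_\xi^\alpha r_\alpha(x,y,\xi))\colon H^{\mu((s+m-l)_+)}_q(\crnp)\to \ol{H}^s_q(\R^n_+),
\end{equation*}
with the semi-norm estimate inherited directly from Corollary~\ref{cor:Bddnessr}.

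There is no real obstacle here: the only substantive point is recognising that the numerical condition $s+m<\mu+l+\tfrac{1}{q}$ is precisely what places $(s+m-l)_+$ in the range where the $\mu$-transmission space is indistinguishable from the supported Sobolev space, so that the nontrivial order-reducing factor $\Lambda_+^{-\mu}$ never needs to be invoked and the half-space mapping property follows by a pure inclusion-plus-restriction argument from the whole-space mapping property.
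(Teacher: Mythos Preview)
Your proof is correct and follows essentially the same route as the paper's: identify $H^{\mu((s+m-l)_+)}_q(\crnp)$ with $\dot H^{(s+m-l)_+}_q(\crnp)$ via the inequality $(s+m-l)_+<\mu+\tfrac1q$, and then invoke Corollary~\ref{cor:Bddnessr} together with restriction by $r^+$. Your case analysis verifying that inequality, and the explicit remark that $\dot H^{(s+m-l)_+}_q(\crnp)$ sits as a closed subspace of $H^{(s+m-l)_+}_q(\Rn)$, are in fact more detailed than the paper's own terse version.
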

\begin{proof}
  \noindent
  We use that
  \begin{equation*}
    H^{\mu({(m-l+s)_+})}_q(\ol{\R}_+^n) =\dot{H}^{(s+m-l)_+}_q(\ol{\R}^n_+)
  \end{equation*}
  since ${(m-l+s)_+} < \mu+ \frac1q$. Hence
  \begin{equation*}
    r^+ \op(D_\xi^\alpha r_\alpha(x,y,\xi ))  \colon  H^{\mu((m-l+s)_+)}_q(\ol{\R}_+^n)  \to \ol{H}_q^s(\Rn_+),
  \end{equation*}
 with the mentioned estimates,  because of Corollary~\ref{cor:Bddnessr}.
\end{proof}

We  now show a mapping property for restricted $(x,y)$-form operators,
with limitations on both $\mu ,m$ and $s$:

\begin{thm}\label{thm:BddnessTruncatedPsDOs}
  Let $1<q<\infty $,  $\tau >0$, $0\leq \mu \leq m<\tau$ and $\mu-m\leq s< \tau-m$, 
and let $a\in C^\tau S^m(\R^{2n}\times \Rn)$, satisfying the global
$\mu$-transmission condition with respect to $\ol{\R}_+^n$.
Then
  \begin{equation*}
    r^+ \op(a(x,y,\xi ))\colon H^{\mu(m+s)}_q(\ol{\R}_+^n)\to \ol{H}_q^s(\Rn_+) 
  \end{equation*}
  is bounded. 
  Moreover, there is some $N\in\N$ and $C_{s,m,\mu,q}>0$ such that
  \begin{equation}\label{eq:OpaEstim}
    \|r^+\op(a(x,y,\xi ))\|_{\mathcal{L}(H^{\mu(m+s)}_q(\ol{\R}_+^n), \ol{H}_q^s(\Rn_+))}\leq C_{s,m,\mu ,q}|a|_{N, C^{\tau} S^{m}_{1,0}}.
  \end{equation}
\end{thm}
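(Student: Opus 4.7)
My plan is to reduce the $(x,y)$-form operator to a finite sum of $x$-form operators plus a remainder via Corollary~\ref{cor:xyFormReduction}, then invoke Corollary~\ref{cor:mu-P_+bound} for the $x$-form pieces (after checking that each inherits the $\mu$-transmission condition) and Corollary~\ref{cor:BoundednessRemainderNew} for the remainder. Fix $l\in\N_0$ with $l<\tau$ and $l>s+m-\mu-\tfrac1q$; Corollary~\ref{cor:xyFormReduction} then gives
\begin{equation*}
\op(a)=\sum_{|\alpha|\le l}\op(p_\alpha)+\sum_{|\alpha|=l}\op(D_\xi^\alpha r_\alpha),
\end{equation*}
with $p_\alpha=\tfrac{1}{\alpha!}[\partial_y^\alpha D_\xi^\alpha a]_{y=x}\in C^{\tau-|\alpha|}S^{m-|\alpha|}(\R^n\times\R^n)$ and $D_\xi^\alpha r_\alpha\in C^{\tau-l}S^{m-l}(\R^{2n}\times\R^n)$ vanishing on the diagonal.

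The key subclaim is that each $p_\alpha$ satisfies the global $\mu$-transmission condition with respect to $\crnp$: since the parity identities \eqref{eq:exroughmu-transm} hold for the homogeneous terms $a_j$ at all $(x,y)$, differentiating in $y^\alpha$ and $\xi^\alpha$ and restricting to $y=x$ yields the identities for the homogeneous term of $p_\alpha$ of order $m-|\alpha|-j$, with $m$ replaced by $m-|\alpha|$ (the relation $|\alpha+\gamma|=|\alpha|+|\gamma|$ shifts the phase correctly). Applying Corollary~\ref{cor:mu-P_+bound} to $p_\alpha$ (with $\tau$ replaced by $\tau-|\alpha|$ and $m$ by $m-|\alpha|$) produces $r^+\op(p_\alpha)\colon H_q^{\mu(m-|\alpha|+s)}(\crnp)\to \ol H_q^s(\rnp)$ for $|s|<\tau-|\alpha|$; combined with the monotone inclusion $H_q^{\mu(m+s)}\hookrightarrow H_q^{\mu(m-|\alpha|+s)}$, this gives continuity from $H_q^{\mu(m+s)}(\crnp)$ into $\ol H_q^s(\rnp)$. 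For the remainder, Corollary~\ref{cor:BoundednessRemainderNew} yields $r^+\op(D_\xi^\alpha r_\alpha)\colon H_q^{\mu((m-l+s)_+)}(\crnp)\to \ol H_q^s(\rnp)$ under our choice of $l$, and $H_q^{\mu(m+s)}\subset H_q^{\mu((m-l+s)_+)}$ gives the required source inclusion. The seminorm bound \eqref{eq:OpaEstim} then follows from the closed graph theorem, as in Theorem~\ref{thm:bd-compos}.

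The hard part is producing an integer $l$ that simultaneously satisfies $l>s+m-\mu-\tfrac1q$ (needed for the remainder) and $l+|s|<\tau$ (so that Corollary~\ref{cor:mu-P_+bound} applies uniformly for all $|\alpha|\le l$) throughout the allowed range $s\in[\mu-m,\tau-m)$; for $s$ near the upper endpoint the two conditions may fail to be jointly achievable, forcing a more delicate argument. The intended remedy is a splitting analogous to the proof of Theorem~\ref{thm:P_+bound}: one first establishes the bound for $s$ in an interval near $0$ where $l$ can be chosen directly, then gains tangential regularity one step at a time using the commutator formula $[\partial_{x_j},\op(a)]=\op((\partial_{x_j}+\partial_{y_j})a)$ for $j<n$, and finally treats the non-integer intermediate values by complex interpolation of the $\mu$-transmission spaces (Remark~\ref{rem:Interpolation}). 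The subcase $s<0$ (only relevant when $\mu<m$) must be handled separately since Corollary~\ref{cor:BoundednessRemainderNew} assumes $s\ge 0$; here $m+s<\mu+\tfrac1q$ is available in the relevant sub-range, so $H_q^{\mu(m+s)}(\crnp)=\dot H_q^{m+s}(\crnp)$ and the statement follows directly from Theorem~\ref{thm:Bddness} applied to $a$, or alternatively by duality against the adjoint $\op(\ol{a(y,x,\xi)})$.
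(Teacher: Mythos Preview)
Your reduction via Corollary~\ref{cor:xyFormReduction} and the verification that each $p_\alpha$ inherits the global $\mu$-transmission condition are correct, and the argument does give the bound for $s$ in a low range. The gap is in your repair for $s$ near $\tau-m$: commutators with \emph{tangential} derivatives $\partial_{x_j}$, $j<n$, only control tangential regularity of $r^+\op(a)u$, whereas membership in $\ol H_q^s(\rnp)$ requires control of \emph{all} derivatives up to order $[s]$, including the normal one $\partial_{x_n}$. Interpolation cannot supply the missing normal regularity, so the scheme as outlined does not close.

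The paper proceeds differently. It proves the bound only at the two endpoints $s=\mu-m$ and $s=\tau-m-\varepsilon$ and then interpolates (Remark~\ref{rem:Interpolation}). At $s=\mu-m$ one has $H_q^{\mu(m+s)}(\crnp)=\dot H_q^\mu(\crnp)$, and Theorem~\ref{thm:Bddness} applies directly to $a$ with no expansion needed. At $s=\tau-m-\varepsilon$ one writes $s=s'+k$ with $k=[s]$, $s'\in[0,1)$, uses the characterization of $\ol H_q^s$ via $\partial_x^\beta v\in\ol H_q^{s'}$ for all $|\beta|=k$ (normal directions included), and expands
\[
\partial_x^\beta\, r^+\op(a)=\sum_{0\le\gamma\le\beta}\tbinom{\beta}{\gamma}\,r^+\op\bigl(\partial_x^\gamma a\cdot(i\xi)^{\beta-\gamma}\bigr),
\]
each summand having symbol in $C^{\tau-|\gamma|}S^{m+k-|\gamma|}$ and still globally $\mu$-transmission; this reduces to the case $k=0$ with shifted parameters. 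In that base case the remainder from Corollary~\ref{cor:xyFormReduction} with $l=[m]$ is handled by Theorem~\ref{thm:Bddness2} (smoothing $L_q\to H_q^s$ for diagonal-vanishing symbols) rather than Corollary~\ref{cor:BoundednessRemainderNew}, which is precisely what circumvents the $l$-balancing obstruction you encountered.
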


\begin{proof}
  We will prove the statement in the cases $s=\mu-m$ and $s=\tau-m-\eps$ for $\eps>0$ sufficiently small. Then the general statement follows by complex interpolation since $s+m\geq \mu>\mu-\tfrac1{q'}$, cf.\ Remark~\ref{rem:Interpolation}.

  \noindent
  {\bf Case $s=\mu-m$:} In this case we have
    \begin{equation*}
    H^{\mu(m+s)}_q(\ol{\R}_+^n)= H^{\mu(\mu)}_q(\ol{\R}_+^n) =\dot{H}^{\mu}_q(\ol{\R}^n_+)\subset H_q^\mu(\Rn)
  \end{equation*}
  and
  \begin{equation*}
   r^+ \op(p_\alpha (x,\xi))\colon \dot{H}^{\mu}_q(\ol{\R}^n_+)  \to \ol{H}_q^{\mu-m}(\Rn_+)
 \end{equation*}
 by Theorem~\ref{thm:Bddness}, using that $|\mu-m|=m-\mu\leq m<\tau$ and $\mu< \tau$.\\[1ex]
  \noindent
  {\bf Case $s=\tau-m-\eps$, $\eps>0$ sufficiently small:} We can assume $\tau-m\not\in\N$ without loss of generality. (Otherwise replace $\tau$ by some $\tau'\in (s+m,\tau)$.) Then $k:=[s]=[\tau-m]$, if $\eps>0$ is sufficiently small.

  First we consider the case $k=0$. Then $0\leq s=\tau-m-\eps <\tau
  -m<1$. We use again the expansion in
  Corollary~\ref{cor:xyFormReduction} with $l=[m]$.
  Here Theorem~\ref{thm:Bddness2} yields:
\begin{equation*}
    r^+ \op( D_\xi^\alpha r_\alpha (x,y,\xi))\colon H^{\mu(s+m)}_q(\ol{\R}^n_+)\hookrightarrow e^+L_q(\Rn_+)\to \ol{H}^s_q(\Rn_+)
  \end{equation*}
  because of $D_\xi^\alpha r_\alpha \in C^{\tau-[m]}S^{m-[m]}_{1,0}(\R^{2n}\times \Rn)$ and $s<\tau-[m]-(m-[m])=\tau-m$. Moreover,
\begin{equation*}
    r^+ \op( p_\alpha (x,\xi))\colon H^{\mu(s+m-|\alpha|)}_q(\ol{\R}^n_+)\to \ol{H}^s_q(\Rn_+)
  \end{equation*}
  by Corollary~\ref{cor:mu-P_+bound}. This shows the case $k=0$.

  Next we consider the case $k\geq 1$.
  We shall use that $s=s'+k$ with $s'\in [0,1)$ and
  \begin{equation*}
    v \in \ol{H}^s_q(\Rn_+) \iff  \partial_x^\beta v \in
    \ol{H}^{s'}_q(\Rn_+)\quad \text{for all }|\beta|= k\text{ and }\beta=0, 
  \end{equation*}
  with corresponding norm equivalences. Let $|\beta |=k$. The
  composition of the differential operator $\partial_x^\beta $ with
  $r^+\OP(a(x,y,\xi ))$ is a finite sum
  \begin{equation*}
    \partial_x^\beta r^+ \op(a(x,y,\xi )) = \sum_{0\leq \gamma \leq \beta} \tbinom{\beta}{\gamma}r^+ \op(\partial_x^{\gamma}a(x,y,\xi )(i\xi)^{\beta-\gamma} )=\sum_{j=0}^kr^+\OP(a_{\beta ,j}(x,y,\xi )), 
  \end{equation*}
  where $a_{\beta ,j}(x,y,\xi )\in C^{\tau
    -j}S^{m+k-j}(\R^{2n}\times\Rn)$. Here the result for the case $k=0$ yields that
  \begin{equation*}
    r^+ \op(a_{\beta ,j} )\colon H^{\mu(m+k+s'-j)}_q(\ol{\R}_+^n)\to \ol{H}_q^{s'}(\Rn_+) 
  \end{equation*}
  since $s'=s-k < \tau-j-(m+k-j)=\tau-m-k$ and $m+k-j<\tau-j$ due to $k=[\tau-m]<\tau-m$.
This treats the case $|\beta |=k$. 
For the case $|\beta |=0$ we apply
the first case directly in a similar way.

  Finally, the last statement is a consequence of the closed graph
  theorem. More precisely, let
  \begin{equation*}
    C^\tau S^m_{\mu,tr}(\R^{2n}\times \Rn):=\{a\in C^\tau
    S^m(\R^{2n}\times \Rn)\mid a \text{ satisfies the global }\mu\text{-transmission condition}\}
  \end{equation*}
  and 
  consider the mapping 
  \begin{alignat*}{1}
    \op_+\colon &C^\tau S^m_{\mu,tr}(\R^{2n}\times \Rn)\to \mathcal{L}( H^{\mu(m+s)}_q(\ol{\R}_+^n), \ol{H}^s_q(\Rn_+))\colon a\mapsto  r^+ \op(a(x,y,\xi )).
  \end{alignat*}
 Note that $C^\tau S^m_{\mu,tr}(\R^{2n}\times \Rn)$ is a closed subspace of the Fr\'echet space $C^\tau S^m_{1,0}(\R^{2n}\times\Rn)$ and therefore a Fr\'echet space.
  If $(a_k)_{k\in\N}\subset C^\tau S^m_{\mu,tr}(\R^{2n}\times \Rn)$ is such that
  \begin{alignat*}{2}
    a_k&\to_{k\to\infty} a&\qquad& \text{in } C^\tau S^m_{1,0;\mu,tr}(\R^{2n}\times \Rn), \\
    r^+\op(a_k(x,y,\xi))&\to_{k\to \infty} A&\qquad& \text{in } \mathcal{L}( H^{\mu(m+s)}_q(\ol{\R}_+^n), \ol{H}^s_q(\Rn_+)),
  \end{alignat*}
  then for any $u\in \mathcal E_\mu\cap \mathcal E'$
  and a suitable subsequence
  \begin{equation*}
    r^+\op(a_k(x,y,\xi))u(x) \to_{k\to \infty} Au(x)\qquad \text{for almost every }x\in\Rn_+. 
  \end{equation*}
  Moreover, using the representation in Theorem~\ref{thm:OscIntegrals}
     of $\op(a(x,y,\xi))u(x)$ with $a$ replaced by $a_k$ it is easy to observe that
  \begin{equation*}
    r^+\op(a_k(x,y,\xi))u(x)\to_{k\to\infty}r^+\op(a(x,y,\xi))u(x)\qquad \text{for all }x\in \R^n_+.
  \end{equation*}
  Hence $Au(x)=r^+\op(a(x,y,\xi))u(x)$ for almost all $x\in\R^n_+$ and
  all $u\in \mathcal E_\mu\cap \mathcal E'$. This shows the closedness of $\op_+$ since $\mathcal E_\mu\cap \mathcal E'$ is dense in $H^{\mu(m+s)}_q(\ol{\R}_+^n)$. 
Hence $\op_+$ is continuous and therefore bounded, which yields the last statement.
\end{proof}

Also cases where $\mu $ and $m$ are in $(-1,0)$ can be included, with a
loss of H\"older-regularity by one step:

\begin{cor}\label{cor:BddnessHalfspace}
    Let  $\tau >0$, and $m\ge \mu >-1$, 
and let $a\in C^\tau S^m(\R^{2n}\times \Rn)$ satisfy the global
$\mu$-transmission condition with respect to $\ol{\R}_+^n$. If  $ m<\tau-1$ and $\mu -m \leq s< \tau-m-1$, then
  \begin{equation*}
    r^+ \op(a(x,y,\xi ))\colon H^{\mu(m+s)}_q(\ol{\R}_+^n)\to \ol{H}_q^s(\Rn_+) 
  \end{equation*}
  is bounded.
   Moreover, there is some $N\in\N$ and $C_{s,m,\mu}>0$ such that
  \begin{equation}\label{eq:OpaEstim}
    \|r^+\op(a(x,y,\xi ))\|_{\mathcal{L}(H^{\mu(m+s)}_q(\ol{\R}_+^n), \ol{H}_q^s(\Rn_+))}\leq C_{s,m,\mu }|a|_{N, C^{\tau} S^{m}_{1,0}}.
  \end{equation}
\end{cor}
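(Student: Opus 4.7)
The plan is to reduce to Theorem~\ref{thm:BddnessTruncatedPsDOs} by multiplying the symbol $a$ with the Fourier multiplier $\lambda_+^1(\xi)$, which has the global $1$-transmission property and shifts both the order $m$ and the transmission index $\mu$ up by one, at the cost of losing one unit of $s$. Set
$$\tilde a(x,y,\xi) := a(x,y,\xi)\lambda_+^1(\xi).$$
By Proposition~\ref{thm:multipl}, $\tilde a\in C^\tau S^{m+1}_{1,0}(\R^{2n}\times\Rn)$ satisfies the global $(\mu+1)$-transmission condition. The shifted parameters $\tilde\mu:=\mu+1\in[0,1)$ and $\tilde m:=m+1\in[0,\tau)$ fall in the admissible range of Theorem~\ref{thm:BddnessTruncatedPsDOs}: $0\le\tilde\mu\le\tilde m<\tau$ from $\mu\le m$ and $m<\tau-1$, while the corollary's range $\mu-m\le s<\tau-m-1$ reads $\tilde\mu-\tilde m\le s<\tau-\tilde m$. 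Thus the theorem yields
$$r^+\op(\tilde a)\colon H^{(\mu+1)((m+1)+s)}_q(\crnp)\to \ol H^s_q(\rnp)$$
bounded, with norm $\le C|\tilde a|_{N,C^\tau S^{m+1}_{1,0}}\le C'|a|_{N,C^\tau S^m_{1,0}}$.

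Next, $\Lambda_+^{-1}$ is a homeomorphism $H^{\mu(m+s)}_q(\crnp)\to H^{(\mu+1)((m+1)+s)}_q(\crnp)$: from $H^{\mu(t)}_q(\crnp)=\Lambda_+^{-\mu}e^+\ol H^{t-\mu}_q(\rnp)$ together with $\Lambda_+^{-1}\Lambda_+^{-\mu}=\Lambda_+^{-(\mu+1)}$ and the identity $(m+s)-\mu=((m+1)+s)-(\mu+1)$, both members are equal to $\Lambda_+^{-(\mu+1)}e^+\ol H^{m+s-\mu}_q(\rnp)$. Composing with the above, $r^+\op(\tilde a)\Lambda_+^{-1}\colon H^{\mu(m+s)}_q(\crnp)\to \ol H^s_q(\rnp)$ is bounded with a symbol-seminorm bound on $a$. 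It thus remains to estimate the discrepancy $R:=\op(a)-\op(\tilde a)\Lambda_+^{-1}$. Writing both operators as oscillatory integrals and using the factorization $\lambda_+^1(\xi)\lambda_+^{-1}(\eta)-1=\lambda_+^{-1}(\eta)(\lambda_+^1(\xi)-\lambda_+^1(\eta))$ together with $\lambda_+^1(\xi)-\lambda_+^1(\eta)=(\xi-\eta)\cdot\int_0^1\nabla\lambda_+^1(\eta+t(\xi-\eta))\,dt$, followed by integration by parts in $y$ (converting each factor $\xi_j-\eta_j$ into a $D_{y_j}$-derivative of the phase $e^{i(x-y)\xi+iy\eta}$), one sees that $R$ is a double-symbol amplitude operator whose amplitude is proportional to $\partial_{y_j}a(x,y,\xi)\,\lambda_+^{-1}(\eta)\Phi_j(\xi,\eta)$ with $\Phi_j$ of joint order $0$, hence of total order $m-1$ and regularity $C^{\tau-1}$.

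The boundedness $r^+R\colon H^{\mu(m+s)}_q(\crnp)\to \ol H^s_q(\rnp)$ is then obtained by reducing this double-symbol operator to $(x,y)$-form via the amplitude calculus (integrating out $\eta$ against $\hat u$ to absorb the $\lambda_+^{-1}(\eta)\Phi_j(\xi,\eta)$ factor into a standard symbol of order $m-1$ in $\xi$ with $C^{\tau-1}$-smoothness in $x,y$), and then applying Corollary~\ref{cor:xyFormReduction} followed by Corollary~\ref{cor:mu-P_+bound} for the $x$-form leading pieces and Corollary~\ref{cor:Bddnessr} for the diagonal-vanishing remainders, combined with the boundedness of $\Lambda_+^{\pm 1}$ on the transmission scale. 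The order gain of one unit is precisely what accommodates the loss of one unit in $s$ relative to Theorem~\ref{thm:BddnessTruncatedPsDOs}. Finally, the seminorm bound \eqref{eq:OpaEstim} follows from the closed graph theorem, exactly as in the last paragraph of the proof of Theorem~\ref{thm:BddnessTruncatedPsDOs}. The main obstacle is the careful amplitude-level bookkeeping of the double-symbol remainder $R$ arising from the non-commutativity of $\op(a)$ and the Fourier multiplier $\Lambda_+^1$ in the $(x,y)$-form calculus.
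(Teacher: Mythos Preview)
Your overall strategy---shift $(\mu,m)$ to $(\mu+1,m+1)$ and invoke Theorem~\ref{thm:BddnessTruncatedPsDOs}---is exactly the paper's idea, and your identification $H^{\mu(m+s)}_q=\Lambda_+^{-1}H^{(\mu+1)(m+1+s)}_q$ is correct. The difficulty is entirely in your treatment of the remainder $R=\op(a)-\op(\tilde a)\Lambda_+^{-1}$.

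The expression you obtain for $R$ is a genuine \emph{double-symbol} operator with amplitude $\partial_{y_j}a(x,y,\xi)\,\lambda_+^{-1}(\eta)\Phi_j(\xi,\eta)$. Your sentence ``integrating out $\eta$ against $\hat u$ to absorb the $\lambda_+^{-1}(\eta)\Phi_j(\xi,\eta)$ factor into a standard symbol of order $m-1$ in $\xi$'' does not work: the factor $\Phi_j(\xi,\eta)=\int_0^1\partial_j\lambda_+^1(\eta+t(\xi-\eta))\,dt$ depends jointly on $\xi$ and $\eta$, so after integrating in $\eta$ you are left with a function $w_j(y,\xi)$ that depends on $u$, $y$, \emph{and} $\xi$. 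The resulting expression $\int e^{i(x-y)\xi}\partial_{y_j}a(x,y,\xi)\,w_j(y,\xi)\,dy\,d\xi$ is not an $(x,y)$-form $\psi$do applied to $u$, so Corollary~\ref{cor:xyFormReduction} and the mapping results that follow it do not apply. A reduction of nonsmooth double symbols $a(x,y,\xi,\eta)$ to $(x,y)$-form is precisely the kind of tool the paper does \emph{not} have (and whose $(x,y)$-form analogue is already one of its main technical contributions).

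The paper avoids this obstacle by choosing a different order-one lift: instead of the full $\psi$do $\Lambda_+^1$, it uses $\Xi_+^1=\partial_{x_n}+\langle D'\rangle$ acting on the \emph{input}, writing $u=\partial_{x_n}v+w$ with $v\in H^{(\mu+1)(m+s+1)}_q$ and $w\in H^{(\mu+1)(m+s)}_q$. The normal-derivative part commutes \emph{exactly} with an $(x,y)$-form operator via integration by parts,
\[
\op(a(x,y,\xi))\partial_{x_n}v=\op(a(x,y,\xi)i\xi_n)v-\op(\partial_{y_n}a(x,y,\xi))v,
\]
producing two honest $(x,y)$-form operators of order $m+1$ (and $C^{\tau}$, $C^{\tau-1}$ regularity) satisfying the $(\mu+1)$-transmission condition, to which Theorem~\ref{thm:BddnessTruncatedPsDOs} applies directly. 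The tangential part $\langle D'\rangle$ commutes with $e^+$ and hence maps the transmission scale to itself, so $\op(a)w$ is handled by the same theorem with $a$ viewed as an order-$(m+1)$ symbol. No double-symbol calculus is needed.
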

\begin{proof}
  We use that by definition,
  \begin{equation*}
   H_q^{\mu(m+s)}(\ol{\R}_+^n) = \Xi_+^1 \Xi_+^{-(\mu+1)}e^+ \ol{H}_q^{m+s-\mu}(\Rn_+)=\Xi_+^1 H_q^{(\mu+1)(m+s+1)}(\ol{\R}_+^n),
 \end{equation*}
 where $\Xi_+^1= \partial_{x_n}+ \op(\weight{\xi'})$. Here $\op(\weight{\xi'})\colon H_q^{(\mu+1)(m+s+1)}(\ol{\R}_+^n)\to H_q^{(\mu+1)(m+s)}(\ol{\R}_+^n)$ since the operator commutes with $e^+$. Thus for every $u\in  H_q^{\mu(m+s)}(\ol{\R}_+^n)$ there are $v\in H_q^{(\mu+1)(m+s+1)}(\ol{\R}_+^n)$ and $w\in H_q^{(\mu+1)(m+s)}(\ol{\R}_+^n)$ (depending continuously on $u$) such that $u= \partial_{x_n} v+w$. Moreover,
 \begin{equation*}
   r^+ \op(a(x,y,\xi ))\partial_{x_n} v=    r^+ \op(a(x,y,\xi )i\xi_n)v - r^+ \op(\partial_{y_n}a(x,y,\xi ))v, 
 \end{equation*}
 where $a(x,y,\xi )i\xi_n\in C^\tau S^{m+1}(\R^{2n}\times \Rn)$ and
 $\partial_{y_n} a(x,y,\xi)\in C^{\tau-1} S^{m}(\R^{2n}\times \Rn)$
 satisfy the global $(\mu+1)$-transmission condition. Considering
 $\partial_{y_n}a(x,y,\xi )$ and $a(x,y,\xi )$ as symbols of order
 $m+1$, we get from 
 Theorem~\ref{thm:BddnessTruncatedPsDOs} that all three maps
 $  r^+ \op(a(x,y,\xi )i\xi_n)$, $ r^+ \op(\partial_{y_n}a(x,y,\xi ))$
 and $   r^+ \op(a(x,y,\xi ))$ are  bounded
from $H^{(\mu+1)(m+1+s)}_q(\ol{\R}_+^n)$ to $\ol{H}_q^s(\Rn_+)$, 
when $\tau >0$, $0\le \mu +1\le m+1< \tau $ and $(\mu +1)-(m+1)\le
s<\tau -(m+1)$. In view of the assumption $-1<\mu \le m$, the latter conditions
reduce to $  m<\tau -1$, $\mu -m\le s<\tau -m-1$. Then when they hold,
 \begin{equation*}
r^+ \op(a(x,y,\xi ))u= r^+ \op(a(x,y,\xi )i\xi_n)v - r^+ \op(\partial_{y_n}a(x,y,\xi ))v + r^+ \op(a(x,y,\xi ))w
\end{equation*}
belongs to $ \ol{H}_q^s(\Rn_+)$, and the corresponding map is bounded.

The last statement follows likewise from Theorem~\ref{thm:BddnessTruncatedPsDOs}.
\end{proof}

The results can be generalized to symbols satisfying the extended $\mu
$-transmission condition.

\section{The Homogeneous Dirichlet Problem on Nonsmooth Domains}\label{sec:Regularity}

We shall now apply the analysis to the homogeneous Dirichlet problem
for those $\psi $do's that are close generalizations of the fractional
Laplacian, namely operators $P$ of order $2a$ with an {\it even}
symbol. As already noted, they satisfy the global $a$-transmission condition
with respect to any choice of normal coordinate. The homogeneous
Dirichlet problem is, for strongly elliptic operators,
\begin{equation}\label{eq:Dirpb1}
Pu=f\text{ on }\Omega ,\quad \supp u\subset \comega,
\end{equation}
where the solution $u$ is sought in $H^a(\rn)$, and it is known in the
smooth case \cite{G15} that it is Fredholm solvable for $f\in \ol
H^s_q(\Omega )$, with $ u\in H_q^{a(s+2a)}(\comega)$, when
$s>-a-\frac1{q'}$. Our present aim is to
extend the regularity result to symbols $p$ and open sets $\Omega $  with $C^{1+\tau}$-boundary, for
$s$ as large as possible relative to the H\"older exponents.

\subsection{Coordinate changes at a boundary, boundedness over nonsmooth domains}\label{subsec: CoordinateTrafo}

As in Section \ref{subsec: NonsmoothSets},   $\Rn_\gamma= \{x\in\R^n\mid
x_n>\gamma(x')\}$ for some $\gamma\in C^{1+\tau}(\R^{n-1})$ with
$\tau >0$, and $F_\gamma\colon \R^n\to \R^n$ is a
$C^{1+\tau}$-diffeomorphism such that $F_\gamma(\Rn_\gamma)=
\Rn_+$. We take $F_\gamma(x)= (x',x_n-\gamma(x'))$ for all $x\in\R^n$,
where $x'=(x_1,\ldots, x_{n-1})$. Moreover, let $p\in C^\tau S^{2a}(\R^n\times \R^n)$ be even and $\tau> 2a$ and let $P_\gamma $ be the
transformed operator:
\begin{equation}\label{eq:Pgamma}
  (P_\gamma u)(x) = (P (u\circ F^{-1}_\gamma))(F_\gamma(x))= (F^{\ast}_\gamma P F^{\ast,-1}_\gamma u)(x)\quad \text{for all }u\in \SD(\Rn),
\end{equation}
and let $\|\gamma \|_{C^1(\R^n)}\leq r_0 $ for some $r_0\in (0,1]$. 
We assume for simplicity that $r_0$ is so small that
\begin{equation}\label{eq:CondFgamma}
  \sup_{x\in\R^n}|\nabla F_\gamma(x)-I|\leq \tfrac12.
\end{equation}
Then one obtains by the results of Section~\ref{sec:CoordinateChange} that
for all $u\in \SD(\R^n)$ and $x\in\R^n$
\begin{alignat}{1}
  P_\gamma u(x) \label{eq:CoordTrafo1}
  &=\Os\int_{\R^n}\int_{\R^n} e^{i(x-y)\cdot \xi } q_\gamma(x,y,\xi) u(y) \sd y\dd \xi,
\end{alignat}
where
\begin{alignat}{1} \label{eq:CoordTrafo2}
  q_\gamma(x,y,\xi)&= p(F_\gamma(x),A_\gamma(x,y)^{-1,T}\xi)|\det A_\gamma(x,y)|^{-1}|\det \nabla_y F_\gamma(y)|,\\\nonumber
  A_\gamma(x,y) &= \int_0^1 \nabla_x F_\gamma (x+t(y-x))\sd t,
\end{alignat}
for all $x,y,\xi\in\R^n$.  
Here $q_\gamma (x,y,\xi )\in  C^\tau S^m_{1,0}(\R^{2n}\times \Rn)$. 
Moreover, for every $0<\tau'<\tau$ and $k\in \N_0$ there is some $C_k$ independent of
$\gamma$ and $p$ such that 
\begin{alignat}{1}\label{eq:qgammaEstim}
  |q_\gamma -p|_{k, C^{\tau'} S^{2a}_{1,0}(\R^{2n}\times\Rn)} &\leq C_k \|\gamma\|_{C^{1+\tau}(\R^{n-1})}^{\min(\tau-\tau',1)}|p|_{k+1, C^\tau S^{2a}_{1,0}(\R^{2n}\times\Rn)}
\end{alignat}
for all $\gamma\in C^{1+\tau}(\R^{n-1}),\|\gamma\|_{C^{1+\tau}(\R^{n-1})}\leq r_0$,
since
\begin{alignat*}{1}
  \|F_\gamma-\operatorname{id}\|_{C^{\tau}(\R^n)}&\leq C\|\gamma\|_{C^{1+\tau}(\R^{n-1})},\quad 
  \|A_\gamma-I\|_{C^{\tau}(\Rn\times \Rn)}\leq
  C\|\gamma\|_{C^{1+\tau}(\R^{n-1})}.
\end{alignat*}

In order to apply the results to the nonlocal equations on
$\Rn_\gamma$, we have to extend \eqref{eq:Pgamma} and \eqref{eq:CoordTrafo1} to $u\in H^{a(s+2a)}_q(\ol{\R}_+^n)$. First of all, $\op(q_\gamma)$ 
extends to a bounded linear operator from $H_q^a(\Rn)$ to $H_q^{-a}(\Rn)$ because of Theorem~\ref{thm:Bddness}, 
due to $0<a<2a<\tau$. Moreover, $F_\gamma^\ast$ and $F_\gamma^{\ast,-1}$ map $H^a_q(\Rn)$ to itself since $0<a<\tau$. Because of $\det DF_\gamma(x)\equiv 1$,
\begin{equation*}
  \int_{\Rn} (F_\gamma^\ast u)(x) v(x)\,dx =   \int_{\Rn} u(x) (F_\gamma^{\ast,-1}v)(x)\,dx  \qquad \text{for all }u,v\in \SD(\Rn).
\end{equation*}
 Hence $F_\gamma^\ast$ and $F_\gamma^{\ast,-1}$ map $H^{-a}_q(\Rn)$ to itself as well. Therefore we obtain
\begin{align}\label{eq:DefnPGamma}
  P_\gamma u &= F_\gamma^{\ast} P F_\gamma^{\ast,-1} u = \op(q_\gamma(x,y,\xi))u
\end{align}
for all $u\in H^a_q(\Rn)$. In particular, we obtain the identity for all $u\in H^{a(a)}_q(\ol{\R}^n_+)= \dot{H}^{a}_q(\ol{\R}^n_+)$, and conclude
\begin{align*}
 r^+F^{\ast}_\gamma P F^{\ast,-1}_\gamma u &= r^+\op(q_\gamma(x,y,\xi))u
\end{align*}
for all $u\in H^{a(a)}_q(\ol{\R}^n_+)$. Note that $H^{a(s+2a)}_q(\ol{\R}^n_+)\subset H^{a(a)}_q(\ol{\R}^n_+)$ for any $s\geq -a$.

In the case of a classical even symbol $p$ this leads to:
\begin{theorem}\label{thm:TransformedPsDOs}
  Let $p\in C^\tau S^{2a}(\Rn\times \Rn)$ be even, where $0<a<1$, let
  $\gamma\in C^{1+\tau}(\R^{n-1})$ and $N<\tau$, let $q_\gamma $ be
  the transformed symbol \eqref{eq:CoordTrafo2}, and let $P_\gamma =  \op(q_\gamma(x,y,\xi)) $. 
  Then
  $q_{\gamma }\in C^{\tau}S^{2a}(\R^{2n}\times \R^n)$
  satisfies the global $a$-transmission condition. 
  Moreover, with $1<q<\infty $,
\begin{equation*}
 r^+P_\gamma \equiv  r^+\op(q_\gamma(x,y,\xi))\colon H^{a(2a+s)}_q(\ol{\R}_+^n)\to \ol{H}_q^s(\Rn_+)
 \end{equation*}
 is bounded for any $s\in\R$ such that $-a\leq s<\tau-2a$. Furthermore, for any $r_0>0$ there is some $C_{s,r_0,q}>0$, $\theta>0$ and $k\in\N_0$ independent of $p$ and $\gamma$ such that
\begin{equation}\label{eq:TransformEstim}
  \|r^+P_\gamma - r^+ \op(p(x,\xi))\|_{\mathcal{L}(H^{\mu(2a+s)}_p(\ol{\R}_+^n), \ol{H}_p^s(\Rn_+))}\leq C_{s,r_0,q}\|\gamma\|^\theta_{C^{1+\tau}(\R^{n-1})}|p|_{k,C^\tau S^{2a}(\Rn\times\Rn)}
\end{equation}
provided that $\|\gamma\|_{C^{1+\tau}}\leq r_0$.

\end{theorem}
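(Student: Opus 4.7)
The argument has three parts: establishing that $q_\gamma$ is a classical symbol in $C^\tau S^{2a}(\R^{2n}\times\R^n)$ satisfying the global $a$-transmission condition, deducing the boundedness from Theorem~\ref{thm:BddnessTruncatedPsDOs}, and proving the quantitative estimate \eqref{eq:TransformEstim}.

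For the symbol structure, formula \eqref{eq:CoordTrafo2} together with the classical expansion $p \sim \sum_j p_j$ yields a formal expansion $q_\gamma \sim \sum_j q_{\gamma,j}$ with
\begin{equation*}
q_{\gamma,j}(x,y,\xi) = |\det A_\gamma(x,y)|^{-1}|\det\nabla F_\gamma(y)|\,p_j(F_\gamma(x),A_\gamma(x,y)^{-1,T}\xi),
\end{equation*}
homogeneous in $\xi$ of degree $2a-j$ for $|\xi|\geq 1$; the $C^\tau$-estimates \eqref{eq:2.7} transfer from $p$ to $q_\gamma$ because $A_\gamma$ and its inverse are $C^\tau$ in $(x,y)$ uniformly under \eqref{eq:CondFgamma}, and multiplication/composition with $C^\tau$-bounded factors preserves such estimates. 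To verify the global $a$-transmission condition, set $B(x,y):=A_\gamma(x,y)^{-1,T}$; by linearity, $B(x,y)(0,-1)=-B(x,y)(0,1)$. Since the evenness of $p$ (cf.~\eqref{eq:even}) yields $(\partial^\beta p_j)(Y,-\eta)=(-1)^{j+|\beta|}(\partial^\beta p_j)(Y,\eta)$, differentiating $q_{\gamma,j}(x,y,\xi)=c(x,y)\,p_j(F_\gamma(x),B(x,y)\xi)$ in $\xi$ via the chain rule produces only terms of the form $C_{\alpha,\beta}(B)\,(\partial^\beta p_j)(F_\gamma(x),B\xi)$ with $|\beta|=|\alpha|$, so
\begin{equation*}
\partial_\xi^\alpha q_{\gamma,j}(x,y,0,-1) = (-1)^{j+|\alpha|}\partial_\xi^\alpha q_{\gamma,j}(x,y,0,1) = e^{i\pi(2a-2a-j-|\alpha|)}\partial_\xi^\alpha q_{\gamma,j}(x,y,0,1)
\end{equation*}
for \emph{all} $(x,y)\in\R^{2n}$ and $j,\alpha$; this is the global $a$-transmission condition with $m=2a$, $\mu=a$.

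The boundedness then follows by applying Theorem~\ref{thm:BddnessTruncatedPsDOs} with $\mu=a$, $m=2a$: the hypothesis $0\leq a\leq 2a<\tau$ holds by $\tau>2a$ and $0<a<1$, and $\mu-m=-a\leq s<\tau-2a=\tau-m$ matches the allowed range of $s$. Combined with the identity $r^+P_\gamma u=r^+\op(q_\gamma(x,y,\xi))u$ (valid for $u\in H_q^{a(2a+s)}(\ol{\R}^n_+)\subset H^a_q(\R^n)$ as noted after \eqref{eq:DefnPGamma}), this yields the asserted mapping property.

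For \eqref{eq:TransformEstim}, I note that $q_\gamma - p$ also satisfies the global $a$-transmission condition (being a difference of two such symbols). Choose $\tau'\in(s+2a,\tau)$, which is possible since $s<\tau-2a$. Then \eqref{eq:qgammaEstim} bounds $|q_\gamma-p|_{k,C^{\tau'}S^{2a}_{1,0}(\R^{2n}\times\R^n)}$ by $C_k\|\gamma\|_{C^{1+\tau}(\R^{n-1})}^{\min(\tau-\tau',1)}|p|_{k+1,C^\tau S^{2a}(\R^n\times\R^n)}$, and the operator-norm estimate of Theorem~\ref{thm:BddnessTruncatedPsDOs} (with $\tau$ replaced by $\tau'$), applied to the $a$-transmission symbol $q_\gamma-p$, yields \eqref{eq:TransformEstim} with $\theta=\min(\tau-\tau',1)>0$. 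The main obstacle is thus the verification in the first step that the $(x,y)$-form symbol $q_\gamma$ inherits the full classical $C^\tau S^{2a}$-structure uniformly in $\gamma$; the transmission condition and the boundedness itself are then immediate consequences of the evenness of $p$ and the truncated $(x,y)$-operator theory from Section~\ref{sec:TransmissionCond}.
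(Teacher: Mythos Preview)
Your proposal is correct and follows essentially the same approach as the paper's proof: the paper observes tersely that evenness of $p$ implies evenness of $q_\gamma$ (hence the global $a$-transmission condition), then invokes Theorem~\ref{thm:BddnessTruncatedPsDOs} for the mapping property, and finally combines \eqref{eq:qgammaEstim} with the operator-norm bound \eqref{eq:OpaEstim} at a slightly reduced H\"older index $\tau'<\tau$ to get \eqref{eq:TransformEstim}. Your write-up supplies more detail on why evenness is preserved under the linear $\xi$-substitution $\xi\mapsto B(x,y)\xi$, and is explicit about the constraint $\tau'>s+2a$ needed for Theorem~\ref{thm:BddnessTruncatedPsDOs} to apply with $\tau'$ in place of $\tau$, but the logical structure is identical.
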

\begin{proof}
  Since $p$ is even, it
  is easy to observe that $q_\gamma$ is even as well. Therefore
  $q_\gamma\in C^\tau S^{2a}(\R^{2n}\times \Rn)$ satisfies the global
  $a$-transmission condition
  and we can apply Theorem~\ref{thm:BddnessTruncatedPsDOs} to
  $q_\gamma$. This implies the statement for the mapping properties
  of $r^+\op(q_\gamma(x,y,\xi))$. To show \eqref{eq:TransformEstim} one chooses some $\tau'\in (0,\tau)$ sufficiently close to $\tau$, $\theta=\min (\tau-\tau',1)$ and applies \eqref{eq:OpaEstim} for
  $r^+P_\gamma - r^+ \op(p(x,\xi))=
  r^+\op(q_\gamma(x,y,\xi)-p(x,\xi))$ and with $\tau'$ instead of $\tau$. Moreover, one uses \eqref{eq:qgammaEstim}.   
\end{proof}

\begin{cor}\label{cor:NonsmoothMap2} Let $0<a<1$ and $\tau>2a$, and let $p\in C^\tau
S^{2a}(\Rn\times \Rn)$ be even. Then $P=\op(p)$ maps
\begin{equation*} r^+P\colon 
  H^{a(s+2a)}_q(\ol{\R}^n_\gamma ) \to \ol H^s_q(\Rn_\gamma ),
\end{equation*} continuously for $-a\le s<\tau -2a$.
\end{cor}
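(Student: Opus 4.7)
The idea is to flatten the curved halfspace via $F_\gamma$ and then invoke Theorem~\ref{thm:TransformedPsDOs} for the resulting $(x,y)$-form operator on $\R^n_+$. Given $u\in H^{a(s+2a)}_q(\ol{\R}^n_\gamma)$, Definition~\ref{def:roughTransmSpaces} provides $v:=u\circ F_\gamma^{-1}=F_\gamma^{\ast,-1}u\in H^{a(s+2a)}_q(\ol{\R}^n_+)$ with $\|v\|\simeq\|u\|$. Because $\det\nabla F_\gamma\equiv 1$ and $F_\gamma^{\ast}$, $F_\gamma^{\ast,-1}$ act boundedly on $H^s_q(\R^n)$ for $|s|<1+\tau$ (by \eqref{eq:change} for $s\ge 0$ and, by duality, for $s<0$), the analogous characterization for the restricted scale holds throughout $-a\le s<\tau-2a$:
\[
f\in \ol H^s_q(\R^n_\gamma)\ \Longleftrightarrow\ f\circ F_\gamma^{-1}\big|_{\R^n_+}\in \ol H^s_q(\R^n_+),
\]
with equivalent norms. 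So it suffices to bound $(r^+Pu)\circ F_\gamma^{-1}\big|_{\R^n_+}$ in $\ol H^s_q(\R^n_+)$ by $\|v\|$.

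A direct computation, using $u=F_\gamma^{\ast}v$ and $F_\gamma\circ F_\gamma^{-1}=\mathrm{id}$, gives
\[
(r^+Pu)\circ F_\gamma^{-1}\big|_{\R^n_+} \;=\; r^+\bigl(F_\gamma^{\ast,-1}\,P\,F_\gamma^{\ast}\,v\bigr).
\]
Set $Q:=F_\gamma^{\ast,-1}P F_\gamma^{\ast}$; this is precisely the transformed operator from \eqref{eq:Pgamma} with $F_\gamma$ replaced by $F_\gamma^{-1}$. The decisive observation is that $F_\gamma^{-1}(x)=(x',x_n+\gamma(x'))=F_{-\gamma}(x)$, so $Q$ coincides with $P_{-\gamma}$ in the notation of Section~\ref{subsec: CoordinateTrafo}, i.e.\ with the boundary profile $\gamma$ replaced by $-\gamma$.

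Since $\|-\gamma\|_{C^{1+\tau}}=\|\gamma\|_{C^{1+\tau}}$, the smallness assumption \eqref{eq:CondFgamma} transfers to $-\gamma$, and Theorem~\ref{thm:TransformedPsDOs} applied to the symbol $p$ and the curve $-\gamma$ yields
\[
r^+ P_{-\gamma}\colon H^{a(2a+s)}_q(\ol{\R}^n_+)\longrightarrow \ol H^s_q(\R^n_+)
\quad\text{boundedly for }-a\le s<\tau-2a.
\]
Applying this to $v$ and chaining the norm equivalences supplied by the coordinate change on both sides delivers $\|r^+Pu\|_{\ol H^s_q(\R^n_\gamma)}\le C\,\|u\|_{H^{a(s+2a)}_q(\ol{\R}^n_\gamma)}$, which is the corollary.

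The conceptually subtle point is the bookkeeping of the coordinate change: pulling $Pu$ back from $\R^n_\gamma$ to $\R^n_+$ via $F_\gamma^{-1}$ produces $Q=F_\gamma^{\ast,-1}PF_\gamma^{\ast}$, not the paper's $P_\gamma=F_\gamma^{\ast}PF_\gamma^{\ast,-1}$. Recognizing $F_\gamma^{-1}=F_{-\gamma}$ collapses this discrepancy into an instance of Theorem~\ref{thm:TransformedPsDOs} with the sign-flipped profile, after which the evenness of $q_{-\gamma}$, its global $a$-transmission condition with respect to $\R^n_+$, and the requisite $H^{a(\cdot)}_q$-to-$\ol H^{\cdot}_q$ mapping are all inherited from that theorem.
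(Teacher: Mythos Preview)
Your proof is correct and follows essentially the same route as the paper's: flatten via $F_\gamma$, identify the pulled-back operator as $P_{-\gamma}$ through $F_\gamma^{-1}=F_{-\gamma}$, and invoke Theorem~\ref{thm:TransformedPsDOs} together with the boundedness of $F_\gamma^{\ast,\pm 1}$ on $H^s_q(\R^n)$ for $|s|<1+\tau$. The paper's two-line proof records exactly these ingredients (the identity $F_\gamma^{\ast,-1}=F_{-\gamma}^\ast$ and the space identification $H^{a(s+2a)}_q(\ol{\R}^n_\gamma)=F_\gamma^\ast H^{a(s+2a)}_q(\ol{\R}^n_+)$); you have simply made the sign-flip bookkeeping explicit.
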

\begin{proof}
  Follows directly from Theorem~\ref{thm:TransformedPsDOs}, since $ F_\gamma^\ast \big(
  H^{a(s+2a)}_q(\ol{\R}^n_+ )\big)=  H^{a(s+2a)}_q(\ol{\R}^n_\gamma )$,
 using that $F_\gamma^{-1,\ast}= F_{-\gamma}^\ast$
  maps $H^s_q(\Rn)$ to itself (since $|s|<1+\tau$).
\end{proof}

\begin{cor}\label{cor:NonsmoothMap3} Let $0<a<1$ and $\tau>2a$, and
  let $p\in C^\tau
S^{2a-1}(\Rn\times \Rn)$ be odd. Then $P=\op(p)$ maps
\begin{equation*}
  r^+P\colon 
  H^{a(\max(a,s+2a-1))}_q(\ol{\R}^n_\gamma )
  \to \ol H^s_q(\Rn_\gamma ),
\end{equation*}
continuously for $-a\le s<\tau -2a$.
\end{cor}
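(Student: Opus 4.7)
The plan is to parallel the proof of Corollary~\ref{cor:NonsmoothMap2} for even symbols, but since the natural parameters $\mu=a$, $m=2a-1$ violate the hypothesis $\mu\le m$ of Theorem~\ref{thm:BddnessTruncatedPsDOs}, I shift $\mu$ down by one and apply Corollary~\ref{cor:BddnessHalfspace} instead. A direct check of \eqref{eq:exroughmu-transm} with $m=2a-1$, $\mu=a$ gives $e^{i\pi(m-2\mu-j)}=e^{-i\pi(j+1)}=(-1)^{j+1}$, matching the odd parity $p_j(x,-\xi)=(-1)^{j+1}p_j(x,\xi)$; hence $p$ satisfies the global $a$-transmission condition, and, as the condition depends only on $\mu\bmod\Z$, also the global $(a-1)$-transmission condition, now with $\mu=a-1\in(-1,0)$.

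Flattening $\R^n_\gamma$ to $\R^n_+$ via $F_\gamma$ as in Section~\ref{subsec: CoordinateTrafo}, the transformed symbol $q_\gamma$ from~\eqref{eq:CoordTrafo2} lies in $C^\tau S^{2a-1}(\R^{2n}\times \R^n)$; since $A_\gamma(x,y)^{-1,T}$ acts linearly in $\xi$, each homogeneous component inherits the odd parity of the corresponding $p_j$, so $q_\gamma$ is odd and satisfies the global $(a-1)$-transmission condition with respect to $\overline{\R}^n_+$. The identity $P_\gamma u=\op(q_\gamma)u$ for $u\in H^a_q(\R^n)$ is established verbatim as in~\eqref{eq:DefnPGamma}, using that $p\in C^\tau S^{2a-1}\subset C^\tau S^{2a}$ keeps the derivation inside the bound $2a<\tau$. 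Corollary~\ref{cor:BddnessHalfspace} applied with $\mu=a-1$, $m=2a-1$---whose hypotheses $m\ge\mu>-1$, $m<\tau-1$, and $\mu-m\le s<\tau-m-1$ reduce precisely to $0<a<1$, $\tau>2a$, and $-a\le s<\tau-2a$---then yields
\begin{equation*}
r^+\op(q_\gamma)\colon H^{(a-1)(s+2a-1)}_q(\overline{\R}^n_+)\to \overline{H}^s_q(\R^n_+).
\end{equation*}

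The main remaining task, and the crux of the argument, is to identify the announced domain inside the one just obtained, i.e., $H^{a(\max(a,s+2a-1))}_q(\overline{\R}^n_+)\subset H^{(a-1)(s+2a-1)}_q(\overline{\R}^n_+)$. If $s>1-a$ then $s+2a-1>a>a-\tfrac1{q'}$, and the hierarchy in~\eqref{eq:2.12} yields $H^{a(s+2a-1)}_q\subset H^{(a-1)(s+2a-1)}_q$ directly. If $-a\le s\le 1-a$, the left-hand space equals $\dot{H}^a_q(\overline{\R}^n_+)$, and $a\ge s+2a-1$ together with~\eqref{eq:2.10} gives $\dot{H}^a_q\subset \dot{H}^{s+2a-1}_q\subset H^{(a-1)(s+2a-1)}_q$. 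Pulling back by $F_\gamma$, which preserves $\overline{H}^s_q$ for $|s|<1+\tau$ and preserves the $H^{a(t)}_q$ spaces by Definition~\ref{def:roughTransmSpaces}, yields the stated mapping on $\overline{\R}^n_\gamma$.
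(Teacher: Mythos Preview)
Your proof is correct and takes a genuinely different route from the paper's. The paper argues by cases on $s$: for $s\le 1-a$ it uses $H^{a(a)}_q=\dot H^a_q$ and applies Theorem~\ref{thm:Bddness} directly (embedding $p$ into a suitable $C^\tau S^{a-s}_{1,0}$), while for $s>1-a$ it differentiates, writing $\partial_x^\alpha P=\op(a_\alpha)$ with $a_\alpha$ \emph{even} of order $2a$, and reduces to Corollary~\ref{cor:NonsmoothMap2}; this in turn forces a further sub-split according to whether $\tau>1+2a$ or $\tau\le 1+2a$. Your approach instead exploits that an odd symbol of order $2a-1$ satisfies the global $(a-1)$-transmission condition and invokes Corollary~\ref{cor:BddnessHalfspace} with $\mu=a-1$, $m=2a-1$ in one stroke, followed by the space inclusion $H^{a(\max(a,s+2a-1))}_q\subset H^{(a-1)(s+2a-1)}_q$ via the hierarchy~\eqref{eq:2.12} and~\eqref{eq:2.10}. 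This is more economical: it avoids the differentiation trick and the $\tau\lessgtr 1+2a$ case analysis, and it makes transparent why the corollary for $\mu\in(-1,0)$ was worth recording. The paper's argument, on the other hand, stays closer to the even case already treated and does not need the $(a-1)$-transmission space machinery at all.
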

\begin{proof}
  We first consider the case that $\max(a,s+2a-1)=a$, i.e., $s\leq 1-a$. Then
  \begin{equation*}
    H^{a(\max(a,s+2a-1))}_q(\ol{\R}^n_\gamma )= \dot{H}^a_q(\ol{\R}^n_\gamma)
\end{equation*}
and
\begin{equation*}
  r^+ P\colon  \dot{H}^a_q(\ol{\R}^n_\gamma)\to \ol{H}^s_q(\Rn_\gamma)
\end{equation*}
because of Theorem~\ref{thm:Bddness}, $-\tau <-a\leq s<\tau-2a<\tau$, $p\in C^\tau S^{2a-1}(\Rn\times \Rn)\subset C^\tau S^{a-s}_{1,0}(\Rn\times \Rn)$, and $|a-s|<\tau$.

Next we consider the case $s>1-a$. Then $\max(a,s+2a-1)=s+2a-1$ and $\tau>s+2a>1+a$. We use that
\begin{equation*}
  f\in \ol{H}^s_q(\Rn_\gamma) \quad \iff \quad \partial_x^\alpha f \in \ol{H}^{s-1}_q(\Rn_\gamma)\quad \text{for all }|\alpha|\leq 1 ,
\end{equation*}
and write $\partial_x^\alpha P= \op (a_\alpha (x,y,\xi))$ for some even $a_\alpha \in C^{\tau-1} S^{2a}(\Rn\times \Rn)$. In the case $\tau>1+2a$, Corollary~\ref{cor:NonsmoothMap2} implies that
\begin{equation*}
  r^+ \op (a_\alpha (x,y,\xi))\colon 
  H^{a(s+2a-1)}_q(\ol{\R}^n_\gamma ) \to \ol H^{s-1}_q(\Rn_\gamma ),
\end{equation*}
due to $2a<\tau-1$ and $-a<s-1<\tau-1-2a$.
In the case $\tau\leq 1+2a$ we use that $a_\alpha(x,y,\xi)= q_\gamma(x,y,\xi)(i\xi)^\alpha+ \partial_x^\alpha q_\gamma(x,y,\xi)$ if $|\alpha|=1$, where $q_\gamma(x,y,\xi)(i\xi)^\alpha\in C^\tau S^{2a}(\Rn\times \Rn)$ is even and $\partial_x^\alpha q_\gamma(x,y,\xi)\in C^{\tau-1} S^{2a-1}(\Rn\times \Rn)$. Again using
Corollary~\ref{cor:NonsmoothMap2} implies that
\begin{equation*}
  r^+ \op (q_\gamma (x,y,\xi)(i\xi)^\alpha )\colon
  H^{a(s+2a-1)}_q(\ol{\R}^n_+) \to \ol H^{s-1}_q(\Rn_\gamma ),
\end{equation*}
due to $2a<\tau$ and $-a<s-1<\tau-2a$. Moreover,
\begin{equation*}
  r^+ \op (\partial_x^\alpha q_\gamma (x,y,\xi) )\colon
H^{a(a)}_q(\ol{\R}^n_+)=\dot{H}^a_q(\Rn_\gamma) \to \ol H^{s-1}_q(\Rn_\gamma )
\end{equation*}
because of $-a<s-1<\tau-1$, $a<\tau-1 $ and $a-s+1\geq 2a-1$ due to $s<\tau-2a\leq 1$. Altogether this yields the desired mapping properties.
\end{proof}

  For general domains we obtain:
 \begin{theorem}\label{thm:MappingGeneralDomain} Let $0<a<1$ and $\tau >2a$, let $\Omega $ be a bounded $C^{1+\tau
 }$-domain, and let $P=\op(p(x,\xi ))$ where $p\in C^\tau
 S^{2a}(\rn\stimes\rn)$ is even. Then 
 \begin{equation}
   \label{eq:6.1}
   r^+ P\colon H_q^{a (s+2a)}(\comega)\to \ol H_q^{s}(\Omega )    
 \end{equation}
holds for $-a\le s<\tau -2a$, $1<q<\infty $.
\end{theorem}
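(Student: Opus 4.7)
The plan is to reduce the theorem to the curved-halfspace case already established in Corollary \ref{cor:NonsmoothMap2} by means of a finite partition of unity subordinate to a boundary-straightening atlas.

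First, I would cover $\overline\Omega$ by finitely many bounded open sets $U_0,U_1,\dots,U_I$, where $\overline{U_0}\subset\Omega$ is an interior neighborhood and each $U_i$, $i\ge 1$, is a boundary chart such that, after a rigid motion (which preserves both evenness of the symbol and the class $C^\tau S^{2a}$), one has $\Omega\cap U_i = \mathbb R^n_{\gamma_i}\cap U_i$ for some compactly perturbed $\gamma_i \in C^{1+\tau}(\mathbb R^{n-1})$. I then choose a refined subordinated partition of unity $\{\varrho_j\}_{j=0}^J \subset C_0^\infty$ such that for every pair $(j,k)$ there is an index $i=i(j,k)$ with $\supp\varrho_j\cup\supp\varrho_k \subset U_i$, together with cutoffs $\zeta_k \in C_0^\infty(U_{i(j,k)})$ satisfying $\zeta_k\equiv 1$ on $\supp\varrho_k$. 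Using $\sum_j\varrho_j\equiv 1$ on $\overline\Omega$ and $\zeta_k\varrho_k=\varrho_k$, I decompose
$$
r^+Pu \;=\; \sum_{j,k}\varrho_j\, r^+P\,\zeta_k(\varrho_k u).
$$

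For each pair $(j,k)$ with $i=i(j,k)\ge 1$, Proposition \ref{prop:Commutator1} (applicable because $\tau>2a$) ensures $\varrho_k u \in H_q^{a(s+2a)}(\overline\Omega)$. Since $\varrho_k u$ is supported in $U_i\cap\overline\Omega = U_i\cap\overline{\mathbb R}^n_{\gamma_i}$, Definition \ref{def:roughTransmSpaces} identifies it with an element of $H_q^{a(s+2a)}(\overline{\mathbb R}^n_{\gamma_i})$ of comparable norm. Corollary \ref{cor:NonsmoothMap2} then gives $r^+P(\varrho_k u)\in \bar H_q^s(\mathbb R^n_{\gamma_i})$. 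Because $U_i\cap\Omega=U_i\cap\mathbb R^n_{\gamma_i}$ and $\varrho_j$ is supported in $U_i$, multiplying by $\varrho_j$ and restricting to $\Omega$ produces exactly the same function as restricting to $\mathbb R^n_{\gamma_i}$, landing in $\bar H_q^s(\Omega)$ by standard multiplier rules. For the remaining pairs with $i(j,k)=0$, $\varrho_k u$ has compact support in $\Omega$, so $\varrho_k u\in H_q^{s+2a}(\mathbb R^n)$, and Theorem \ref{thm:bd-compos} (using $|s|,|s+2a|<\tau$, which follow from $-a\le s<\tau-2a$ and $\tau>2a$) yields $P(\varrho_k u)\in H_q^s(\mathbb R^n)$; multiplying by $\varrho_j\in C_0^\infty(\Omega)$ and restricting gives the desired element of $\bar H_q^s(\Omega)$. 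Summing the finitely many contributions yields the bound.

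The main obstacle is the chart-compatibility step in the boundary case: verifying that a function supported in $U_i\cap\overline\Omega$ sits in both $H_q^{a(s+2a)}(\overline\Omega)$ and $H_q^{a(s+2a)}(\overline{\mathbb R}^n_{\gamma_i})$ with comparable norms, and that the two realizations of $r^+P$ agree on the common set $U_i\cap\Omega$. This works because the global straightening $F_{\gamma_i}$ used to define $H_q^{a(s+2a)}(\overline{\mathbb R}^n_{\gamma_i})$ coincides (up to the rigid motion and a modification outside $U_i$ that is invisible to functions supported in $U_i$) with the local chart diffeomorphism used in Definition \ref{def:roughTransmSpaces}$\,2^\circ$, so the local transmission condition transports unambiguously; and because $r^+P$ is a local operation in the sense that $\varrho_j\,r^+P(\varrho_k u)$ is determined by $P(\varrho_k u)$ on $U_i$, where the restrictions from $\Omega$ and from $\mathbb R^n_{\gamma_i}$ agree.
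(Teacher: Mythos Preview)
Your argument is correct and follows the same overall localization strategy as the paper, but the bookkeeping differs in one substantive way. The paper does not use a subordinated double-sum $\sum_{j,k}\varrho_j P\varrho_k$. Instead, for each boundary chart it works with the chart cutoff $\varphi$ coming \emph{directly} from Definition~\ref{def:roughTransmSpaces}\,$2^\circ$ (so that $\underline{\varphi}\,\underline{u}\in H_q^{a(s+2a)}(\overline{\mathbb R}^n_\gamma)$ is literally the definition, with no chart-compatibility argument needed), applies Corollary~\ref{cor:NonsmoothMap2} to get $r^+\psi P(\varphi u)\in\overline H_q^s$, and then handles the tail $(1-\psi)P(\varphi u)$ via the disjoint-support reduction Corollary~\ref{cor:Commutator}, which replaces $P$ by an operator of order $2a-N$ acting on $\dot H_q^a(\overline\Omega)$. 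A single partition $\{\varrho_i\}$ subordinate to the $U_i'$ then suffices.

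What this buys the paper is exactly the point you flag as your ``main obstacle'': by using the definitional $\varphi$, the inclusion $\varphi u\in H_q^{a(s+2a)}(\overline{\mathbb R}^n_{\gamma})$ is immediate, whereas in your version you must argue that a general $\varrho_k u$ supported in $U_i$ lies in $H_q^{a(s+2a)}(\overline{\mathbb R}^n_{\gamma_i})$, i.e.\ you need a chart-independence statement for the transmission space. Your justification for this is adequate (and standard), but the paper's route sidesteps it entirely at the cost of one extra ingredient, Corollary~\ref{cor:Commutator}. Conversely, your subordinated partition avoids invoking Corollary~\ref{cor:Commutator} altogether. Both are valid; the paper's is marginally cleaner in tying the argument to the definition.
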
 

\begin{proof}
Let $u\in H_q^{{a} (s+2a)}(\comega )$ and let $x_0,U,\gamma ,\varphi $
be as in Definition~\ref{def:roughTransmSpaces} $2^\circ$. Let $\psi \in C_0^\infty
(U)$ satisfy $\psi \varphi =\varphi $. Let $U'$ be the interior of the
set where $\varphi =1$. In the translated and
rotated situation, where the objects will be marked with an underline,
we then have that $\underline\varphi \underline u\in  H_q^{{a}
(s+2a)}(\ol{\R}^n_\gamma  )$; and then by Corollary~\ref{cor:NonsmoothMap2},
$r^+\underline P(\underline\varphi \underline u)\in
\ol H_q^s(\R^n_\gamma )$, and also $r^+\underline \psi \underline
P(\underline \varphi \underline u)$ lies there. Thus in the original
position, $r^+\psi P\varphi u\in \ol H_q^s(\Omega ) $.

By Corollary~\ref{cor:Commutator}, $(1-\psi )P\varphi u=(1-\psi )\op(q_N)u$ with
$q_N\in C^\tau S^{2a-N}_{1,0}(\Rn\times \Rn)$ for arbitrarily large
$N$. Take $N\ge\tau +2a$. By  Theorem~\ref{thm:bd-compos}, $\op(q_N)$ maps $H_q^{s+2a-N}(\rn)$ into
$H_q^s(\rn)$ for $|s|<\tau $. When $s\in [-a,\tau -2a)$,  $H_q^{{a}
(s+2a)}(\comega )\subset H_q^{{a}
(a)}(\comega )= \dot H_q^a(\comega)$ is thus mapped by $r^+(1-\psi
)\op(q_N)$ into $\ol H_q^s(\Omega )$.

Altogether, we see that $r^+P\varphi u\in \ol H_q^s(\Omega ) $.

There is a finite set of points $\{x_{0,i}\}_{i=1,\dots,I}$ such that
$\bigcup_iU'_i\supset \partial\Omega $ holds for the associated data $\{U_i,\gamma _i,\varphi _i,U'_i,\psi _i\}$.
Supply these sets with an open set
$U'_0\supset \comega\setminus \bigcup_iU'_i $ with $\overline U'_0\subset \Omega $, and let $\{\varrho
_i\}_{i=0,\dots,I}$ be an associated partition of unity, $\varrho
_i\in C_0^\infty (U'_i)$. Then $u=\sum_i\varrho _iu$, where also
$\varrho _iu$ belongs to $H_q^{{a}
(s+2a)}(\comega )$ by Proposition~\ref{prop:Commutator1}. Moreover,
$\varrho _iu=\varphi _i\varrho _iu$ for $i\ge 1$, where the initial
considerations apply to $\varrho
_iu$ to show that $r^+P\varphi _i\varrho _iu\in \ol H_q^s(\Omega ) $.

Summation over $i$ gives the mapping property for $u$.
\end{proof}

\subsection{Elliptic regularity in an almost flat curved halfspace }

We now turn to the study of regularity properties of solutions of
elliptic problems in this context. Also here, we restrict the
attention in the present paper to even operators; this suffices for the treatment of $(-\Delta
)^a$ and  its pseudodifferential generalizations.

In the following let  $\ol{p}\in S^{2a}(\Rn\times
\Rn)$ and $p\in C^\tau S^{2a}(\Rn\times
\Rn)$ be strongly elliptic and even, and assume that for some $1<q<\infty$ and  $s\geq 0$ 
\begin{equation}
  \label{eq:Invertibility}
  r^+ \OP(\ol{p})\colon H^{a(t+2a)}_q(\overline{\R}^n_+)\to \overline{H}_q^t(\Rn_+)\quad \text{is invertible for }t=s,s',
\end{equation}
where $s':=\min(s-1,-a)$.
Moreover, let $\gamma \in C^{1+\tau}(\R^{n-1})$, $\Rn_\gamma= \{x\in\R^n\mid x_n>\gamma(x')\}$, and
let $F_\gamma\colon \R^n\to \R^n$  be as in the preceding section.
For the following it is assumed that $s+2a<\tau$. 
Finally, let $P_\gamma$ be defined as in \eqref{eq:DefnPGamma}.

\begin{prop}\label{thm:localRegularity1} Let $\ol{p}\in S^{2a}(\Rn\times
\Rn)$ and $p\in C^\tau S^{2a}(\Rn\times
\Rn)$ be strongly elliptic and even, with $\ol p$ invertible as in {\rm\eqref{eq:Invertibility}}, and let $0\leq s <\tau -2a$.
  There are some $k\in\N$ and $\delta=\delta(\ol{p},s,q)>0$ such that 
  \begin{equation*}
    r^+ P_\gamma\colon H^{a(t+2a)}_q(\overline{\R}^n_+)\to \overline{H}_q^t(\Rn_+)
  \end{equation*}
  is invertible for $t=s$ and $t=s':=\max (s-1,-a)$ if 
  \begin{equation*}
    |\ol{p}-p|_{k,C^\tau S^{2a}_{1,0}(\Rn\times \Rn)}\leq \delta\quad \text{and} \quad \|\gamma\|_{C^{1+\tau}(\R^{n-1})}\leq \delta. 
  \end{equation*}
\end{prop}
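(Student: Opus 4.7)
The plan is to realize $r^+P_\gamma$ as a small operator-norm perturbation of the fixed invertible operator $r^+\op(\ol p)$ and then invoke the standard perturbation principle (Neumann series) for bounded invertible operators on Banach spaces. We carry this out simultaneously for the two values $t=s$ and $t=s'$.

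First I would write
\begin{equation*}
  r^+ P_\gamma - r^+\op(\ol p) = \bigl(r^+P_\gamma - r^+\op(p)\bigr) + \bigl(r^+\op(p) - r^+\op(\ol p)\bigr)
\end{equation*}
and estimate the two summands separately in $\mathcal L\bigl(H^{a(t+2a)}_q(\ol{\R}^n_+),\ol H^t_q(\R^n_+)\bigr)$ for $t\in\{s,s'\}$. For the first summand, Theorem~\ref{thm:TransformedPsDOs} (estimate~\eqref{eq:TransformEstim}) applied to the symbol $p$ gives, with suitable $k_1\in\N$ and $\theta>0$,
\begin{equation*}
  \|r^+P_\gamma - r^+\op(p)\|_{\mathcal L(H^{a(t+2a)}_q,\ol H^t_q)} \le C_{t,r_0,q}\,\|\gamma\|_{C^{1+\tau}(\R^{n-1})}^{\theta}\,|p|_{k_1,C^\tau S^{2a}(\R^n\times\R^n)},
\end{equation*}
valid for $-a\le t<\tau-2a$, provided $\|\gamma\|_{C^{1+\tau}}\le r_0$. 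For the second summand, the symbol $p-\ol p$ lies in $C^\tau S^{2a}(\R^n\times\R^n)$ and is even (as a difference of even symbols), so the operator-norm estimate in Theorem~\ref{thm:BddnessTruncatedPsDOs}, combined with the reduction from $x$-form symbols to $(x,y)$-form symbols, yields for some $k_2\in\N$
\begin{equation*}
  \|r^+\op(p) - r^+\op(\ol p)\|_{\mathcal L(H^{a(t+2a)}_q,\ol H^t_q)} \le C'_{t,q}\,|p-\ol p|_{k_2,C^\tau S^{2a}(\R^n\times\R^n)}.
\end{equation*}

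Setting $k=\max(k_1,k_2)$ and using $|p|_{k_1,\cdot}\le |\ol p|_{k_1,\cdot}+|p-\ol p|_{k_1,\cdot}$, if both $|\ol p-p|_{k,C^\tau S^{2a}}\le \delta$ and $\|\gamma\|_{C^{1+\tau}}\le\delta$ with $\delta\le 1$, say, then there is a constant $M=M(\ol p,s,q)$ such that for both $t=s$ and $t=s'$,
\begin{equation*}
  \|r^+P_\gamma - r^+\op(\ol p)\|_{\mathcal L(H^{a(t+2a)}_q,\ol H^t_q)} \le M\delta^{\min(1,\theta)}.
\end{equation*}
By hypothesis~\eqref{eq:Invertibility}, $r^+\op(\ol p)$ is a Banach space isomorphism between these spaces for $t=s$ and $t=s'$; denote the maximum of the inverse norms by $N$. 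Choosing $\delta=\delta(\ol p,s,q)>0$ so small that $MN\delta^{\min(1,\theta)}<\tfrac12$, the operator $r^+P_\gamma$ is a perturbation of $r^+\op(\ol p)$ of relative operator norm $<\tfrac12$, hence invertible (by a Neumann series argument) as a map $H^{a(t+2a)}_q(\ol{\R}^n_+)\to \ol H^t_q(\R^n_+)$ for each of $t=s$ and $t=s'$.

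The main obstacle is the verification that the constants in both perturbation estimates can be chosen uniformly in $\gamma$ (and in the perturbation $p-\ol p$) for the relevant range of $t$; this is exactly what is packaged into Theorem~\ref{thm:TransformedPsDOs} and the last assertion of Theorem~\ref{thm:BddnessTruncatedPsDOs}, so the argument closes cleanly.
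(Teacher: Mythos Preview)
Your proof is correct and follows essentially the same approach as the paper: split $r^+P_\gamma-r^+\op(\ol p)$ via $r^+\op(p)$, estimate the two pieces using Theorem~\ref{thm:TransformedPsDOs} and Theorem~\ref{thm:BddnessTruncatedPsDOs} respectively, and conclude by the standard Neumann-series perturbation argument. The only difference is cosmetic: you spell out the bound $|p|_{k_1}\le |\ol p|_{k_1}+|p-\ol p|_{k_1}$ and the Neumann series explicitly, whereas the paper absorbs these into the phrase ``there is some $\eps>0$ such that \ldots''.
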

\begin{proof}
  Because of \eqref{eq:Invertibility}, there is some $\eps>0$ such that $r^+ P_\gamma\colon H^{a(t+2a)}_q(\overline{\R}^n_+)\to \overline{H}_q^t(\Rn_+)$ is invertible for $t=s$ and $t=s'$ provided
  \begin{equation*}
    \|r^+\op(\ol{p})-r^+ P_\gamma\|_{\mathcal{L}(H^{a(s+2a)}_q(\overline{\R}^n_+), \overline{H}_q^s(\Rn_+))} <\eps.
  \end{equation*}
  Moreover, because of Theorem~\ref{thm:BddnessTruncatedPsDOs} and Theorem~\ref{thm:TransformedPsDOs}, there are some $k\in\N$, $\theta>0$ and some $C>0$ independent of $p$ and $\gamma$ with $|p|_{k,C^\tau S^{2a}(\Rn \times \Rn)}\leq 1$, $\|\gamma\|_{C^{1+\tau}(\Rn)}\leq 1$ such that
  \begin{alignat*}{1}
    &\|r^+\op(\ol{p})-r^+ P_\gamma\|_{\mathcal{L}(H^{a(t+2a)}_q(\overline{\R}^n_+), \overline{H}_q^t(\Rn_+))}\\
    &\leq
    \|r^+\op(\ol{p})-r^+\op(p)\|_{\mathcal{L}(H^{a(t+2a)}_q(\overline{\R}^n_+), \overline{H}_q^t(\Rn_+))}
    + \|r^+\op(p)-r^+ P_\gamma\|_{\mathcal{L}(H^{a(t+2a)}_q(\overline{\R}^n_+), \overline{H}_q^t(\Rn_+))} \\
    &\leq  C|\ol{p}-p|_{k,C^\tau S^{2a}_{1,0}(\Rn\times \Rn)} +C\|\gamma\|^\theta_{C^{1+\tau}(\Rn)}
  \end{alignat*}
for $t=s$ and $t=s'$.
  Hence there is some $\delta>0$ such that the right-hand side is smaller than $\eps$ provided $    |\ol{p}-p|_{k,C^\tau S^{2a}_{1,0}(\Rn\times \Rn)}\leq \delta$ and $\|\gamma\|_{C^{1+\tau}(\R^{n-1})}\leq \delta$.
\end{proof}

Next, we apply the preceding result to obtain a local regularity
result in $\R^n_\gamma$. Denote $\{x\in \R^n\mid |x-x_0|<r\}=B_r(x_0)$.

\begin{theorem}\label{thm:localRegularity2} Let $0\le s<\tau-2a$.
Assume that $\gamma\in C^{1+\tau}(\R^{n-1})$ satisfies $\gamma(0)=0,
\nabla \gamma(0)=0$, and that $u \in 
H^{a(s'+2a)}_q(\ol{\R}^n_\gamma )$ is a solution of
\begin{equation*}
  r^+ Pu = f \qquad \text{in }\R^n_\gamma
\end{equation*}
for some $f\in L_q(\Rn_\gamma)$ with $f|_{B_{R_1}(0)}\in \ol{H}^s_q(\Rn_\gamma\cap B_{R_1}(0))$ for some $R_1>0$, where $s'=\max (s-1,-a)$. Then there is some $R>0$ such that
\begin{equation*}
  u=v \qquad \text{ in } \Rn_\gamma\cap B_R(0)
\end{equation*}
for some $v\in
H^{a(s+2a)}_q(\ol{\R}_\gamma ^n)$. 
\end{theorem}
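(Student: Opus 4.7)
The plan is to reduce to a situation covered by Proposition~\ref{thm:localRegularity1} via a dilation near the origin — making the curved halfspace nearly flat and the operator nearly constant-coefficient — and then to localize $u$ with a cutoff and invert.

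For small $\lambda\in(0,1]$, set $v(z):=u(\lambda z)$, $\tilde f(z):=\lambda^{2a}f(\lambda z)$, and $\tilde\gamma(z'):=\lambda^{-1}\gamma(\lambda z')$. A direct calculation transforms $r^+Pu=f$ on $\R^n_\gamma$ into $r^+\tilde Pv=\tilde f$ on $\R^n_{\tilde\gamma}$, where $\tilde P=\op(\tilde p)$ with $\tilde p(z,\xi):=\lambda^{2a}p(\lambda z,\xi/\lambda)\in C^\tau S^{2a}(\R^n\times\R^n)$, still even and strongly elliptic. Multiplying $\tilde\gamma$ by a fixed cutoff $\chi\in C_0^\infty(\R^{n-1})$ equal to $1$ on $B_1$ yields a globally defined $\tilde\gamma_\chi$, and the hypotheses $\gamma(0)=0$, $\nabla\gamma(0)=0$, $\gamma\in C^{1+\tau}$ give $\|\tilde\gamma_\chi\|_{C^{1+\tau}(\R^{n-1})}\le C\lambda^\theta$ for some $\theta>0$. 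Similarly, the classical homogeneous expansion of $p$ yields $|\tilde p-\bar p|_{k,C^\tau S^{2a}}\le C\lambda^{\theta'}$, where $\bar p(\xi):=p_0(0,\xi)$ (corrected by a small lower-order term if necessary to secure invertibility); the point is that rescaling preserves the principal symbol frozen at $0$ and inserts extra factors of $\lambda^j$ into each subsequent homogeneous piece $p_j$. Invertibility of $r^+\op(\bar p)$ at levels $t=s,s'$ is a standard fact for this smooth, translation-invariant, even, strongly elliptic reference operator, by the halfspace theory of \cite{G15}. Consequently, for $\lambda$ small enough, Proposition~\ref{thm:localRegularity1} gives that the flattened $r^+\tilde P_{\tilde\gamma_\chi}\colon H^{a(t+2a)}_q(\ol\R^n_+)\to\ol H^t_q(\R^n_+)$ is an isomorphism for both $t=s$ and $t=s'$.

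To localize, pick nested cutoffs $\chi_0,\chi_1\in C_0^\infty(B_{R_0})$ with $\chi_1\equiv 1$ on $\supp\chi_0$ and $R_0$ so small that $\tilde\gamma_\chi=\tilde\gamma$ on $B_{R_0}$ and $\lambda B_{R_0}\subset B_{R_1}$. Then $\chi_0 v\in H^{a(s'+2a)}_q(\ol\R^n_{\tilde\gamma_\chi})$ by Proposition~\ref{prop:Commutator1}, and the decomposition
\begin{equation*}
r^+\tilde P(\chi_0 v)=\chi_0\tilde f+r^+[\tilde P,\chi_0](\chi_1 v)+r^+\chi_0\tilde P(1-\chi_1)v
\end{equation*}
has each summand in $\ol H^s_q(\R^n_{\tilde\gamma_\chi})$: the first by the hypothesis on $\tilde f$; the second via Proposition~\ref{prop:Commutator} (giving $[\tilde P,\chi_0]=\op(\tilde q)$ with odd $C^\tau S^{2a-1}$-symbol) together with Corollary~\ref{cor:NonsmoothMap3}, using the identity $s'+2a=\max(a,s+2a-1)$ valid in both sub-cases $s\ge 1-a$ and $s<1-a$; the third by Corollary~\ref{cor:Commutator} applied to the disjoint supports of $\chi_0$ and $1-\chi_1$, rewriting it as $r^+\chi_0\op(\tilde q_N)v$ with $\tilde q_N\in C^\tau S^{2a-N}_{1,0}$ for any $N$, which lies in $\ol H^s_q$ once $N$ is taken large enough. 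The $s$-level invertibility then yields a preimage in $H^{a(s+2a)}_q(\ol\R^n_+)$, and the $s'$-level injectivity identifies it with $\chi_0 v\circ F_{\tilde\gamma_\chi}^{-1}$. Hence $\chi_0 v\in H^{a(s+2a)}_q(\ol\R^n_{\tilde\gamma_\chi})$; since $\chi_0 v$ is supported where $\tilde\gamma_\chi=\tilde\gamma$, the identity of the flattening maps on this set gives also $\chi_0 v\in H^{a(s+2a)}_q(\ol\R^n_{\tilde\gamma})$. Finally, undoing the dilation via the relation $F_\gamma\circ T_\lambda=T_\lambda\circ F_{\tilde\gamma}$ produces $v_\ast(y):=\chi_0(y/\lambda)u(y)\in H^{a(s+2a)}_q(\ol\R^n_\gamma)$, equal to $u$ on $\R^n_\gamma\cap B_R(0)$ for $R:=\lambda$ times the inner radius of $\chi_0$.

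\textbf{The hard part} will be the rescaling step: verifying uniformly in $\lambda$ that $\tilde p$ really lies in $C^\tau S^{2a}(\R^n\times\R^n)$ with the claimed closeness $|\tilde p-\bar p|_{k,C^\tau S^{2a}}\to 0$, particularly in the low-frequency regime $|\xi|\lesssim\lambda$ where the classical homogeneous expansion of $p$ does not directly apply. Once the rescaling is controlled, the commutator localization and the application of the invertibility in Proposition~\ref{thm:localRegularity1} follow fairly routine lines using only the preceding sections of the paper.
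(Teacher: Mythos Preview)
Your overall architecture — rescale to make the halfspace nearly flat and the symbol nearly constant, then localize and invert via Proposition~\ref{thm:localRegularity1} — matches the paper's. But the rescaling step, which you yourself flag as ``the hard part'', is a genuine gap as written, not just a technicality.

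The problem is that your rescaled full symbol $\tilde p(z,\xi)=\lambda^{2a}p(\lambda z,\xi/\lambda)$ is \emph{not} uniformly in $C^\tau S^{2a}$ as $\lambda\to0$. Differentiating gives $\partial_\xi^\alpha\tilde p(z,\xi)=\lambda^{2a-|\alpha|}(\partial_\xi^\alpha p)(\lambda z,\xi/\lambda)$, and for $|\xi|\lesssim\lambda$ and $|\alpha|>2a$ this is of size $\lambda^{2a-|\alpha|}\to\infty$, while the required bound is $\langle\xi\rangle^{2a-|\alpha|}\sim1$. Since Proposition~\ref{thm:localRegularity1} needs smallness of $|\tilde p-\bar p|_{k,C^\tau S^{2a}}$ with $k$ possibly large (it enters through the operator-norm estimate \eqref{eq:OpaEstim}), you cannot feed $\tilde p$ into it. The issue is exactly that $p$ is only \emph{classical}, not globally homogeneous: the expansion $p\sim\sum p_j$ is asymptotic for large $\xi$ and says nothing in the regime $|\xi|\lesssim\lambda$.

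The paper's remedy is to separate $P=P_0+(P-P_0)$ \emph{before} rescaling. The lower-order piece $P-P_0$ has an odd symbol in $C^\tau S^{2a-1}$ and is lumped with the commutator $[P_0,\psi_R]$; both are handled by Corollary~\ref{cor:NonsmoothMap3}, which is precisely designed for odd symbols acting on $H_q^{a(\max(a,s+2a-1))}$. Only $P_0$ is rescaled, and for it the discrepancy $q_R(x,\xi)=p_0(Rx,R^{-1}\xi)-R^{-2a}p_0(Rx,\xi)$ vanishes identically for $|\xi|\ge1$ by exact homogeneity, hence $q_R\in C^\tau S^{-\infty}$ and contributes a harmless smoothing term (see \eqref{eq:IdPR} and Lemma~\ref{lem:Scaling}). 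The rescaled principal symbol $p_R$ is then compared to $\bar p=p_0(0,\cdot)$ only through the spatial freezing, which gives the $R^{\min(1,\tau)}$ smallness. Your commutator/localization arguments are fine and essentially coincide with the paper's; what is missing is this splitting of $P$ that makes the rescaling step actually go through.
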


We assume for simplicity that $R_1=1$. By a suitable scaling in space, one can always reduce to this case.
The idea of the proof is to rescale in space and localize in order to apply the results for operators close to a constant coefficient pseudodifferential operator, i.e., Proposition~\ref{thm:localRegularity1}. For the rescaling we define for $R>0$
\begin{alignat*}{1}
  \gamma_R(x')&= R^{-1}\eta ((x',0)) \gamma(Rx'), \\
  p_R(x,\xi) &= \eta (x) p_0(Rx,\xi)+ (1-\eta(x)) p_0(0,\xi),\\
  \ol{p}(x,\xi) &= p_0(0,\xi),
\end{alignat*}
for all $x,\xi\in\R^n$, $x'=(x_1,\ldots,x_{n-1})$,
where $\eta \in C^\infty_0(\Rn)$ with $\eta\equiv 1$ on $B_1(0)$ and
$\supp \eta \subset B_2(0)$. 
To assure that $\op(\ol p)$ is invertible, we can in view of the strong
ellipticity assume that $p_0(0,\xi )$ has been modified for $|\xi |\le
1$ such that 
$\operatorname{Re}p_0(0,\xi)\ge c>0$ for all $\xi\in\R^n$  (also
done for $p_0(x,\xi )$ with $x$ in a small neighborhood of $0$). Then
 $\ol P=\operatorname{OP}(p_0(0,\xi ))$ satisfies
 \eqref{eq:Invertibility}, cf.\ Example \ref{ex:LaxMilgram}. 
 Furthermore, for $v\colon \Rn\to \C$ and $R>0$ we define $\sigma_R v\colon \Rn\to\C$ by
\begin{equation*}
  (\sigma_R v)(x)= v(Rx)\qquad \text{for all }x\in\Rn.
\end{equation*}
Define moreover
\begin{equation*}
  q_R(x,\xi)= p_0(Rx,R^{-1}\xi)- R^{-2a}p_0(Rx,\xi),  
\end{equation*}
and note that since $p_0(Rx,R^{-1}\xi)=R^{-2a} p_0(Rx,\xi)$ for all
$|\xi|\geq 1$ and $R\in (0,1]$ by the homogeneity,
$q_R(x,\xi)=0$ for all $|\xi|\geq 1$, $R\in (0,1]$. Hence $q_R\in C^\tau S^{-\infty}_{1,0}(\Rn\times \Rn)$.
Now $P_0=\op (p_0)$ satisfies for all $x\in\R^n$ and suitable $v\colon \R^n \to \C$:
\begin{equation*}
  \sigma_R(P_0v)(x)= \int_{\R^n} e^{iRx\cdot \xi} p_0(Rx,\xi) \hat{v}(\xi)\dd \xi
          = \int_{\R^n} e^{ix\cdot \xi} p_0(Rx,R^{-1}\xi)
          \widehat{\sigma_R(v)}(\xi)\dd \xi.
        \end{equation*}
Since $p_R(x,\xi)= p_0(Rx,\xi)$ if $|x|\leq 1$, this may by use of $q_R$
be written: 
\begin{equation}        \label{eq:IdPR}
  \sigma_R(P_0v)(x)= R^{-2a}(\op (p_R) \sigma_R(v))(x) +
  (\op(q_R)\sigma_R(v))(x)\text{ for }|x|\le 1.
\end{equation}


Moreover, we 
show a technical lemma in order to control remainder terms:

\begin{lemma}\label{lem:Scaling}
For any $k\in\N$ and $R\in (0,1]$,
\begin{equation*}
  \|\gamma_R\|_{C^{1+\tau}(\R^{n-1})}\leq CR^{\min (1,\tau) },\qquad
  |p_R-\ol{p}|_{k,C^{\tau}S^{2a}_{1,0}(\Rn\times \Rn)}\leq CR^{\min (1,\tau) }.
\end{equation*}
\end{lemma}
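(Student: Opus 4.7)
The plan is to obtain both estimates by direct computation: expand the relevant derivative via Leibniz/chain rule and exploit the vanishing conditions at the origin---$\gamma(0)=\nabla\gamma(0)=0$ for the first, and the pointwise difference $p_0(Rx,\xi)-p_0(0,\xi)$ for the second---to extract the factor $R^{\min(1,\tau)}$.

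For $\gamma_R$, setting $\tilde\eta(x')=\eta((x',0))$, Leibniz yields
\[
\partial^\alpha\gamma_R(x')=\sum_{\beta\le\alpha}\binom{\alpha}{\beta}R^{|\alpha|-|\beta|-1}(\partial^\beta\tilde\eta)(x')(\partial^{\alpha-\beta}\gamma)(Rx').
\]
The Taylor bounds $|\gamma(y)|\le C|y|^{1+\min(1,\tau)}$ and $|\nabla\gamma(y)|\le C|y|^{\min(1,\tau)}$, which follow from $\gamma\in C^{1+\tau}$ together with $\gamma(0)=\nabla\gamma(0)=0$, combined with the prefactors $R^{|\alpha|-|\beta|-1}$ and the boundedness of $\supp\tilde\eta$, show that each summand is $\le CR^{\min(1,\tau)}$ in $L_\infty$ for $|\alpha|\le1+[\tau]$. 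For the H\"older seminorm of the top derivative I would use a case split: when $\tau\le1$, the H\"older continuity of $\nabla\gamma$ at exponent $\tau$ (again with vanishing at zero) supplies the factor $R^\tau$; when $\tau>1$, a first-order Taylor estimate on $\partial^{1+[\tau]}\gamma$ yields the Lipschitz bound $R^{[\tau]+1}|x'-y'|$, which after division by $|x'-y'|^{\tau-[\tau]}$ and use of the bounded diameter of $\supp\tilde\eta$ gives $R^{[\tau]+1}\le R^{\min(1,\tau)}$.

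For $p_R-\ol p$, I would use the fundamental theorem of calculus to write
\[
p_R(x,\xi)-\ol p(x,\xi)=\eta(x)\bigl(p_0(Rx,\xi)-p_0(0,\xi)\bigr)=\eta(x)\,Rx\cdot\int_0^1(\nabla_x p_0)(tRx,\xi)\,dt,
\]
which explicitly displays the factor $R$. Since $p_0$ is smooth and classical of order $2a$, each application of $\partial_x^\beta\partial_\xi^\alpha$ to this representation produces a finite sum of terms that are smooth classical symbols of order $2a-|\alpha|$ uniformly in $R$, and each term carries at least one factor of $R$ (either from the outer $Rx$ or from a chain-rule differentiation of $p_0(Rx,\xi)$). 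Since $R\le1$, this gives the $L_\infty$-bound $CR\weight{\xi}^{2a-|\alpha|}\le CR^{\min(1,\tau)}\weight{\xi}^{2a-|\alpha|}$ for all $|\beta|\le[\tau]$. For the H\"older seminorm of the top $x$-derivative (when $\tau\notin\N$), one more differentiation supplies an additional $R$-factor, and the quotient $|x-y|^{1-(\tau-[\tau])}$ is bounded on $\supp\eta$, yielding $R^{[\tau]+1}\le R^{\min(1,\tau)}$. Summing the finitely many $\xi$-derivatives $|\alpha|\le k$ gives the semi-norm estimate.

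The main obstacle is the case split $\tau\le1$ versus $\tau>1$: for $\tau\le1$ the power $R^\tau$ is supplied by H\"older regularity at the origin, while for $\tau>1$ it is the single factor $R$ extracted from the Taylor remainder (or a single chain-rule differentiation of $p_0(Rx,\xi)$) that saturates the bound. Aside from carefully tracking which term attains the minimum in each regime, both estimates are a straightforward consequence of the Leibniz rule and standard scaling.
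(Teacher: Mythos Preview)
Your treatment of $\gamma_R$ is essentially the paper's argument (Leibniz plus Taylor bounds from $\gamma(0)=\nabla\gamma(0)=0$), and is fine in outline. However, there is a genuine gap in your argument for $p_R-\ol p$, and a related slip in your H\"older-seminorm step.

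The central error is the assertion that ``$p_0$ is smooth.'' The standing hypothesis is $p\in C^\tau S^{2a}(\Rn\times\Rn)$, so the principal part $p_0$ is only $C^\tau$ in $x$. Two consequences:
\begin{itemize}
\item When $\tau<1$ (which is allowed, since we only assume $\tau>2a$), $\nabla_x p_0$ need not exist classically, so your FTOC representation $\eta(x)\,Rx\cdot\int_0^1(\nabla_xp_0)(tRx,\xi)\,dt$ is unavailable.
\item Even when $\tau>1$, differentiating \emph{through} the FTOC representation costs one extra $x$-derivative: computing $\partial_x^\beta$ with $|\beta|=[\tau]$ on that expression requires $(\partial_x^{\beta}\nabla_xp_0)$, i.e.\ $[\tau]+1$ derivatives of $p_0$, which are not available. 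The same objection applies to your H\"older-seminorm step (``one more differentiation supplies an additional $R$-factor''), and to the ``Lipschitz bound on $\partial^{1+[\tau]}\gamma$'' in the $\gamma_R$ part: that top derivative is only $C^{\tau-[\tau]}$, not Lipschitz.
\end{itemize}

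The fix is exactly what the paper does: do \emph{not} pass through the FTOC. For $\beta=0$ use H\"older continuity directly,
\[
|p_0(Rx,\xi)-p_0(0,\xi)|\le C|Rx|^{\min(1,\tau)}\weight{\xi}^{2a-|\alpha|}\quad(|x|\le 2),
\]
and for $|\beta|\ge 1$ (which forces $\tau>1$) differentiate \emph{first} by the chain rule,
\[
\partial_x^\beta\partial_\xi^\alpha\bigl(p_0(Rx,\xi)-p_0(0,\xi)\bigr)=R^{|\beta|}(\partial_x^\beta\partial_\xi^\alpha p_0)(Rx,\xi),
\]
so the factor $R^{|\beta|}\le R^{\min(1,\tau)}$ appears without spending an extra derivative. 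For the H\"older seminorm at $|\beta|=[\tau]$ (resp.\ $|\alpha|=1+[\tau]$ for $\gamma_R$) one uses the available $C^{\tau-[\tau]}$-modulus together with scaling $|Rx-Ry|^{\tau-[\tau]}=R^{\tau-[\tau]}|x-y|^{\tau-[\tau]}$, which combined with the chain-rule factor gives $R^\tau\le R^{\min(1,\tau)}$. Your closing summary (``for $\tau\le1$ H\"older regularity supplies $R^\tau$; for $\tau>1$ a single chain-rule factor of $R$ saturates'') is in fact the right picture---it just needs to be implemented without the extra derivative that the FTOC route would demand.
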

\begin{proof}
  Using $\gamma (0)=0,\nabla \gamma(0)=0$  we have
  \begin{equation*}
    \gamma(Rx')= \int_0^1 (\nabla \gamma(sRx')-\nabla \gamma(0)) \, ds \cdot Rx'
  \end{equation*}
  and therefore
  \begin{equation*}
    \sup_{|x'|\leq 2} |\gamma (Rx')|\leq C |R|^{1+\min(\tau,1)}.
  \end{equation*}
  since $\nabla \gamma \in C^\tau(\R^{n-1})$.
  Now let $\alpha\in \N_0^{n-1}$ with $|\alpha|< 1+\tau$. Using
  \begin{equation*}
    \partial_{x'}^\alpha (\gamma(Rx'))= R^{|\alpha|} (\partial_{x'}^\alpha \gamma)(Rx')
  \end{equation*}
  one obtains in the same way
  \begin{equation*}
    \sup_{|x'|\leq 2} |\partial_{x'}^\alpha (\gamma(Rx'))|\leq
    \begin{cases}
      C |R|^{1+\min(\tau,1)}&\quad \text{if }|\alpha|=1,\\
      C |R|^{|\alpha|}\leq C |R|^{1+\min(\tau,1)}  &\quad \text{if }|\alpha|\geq 2,
    \end{cases}
  \end{equation*}
  since $\partial_{x'}^\alpha \gamma  \in C^{\tau+1-|\alpha|}(\R^{n-1})$, $\partial_{x'}^\alpha
  \gamma(0)=0$ if $|\alpha|=1$, and
  $R\leq 1$. 

  Now let $|\alpha|=1+[\tau]$. Then
  one obtains
  \begin{align*}
    &\sup_{|x'|,|y'|\leq 2, x'\neq y'} \frac{|\partial_{x'}^\alpha(\gamma (Rx'))-\partial_{y'}^\alpha(\gamma(Ry'))|}{|x'-y'|^{\tau}}\\
    &\leq  \sup_{x',y'\in\R^{n-1}, x'\neq y'} \frac{|(\partial_{x'}^\alpha\gamma) (Rx'))-(\partial_{x'}^\alpha\gamma)(Ry'))|}{|Rx'-Ry'|^{\tau}}R^{|\alpha|+\tau}\leq C |R|^{1+\tau}.
  \end{align*}
  Therefore
  $
  \|R^{-1}\gamma(R\cdot)\|_{C^{1+\tau}(\ol{B_2(0)})}\leq C R^{\min(1,\tau)}.
  $
  This implies 
  \begin{equation*}
    \|\gamma_R\|_{C^{1+\tau}(\R^n)}=\|\eta R^{-1}\gamma(R\cdot)\|_{C^{1+\tau}(\ol{B_2(0)})}\leq C'\|R^{-1}\gamma(R\cdot)\|_{C^{1+\tau}(\ol{B_2(0)})}\leq C R^\tau.
  \end{equation*}

  In a similar manner one shows for every  $\alpha, \beta\in\N_0$ with $|\beta|<\tau$
  \begin{equation*}
    \sup_{|x|\leq 2,} |\partial_x^\beta \partial_\xi^\alpha (p_0(Rx,\xi)-p_0(0,\xi))|\leq
    \begin{cases}
      C_{\alpha,\beta} R^{\min (1,\tau)}\weight{\xi}^{m-|\alpha|}&\text{if }\beta=0,\\ 
    C_{\alpha,\beta} R^{|\beta|}\weight{\xi}^{m-|\alpha|} \leq C_{\alpha,\beta} R^{\min (1,\tau)}\weight{\xi}^{m-|\alpha|}  &\text{if }\beta\neq 0
    \end{cases}
  \end{equation*}
  and, if $|\beta|=[\tau]$,
  \begin{equation*}
    \sup_{|x|,|y|\leq 2} \frac{|\partial_x^\beta \partial_\xi^\alpha (p_0(Rx,\xi)-p_0(Ry,\xi))|}{|x-y|^{\tau-[\tau]}}\leq 
      C_{\alpha,\beta} R^{\tau}\weight{\xi}^{m-|\alpha|}
    \end{equation*}
  for all $\xi\in\R^n$ and $R\in (0,1]$. From this one derives the second statement in a straightforward manner with the aid of the product rule.
\end{proof}

\noindent
\begin{proof*}{of Theorem~\ref{thm:localRegularity2}}
  First of all, because of Proposition~\ref{thm:localRegularity1} and Lemma~\ref{lem:Scaling}, there is some $R\in (0,1]$ such that
\begin{equation*}
  r^+ P_{\gamma_R} \colon H^{a(t+2a)}_q(\overline{\R}^n_+)\to \overline{H}_q^t(\Rn_+)
\end{equation*}
is invertible for $t=s$ and $t=s'$, where
$ 
  P_{\gamma_R} u = F^{\ast}_{\gamma_R} P_R F^{\ast,-1}_{\gamma_R}
  u\text{ for all }u\in H^{a}_q(\Rn)$. 
For the following we fix such an $R$.
Then we have that
\begin{equation*}
  r^+ P_R\colon H^ {a(s+2a)}_q(\overline{\R}^n_{\gamma_R})\to \overline{H}_q^s(\overline{\R}^n_{\gamma_R})
\end{equation*}
is invertible.

Now we localize the given solution $u\in H^{a(s'+2a)}_q(\ol{\R}^n_{\gamma })$. To this end let $\psi\in C^\infty_0(B_1(0))$ with $\psi\equiv 1$ on $B_{1/2}(0)$ and set $\psi_R(x)= \psi(x/R)=(\sigma_{1/R}\psi)(x)$ for all $x\in\Rn$.
Then
\begin{alignat}{1}\label{eq:localEquation}
  r^+ P_0 (\psi_R u) = r^+ \psi_R {P} u + g =  \psi_R r^+ P u + g = \psi_R f+ g=:\tilde{f}\qquad\text{in }\Rn_\gamma,
\end{alignat}
where $P_0=\op(p_0)$,  $g:= \psi_R r^+(P_0-P) u+
r^+[P_0,\psi_R]u$, and $[P_0,\psi_R]=\op (q)$ for some
odd $q\in C^\tau S^{2a-1}(\Rn\times \Rn)$ due to
Proposition~\ref{prop:Commutator} and  $P_0-P=\op(p_0-p)$, where
$p_0-p\in C^\tau S^{2a-1}(\Rn\times \Rn)$ is odd. Therefore $q$ and
$p_0-p$ satisfy the $a$-transmission condition and we conclude that $g
\in \ol{H}^{s}_q(\Rn_\gamma)$ because of $u\in
H^{a(2a+s')}_q(\ol{\R}^n_\gamma )$, $2a+s'=\max (a,s+2a-1)$ and Corollary~\ref{cor:NonsmoothMap3}. Hence $\tilde{f}\in \ol{H}^{s}_q(\Rn_\gamma)$ since $f|_{B_1(0)}\in \ol{H}^{s}_q(\Rn_\gamma\cap B_1(0))$.

Moreover, by the definition of $\gamma_R$
\begin{equation*}
  Rx\in \Rn_\gamma\cap B_R(0)\quad \text{if and only if}\quad x\in \Rn_{\gamma_R}\cap B_1(0).
\end{equation*}
Hence \eqref{eq:localEquation} in $\Rn_\gamma\cap B_R(0)$ is equivalent to
\begin{equation*}
  r^+ P_R (\psi \sigma_R (u))= R^{2a}(\sigma_R(\tilde{f}) - \op(q_R)(\psi \sigma_R(u))\qquad \text{in } \Rn_{\gamma_R}\cap B_1(0)
\end{equation*}
because of \eqref{eq:IdPR} and $\psi \sigma_R(u)= \sigma_R(\psi_R u)$.
Moreover, since $\supp (\psi\sigma_Ru)$ is compactly contained in
$B_1(0)$, thus contained in $B_\lambda (0)$ for some $\lambda <1$, we have $P_R(\psi \sigma_R u)\in \ol{H}^s_q(\Rn\setminus B_\lambda(0)))$ because of Remark~\ref{rem:cutoff}. Hence
\begin{equation*}
  r^+ P_R (\psi \sigma_R (u))= h\qquad \text{in } \Rn_{\gamma_R}
\end{equation*}
for some $h\in \ol{H}^s_q({\Rn_{\gamma_R}})$.
Since this equation has a unique solution in
$H^{a(2a+s)}_q(\ol{\R}^n_{\gamma_R})$ and in
$H^{a(2a+s')}_q(\ol{\R}^n_{\gamma_R})$, we conclude
$\psi \sigma_R(u)\in H^{a(2a+s)}_q(\ol{\R}^n_{\gamma_R})$. Scaling
back implies $u= \tilde{v}$ in $B_{R/2}(0)\cap \Rn_\gamma$ for some
$\tilde{v}
\in H^{a(2a+s)}_q(\ol{\R}^n_\gamma )$.
\end{proof*}
\begin{corollary}\label{cor:localRegularity2} Let $0\le s<\tau-2a$.
Assume that $\gamma\in C^{1+\tau}(\R^{n-1})$ satisfies $\gamma(0)=0, \nabla \gamma(0)=0$ and that $u \in \dot{H}^{a}_q(\ol{\R}^n_\gamma)$ is a solution of
\begin{equation*}
  r^+ Pu = f \qquad \text{in }\R^n_\gamma
\end{equation*}
for some $f\in L_q(\Rn_\gamma)$ with $f|_{B_{R_1}(0)}\in \ol{H}^s_q(\Rn_\gamma\cap B_{R_1}(0))$ for some $R_1>0$. Then there is some $R>0$ such that
\begin{equation*}
  u=v \qquad \text{ in } \Rn_\gamma\cap B_R(0)
\end{equation*}
for some $v\in H^{a(s+2a)}_q(\ol{\R}_\gamma ^n)$. 
\end{corollary}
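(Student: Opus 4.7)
The plan is to deduce the corollary from Theorem~\ref{thm:localRegularity2} by a bootstrap argument, since the theorem demands that $u$ lie in the higher transmission space $H^{a(s'+2a)}_q(\ol{\R}^n_\gamma)$ with $s' = \max(s-1,-a)$, while the corollary only assumes $u \in \dot{H}^a_q(\ol{\R}^n_\gamma) = H^{a(a)}_q(\ol{\R}^n_\gamma) = H^{a((-a)+2a)}_q(\ol{\R}^n_\gamma)$. The base case is immediate: when $0\le s\le 1-a$, then $s' = -a$ and the hypothesis of Theorem~\ref{thm:localRegularity2} coincides with $u \in \dot{H}^a_q(\ol{\R}^n_\gamma)$, so the conclusion follows at once. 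For larger $s$ the point is to climb the regularity ladder in steps of size at most $1$.

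For the inductive step I set $s_0 = -a$ and $s_{k+1} = \min(s_k+1,s)$, and assume we have already produced $R_k > 0$ and $v_k \in H^{a(s_k+2a)}_q(\ol{\R}^n_\gamma)$ with $u = v_k$ on $B_{R_k}(0)\cap \R^n_\gamma$. Choose cutoffs $\psi,\phi \in C_0^\infty(B_{R_k}(0))$ with $\psi \equiv 1$ on a smaller ball and $\phi \equiv 1$ on $\supp\psi$. The function $\tilde{u} := \psi u = \psi v_k$ lies in $H^{a(s_k+2a)}_q(\ol{\R}^n_\gamma)$ globally by Proposition~\ref{prop:Commutator1}, and it satisfies
\begin{equation*}
r^+P\tilde{u} = \psi f + r^+[P,\psi]u =: \tilde{f} \qquad \text{in } \R^n_\gamma.
\end{equation*}
Since $\psi f \in \ol{H}^s_q(\R^n_\gamma)$ (by shrinking $R_k \le R_1$ and using that $C^\infty_0$-multiplication preserves $\ol{H}^s_q$), it remains to control the commutator term.

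For $r^+[P,\psi]u$ I split $u = \phi u + (1-\phi)u$. On one hand $\phi u = \phi v_k \in H^{a(s_k+2a)}_q(\ol{\R}^n_\gamma)$, and by Proposition~\ref{prop:Commutator} the commutator $[P,\psi]$ has odd symbol in $C^\tau S^{2a-1}(\R^n\times\R^n)$, so Corollary~\ref{cor:NonsmoothMap3} gives $r^+[P,\psi](\phi u) \in \ol{H}^{s_k+1}_q(\R^n_\gamma) \subset \ol{H}^{s_{k+1}}_q(\R^n_\gamma)$. On the other hand, since $\psi(1-\phi) = 0$, we have $[P,\psi]((1-\phi)u) = -\psi P((1-\phi)u)$; here $\psi$ and $1-\phi$ have disjoint supports, so by Corollary~\ref{cor:Commutator} the composition $\psi P(1-\phi)$ equals $\psi\op(q_N)$ with $q_N \in C^\tau S^{2a-N}$ for arbitrarily large $N$, and acting on $u \in \dot{H}^a_q$ this produces an element of arbitrarily high regularity. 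Hence $\tilde{f} \in \ol{H}^{s_{k+1}}_q(\R^n_\gamma)$.

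Now Theorem~\ref{thm:localRegularity2} applied to $\tilde{u}$ at level $s_{k+1}$ yields some $R_{k+1} > 0$ and $v_{k+1} \in H^{a(s_{k+1}+2a)}_q(\ol{\R}^n_\gamma)$ with $\tilde{u} = v_{k+1}$ on $B_{R_{k+1}}(0) \cap \R^n_\gamma$, and since $\psi \equiv 1$ there we obtain $u = v_{k+1}$ on this ball, completing the induction. The iteration terminates after finitely many steps when $s_k = s$, yielding the corollary. The main delicate point is the commutator splitting: one must verify that the global regularity of $\phi v_k$ (and not merely the local regularity of $u$) is what is fed into Corollary~\ref{cor:NonsmoothMap3}, while the far-field contribution is absorbed via the disjoint-support smoothing mechanism of Corollary~\ref{cor:Commutator}.
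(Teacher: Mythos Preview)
Your proof is correct and follows essentially the same bootstrap as the paper: both localize, write $r^+P(\psi u)=\psi f+r^+[P,\psi]u$, control the commutator via the odd-symbol mapping Corollary~\ref{cor:NonsmoothMap3} together with disjoint-support smoothing, and then iterate Theorem~\ref{thm:localRegularity2} in steps of size at most $1$. The only cosmetic difference is that the paper handles the commutator by multiplying with an inner cutoff $\eta$ (so that $\eta[P,\psi_R]u=\eta P((1-\psi_R)u)$ and Remark~\ref{rem:cutoff} applies), whereas you split $u=\phi u+(1-\phi)u$ and treat the near-field and far-field pieces separately; both arguments are equivalent.
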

\begin{proof}
  If $s\le 1-a$, then the statement follows directly from Theorem~\ref{thm:localRegularity2} since $s'=-a$ and $H^{a(s'+2a)}_q(\ol{\R}^n_\gamma )= \dot{H}^a_q(\ol{\R}^n_\gamma)$ in that case.
  Let us consider the case $s>1-a$. Then we see from the proof of Theorem~\ref{thm:localRegularity2} that
  \begin{equation*}
    r^+ P (\psi_R u)= \psi_R f+r_{\Rn_\gamma}[P,\psi_R]u=: f' \qquad \text{in } \Rn_{\gamma_R},
  \end{equation*}
  where $\psi_R u \in H^{a(a+1)}_q(\ol{\R}^n_\gamma)$. Moreover, if $\eta\in C_0^\infty(B_{R/2}(0))$ such that $\eta\equiv 1$ on $B_{R/4}(0)$, then
  \begin{align*}
    \eta [P,\psi_R]u &= \op (\eta(x) p(x,\xi)(\psi_R(x)-\psi_R(y)) u= \op (\eta(x) p(x,\xi)(1-\psi_R(y))u\\
    &= \eta P ((1-\psi_R)u)\in H^s_q(\Rn) 
  \end{align*}
  because of Remark~\ref{rem:cutoff} and $\supp \eta \cap \supp (1-\psi_R)=\emptyset$. Hence $f' \in \ol{H}^s_q(\Rn_\gamma\cap B_{R/4}(0))$. Therefore we can apply Theorem~\ref{thm:localRegularity2} to $\psi_R u$ and $f'$ again to conclude the statement of the corollary provided that $s\le 2-a$. Repeating this argument finitely many times with the help of Corollary~\ref{cor:Commutator}, we obtain the statement in the general case.
\end{proof}

\subsection{Elliptic regularity in a bounded domain}

Now we are in the position to prove:
\begin{theorem}\label{thm:ellipticRegularity}
  Let $1<q<\infty$, $a\in (0,1)$, $\tau>2a$, and $0\leq s
  <\tau-2a$. Let  $\Omega\subset \R^n$ be a bounded 
  $C^{1+\tau}$-domain, and let  $p\in C^\tau S^{2a}(\Rn\times \Rn)$
  be an even and strongly elliptic symbol, $P=\op(p(x,\xi))$. If $u\in \dot{H}^a_q(\ol{\Omega})$ solves
  \begin{equation}\label{eq:6.9}
    r^+ Pu = f\qquad \text{in }\Omega 
  \end{equation}
  for some $f\in \ol{H}^s_q(\Omega)$, then $u\in H^{a(s+2a)}_q(\ol\Omega)$.
\end{theorem}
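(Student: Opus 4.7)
The plan is to reduce the global regularity assertion to the local curved-halfspace result Corollary~\ref{cor:localRegularity2} by a partition-of-unity argument, the main subtlety being the treatment of the commutator $[P,\chi]u$ arising from localization.

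First I would cover $\comega$ by a finite family of open sets $\{U_i\}_{i=0}^I$, where $\overline{U_0}\subset\Omega$ is an interior neighborhood and each $U_i$ for $i\ge 1$ is centered at a boundary point $x_{0,i}\in\partial\Omega$. After translating $x_{0,i}$ to $0$ and rotating the tangent plane to $\{x_n=0\}$, one has $\Omega\cap U_i=\Rn_{\gamma_i}\cap U_i$ with $\gamma_i\in C^{1+\tau}(\R^{n-1})$, $\gamma_i(0)=0$ and $\nabla\gamma_i(0)=0$, which is precisely the setting of Corollary~\ref{cor:localRegularity2}; the rigid motion leaves the symbol in $C^\tau S^{2a}$, still even and strongly elliptic. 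Pick a subordinated partition of unity $\{\varrho_i\}$ and cutoffs $\chi_i\in C^\infty_b(\R^n)$ supported in $U_i$ with $\chi_i\equiv 1$ on a neighborhood $V_i\supset\supp\varrho_i$. Writing $u=\sum_i\varrho_i u=\sum_i\varrho_i(\chi_i u)$ and invoking Proposition~\ref{prop:Commutator1} to see that multiplication by $\varrho_i$ preserves $H^{a(s+2a)}_q$, it suffices to show $\chi_i u\in H^{a(s+2a)}_q(\overline{\Rn_{\gamma_i}})$ in the chart for each $i\ge 1$, and analogously for $i=0$.

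For $i\ge 1$ I would apply Corollary~\ref{cor:localRegularity2} to $\chi_i u\in\dot H^a_q(\overline{\Rn_{\gamma_i}})$. Setting $\tilde f_i:=r^+_{\Rn_{\gamma_i}}P(\chi_i u)$, on $U_i\cap\Rn_{\gamma_i}=U_i\cap\Omega$ we have $\tilde f_i=\chi_i f+r^+[P,\chi_i]u$. By Proposition~\ref{prop:Commutator}, $[P,\chi_i]=\op(q_i)$ with odd $q_i\in C^\tau S^{2a-1}$, and Corollary~\ref{cor:NonsmoothMap3}, applied with secondary index $1-a$ (for which $\max(a,(1-a)+2a-1)=a$), yields $r^+[P,\chi_i]u\in\ol H^{1-a}_q(\Rn_{\gamma_i})\subset L_q$. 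On $\Rn_{\gamma_i}\setminus U_i$ the function $\chi_i u$ vanishes, so the disjoint-support form of Corollary~\ref{cor:Commutator} represents $P(\chi_i u)$ there as $\op(q_N)(\chi_i u)$ for an arbitrarily smoothing $q_N\in C^\tau S^{2a-N}$, giving $\tilde f_i\in L_q(\Rn_{\gamma_i})$ globally. For the local $\ol H^s_q$ condition, pick $R_1>0$ with $\overline{B_{R_1}(0)}\subset V_i$ and write $[P,\chi_i]u=[P,\chi_i-1]u=Pw_i-(\chi_i-1)Pu$ with $w_i:=(\chi_i-1)u\in H^a_q(\R^n)$. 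On $B_{R_1}(0)$ the second term vanishes and $\supp w_i\subset\R^n\setminus V_i$ is at positive distance from $B_{R_1}(0)$, so Corollary~\ref{cor:Commutator} gives $\zeta_1 Pw_i=\zeta_1\op(q_N)w_i$ for cutoffs $\zeta_1\equiv 1$ near $0$ and $\zeta_2\equiv 1$ on $\supp w_i$ with disjoint supports; choosing $N$ with $2a-N\le a-s$ and applying Theorem~\ref{thm:bd-compos} yields $Pw_i\in H^s_q$ on $B_{R_1}(0)$. Combined with $\chi_i f|_{B_{R_1}(0)}=f|_{B_{R_1}(0)}\in\ol H^s_q$, this establishes $\tilde f_i\in\ol H^s_q(B_{R_1}(0)\cap\Rn_{\gamma_i})$. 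Corollary~\ref{cor:localRegularity2} then delivers $R_i>0$ and $v_i\in H^{a(s+2a)}_q(\overline{\Rn_{\gamma_i}})$ with $\chi_i u=v_i$ on $B_{R_i}(0)\cap\Rn_{\gamma_i}$; after refining the original cover so that $\supp\varrho_i\subset B_{R_i}(0)$, we obtain $\varrho_i u=\varrho_i v_i\in H^{a(s+2a)}_q$ via Proposition~\ref{prop:Commutator1}.

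The interior case $i=0$ is handled by the same procedure with $\gamma_0$ a sufficiently large negative constant, so that $\chi_0 u$ has support strictly inside the auxiliary halfspace $\Rn_{\gamma_0}$; then $H^{a(s+2a)}_q$-membership reduces to ordinary $\dot H^{s+2a}_q$-regularity, which Corollary~\ref{cor:localRegularity2} also delivers. Summation via the partition of unity and Definition~\ref{def:roughTransmSpaces} gives $u=\sum_i\varrho_i u\in H^{a(s+2a)}_q(\comega)$. The main technical obstacle is the control of $[P,\chi_i]u$ near $x_{0,i}$: the straightforward mapping bound yields only $\ol H^{1-a}_q$, which falls short of $\ol H^s_q$ whenever $s>1-a$. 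Recovering the required $\ol H^s_q$-regularity on $B_{R_1}(0)$ rests essentially on the pseudo-local refinement in Corollary~\ref{cor:Commutator}, which in turn uses the H\"older hypothesis $\tau>s+2a$ in order to have the smoothing reservoir $q_N\in C^\tau S^{2a-N}$ at one's disposal.
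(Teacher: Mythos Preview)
Your proposal is correct and follows essentially the same route as the paper's proof: localize near a boundary point, control the commutator $[P,\chi]u$ (globally in $L_q$, and in $\ol H^s_q$ on a small ball via the disjoint-support smoothing argument), and invoke Corollary~\ref{cor:localRegularity2}. The paper phrases this as a direct verification of Definition~\ref{def:roughTransmSpaces}\,$2^\circ$ at each boundary point rather than through an explicit partition of unity, and it appeals to Remark~\ref{rem:cutoff} for the disjoint-support step where you use Corollary~\ref{cor:Commutator}, but these are equivalent devices. One small technical point: your appeal to Corollary~\ref{cor:NonsmoothMap3} with target index $1-a$ requires $1-a<\tau-2a$, i.e.\ $\tau>1+a$, which is not assumed; since you only use the $L_q$ consequence, simply apply that corollary with target index $0$ (or argue directly from Theorem~\ref{thm:bd-compos} as the paper does) and the issue disappears.
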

\begin{proof}
Let $x_0\in \partial\Omega$ be arbitrary. Moreover, let $\gamma\in C^{1+\tau}(\R^{n-1})$ and $R_0>0$ be such that
\begin{equation*}
  \Omega\cap B_{R_0}(x_0)= \Rn_\gamma\cap B_{R_0}(x_0)
\end{equation*}
(after a suitable rotation). By a simple translation and rotation we can always reduce to the case $x_0=0$, $\gamma(0)=0$, and $\nabla \gamma (0)=0$.
It suffices to show that there is an $R\in (0,R_0]$ such that
\begin{equation*}
  u = v \qquad \text{in }\Rn_\gamma \cap B_R(x_0)
\end{equation*}
for some $v\in H^{a(2a+s)}_q(\ol{\R}^n_\gamma)$.
Now let $\psi\in C^\infty_0(B_{R_0}(x_0))$ with $\psi\equiv 1$ on $B_{R_0/2}(x_0)$. Then
\begin{alignat*}{1}
  r^+ {P} (\psi u) = \psi r_\Omega {P} u + g = \psi f+ g\qquad\text{in }\Rn_\gamma,
\end{alignat*}
where $g:= r^+[{P},\psi]u \in
\ol{H}^{1-a}_q(\Rn_\gamma)\subset L_q(\Rn_\gamma)$ since $u\in \dot{H}^a_q(\ol{\Omega})$, and $q\in C^\tau S^{2a-1}_{1,0}(\Rn\times\Rn)\subset C^\tau S^{a}_{1,0}(\Rn\times\Rn)$. Moreover, $g|_{B_{R_0/4}(0)}\in \ol{H}^s_q(\Rn_\gamma \cap B_{R_0/4}(0))$ because of Remark~\ref{rem:cutoff} and the same observations as in the proof of Corollary~\ref{cor:localRegularity2}.
Hence Corollary~\ref{cor:localRegularity2} implies that there is an $R>0$ such that
\begin{equation*}
  u=v \qquad \text{ in } \Rn_\gamma\cap B_R(0)
\end{equation*}
for some $v\in H^{a(s+2a)}_q(\ol{\R}_\gamma ^n)$. Hence the statement of the theorem follows.
\end{proof}

Combining Theorem \ref{thm:ellipticRegularity} with the forward
mapping property shown in Theorem~\ref{thm:MappingGeneralDomain} we
find:

\begin{corollary}\label{cor:DirDomain}
Hypotheses as in Theorem {\rm \ref{thm:ellipticRegularity}}.
A function 
  $u\in \dot H_q^a(\comega)$  solves {\rm \eqref{eq:6.9}} for some
 $f\in \ol H_q^s(\Omega )$ if and only if $u\in
 H^{a(s+2a)}_q(\comega)$.
 
 Hence the Dirichlet domain {\rm \eqref{eq:1.7a}} for $P$ with data in
 $\ol H_q^s(\Omega )$ equals $H^{a(s+2a)}_q(\comega)$. 
  \end{corollary}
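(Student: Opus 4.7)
The plan is to observe that this corollary is the synthesis of two already-established directions: the forward mapping property from Theorem~\ref{thm:MappingGeneralDomain} and the regularity direction from Theorem~\ref{thm:ellipticRegularity}. Essentially no new work is required, only bookkeeping to match hypotheses and to package the two inclusions as an equality.

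First I would handle the ``if'' direction. Suppose $u\in H_q^{a(s+2a)}(\comega)$. By the hierarchy noted in \eqref{eq:2.11} (monotonicity of the $\mu$-transmission spaces in $s$), $H_q^{a(s+2a)}(\comega)\subset H_q^{a(a)}(\comega)=\dot H_q^a(\comega)$ for $s\ge -a$, so in particular $u\in \dot H_q^a(\comega)$. Theorem~\ref{thm:MappingGeneralDomain} then gives $f:=r^+Pu\in \ol H_q^s(\Omega)$, so $u$ solves \eqref{eq:6.9} with this datum.

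Conversely, for the ``only if'' direction, let $u\in \dot H_q^a(\comega)$ satisfy $r^+Pu=f$ with $f\in \ol H_q^s(\Omega)$. This is precisely the hypothesis of Theorem~\ref{thm:ellipticRegularity}, which yields $u\in H_q^{a(s+2a)}(\comega)$. Combining the two directions shows
\[
\{u\in \dot H_q^a(\comega)\mid r^+Pu\in \ol H_q^s(\Omega)\}=H_q^{a(s+2a)}(\comega),
\]
which is exactly the identification of the Dirichlet domain \eqref{eq:1.7a} with the $a$-transmission space.

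The only subtlety I foresee is checking that the conditions on $s$ match in both applications: Theorem~\ref{thm:MappingGeneralDomain} requires $-a\le s<\tau-2a$, while Theorem~\ref{thm:ellipticRegularity} requires $0\le s<\tau-2a$. Since the corollary inherits the hypothesis $0\le s<\tau-2a$ from Theorem~\ref{thm:ellipticRegularity}, both ranges are covered, and there is no gap. The ``main obstacle'' of the overall argument has already been resolved upstream, in the proof of Theorem~\ref{thm:ellipticRegularity}, where the localization to an almost flat curved halfspace, the scaling/freezing argument of Proposition~\ref{thm:localRegularity1}, and the bootstrapping via Corollary~\ref{cor:localRegularity2} were used; no further difficulty arises at the level of Corollary~\ref{cor:DirDomain} itself.
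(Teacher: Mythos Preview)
Your proposal is correct and matches the paper's own approach exactly: the paper simply states that Corollary~\ref{cor:DirDomain} follows by combining the forward mapping property of Theorem~\ref{thm:MappingGeneralDomain} with the regularity statement of Theorem~\ref{thm:ellipticRegularity}, which is precisely the two-direction argument you wrote out. Your extra checks (the inclusion $H_q^{a(s+2a)}(\comega)\subset \dot H_q^a(\comega)$ via \eqref{eq:2.11} and the compatibility of the $s$-ranges) are correct and make explicit what the paper leaves implicit.
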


\noindent
\begin{proof*}{of Theorem~\ref{thm:main}}
  The result $1^\circ$ is a consequence of
  Theorem 6.4, and $2^\circ$ is shown above in Corollary \ref{cor:DirDomain}.
\end{proof*}

The theorem applies to $(-\Delta )^a$ in the way that $(-\Delta )^a=P_1+P_2$,
where $P_1=\op((1-\psi (\xi ))|\xi |^{2a})$ satisfies the hypotheses
and $P_2$ is smoothing, so that $r^+(-\Delta )^au=f\in \ol
H_q^s(\Omega )$ is turned into $r^+P_1u=f_1$, with $f_1=f-r^+P_2u\in
\ol H_q^s(\Omega )$ since $P_2$ is smoothing. 

There is a  corollary on regularity in H\"older spaces:

\begin{corollary}\label{cor:Zygmundref}
Hypotheses as in Theorem~\ref{thm:ellipticRegularity}.
$1^\circ$ If $f\in \ol C^s_*(\Omega )$ for some $s\in(0,\tau -2a)$, then 
$u\in C_*^{a(s+2a-\varepsilon )}(\comega)$ for every  small $\varepsilon
>0$. When $\tau \ge 1$, it satisfies
\begin{equation}\label{eq:Zygmundreg}
u\in \dot C^{s+2a-\varepsilon }(\comega)+d^ae^+\ol C^{s+a-\varepsilon
}(\Omega ).
\end{equation}
If $2a<1$ and $2a<\tau <1$, it satisfies a local version of \eqref{eq:Zygmundreg}, cf.\ Remark~\ref{rem:TransmissionSpace}.

$2^\circ$ If $f\in L_\infty (\Omega )$,  then 
$u\in C_*^{a(2a-\varepsilon )}(\comega)$ for every small $\varepsilon >0$,
satisfying \eqref{eq:Zygmundreg}~ff.\ with $s$ replaced by $0$.

\end{corollary}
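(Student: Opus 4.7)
The plan is to sandwich the H\"older--Zygmund hypotheses between two $L_q$-Sobolev statements, apply the already-established regularity result Theorem~\ref{thm:ellipticRegularity}, and read off the conclusion by Sobolev embedding. The two embeddings I would use on the bounded set $\Omega$ are: (a) $\overline C_*^\sigma(\Omega)\hookrightarrow \overline H_q^{\sigma-\delta}(\Omega)$ for every $\sigma>0$, every $\delta>0$ and every $1<q<\infty$, coming from $B^\sigma_{\infty,\infty}\hookrightarrow B^{\sigma-\delta}_{q,q}\hookrightarrow H^{\sigma-\delta}_q$ on $\Rn$ followed by restriction; and (b) $\overline H_q^{t}(\Omega)\hookrightarrow \overline C_*^{t-n/q}(\Omega)$ whenever $t>n/q$, together with their analogues for the dotted/supported spaces on $\comega$.

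For $1^\circ$, given $s\in(0,\tau-2a)$ and any prescribed $\varepsilon>0$, I would choose $\delta>0$ and $q<\infty$ large with $\delta+n/q\le\varepsilon$ and $s-\delta<\tau-2a$. First, embedding (a) places $f\in\overline H_q^{s-\delta}(\Omega)$, so Theorem~\ref{thm:ellipticRegularity} applies and yields $u\in H_q^{a(s+2a-\delta)}(\comega)$. Second, Theorem~\ref{thm:TransmissionSpace} (with its extra $-\varepsilon'$ at integer thresholds absorbed into $\varepsilon$) splits
\begin{equation*}
  u = w + d_0^a e^+ v, \qquad w\in\dot H_q^{s+2a-\delta}(\comega),\quad v\in \overline H_q^{s+a-\delta}(\Omega).
\end{equation*}
Third, embedding (b) applied to each summand gives $u\in \dot C_*^{s+2a-\varepsilon}(\comega)+d_0^a e^+ \overline C_*^{s+a-\varepsilon}(\Omega)$, which is exactly \eqref{eq:Zygmundreg}. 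For the sharper statement $u\in C_*^{a(s+2a-\varepsilon)}(\comega)$, I would unfold Definition~\ref{def:roughTransmSpaces}~$3^\circ$: in local coordinates at a boundary point, the pull-back $\Lambda_+^{(a)}(\varphi u)$ already belongs to $e^+\overline H_q^{s+a-\delta}(\crnp)$ by Step~2, and embedding (b) lifts this to $e^+\overline C_*^{s+a-\varepsilon}(\crnp)$, which is precisely what the $C_*^s$-analogue of the transmission-space definition demands. In the regime $2a<\tau<1$ the normal-coordinate distance function $d_0$ is not globally $C^{1+\tau}$, and one can only conclude the local version of the splitting in the sense of Remark~\ref{rem:TransmissionSpace}.

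For $2^\circ$ the only extra input is the trivial embedding $L_\infty(\Omega)\subset L_q(\Omega)=\overline H_q^0(\Omega)$, which holds because $\Omega$ is bounded. Running the same chain with $s=0$ then produces $u\in H_q^{a(2a-\delta)}(\comega)$ for every $q<\infty$ and $\delta>0$, and the subsequent splitting plus embedding step furnishes the Zygmund-space conclusion \eqref{eq:Zygmundreg} with $s$ replaced by $0$.

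I expect the only delicate point to be book-keeping: matching the two small losses $\delta$ (from (a)) and $n/q$ (from (b)) against the integer-threshold exceptions in Theorem~\ref{thm:TransmissionSpace}, and treating the dichotomy $\tau\ge 1$ versus $2a<\tau<1$ where $d_0$ may fail to be $C^{1+\tau}$ up to $\partial\Omega$. Both reduce to choosing $\delta, q^{-1}$ generically small and invoking Remark~\ref{rem:TransmissionSpace}; no genuinely new analytic obstacle appears, since every step is a direct application of a result already proved in earlier sections.
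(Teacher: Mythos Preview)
Your proposal is correct and follows essentially the same route as the paper: embed the H\"older--Zygmund data into $\overline H_q^{s-\delta}(\Omega)$ (resp.\ $L_q(\Omega)$), invoke Theorem~\ref{thm:ellipticRegularity} to land in $H_q^{a(s+2a-\delta)}(\comega)$, and then pass back to $C_*^{a(\cdot)}$ via Sobolev embedding and Theorem~\ref{thm:TransmissionSpace}/Remark~\ref{rem:TransmissionSpace}. The paper compresses your step~3 into the single observation that $H_q^{a(t)}(\comega)\subset C_*^{a(t-n/q-\varepsilon)}(\comega)$ follows directly from Definition~\ref{def:roughTransmSpaces}, and in $2^\circ$ no $\delta$-loss is needed since $L_\infty\subset L_q$ already gives $f\in \overline H_q^0(\Omega)$ and hence $u\in H_q^{a(2a)}(\comega)$.
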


\begin{proof}
We use the Sobolev embedding property $H_q^t(\rn)\subset
C_*^{t-n/q-\varepsilon }(\rn)$, which implies
$H_q^{a(t)}(\comega)\subset C_*^{a(t-n/q-\varepsilon )}(\comega)$ in
view of Definition \ref{def:roughTransmSpaces} (when $a\le t-n/q-\varepsilon <t<1+\tau $). For $1^\circ$,  $f\in \ol
C^s_*(\Omega )\subset \ol H_q^{s-\varepsilon /2}(\Omega )$ implies when
$n/q<\varepsilon /2$ that $u\in H_q^{a(s+2a-\varepsilon /2)}(\comega)\subset
C_*^{a(s+2a-\varepsilon )}(\comega)$, and  \eqref{eq:Zygmundreg}ff.\
follow from Theorem \ref{thm:TransmissionSpace} and Remark
\ref{rem:TransmissionSpace}. For $2^\circ$, we conclude similarly from
$f\in L_\infty (\Omega )\subset L_q(\Omega )=\ol H_q^0(\Omega )$ that
$u\in H_q^{a(2a)}(\comega)\subset 
C_*^{a(2a-\varepsilon )}(\comega)$, when
$n/q<\varepsilon $, with the ensuing descriptions.
\end{proof}

\appendix
\section{Appendix}

\begin{proof*}{of Theorem \ref{thm:OscIntegrals}}
    We use \eqref{eq:Taylor} with $l=\max\{[m-|\gamma|],0\}$.
  Since $\partial_y^\alpha a(x,y,\xi)|_{y=x} (y-x)^\alpha u(y)= (\partial_y^\alpha a)(x,x,\xi) (y-x)^\alpha u(y)$ is smooth with respect to $(y,\xi)$, the existence of
  \begin{equation*}
    \lim_{\eps\to 0} \int_{\R^{2n}}\chi(\eps y,\eps \xi) e^{i(x-y)\cdot \xi} \left(\partial_y^\alpha a(x,y,\xi)|_{y=x}\right) (x-y)^\alpha u(y)\sd y\dd\xi = \op(p_\alpha)u(x)
  \end{equation*}
  follows from standard results on oscillatory integrals. Here
  \begin{equation*}
    p_\alpha (x,\xi)= \partial_y^\alpha D_\xi^\alpha a(x,y,\xi)|_{y=x}\qquad \text{for all }x,\xi\in\Rn,
  \end{equation*}
  because of the calculation rules for oscillatory integrals. Therefore it only remains to show the existence of the oscillatory integrals for $(y-x)^{\alpha+\gamma}r_\alpha(x,y,\xi)$.
  Now we use that
  \begin{alignat*}{1}
    &\int_{\R^{2n}} \chi(\eps y,\eps \xi) e^{i(x-y)\cdot \xi} (y-x)^{\alpha+\gamma}r_\alpha(x,y,\xi) u(y)\sd y\dd\xi\\
    &= \int_{\R^n} \int_{\R^n}  e^{i(x-y)\cdot \xi}\chi(\eps y,\eps \xi) (y-x)^{\alpha+\gamma} r_\alpha(x,y,\xi) \dd\xi\, u(y)\sd y
    = \int_{\R^n} k_\eps (x,y,x-y) u(y)\sd y,
  \end{alignat*}
  where
  \begin{alignat*}{1}
    k_\eps (x,y,z) &:= (y-x)^{\alpha+\gamma}\int_{\R^n} e^{iz\cdot \xi}\chi(\eps y,\eps \xi) r_\alpha(x,y,\xi) \dd\xi= \sum_{j\in\N_0} k_{\eps,j}(x,y,z),\\
    k_{\eps,j} (x,y,z) &:= (y-x)^{\alpha+\gamma}\int_{\R^n} e^{iz\cdot \xi}\chi(\eps y,\eps \xi) r_\alpha(x,y,\xi)\varphi_j(\xi) \dd\xi.
  \end{alignat*}
  Using \eqref{eq:ralphaEstim} one shows in the same manner as in
  the proof of \cite[Lemma~5.14]{PsDOBuch} that for every $N\in\N_0$ there is some $C_N>0$ such that
  \begin{equation*}
    |k_{\eps,j} (x,y,z)|\leq C_N |x-y|^{l+|\gamma|+\theta} |z|^{-N} 2^{j(n+m-N)} 
  \end{equation*}
  for all $z\neq 0, j\in\N_0, \eps\in (0,1),x,y\in\R^n$, where $\theta= \min \{\tau-l, 1\}$.
  Using
  \begin{equation*}
    \sum_{j\in\N_0} k_{\eps,j}(x,y,z)= \sum_{2^j\leq |z|^{-1}} k_{\eps,j}(x,y,z)+ \sum_{2^j> |z|^{-1}} k_{\eps,j}(x,y,z)
  \end{equation*}
  one can derive in the same way as in the proof of
  \cite[Theorem~5.12]{PsDOBuch} that the series $\sum_{j\in\N_0} k_{\eps,j}(x,y,z)$ converges absolutely and uniformly in $|z|\geq \delta$, $x,y\in\R^n$ for any $\delta>0$ to a function $k_\eps\colon \R^n\times \R^n \times (\R^n\setminus \{0\})\to \C$ that satisfies for any $N\in\N_0$
  \begin{equation*}
    |k_\eps(x,y,z)|\leq
    \begin{cases}
      C_N|x-y|^{l+|\gamma|+\theta} |z|^{-m-n}\weight{z}^{-N}&\text{if } m+n>0,\\
      C_N|x-y|^{l+|\gamma|+\theta}(1+ \log |z|^{-1}) \weight{z}^{-N}&\text{if } m+n=0,\\
    C_N|x-y|^{l+|\gamma|+\theta}\weight{z}^{-N}&\text{if } m+n<0,
    \end{cases}
  \end{equation*}
  for all $x,y\in\R^n$, $z\neq 0$, $\eps\in (0,1)$, $j\in\N_0$. Now, if we choose $N\in\N_0$ sufficiently large and $z=x-y$, the right-hand side is in $L_1(\Rn)$ with respect to $y$ since $\tau+|\gamma|> m$. Hence by the dominated convergence theorem the limit
  \begin{alignat*}{1}
    &\lim_{\eps\to 0}   \int_{\R^{2n}} \chi(\eps y,\eps \xi) e^{i(x-y)\cdot \xi}(y-x)^{\alpha+\gamma} r_\alpha (x,y,\xi) u(y)\sd y\dd\xi
    = \int_{\R^n} k_{\alpha,\gamma}(x,y,x-y) u(y)\sd y 
  \end{alignat*}
  exists, 
  where $k_{\alpha,\gamma}$ and $k_{\alpha,\gamma,j}$ are as in the theorem.
  This concludes the proof.
\end{proof*}

\begin{proof*}{of Lemma \ref{lem:PartIntOscInt}}
  First of all, note that both oscillatory integrals exist because of Theorem~\ref{thm:OscIntegrals}, $D_\xi^\beta a \in C^\tau S^{m-|\beta|}_{1,0}(\R^{2n}\times \Rn)$, and $m-|\beta|<\tau+ |\gamma|-|\beta|$. Moreover, it is sufficient to consider the case $\beta =e_k$ for some $k\in \{1,\ldots,n\}$. The general case follows inductively.
In view of Theorem~\ref{thm:OscIntegrals}, it only remains to show that
  \begin{equation*}
    \op ((y-x)^{\alpha+\gamma} r_{\alpha} (x,y,\xi))u(x)=   \op ((y-x)^{\alpha+\gamma-e_k} D_{\xi_k} r_{\alpha} (x,y,\xi))u(x).
  \end{equation*}
  By Theorem~\ref{thm:OscIntegrals} applied to $D_{\xi_k} a$ we have
  \begin{equation*}
    \op ((y-x)^{\alpha+\gamma-e_k} D_{\xi_k} r_{\alpha} (x,y,\xi))u(x) = \int_{\Rn} \tilde{k}_{\alpha,\gamma} (x,y,x-y)u(y) \, dy,
    \end{equation*}
    where $\tilde{k}_{\alpha,\gamma} (x,y,z)= \sum_{j\in\N_0} \tilde{k}_{\alpha,\gamma,j}(x,y,z)$ and
      \begin{alignat*}{1}
        \tilde{k}_{\alpha,\gamma,j}(x,y,z) &= \int_{\Rn} e^{iz\cdot \xi} (y-x)^{\alpha+\gamma-e_k} D_{\xi_k} r_\alpha (x,y,\xi) \varphi_j(\xi)\dd \xi
      \end{alignat*}
      For $z=x-y\neq 0$, an integration by parts yields 
      \begin{alignat*}{1}
        &\tilde{k}_{\alpha,\gamma,j}(x,y,x-y) 
        = \int_{\Rn} e^{i(x-y)\cdot \xi} (y-x)^{\alpha+\gamma} r_\alpha (x,y,\xi) \varphi_j(\xi)\dd \xi\\
        &\quad - \int_{\Rn} e^{i(x-y)\cdot \xi} (y-x)^{\alpha+\gamma-e_k} r_\alpha (x,y,\xi) D_{\xi_k}\varphi_j(\xi)\dd \xi\\
        &= k_{\alpha,\gamma,j} (x,y,x-y) -  \int_{\Rn} e^{i(x-y)\cdot \xi} (y-x)^{\alpha+\gamma-e_k} r_\alpha (x,y,\xi) D_{\xi_k}\varphi_j(\xi)\dd \xi.
      \end{alignat*}
      By the results in the proof of Theorem~\ref{thm:OscIntegrals}
      \begin{equation*}
        \sum_{j\in\N_0} \tilde{k}_{\alpha,\gamma,j}(x,y,x-y) \quad \text{and} \quad \sum_{j\in\N_0} k_{\alpha,\gamma,j}(x,y,x-y)
      \end{equation*}
      converge absolutely for every $x\neq y$.
      Hence the same is true for
      \begin{equation*}
        \sum_{j\in\N_0} \int_{\Rn} e^{i(x-y)\cdot \xi} (y-x)^{\alpha+\gamma-e_k} r_\alpha (x,y,\xi) D_{\xi_k}\varphi_j(\xi)\dd \xi.
      \end{equation*}
      Here for $x\neq y$ and $N\in\N$
      \begin{alignat*}{1}
        &\int_{\Rn} e^{i(x-y)\cdot \xi} (y-x)^{\alpha+\gamma-e_k} r_\alpha (x,y,\xi) D_{\xi_k}\varphi_j(\xi)\dd \xi\\
        &=|x-y|^{-2N}\int_{\Rn} e^{i(x-y)\cdot \xi} (y-x)^{\alpha+\gamma-e_k} (-\Delta_\xi)^N \left(r_\alpha (x,y,\xi) D_{\xi_k}\varphi_j(\xi)\right)\dd \xi,
      \end{alignat*}
      where
      \begin{equation*}
        \left|(-\Delta_\xi)^N \left(r_\alpha (x,y,\xi) D_{\xi_k}\varphi_j(\xi)\right)\right|\leq C_N \weight{\xi}^{m-2N}\quad \text{for all }\xi\in\Rn.
      \end{equation*}
      Hence choosing $N\in\N$ such that $m-2N<-n$, we can apply the dominated convergence theorem to conclude
      \begin{alignat*}{1}
        &\sum_{j\in\N_0} \int_{\Rn} e^{i(x-y)\cdot \xi} (y-x)^{\alpha+\gamma-e_k} r_\alpha (x,y,\xi) D_{\xi_k}\varphi_j(\xi)\dd \xi\\
        &=\sum_{j\in\N_0}|x-y|^{-2N}\int_{\Rn} e^{i(x-y)\cdot \xi} (y-x)^{\alpha+\gamma-e_k} (-\Delta_\xi)^N \bigl(r_\alpha (x,y,\xi) \sum_{j\in\N_0}D_{\xi_k}\varphi_j(\xi)\bigr)\dd \xi
        =0,
      \end{alignat*}
      because of $0 = \sum_{j=0}^\infty D_{\xi_k} \varphi_j(\xi)$ for all $\xi\in\R^n$.
      Thus $\tilde{k}_{\alpha,\gamma}(x,y,x-y)=
      k_{\alpha,\gamma}(x,y,x-y)$ for all $x\neq y$, and the statement
      of the lemma 
      follows. 
    \end{proof*}

\begin{proof*}{of Theorem \ref{thm:Bddness}}
   First we treat the case $s\geq 0$ by splitting it up in  several cases. Afterwards the case $s<0$ follows by duality.\\[1ex]
  {\bf Case $m\in [0,1]$ and $0\leq s<1$:} Let us define $\sigma:=m$ and $\varrho := \tau -m$ if $\tau-m<1$, and $\varrho \in (s,1)$ arbitrary if $\tau-m\geq 1$. Then $a\in C^{\varrho +\sigma}S^\sigma_{1,0}(\R^{2n}\times \R^n)$, and \cite[Proposition 9.8]{ToolsForPDE} yields the boundedness of 
  \begin{equation*}
    \op(a(x,y,\xi))\colon H^{s+m}_q(\R^n)\to H^s_q(\R^n).
  \end{equation*}
  {\bf Case $m\in (-1,0)$ and $0\leq s<1$:} We use the decomposition
  \begin{equation*}
    \op(a(x,y,\xi))= \op (a(x,x,\xi))+ \op (b(x,y,\xi)),
  \end{equation*}
  where $b(x,y,\xi)= a(x,y,\xi)-a(x,x,\xi)$. Because of Theorem~\ref{thm:bd-compos}, it is sufficient to show the corresponding mapping property for $\op (b(x,y,\xi))$.  Since $b\in C^\tau S^m_{1,0}(\R^{2n}\times \Rn)\subset C^\tau S^0_{1,0}(\R^{2n}\times \Rn) $ and $b(x,x,\xi)=0$ for all $x,\xi\in\R^n$, \cite[Proposition 9.5]{ToolsForPDE} yields that
  \begin{equation}\label{eq:mapping-b}
    \op(b(x,y,\xi))\colon H^\sigma_q(\R^n)\to H^{\sigma+t}_q(\R^n)\quad \text{for all }-\tau <\sigma\leq 0,\; 0\leq t<\tau.
  \end{equation}
  If $s+m\leq 0$, we can choose $\sigma=s+m\in (-\tau,0]$ and $t=-m\in [0,\tau)$ and obtain the desired mapping property. If $s+m>0$, we use that $H^{s+m}_q(\Rn)\hookrightarrow L_q(\Rn)$ and \eqref{eq:mapping-b} with $\sigma=0$ and $t=s$ to conclude that
 $
    \op(b(x,y,\xi))\colon H^{s+m}_q(\R^n)\to H^s_q(\R^n)
$
is bounded.
  \\[1ex]
  {\bf Case $m\in (-1,1]$ and $s\geq 1$:}
  Let $k=[s], s'= s-k\in [0,1)$. We use that
  \begin{equation*}
    u\in H^s_q(\R^n) \quad \text{if and only if}\quad \partial_x^\alpha u\in H^{s'}_q(\Rn)\text{ for all }|\alpha|\leq k
  \end{equation*}
  for $u=\op(a(x,y,\xi)) f$ for some $f\in H^{s+m}_q(\Rn)$.

  First let $m\in [0,1]$. Using that
  \begin{equation*}
    [\partial_{x_j}, \op(a(x,y,\xi)] = \op ( \partial_{x_j} a(x,y,\xi)+ \partial_{y_j} a(x,y,\xi))
  \end{equation*}
  for all $j=1,\ldots,n$ one obtains
  \begin{equation*}
    \partial_x^\alpha \op(a(x,y,\xi))f = \sum_{0\leq \beta\leq \alpha} \op(a_\beta(x,y,\xi))\partial_x^{\alpha-\beta} f
  \end{equation*}
  for some $a_\beta \in C^{\tau-|\beta|}S^m_{1,0}(\R^{2n}\times \Rn)\subset C^{\tau-k}S^m_{1,0}(\R^{2n}\times \Rn)$, where $0\leq s'=s-k <\tau':=\tau-k$ and $0\leq s'+m= s+m-k<\tau'$ because of $|s|,|s+m|<\tau$. Moreover, $\partial_x^{\alpha-\beta} f\in H^{s'+m}_q(\Rn)$. By the preceding cases,
  \begin{equation*}
    \op(a_\beta(x,y,\xi))\colon H^{s'+m}_q(\Rn)\to H^{s'}_q(\Rn)
  \end{equation*}
  is bounded. Altogether this yields the boundedness of $\op(a(x,y,\xi))$ in this case if $m\in [0,1]$.

  If $m\in (-1,0)$ and $1\leq |\alpha|\leq k$, then $\alpha=\alpha'+e_j$ for some $j\in \{1,\ldots, n\}$. Using 
  \begin{equation*}
    \partial_{x_j} \op (a(x,y,\xi))f = \op (a(x,y,\xi)i\xi_j) + \op (\partial_{x_j} a(x,y,\xi)),
  \end{equation*}
  one obtains  similarly as before
    \begin{equation*}
    \partial_x^\alpha \op(a(x,y,\xi))f = \sum_{0\leq \beta\leq \alpha'} \op(a'_\beta(x,y,\xi)+ a''_\beta(x,y,\xi))\partial_x^{\alpha'-\beta} f
  \end{equation*}
  for some $a'_\beta \in C^{\tau-k+1}S^{m+1}_{1,0}(\R^{2n}\times \Rn)$ and $a''_\beta \in C^{\tau-k}S^{m}_{1,0}(\R^{2n}\times \Rn)\subset C^{\tau-k}S^0_{1,0}(\R^{2n}\times \Rn)$. Since $m+1\in (0,1)$, one obtains by the preceding cases that
    \begin{equation*}
    \op(a'_\beta(x,y,\xi))\colon H^{s'+m+1}_q(\Rn)\to H^{s'}_q(\Rn),\quad     \op(a''_\beta(x,y,\xi))\colon H^{s'}_q(\Rn)\to H^{s'}_q(\Rn)
  \end{equation*}
  are bounded due to $0\leq s'+m+1=s+m-k+1<\tau-k+1$ and $0\leq s'<\tau-k$. This yields the statement in this case since $\partial_x^{\alpha'-\beta}f$ in $H_q^{s'+m+1}(\Rn)$ for all $f\in H_q^{s+m}(\Rn)$ and $0\leq \beta \leq \alpha'$, where $|\alpha'|\leq k-1$, and the case $\alpha=0$ is easy. 
  \\[1ex]
  \noindent
  {\bf Case $m\in [0,\tau)$ and $s\geq 0$:}
  Now let $0\leq m <\tau$ and $s\geq 0$ and set $m'= [m]$. We use that there are polynomials $p_k\colon \R^n\to \R$ of order at most $m'$ and $q_k\in S^{0}_{1,0}(\Rn\times \Rn)$, independent of $x$, $k=0,\ldots, n$ such that
  \begin{equation*}
    \weight{\xi}^{m'} = {\sum}_{k=0}^n q_k(\xi)p_k(\xi),
  \end{equation*}
  cf.\ e.g.\ \cite[Proof of Theorem~6.8]{PsDOBuch}.
  Hence
  \begin{equation*}
    a(x,y,\xi)= {\sum}_{k=0}^n a_k(x,y,\xi) p_k(\xi),
  \end{equation*}
  where $a_k\in C^\tau S^{m-m'}_{1,0}(\R^{2n}\times \R^n)$. Combining this with the general relation
  \begin{equation}\label{eq:CompRight}
    \op(b(x,y,\xi)i\xi_j)= \op (b(x,y,\xi))\partial_{x_j}+ \op (\partial_{y_j} b(x,y,\xi))
  \end{equation}
  we have the representation
  \begin{equation*}
    \op (a(x,y,\xi))= {\sum}_{|\alpha|\leq m'} \op (a_\alpha (x,y,\xi)) \partial_x^\alpha  
  \end{equation*}
  for some $a_\alpha\in C^{\tau-m'}S^{m-m'}_{1,0}(\R^{2n}\times \Rn)$. Because of the case ``$m\in [0,1]$ and $s\geq 0$'', we conclude that
  \begin{equation*}
    \op (a_\alpha (x,y,\xi))\colon H^{s+m-m'}_q(\Rn)\to H^s_q(\Rn)
  \end{equation*}
  is bounded for every $|\alpha|\leq m'$. This implies the statement in this case.\\[1ex]
  \noindent
  {\bf Case: $m\in (-\tau,0)$ and $s\geq 0$:}
   Let $m'\in \N_0$ be such that $m+m'\in (-1,0]$, i.e.,
   $m'=[-m]$. First we consider the case $s=0$. As noted above,
   $ \weight{\xi}^{m'} = {\sum}_{k=0}^n q_k(\xi)p_k(\xi)$,
  for some $q_k\in S^0_{1,0}(\Rn\times \Rn)$ independent of $x$ and polynomials $p_k$ of order at most $m'<\tau$. Hence
  \begin{equation*}
    \OP(a(x,y,\xi))= \OP(a(x,y,\xi)) \weight{D_x}^{m'} \weight{D_x}^{-m'}
= \sum_{k=0}^n \OP(a(x,y,\xi)) p_k(D_x)\tilde{q}_k(D_x),
  \end{equation*}
  where $\tilde{q}_k\in S^{-m'}_{1,0}(\Rn\times \Rn)$ is independent of $x$ and $\tilde{q}_k(D_x)\colon H^{m}_q(\Rn)\to H^{m+m'}_q(\Rn)$. Therefore it remains to show that
  \begin{equation*}
    \OP(a(x,y,\xi)) p_k(D_x)\colon H^{s+m+m'}_q(\Rn)\to H^s_q(\Rn)
  \end{equation*}
  is bounded. Using that $p_k(D_x)$ is a differential operator of order $m'$ and \eqref{eq:CompRight} one obtains the representation
  \begin{equation*}
    \OP(a(x,y,\xi)) p_k(D_x) = {\sum}_{|\alpha|\leq m'} \op (a_{\alpha,k} (x,y,\xi) ) 
  \end{equation*}
  for some $a_{\alpha,k}\in C^{\tau-m'}S^{m+m'}_{1,0}(\R^{2n}\times \Rn)$. Hence the case ``$m\in (-1,0]$ and $s\geq 0$'' implies that
  \begin{equation*}
    \op (a_{\alpha,k} (x,y,\xi))\colon H^{m+m'}_q(\Rn)\to L_q(\R^n).
  \end{equation*}
  Altogether we conclude that
  \begin{equation*}
    \op (a (x,y,\xi) )\colon H^{m}_q(\Rn)\to L_q(\R^n)
  \end{equation*}
  is bounded. Next let $s\in [-m,\tau)$ and $s'=s-m'$. Using
    \begin{equation*}
    u\in H^s_q(\R^n) \quad \text{if and only if}\quad \partial_x^\alpha u\in H^{s'}_q(\Rn)\text{ for all }|\alpha|\leq m'
  \end{equation*}
  for $u= \op (a (x,y,\xi) ) f$ for some $f\in H^{s+m}_q(\Rn)$, it is sufficient to show that
  \begin{equation*}
    \partial_x^\alpha \op (a (x,y,\xi) )\colon H_q^{s+m}(\Rn)\to H_q^{s'}(\R^n)
  \end{equation*}
  is bounded for all $|\alpha|\leq m'$. Similarly as before
   \begin{equation*}
    \partial_x^\alpha \op(a(x,y,\xi)) = \op(a_\alpha(x,y,\xi)) 
  \end{equation*}
  for some $a_\alpha \in
  C^{\tau-|\alpha|}S^{m+|\alpha|}_{1,0}(\R^{2n}\times \Rn)\subset
  C^{\tau-m'}S^{m+m'}_{1,0}(\R^{2n}\times \Rn)$, where  $0\leq s+m
  \leq s'=s-m' <\tau':=\tau-m'$. By the preceding cases, 
  \begin{equation*}
    \op(a_\alpha(x,y,\xi))\colon H^{s+m}_q(\Rn)= H^{s'+m+m'}_q(\Rn)\to H^{s'}_q(\Rn)
  \end{equation*}
  is bounded. Now the mapping properties for general $s\in [0,\tau)$ follow by interpolation between the case $s=0$ and $s\in [-m,\tau)$.

  \medskip
  
\noindent
{\bf Case $s<0$:} By the assumptions of the theorem, $|s|<\tau$ and $|s+m|<\tau$. First we consider the case that additionally $s\in (-\tau, -m)$. 
Since $|m|<\tau$, there are some $s$, which satisfy all these assumptions. Note that in the case $m<0$ this condition is trivial. By the case ``$s\geq 0$'' we obtain that
  \begin{equation*}
    \OP(a(x,y,\xi))^\ast = \OP(\ol{a(y,x,\xi)})\colon H^{-s}_{q'}(\Rn)\to H^{-s-m}_{q'}(\Rn)
  \end{equation*}
  since $s':=-s-m\in (0,\tau)$ and $-\tau<-s=s'+m<\tau$. 
  Hence we conclude by duality that
  \begin{equation*}
    \op (a(x,y,\xi))\colon H^{s+m}_q(\Rn)\to H^s_q(\Rn)
  \end{equation*}
  if additionally $s\in (-\tau,-m)$. Interpolation with the case $s\geq 0$ yields the statement for all $s\in (-\tau, \tau-m)$.
\end{proof*}

 \begin{proof*}{of Theorem  \ref{thm:Bddness2}}
  It is sufficient to consider the case $m\geq 0$ since $ C^\tau S^m_{1,0}(\R^{2n}\times \R^n)\subset  C^\tau S^0_{1,0}(\R^{2n}\times \R^n)$ and $\min(\tau-m,\tau)=\tau$ if $m<0$.
  
  Let us first consider the case $0\leq s< 1$.
  Since $0\leq s<\tau-m$ is arbitrary and $B^{s+\eps}_{q,\infty}(\Rn)\hookrightarrow H^s_q(\Rn)$ for any $\eps>0$, it is sufficient to prove that
  \begin{equation*}
    \op(a(x,y,\xi))\colon L_q(\R^n)\to B^s_{q,\infty}(\R^n)
  \end{equation*}
  is a bounded linear operator for any $0\leq s < \min (\tau-m,\tau)$.  To this end we use that 
  \begin{equation}\label{eq:KernelRepresentation}
    \op(a(x,y,\xi)) u(x) = \int_{\R^n} k(x,y,x-y) u(y) \, dy \qquad \text{for all }u\in \SD(\R^n),x\in\Rn,
  \end{equation}
  where $k\colon \Rn\times \Rn\times (\R^n\setminus \{0\})\to \C$ is smooth with respect to the third variable and satisfies for any $\alpha\in\N_0^n$ and $N\in\N$
  \begin{equation*}
    \|\partial_z^\alpha k(.,.,z)\|_{C^\tau (\R^{2n})} \leq C_{\alpha,N} |z|^{-n-m-|\alpha|}(1+|z|)^{-N}\qquad \text{for all }z\neq 0.
  \end{equation*}
  Here $k$ can be defined as
  \begin{equation*}
    {\sum}_{j\in\N_0} k_j(x,y,z) \qquad \text{for all }x,y,z\in\Rn, z\neq 0, 
  \end{equation*}
  where $k_j(x,y,z)= \F^{-1}_{\xi\mapsto
    z}\left(a(x,y,\xi)\varphi_j(\xi)\right)$ and $\varphi_j\in
  C^\infty_0(\Rn)$, $j\in\N_0$, is a smooth dyadic partition of unity
  as in \eqref{eq:dyadic}ff. The proofs in
  \cite[Section~5.4]{PsDOBuch} carry directly over to the present situation.
  
  Moreover, $\left.(\partial_y^\alpha k)(x,y,z)\right|_{y=x}=0$ since $\left.\partial_y^\alpha a(x,y,\xi)\right|_{y=x}=0$ for all $x,\xi\in\Rn$ and $|\alpha|<\tau$, and we have for any $N\in\N$:
  \begin{alignat}{1}\nonumber
    |k(x,y,z)|&=\Big|k(x,y,z)-{\sum}_{|\alpha|<\tau}\frac1{\alpha!}(\partial_y^\alpha k)(x,x,z)z^\alpha\Big|\\\label{eq:kEstim}
    &\leq C_N|x-y|^{\tau}|z|^{-n-m}(1+|z|)^{-N},
  \end{alignat}
  and in particular
 \begin{alignat}{1}\nonumber
    |k(x,y,x-y)|&\leq C_N|x-y|^{-n-m+\tau}(1+|x-y|)^{-N},
  \end{alignat}
  where $|z|^{-n-m+\tau} (1+|z|)^{-N}$ is in $L_1(\Rn)$ with respect to $z$ for sufficiently large $N\in\N$ since $m <\tau$. 
  Now let $(\Delta_h f)(x):=f(x+h)-f(x)$ for any $x,h\in\Rn$ and $f\colon \Rn \to \C$. Then
   \begin{align*}
    (\Delta_h \op(a(x,y,\xi)) u)(x)
     =&  \int_{|x-y|<2|h|} (k(x+h,y,x+h-y)-k(x,y,x-y))u(y)\,dy\\
      &+   \int_{|x-y|\geq 2|h|} (k(x+h,y,x-y)-k(x,y,x-y))u(y)\,dy     \equiv I_1+I_2.                             
  \end{align*}
  In order to estimate $I_1$ we use that
  \begin{align*}
    |k(x+h,y,x+h-y)-k(x,y,x-y)|&\leq |k(x+h,y,x+h-y)|+ |k(x,y,x-y)|\\
    &\leq C\left(|x+h-y|^{-n-m+\tau} + |x-y|^{-m-m+\tau}\right).
  \end{align*}
  Hence Young's inequality implies
  \begin{align*}
    \|I_1\|_{L_q(\Rn)} &\leq C\int_{|z|\leq 2|h|}(|z+h|^{-n-m+\tau}+ |z|^{-n-m+\tau})\|u\|_{L_q(\Rn)}\\
    &\leq C' |h|^{\tau-m}\|u\|_{L_q(\Rn)}\leq C'|h|^s\|u\|_{L_q(\Rn)}
  \end{align*}
  for any $s<\tau-m$ and $|h|\leq 1$. In order to estimate $I_2$ we use
  \begin{align*}
    I_2
    =&  \int_{|x-y|\geq 2|h|} (k(x+h,y,x+h-y)-k(x+h,y,x-y))u(y)\,dy\\
    &+   \int_{|x-y|\geq 2|h|} (k(x+h,y,x-y)-k(x,y,x-y))u(y)\,dy     \equiv J_1+J_2.                             
  \end{align*}
  For the second integral we use that for any $|x-y|\geq 2|h|$
  \begin{align*}
    |k(x+h,y,x-y)-k(x,y,x-y)|&\leq C_N|h|^{\tau}|x-y|^{-n-m}(1+|x-y|)^{-N}\\
    &\leq C'_N|h|^s |x-y|^{-n-m+\tau-s}(1+|x-y|)^{-N}.
  \end{align*}
  Therefore
  \begin{align*}
    \|J_2\|_{L_q(\Rn)} \leq C_N|h|^s\int_{\Rn}|z|^{-n-m+\tau-s}(1+|z|)^{-N}\, dz\|u\|_{L_q(\Rn)}\leq C'_s|h|^s\|u\|_{L_q(\Rn)}
  \end{align*}
  for any $s<\tau-m$ and $|h|\leq 1$ and sufficiently large $N$. Furthermore, for any $|x-y|\geq 2|h|$ 
  \begin{align*}
    &|k(x+h,y,x+h-y)-k(x+h,y,x-y)|\\
    &= \left|\int_0^1 D_zk(x+h,y,x+sh-y)\, ds\, h\right|
     \leq C_N|x+h-y|^\tau |x-y|^{-n-m-1}(1+|x-y|)^{-N}|h|\\
    &\leq C'_N|h|^s |x-y|^{-n-m+\tau-s}(1+|x-y|)^{-N}.
  \end{align*}
  by \eqref{eq:kEstim} with $k$ replaced by $D_z k$ and since
  \begin{equation*}
    \tfrac12 |x-y|\leq |x-y|-|h|\leq |x+sh-y|\leq |x-y|+|h|\leq \tfrac32 |x-y| 
  \end{equation*}
  for every $s\in [0,1]$. Thus we obtain as before
  \begin{align*}
    \|J_1\|_{L_q(\Rn)} \leq C_N|h|^s\int_{\Rn}|z|^{-n-m+\tau-s}(1+|z|)^{-N}\, dz\|u\|_{L_q(\Rn)}\leq C'_s|h|^s\|u\|_{L_q(\Rn)}
  \end{align*}
  for any $s<\tau-m$, $|h|\leq 1$, and suitable $N\in\N$.
  Altogether we obtain
  \begin{equation*}
    \|\Delta_h \op(a(x,y,\xi)) u\|_{L_q(\Rn)}\leq C|h|^s \|u\|_{L_q(\Rn)}
  \end{equation*}
  uniformly in $|h|\leq 1$ for any $s<\tau-m$. Moreover, one obtains by similar, but simpler estimates
  \begin{equation*}
    \|\op(a(x,y,\xi)) u\|_{L_q(\Rn)}\leq C\|u\|_{L_q(\Rn)}.
  \end{equation*}
  This implies the boundedness of $\op(a(x,y,\xi))\colon L_q(\Rn)\to B^s_{q,\infty}(\Rn)$ for any $0\leq s<\tau-m$.

  Finally, let $s\geq 1$. 
  Now let $k\in\N$ such that $s=k+s'$ with $s'\in [0,1)$. We use that $u\in H^s_q(\Rn)$ if and only if $\partial_x^\alpha u \in H^{s'}_q(\Rn)$ for every $|\alpha|\leq k$. For $|\alpha|\leq k$ we have
  \begin{equation*}
    \partial_x^\alpha \op(a(x,y,\xi))= \sum_{0\leq \beta \leq \alpha} \binom{\alpha}{\beta} \op (\partial_x^\beta a(x,y,\xi)(i\xi)^{\alpha-\beta}),
  \end{equation*}
  where $\partial_x^\beta a(x,y,\xi)(i\xi)^{\alpha-\beta}\in C^{\tau-|\beta|} S^{m+|\alpha|-|\beta|}_{1,0}(\R^{2n}\times \Rn)$. Because of
  $$
  0\leq s'=s-k<s-|\alpha|\leq \tau-m-|\alpha|=(\tau-|\beta|)-(m+|\alpha|-|\beta|),
  $$
  we can apply the case ``$0\leq s<1$'' to $\partial_x^\beta a(x,y,\xi)(i\xi)^{\alpha-\beta}$ (with $s'$ instead of $s$) and obtain the result.
\end{proof*}
   
\footnotesize



\def\cprime{$'$} \def\ocirc#1{\ifmmode\setbox0=\hbox{$#1$}\dimen0=\ht0
  \advance\dimen0 by1pt\rlap{\hbox to\wd0{\hss\raise\dimen0
  \hbox{\hskip.2em$\scriptscriptstyle\circ$}\hss}}#1\else {\accent"17 #1}\fi}

\end{document}